\documentclass[11pt,letterpaper]{amsart}
\usepackage[utf8]{inputenc}

\usepackage{amssymb}
\usepackage[english]{babel}
\usepackage[mathscr]{eucal}
\usepackage[all,cmtip]{xy}
\usepackage{appendix}

\usepackage{hyperref}
\usepackage{enumitem}

\usepackage{color}

\newcommand{\ZZ}{{\mathbb Z}}

\newcommand{\RR}{{\mathbb R}}

\newcommand{\QQ}{{\mathbb Q}}
\newcommand{\NN}{{\mathbb N}}

\newcommand{\stab}{\mathrm{Stab}}

\newcommand{\Ga}{{\mathbb{G}_a}}

\newcommand{\R}{{\mathbb R}}

\newcommand{\A}{\mathbb{A}}
\newcommand{\rk}{\mathrm{rk}}

\newcommand{\cO}{\mathscr{O}}

\renewcommand{\u}[1]{\boldsymbol{#1}}
\usepackage{mathtools}
\usepackage{scalerel}
\DeclarePairedDelimiter{\norm}{\lVert}{\rVert}

\newtheorem{theo}{Theorem}[section]

\newtheorem{lemma}[theo]{Lemma}
\newtheorem{prop}[theo]{Proposition}
\newtheorem{coro}[theo]{Corollary}
\newtheorem{rema}[theo]{Remark}
\newtheorem{exam}[theo]{Example}

\newtheorem{defi}[theo]{Definition}

\newcommand{\GL}{\mathrm{GL}}
\newcommand{\PEP}{\mathrm{PEP}}
\newcommand{\SL}{\mathrm{SL}}

\hoffset=-2.5cm \textwidth=17cm \voffset=-1.8cm \textheight=24.3cm
.240pk scaled 1200 .240pk
\begin{document}
	\title[PEP and applications]{Purely exponential parametrizations and their group-theoretic applications}
	
	\author[P.~Corvaja]{Pietro Corvaja}
	\author[J.~Demeio]{Julian L. Demeio}
	\author[A.~Rapinchuk]{Andrei S. Rapinchuk}
	\author[J.~Ren]{Jinbo Ren}
	\author[U.~Zannier]{Umberto M. Zannier}

	\address{Dipartimento di Scienze Matematiche, Informatiche e Fisiche, via delle Scienze,
		206, 33100 Udine, Italy}
	
	\email{pietro.corvaja@uniud.it}
	
	\address{Department of Mathematical Sciences, 6 West, University of Bath, Claverton Down, Bath, BG2 7AY, United Kingdom}
	
	\email{demeiojulian@yahoo.it}
	
	\address{Department of Mathematics, University of Virginia,
		Charlottesville, VA 22904-4137, USA}
	
	\email{asr3x@virginia.edu}

	\address{School of Mathematical Sciences, Xiamen University, Xiamen 361005, China}
	
	\email{jren@xmu.edu.cn}

	\address{Scuola Normale Superiore, Piazza dei Cavalieri, 7, 56126 Pisa, Italy}
	
	\email{umberto.zannier@sns.it}

	\begin{abstract}

This paper is mainly motivated by the analysis of the so-called {\em Bounded Generation property} (BG) of linear groups (in characteristic $0$), which is known to admit far-reaching group-theoretic implications.

We achieve complete answers to certain longstanding open questions about Bounded Generation (sharpening considerably some results of \cite{CRRZ}). For instance, we prove that {\em linear groups boundedly generated by semi-simple elements are necessarily virtually abelian}. This is obtained as a corollary of sparseness of subsets which are likewise generated.

In the paper in fact we go further, framing (BG) in the more general context of {\em (Purely) Exponential Parametrizations} (PEP) for subsets of affine spaces, a concept which unifies different issues. Using deep tools from Diophantine Geometry (including the Subspace Theorem), we systematically develop a theory showing in particular that for a (PEP) set over a number field, the asymptotic distribution of its points of Height at most $T$ is always $\sim c(\log T)^r$, with certain constants ${c}>0$ and ${r}\in \ZZ_{\geq 0}$. (This shape fits with a well-known viewpoint first put forward by Manin.)
  	
	\end{abstract}
	
	\maketitle
	
	\tableofcontents
	
	\section{Introduction}

The so-called {\bf Bounded Generation property} (BG) for linear groups, i.e.\ expressibility of the group as a product of cyclic subgroups, is well-known to have far-reaching consequences in several directions, sometimes toward fundamental problems; hence it is highly needed to have general criteria for this property to hold or not. In the recent paper \cite{CRRZ} a longstanding question was answered, by showing in particular that (in characteristic $0$) a boundedly generated {\em anisotropic} linear group is necessarily virtually abelian. \footnote{See below for reminders on some group-theoretic definitions.}

However a number of other natural questions were left open in \cite{CRRZ}, and were seemingly lying outside the range of the method used therein. For instance, ``Question B" in \cite{CRRZ}\footnote{In fact, this is the second ``Question A'' on page 3 of the published version of \cite{CRRZ}.}, asked whether the ``{\em anisotropic}'' condition could be removed, weakening the requirement by assuming instead that (BG) {\em holds with semi-simple elements}. This ``Question B'' will be answered in full in this paper.

Also, no information was provided as to the distribution of Boundedly Generated subsets of a non-virtually abelian linear group, namely toward the natural question of {\em measuring the distance from the validity of property (BG) in the cases when it cannot be fully achieved}. In this paper we give rather complete answers to these issues (some of which were already announced in \cite{CDRRZcr}).

Moreover we frame Bounded Generation in the more general context of {\bf Purely Exponential Parametrizations} (PEP) for sets in affine spaces over fields of characteristic zero, which unifies different needs and issues.

Using deep tools from Diophantine Geometry, i.e.\ Schlickewei-Schmidt's subspace theorem, we systematically develop a theory showing in particular that for a (PEP) set over a number field, the asymptotic distribution of its points in terms of the Height $T$ is always exactly of the form $\sim c(\log T)^r$, with certain constants ${c}$ and ${r}$.

As a consequence for linear groups over number fields, we prove {\em sparseness of subsets boundedly generated by semi-simple elements}. In turn, this yields a complete characterisation of likewise generated linear groups, proving that they are necessarily virtually abelian.

    Before giving the precise statements of the main results, we start with the formal definitions. 
	\vskip2mm
	
	\noindent {\bf Definition.} {\rm Let $K$ be a field. A purely exponential polynomial is a function $f:\ZZ^r \to \overline{K}$ expressible as a sum:
		\begin{equation}\label{Eq:expressionPEP}
		f(\mathbf{x})=\sum_{j=1}^e a_j \lambda_1^{l_{1,j}(\mathbf{x})}\cdots \lambda_k^{l_{k,j}(\mathbf{x})},
		\end{equation}
		for certain constants $a_1,\dots,a_e,\lambda_1,\dots, \lambda_k\in \overline{K}^{*}$ and linear forms $l_{i,j}(\mathbf{x})$ in $r$ variables whose coefficients are {\bf rational integers}. We refer to the elements $\lambda_1,\dots,\lambda_k$ as the {\bf bases} of $f$, to the linear forms $l_{i,j}$ as the {\bf exponents} of ${f}$, and to the constants $a_j$ as the {\bf coefficients} of ${f}$. Every $f$ as above with $e=1$ is called a {\bf (purely exponential) monomial}. A vector-valued function $\mathbf{f}=(f_1,\dots,f_s)\colon \ZZ^r \to \overline{K}^s$ such that each component $f_i$ is a purely exponential polynomial is called a {\bf (vector of) purely exponential polynomial(s)} (PEP). The {\bf bases/exponents/coefficients} of such $\mathbf{f}$ are defined to be the union of those of the $f_i$'s.}
	
	\vskip2mm
	
	\noindent {\bf Definition.} {\rm Let $\Sigma$ be a subset of $\mathbb{A}_K^s$ ($K$ is a field). Then $\Sigma$ is said to have {\bf Purely Exponential Parametrization} (PEP) in $r$ variables if $\Sigma$ has the shape
		$$\Sigma=\Big\{ (f_1(\mathbf{n}),\dots,f_s(\mathbf{n}))\colon \mathbf{n}\in \mathbb{Z}^r \Big\}= \mathbf{f}(\mathbb{Z}^r),$$
		where each $f_i(\mathbf{x})=f(x_1,\dots, x_r)$ is a {\bf Purely Exponential Polynomial}. We also say that $\Sigma$ is a (PEP) set.}
	\vskip2mm
	
	In the cases we investigate, a $\Sigma$ as above will be often contained in an algebraic subvariety $V\subset \mathbb{A}_K^s$.

    \begin{rema}{\rm 
		It is easily seen (cf. Proposition \ref{Prop:Union}) that any finite union of (PEP) sets is still a (PEP) set. Moreover, the image of a (PEP) set under a polynomial map is also (PEP).}
	\end{rema}

 \begin{rema}\label{pepchar} {\rm  An equivalent way to define a purely exponential polynomial is to say that it is a \underline{linear combination of characters of $\ZZ^r$}, see for example \cite{SchlickeweiSchmidt93,SchlickeweiSchmidt00}. It follows from  Artin's linear independence of characters that this linear combination is uniquely determined by the polynomial.}
	\end{rema}
	\vskip2mm
	\noindent
	Note that, despite the uniqueness described in Remark \ref{pepchar}, the parameters $a_j$'s, $\lambda_i$'s and forms $l_{i,j}$'s in \eqref{Eq:expressionPEP} are instead not uniquely determined by $f$ (for example $2^n=(\sqrt{2})^{2n}$, or $2^n3^m=2^{n-m}6^m$). For convenience, in the rest of the paper, whenever we talk about a vector of purely exponential polynomials $\mathbf{f}=(f_1,\dots,f_s)$, we always fix a choice of these parameters and forms. This choice is often implicit and clear from context. See also the ``Notations and Conventions'' at the end of this section.

	\begin{rema}{\rm The more classical question of finding suitable {\em algebraic} parametrizations, i.e.\  what one obtains by
		replacing the
		$f_i(\mathbf{x})$'s above by usual multi-variable polynomials,  of a specific subset of an algebraic variety (e.g.\ \ the set of integral or rational points) has a long history. For example, one has the well-known parametrization of the integer solutions of the classical Pythagorean equation as $$(2tab, t(a^2-b^2),t(a^2+b^2))\text{ or }(t(a^2-b^2),2tab, t(a^2+b^2)),a,b,t\in \mathbb{Z}.$$ An extensive number of cases of the classical algebraic parametrization have been studied, see for example \cite{Zannier96},  \cite{Zannier03},\cite{Vasser} and \cite{LarsenNguyen} for some recent discoveries concerning algebraic parametrization of some arithmetic groups.}
	\end{rema}
	
	Examples of (PEP) sets of particular interest arise in the analysis of bounded generation (BG) subsets  of a linear group. Recall that an abstract group $\Gamma$ is said to have the  {\bf bounded generation} property (BG) if it can be written in the form
	\begin{equation} \label{bgequation}
	    \Gamma=\langle \gamma_1 \rangle \cdots \langle \gamma_r \rangle
	\end{equation}
	
	for certain fixed $\gamma_1,\dots, \gamma_r\in \Gamma$, where $\langle \gamma_i \rangle$ is the cyclic subgroup generated by $\gamma_i$.
	\begin{exam}{\rm
		Let $\Gamma \subset \mathrm{GL}_n (K)$ be a linear group which has (BG) by semi-simple elements, i.e.\ admits a presentation (\ref{bgequation}) with all the $\gamma_i$'s semi-simple. Writing
		$$g_i^{-1}\gamma_i g_i=\mathrm{diag}(\lambda_{i,1},\dots, \lambda_{i,n}),\text{ for all }i=1,\dots,r$$
		for certain $g_i\in \mathrm{GL}_n(\overline{K})$ and $\lambda_{i,j}\in \overline{K}^*$ for $i=1,\dots, r,j=1,\dots,n$, we see that every $\gamma \in \Gamma$ can be written in the form
		$$\gamma=\prod_{i=1}^r g_i\left[\mathrm{diag}(\lambda_{i,1}^{a_i},\dots, \lambda_{i,n}^{a_i})\right]g_i^{-1}\text{ for some }a_1,\dots,a_r\in \mathbb{Z}.$$
		The last equation gives rise to a (PEP) parametrization of $\Sigma$ as a subset of $M_n(K)=\mathbb{A}_K^{n^2}$ in $r$ variables with bases equal to the eigenvalues $\lambda_{i,j}$'s. }
	\end{exam}

    More examples of (PEP) can be found in \S \ref{generalitypep}.
 
	Groups with bounded generation were the starting point of this project. We refer the readers to \cite[\S 1]{CRRZ} for a discussion of a large number of applications of bounded generation. In particular: to rigidity, the congruence subgroup problem, the Margulis-Zimmer conjecture, etc. At the same time, since the main theorem of \cite{CRRZ} implies that infinite $S$-arithmetic subgroups of simple anisotropic algebraic groups are {\bf not} boundedly generated, one is interested in finding other abstract conditions with the same consequences of bounded generation that can potentially hold in the anisotropic situation. See \cite[\S 6]{CRRZ} for the discussion of the ``polynomial index growth'' condition. These issues, however, will not be discussed further in the current paper.

	In the current article, we will study the distribution of points of (PEP) subsets of $\mathbb{A}_K^n$ over a \underline{number field} $K$, with respect to the affine height $H_{\text{aff}}$. We will recall some facts about heights in \S \ref{heightdefprop}, here we only mention that for $\xi=p/q\in \mathbb{Q}$ with $p,q\in \ZZ$ coprime, we have $H_{\mathrm{aff}}(\xi)=\max \{|p|,|q|\}$. The main quantitative result of this paper reads
	\begin{theo}
	\label{firstmainthm}
	Let $K$ be a number field, $\Sigma \subset \mathbb{A}_K^n$ be a subset that has a (PEP) parametrization in $r$ variables. 
	Then there exists an integer $0 \leq r' \leq r$ and a(n explicitly computable) constant $c>0$ such that
		$$\#\{P\in \Sigma\colon H_{\text{aff}}(P)\leq H\}\sim c(\log H)^{r'}\text{ when }H\to \infty.$$ 
	\end{theo}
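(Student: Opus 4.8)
The plan is to transfer the height count on $\Sigma$ to a lattice‑point count in $\mathbb Z^r$ over a convex region that dilates linearly with $\log H$, using the Subspace Theorem to control the only two things that can go wrong: $v$‑adic cancellation among the exponential monomials of $\mathbf f$, and collapsing of the parametrization $\mathbf f\colon\mathbb Z^r\to\Sigma$ onto a lower‑dimensional image.

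First I would express the affine height of a PEP value as an essentially piecewise‑linear function of $\mathbf n$. For each place $v$ of $K$ and each component $f_i$, the monomial in \eqref{Eq:expressionPEP} of largest $v$‑adic size contributes a $\log|{\cdot}|_v$ equal to a linear form in $\mathbf n$ whose coefficients are the real numbers $\log|\lambda_k|_v$ and $\log|a_j|_v$; taking, over all $i$, the maximum of these and of $0$, and summing over the (finitely many relevant) $v$, one obtains a convex, positively $1$‑homogeneous, piecewise‑linear $\Psi\colon\mathbb R^r\to\mathbb R_{\ge 0}$ with $\log H_{\mathrm{aff}}(\mathbf f(\mathbf n))\le\Psi(\mathbf n)+O(1)$ automatically (triangle inequality, ultrametric at finite places). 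The matching lower bound is where the Diophantine input enters: the dominant monomials could cancel $v$‑adically and push $\log H_{\mathrm{aff}}(\mathbf f(\mathbf n))$ far below $\Psi(\mathbf n)$, but by the quantitative Subspace Theorem — equivalently, the structure of solutions of linear equations in variables from a finitely generated multiplicative group — this can only happen for $\mathbf n$ in a finite union $Z_0$ of translates of proper sublattices of $\mathbb Z^r$. Hence $\log H_{\mathrm{aff}}(\mathbf f(\mathbf n))=\Psi(\mathbf n)+O(\log\|\mathbf n\|)$ for $\mathbf n\notin Z_0$.

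Next I would pin down the fibres of $\mathbf f$: for $\mathbf n,\mathbf n'\in\mathbb Z^r$, each equation $f_i(\mathbf n)-f_i(\mathbf n')=0$ is again a vanishing purely exponential polynomial, now in the $2r$ variables $(\mathbf n,\mathbf n')$, so once more the Subspace Theorem confines its zero set to a finite union of cosets of subgroups of $\mathbb Z^{2r}$, together with a remainder lying in finitely many proper subgroups. Intersecting over $i$, projecting, passing to the cosets of a fixed finite‑index sublattice $\Lambda\le\mathbb Z^r$, and quotienting by the null cone of the relevant $\Psi$, one finds that on each such coset $\mathbf f$ becomes a PEP map $\mathbf g_\alpha$ in some number $r_\alpha\le r$ of variables, with $\Psi_\alpha$ now positive‑definite and $\mathbf g_\alpha$ injective off a finite union of translated proper sublattices, while the pieces of $\Sigma$ contributed by distinct cosets differ only along a (PEP) set parametrized in fewer than $r$ variables. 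On the generic part the count is that of lattice points in the dilate by $\log H$ of the convex body $\{\Psi_\alpha\le 1\}$, namely $\operatorname{vol}(\{\Psi_\alpha\le 1\})(\log H)^{r_\alpha}+O((\log H)^{r_\alpha-1})$; dividing by the constant generic fibre size gives a contribution $c_\alpha(\log H)^{r_\alpha}$ with $c_\alpha>0$ explicit. The remaining pieces — $Z_0$, the non‑injectivity loci, and the inter‑coset discrepancies — are (PEP) sets in fewer than $r$ variables, so by induction on $r$ (base case $r=0$ trivial; a finite $\Sigma$ gives $r'=0$, $c=\#\Sigma$) their counts are $O((\log H)^{r-1})$. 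Summing, $\#\{P\in\Sigma:H_{\mathrm{aff}}(P)\le H\}=\sum_\alpha c_\alpha(\log H)^{r_\alpha}+O((\log H)^{r-1})$, and with $r'=\max_\alpha r_\alpha$ and $c=\sum_{r_\alpha=r'}c_\alpha$ this is the claimed asymptotic.

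The main obstacle is making the Subspace Theorem yield, \emph{uniformly}, that both the $v$‑adic cancellations and the coincidences $\mathbf f(\mathbf n)=\mathbf f(\mathbf n')$ are confined to finite unions of \emph{translated proper sublattices} — rather than merely to some ``thin'' set — since this is exactly what makes every exceptional locus again a (PEP) set in fewer variables and lets the induction on $r$ close; and, relatedly, keeping the error genuinely of size $O((\log H)^{r-1})$, not just $o$ of the main term, which is what upgrades a crude order of magnitude into the clean asymptotic $c(\log H)^{r'}$ with $c>0$ explicitly computable. (The same bookkeeping can be repackaged through the height zeta function $\sum_{P\in\Sigma}H_{\mathrm{aff}}(P)^{-s}$ and a Tauberian step, which is the source of the Manin‑type shape, but its analytic continuation rests on the very same Subspace‑theoretic input.)
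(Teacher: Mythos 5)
Your proposal follows essentially the same route as the paper's proof: decompose $\Sigma$ into images of reduced PEP maps in at most $r$ variables, use the Subspace Theorem (in the form of Evertse's inequality) to show that off a finite union of lower-rank cosets the logarithmic height is asymptotic to a piecewise-linear norm in $\mathbf{n}$, control the fibres via the coset structure of the zero locus of $\mathbf{f}(\mathbf{n})-\mathbf{f}(\mathbf{n}')$, count lattice points in dilates of the norm ball, and close by induction on the number of variables. The only caveat is that the Diophantine input gives the comparison $h_{\mathrm{aff}}(\mathbf{f}(\mathbf{n}))\geq(1-\varepsilon)\Psi(\mathbf{n})-c_\varepsilon$ with $c_\varepsilon$ ineffective rather than $\Psi(\mathbf{n})+O(\log\|\mathbf{n}\|)$, so your claimed power-saving error $O((\log H)^{r-1})$ on the generic piece is an overstatement — but the theorem only asserts $\sim$, which your argument does deliver.
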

	
	In other words, any (PEP) set has \underline{exactly} {\bf logarithmic-to-the-}$r'$ distribution in terms of the height.

    The constant $c$ in Theorem \ref{firstmainthm} is expressible in terms of the volume of a certain norm $1$ ball, see Remark \ref{Rmk:meaningc} (one can compare this with the interpretation of the $C$ in \cite[Corollary 1.1]{Maucourant07}).

    The proof of Theorem \ref{firstmainthm} makes use of a height inequality of Evertse \cite{evertse84}, whose proof in turn relies on the version of the subspace theorem due to Schlickewei-Schmidt cf. \cite[Theorem 2.3]{CorvajaZannier}.

    In the literature, there are several asymptotics of the shape $c H^{\alpha}(\log H)^{\beta}$ predicted by various Manin-type conjectures for the counting of rational and integral points on certain varieties (see e.g.\ \ \cite{GMO, STT07}). Taking $\alpha = 0$, we may view Theorem  \ref{firstmainthm} as a Manin's conjecture type counting result.

    \begin{defi} {\rm We call the $r'$ appearing in Theorem \ref{firstmainthm} the {\bf rank} of $\Sigma$, denoted by $\rk_{\mathrm{PEP}}\Sigma$. }
    \end{defi}
    When there is no risk of confusion with the rank of an abelian group, the rank of a (PEP) set $\Sigma$ will be denoted by $\rk \Sigma$. The name ``rank'' here will be justified in Lemma \ref{Lemrankequalrank}.
 
	Theorem \ref{firstmainthm} strengthens the main quantitative result of \cite{CDRRZcr}, where it was only verified that the distribution of points of a (PEP) set is \underline{at most} logarithmic-to-the-$r$. Going back to the study of bounded generation, the initial goal of our project, letting $K$ be a number field, the above theorem tells us that for any semi-simple elements $\gamma_1,\dots, \gamma_r\in \mathrm{GL}_n(K)$, there is an integer $0\leq r'\leq r$ and a constant $c>0$ with
	\begin{equation}\label{BG}
	\# \{P\in \langle \gamma_1\rangle\cdots \langle \gamma_r\rangle \colon H_{\text{aff}}(P)\leq H\}\sim c(\log H)^{r'} \text{ when }H\to \infty.
	\end{equation}
	 When the linear group itself has (BG) by semi-simple elements, the value of $r'$ in (\ref{BG}) will be specified in the appendix, see Remark \ref{valuerprimebg}. Moreover, we mention here that similar counting problems have been considered by Everest-Shparlinski \cite{ES99}, in which they obtain some asymptotic results in terms of norms instead of heights. See also a more recent relevant preprint by Ostafe-Shparlinski \cite{OS22}.

	Recall that, see e.g.\  \cite[Example 1.6]{DukeRudnickSarnak}, we have $\#\left\{P\in \mathrm{SL}_n(\mathbb{Z})\colon H_{\mathrm{aff}}(P)\leq H\right\}\sim c'H^{n^2-n}$ for some $c'>0$. Combined with Theorem \ref{firstmainthm}, we obtain a quantitative proof that $\mathrm{SL}_n(\mathbb{Z})$ is not boundedly generated by semi-simple matrices (while it does have bounded generation by elementary matrices when $g\geq 3$, cf. \cite{CK83}). Originally this result was established in \cite[Theorem 1.1]{CRRZ}.

	Let $K$ be a number field, and $S$ be a finite set of places containing all infinite ones. We let $\cO_S$ denote the ring of $S$-integers. Let $G\subset \mathrm{GL}_{n,K}$ be a linear algebraic group over $K$, and let $G^{\mathrm{der}}$ be the derived group of $G$. 
    
    For a linear algebraic $K$-group $G$, we call a subgroup $\Gamma < G(K)$ {\bf $S$-arithmetic} if $\Gamma$ is commensurable with $G(\cO_S)\coloneqq G(K) \cap \GL_n(\cO_S)$. This definition is actually independent of the chosen embedding $G\hookrightarrow \mathrm{GL}_{n,K}$ (see \cite[\S 5.4]{PR}). We signal to the interested reader that a more general definition has been given in \cite[\S 1]{PrRIHES}, but adopting it would go beyond the scope of our paper.
     
     The following counting property of $S$-arithmetic groups is a consequence of numerous counting results on lattice points in algebraic groups, combined with some straightforward volume estimates.
 
	\begin{theo}\label{volestimate} Let $G$ be a connected linear algebraic group over a number field $K$, and $\Gamma$ be an $S$-arithmetic subgroup of $G$. Let $\rho \colon G\hookrightarrow \mathrm{GL}_{n,K}$ be a faithful $K$-representation. Then the following two conditions are equivalent.
	\begin{enumerate}
	    \item[(a)] The algebraic group $G$ is non-reductive, or the derived group $\Gamma^{\mathrm{der}}$ of $\Gamma$ is infinite.
	    \item[(b)] There exists $\delta>0$ such that $\#\{\gamma \in \Gamma \colon H_{\mathrm{aff}}(\rho(\gamma))\leq T\}\gg T^{\delta}$ as $T\to \infty$.
	\end{enumerate}
		
	\end{theo}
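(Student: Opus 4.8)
The plan is to prove the two implications separately, drawing on lattice‑point counting for $(a)\Rightarrow(b)$ and on Theorem~\ref{firstmainthm} for $(b)\Rightarrow(a)$. For $(a)\Rightarrow(b)$ I would distinguish two cases that together exhaust $(a)$. If $G$ is non‑reductive, then $R_u(G)$ is a nontrivial connected unipotent $K$‑subgroup, hence contains a one‑parameter $K$‑subgroup $U\cong\Ga$; the intersection $\Gamma\cap U(K)$ is $S$‑arithmetic in $U$, so commensurable with $\cO_S\supseteq\ZZ$, and unwinding the embeddings $U\hookrightarrow G\hookrightarrow\GL_n$ it contains, for a fixed positive integer $m_0$ and a fixed nilpotent $N\in M_n$, the subgroup $\{\exp(m_0tN):t\in\ZZ\}$. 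Since $\exp(sN)=\sum_{k<n}\frac{s^k}{k!}N^k$ has entries polynomial in $s$ of degree $<n$ with fixed denominators, $H_{\mathrm{aff}}(\rho(\exp(m_0tN)))\ll|t|^{\,n-1}$, whence $\#\{\gamma\in\Gamma:H_{\mathrm{aff}}(\rho(\gamma))\le T\}\gg T^{1/(n-1)}$ --- this is the ``straightforward volume estimate''. If instead $G$ is reductive (so $G^{\mathrm{der}}$ is semisimple) and $\Gamma^{\mathrm{der}}=[\Gamma,\Gamma]$ is infinite, then $\Gamma\cap G^{\mathrm{der}}(K)\supseteq[\Gamma,\Gamma]$ is an infinite $S$‑arithmetic subgroup of $G^{\mathrm{der}}$, hence contains an infinite lattice $\Lambda_0$ in $\prod_{v\in S}G^{\mathrm{der}}(K_v)$, which is therefore non‑compact; on $\Lambda_0\subseteq\GL_n(\cO_S)$ the height $\gamma\mapsto H_{\mathrm{aff}}(\rho(\gamma))$ is comparable to a product of local operator norms, so the counting theorems for lattice points in semisimple groups (Duke--Rudnick--Sarnak, Eskin--McMullen, Gorodnik--Oh, Maucourant; cf.\ \cite{DukeRudnickSarnak,Maucourant07,GMO}) give $\#\{\gamma\in\Lambda_0:H_{\mathrm{aff}}(\rho(\gamma))\le T\}\sim cT^a(\log T)^b$ with $a>0$, and hence $(b)$.

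For $(b)\Rightarrow(a)$ I would argue by contraposition: assume $G$ is reductive and $\Gamma^{\mathrm{der}}$ is finite. The key preliminary step is to show that $\Gamma$ then contains no nontrivial unipotent element. Indeed, a unipotent $u\in\Gamma$ with $u\ne1$ lies in $G^{\mathrm{der}}(K)$ (unipotents of a reductive group lie in its derived group) and has infinite order, so $\Gamma':=\Gamma\cap G^{\mathrm{der}}(K)$ is an infinite $S$‑arithmetic subgroup of the semisimple group $G^{\mathrm{der}}$ and, as above, contains a lattice in a non‑compact semisimple group; such a lattice is not virtually abelian (e.g.\ by the Tits alternative). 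On the other hand $[\Gamma',\Gamma']\subseteq[\Gamma,\Gamma]=\Gamma^{\mathrm{der}}$ is finite, and a finitely generated group with finite commutator subgroup is virtually abelian --- a contradiction. Hence every element of $\Gamma$ is semisimple. As $\Gamma$ is also finitely generated (being $S$‑arithmetic in a connected group), the same group‑theoretic fact provides a finite‑index abelian subgroup $A=\langle a_1\rangle\cdots\langle a_d\rangle\cdot F$ with $F$ finite and the $a_i$ pairwise commuting and semisimple. Then $\rho(\Gamma)$ is a finite union of translates of $\rho(\langle a_1\rangle\cdots\langle a_d\rangle)$; simultaneously diagonalizing the commuting semisimple matrices $\rho(a_i)$ over $\overline K$ exhibits $\rho(\langle a_1\rangle\cdots\langle a_d\rangle)$ as a (PEP) set in $d$ variables, exactly as in the Example of the Introduction, so $\rho(\Gamma)$ is itself a (PEP) set by Proposition~\ref{Prop:Union} together with the Remark that images of (PEP) sets under polynomial maps are (PEP). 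Theorem~\ref{firstmainthm} now gives $\#\{\gamma\in\Gamma:H_{\mathrm{aff}}(\rho(\gamma))\le T\}\sim c(\log T)^{r'}$, which is $o(T^\delta)$ for every $\delta>0$, so $(b)$ fails.

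The main obstacle, I expect, is the pair of ``semisimple versus unipotent/lattice'' dichotomies underlying both directions: ruling out nontrivial unipotents in $\Gamma$ when $G$ is reductive and $\Gamma^{\mathrm{der}}$ is finite, and, dually, extracting a genuinely positive exponent of $T$ from an infinite $S$‑arithmetic subgroup of a semisimple group. The former rests on the fact that arithmetic subgroups of nontrivial semisimple groups are never virtually abelian; the latter requires matching $H_{\mathrm{aff}}\circ\rho$, over an arbitrary number field and set $S$, to a height or norm to which the equidistribution‑based counting theorems literally apply (combining archimedean and non‑archimedean local factors), together with the standard estimate that a norm‑ball of radius $T$ in a non‑compact semisimple group has volume $\asymp cT^a(\log T)^b$ with $a>0$. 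The remaining ingredients --- finite generation and embedding‑independence of $S$‑arithmetic groups, Borel--Harish-Chandra, and ``finite commutator subgroup $\Rightarrow$ virtually abelian'' --- are classical.
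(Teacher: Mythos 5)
Your overall strategy is the paper's: for (a)$\Rightarrow$(b), lattice-point counting combined with a lower bound for the volume of height balls (plus polynomial growth along a copy of $\Ga$ in the non-reductive case); for (b)$\Rightarrow$(a), showing that $\Gamma$ is virtually a finitely generated group of commuting semisimple elements, hence (PEP), and invoking Theorem \ref{firstmainthm}. There is, however, a genuine gap in your (b)$\Rightarrow$(a) argument: from ``$\Gamma$ contains no nontrivial unipotent element'' you conclude ``hence every element of $\Gamma$ is semisimple''. That implication is false for linear groups in general --- the infinite cyclic group generated by $\bigl(\begin{smallmatrix} 2 & 2 \\ 0 & 2\end{smallmatrix}\bigr)$ contains no nontrivial unipotent, yet none of its nontrivial elements is semisimple. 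The semisimplicity of the generators $a_i$ is precisely what makes $\rho(\langle a_1\rangle\cdots\langle a_d\rangle)$ a (PEP) set (a non-semisimple generator produces polynomial-exponential, not purely exponential, entries, to which Theorem \ref{firstmainthm} does not apply), so this step needs a real argument. It can be repaired from what you have already established: your contradiction argument actually shows that $\Gamma\cap G^{\mathrm{der}}(K)$ is finite; if some $\gamma\in\Gamma$ were non-semisimple, its unipotent Jordan component $\gamma_u\neq 1$ would lie in $G(K)$, hence in $G^{\mathrm{der}}(K)$ since $G$ is reductive, forcing $G^{\mathrm{der}}$ to be a nontrivial $K$-isotropic semisimple group and therefore $\Gamma\cap G^{\mathrm{der}}(K)$ to be infinite --- a contradiction. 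The paper sidesteps the issue by instead showing that the image of $Z(\cO_S)$ ($Z$ the central torus) has finite index in $G(\cO_S)$, so that a finite-index subgroup of $\Gamma$ lies in a torus and is automatically semisimple and diagonalizable.

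On the (a)$\Rightarrow$(b) side your outline matches the paper's, but you defer to ``standard'' results exactly where the paper does its work: the counting theorem of Gorodnik--Nevo is applied only after reducing to $\Gamma=G(\cO_S)$ and to $K$-simple simply connected factors (via commensurability and the universal cover), and the volume lower bound $\mu(B_T)\gg T^{\delta}$ is standard only when some archimedean $G^{\mathrm{der}}(K_v)$ is non-compact. When $G^{\mathrm{der}}$ is anisotropic at all archimedean places and isotropic only at some non-archimedean $v\in S$, the paper needs a separate argument (its Lemma \ref{lemma}: a $K_v$-embedding of $\Ga$ and a count of cosets of $G(\cO_v)$) to get the bound. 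You correctly identify this as the main obstacle but do not supply the argument, so that portion of your proposal is a plan rather than a proof; alternatively, the paper's Remark \ref{Rmk: a second approach} (Tits alternative and word growth in a free subgroup) gives a softer route to the same $T^{\delta}$ bound that bypasses the counting theorems entirely.
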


    We will prove this theorem in \S \ref{vol}. In view of Theorem \ref{firstmainthm}, this theorem implies that for an $S$-arithmetic group $\Gamma$ of a linear algebraic $K$-group $G$, if either $\Gamma$ is non-virtually abelian or $G$ is $K$-isotropic, then all of its (PEP) subset are {\bf sparse} in it (see also Theorem \ref{maingpth}).

    Theorem \ref{firstmainthm} is proved by comparing $h_{\mathrm{aff}}(\mathbf{f}(\mathbf{n}))$ and $\|\mathbf{n}\|_{\infty}$. To elucidate the idea, we now state one representative result that will be sharpened and made more precise, in the form of asymptotic formulas, in \S \ref{pfht} (cf. Theorem \ref{mainlongpaper} and Corollary \ref{CoromainThm}).

	\begin{theo}
	\label{minspecial} Let $\mathbf{f}$ be a vector of purely exponential polynomials in $r$ variables. Then there exists an explicitly computable constant $C=C(\mathbf{f})>0$ such that for all but finitely many vectors $\mathbf{b}\in \mathbf{f}(\ZZ^r)$, there exists an $\mathbf{n}\in \mathbf{f}^{-1}(\mathbf{b})$ such that
		\begin{equation}\label{primaryht}
		h_{\mathrm{aff}}({\mathbf{f}}({\mathbf{n}}))\geq C\cdot \|\mathbf{n}\|_{\infty}.
		\end{equation}
	\end{theo}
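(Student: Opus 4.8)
The plan is to prove \eqref{primaryht} by induction on $r+N$, where $N$ is the total number of purely exponential monomials appearing in $\mathbf f=(f_1,\dots,f_s)$, with the Subspace Theorem of Schlickewei--Schmidt — in the form of Evertse's height inequality \cite{evertse84}, cf. \cite[Theorem~2.3]{CorvajaZannier} — as the Diophantine engine. First I would normalize harmlessly: enlarge $K$ to contain all bases $\lambda_t$ and coefficients $a_j$ of $\mathbf f$ (this changes $h_{\mathrm{aff}}$ only by bounded factors), and enlarge the finite set of places $S$ to contain the archimedean ones and every place where some $\lambda_t$ or $a_j$ fails to be a unit, so that every monomial value $m(\mathbf n)$ occurring in $\mathbf f$, and every ratio of two such, is an $S$-unit. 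Write $f_i(\mathbf x)=\sum_j m_{i,j}(\mathbf x)$ with $m_{i,j}(\mathbf x)=a_{i,j}\prod_t\lambda_t^{L_{t,i,j}(\mathbf x)}$ and $\chi_{i,j}(\mathbf n)=\prod_t\lambda_t^{L_{t,i,j}(\mathbf n)}$, taking the $\chi_{i,j}$ pairwise distinct by Remark~\ref{pepchar}. By Remark~\ref{pepchar} the period lattice of $\mathbf f$ is exactly $\Lambda=\{\mathbf m\in\ZZ^r:\chi_{i,j}(\mathbf m)=1\ \forall i,j\}$; if $\Lambda\neq\{0\}$, choosing a complement $\ZZ^r=\Lambda'\oplus M$ with $\Lambda'$ the full preimage of the torsion of $\ZZ^r/\Lambda$ (so $\rk M<r$), each $\chi_{i,j}$ takes only finitely many values on $\Lambda'$, whence $\mathbf f(\ZZ^r)$ is a \emph{finite} union of images of PEPs $\mathbf m\mapsto\mathbf f(\boldsymbol\mu+\mathbf m)$ in $\rk M<r$ variables and with $\le N$ monomials each — to which the inductive hypothesis applies (transfer \eqref{primaryht} through $M\cong\ZZ^{\rk M}$, paying a multiplicative constant and enlarging the finite exceptional set). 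Hence we may assume $\Lambda=\{0\}$, i.e. $\mathbf n\mapsto(\chi_{i,j}(\mathbf n))_{i,j}$ is injective.

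In this non-degenerate case I would first record a height-norm estimate. Since the Weil height $h$ on $\overline K^{*}$ is subadditive, satisfies $h(x^m)=|m|h(x)$, and (Kronecker) vanishes exactly on roots of unity, it induces a norm on $G\otimes\RR$ for the finitely generated group $G\le\overline K^{*}$ generated by the $\lambda_t$. Thus $\mathbf n\mapsto h(m_{i,j}(\mathbf n))$ equals, up to a bounded error absorbing the $a_{i,j}$, the linear map $\mathbf n\mapsto[\chi_{i,j}(\mathbf n)]\in G/\mathrm{tors}$ followed by that norm, and $\mathbf n\mapsto\max_{i,j}h(m_{i,j}(\mathbf n))$ is $\mathcal N(\mathbf n)+O(1)$ for a seminorm $\mathcal N$ on $\RR^r$ whose kernel, $\{\mathbf n:\chi_{i,j}(\mathbf n)\text{ a root of unity }\forall i,j\}$, is $\{0\}$ by injectivity. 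So $\mathcal N$ is a norm and there is $c_1>0$ with $\max_{i,j}h(m_{i,j}(\mathbf n))\ge c_1\|\mathbf n\|_\infty-O(1)$ (and trivially $h(m_{i,j}(\mathbf n))\le C_0\|\mathbf n\|_\infty+O(1)$). In particular, if every $f_i$ is a single monomial then $h_{\mathrm{aff}}(\mathbf f(\mathbf n))\ge\max_i h(m_{i,1}(\mathbf n))\ge c_1\|\mathbf n\|_\infty-O(1)$, so \eqref{primaryht} already holds (the finitely many small-norm $\mathbf n$ contributing finitely many exceptional $\mathbf b$); this is one base of the induction, the other being $r=0$ (where $\mathbf f(\ZZ^r)$ is a single point).

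The crux — and the step I expect to be the main obstacle — is to show that the bad set $\mathcal B_c:=\{\mathbf n\in\ZZ^r:h_{\mathrm{aff}}(\mathbf f(\mathbf n))<c\|\mathbf n\|_\infty\}$, for a small enough $c\in(0,c_1/4)$, is contained in a finite union of cosets of subgroups of $\ZZ^r$ together with a bounded set. Given $\mathbf n\in\mathcal B_c$ of large norm, pick $i^{*}$ realizing $\max_{i,j}h(m_{i,j}(\mathbf n))$; since $h_{\mathrm{aff}}(\mathbf f(\mathbf n))\ge h(f_{i^{*}}(\mathbf n))$, the sum of $S$-units $f_{i^{*}}(\mathbf n)=\sum_j m_{i^{*},j}(\mathbf n)$ has much smaller height than its largest summand: strong cancellation. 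Applying the Subspace Theorem (equivalently Evertse's inequality) to the point $\big(m_{i^{*},1}(\mathbf n),\dots,m_{i^{*},e_{i^{*}}}(\mathbf n)\big)\in K^{e_{i^{*}}}$ — using at each place of $S$ the coordinate linear forms, but one of them replaced by the total sum $X_1+\dots+X_{e_{i^{*}}}$ exactly at the places where $|f_{i^{*}}(\mathbf n)|_v$ is small relative to the largest coordinate — the product formula (the coordinates are $S$-units, so $\prod_{v\in S}|m_{i^{*},j}(\mathbf n)|_v=1$) together with the two bounds $h(f_{i^{*}}(\mathbf n))<c\|\mathbf n\|_\infty$ and $h(m_{i^{*},j^{*}}(\mathbf n))\ge c_1\|\mathbf n\|_\infty-O(1)$ makes the relevant double product smaller than a negative power of the height, forcing the point onto one of finitely many proper $\overline K$-subspaces. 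Each such subspace is a fixed relation $\sum_{j\in J}b_jm_{i^{*},j}(\mathbf n)=0$ with $|J|\ge2$ and all $b_j\neq0$ (the $|J|=1$ subspaces cannot be met by $S$-unit points); dividing by one monomial and using distinctness of the $\chi_{i^{*},j}$, this is a non-identically-zero purely exponential polynomial equation in $\mathbf n$, whose solution set is a finite union of cosets of subgroups of $\ZZ^r$ (the several-variable Skolem--Mahler--Lech / Laurent phenomenon, itself a consequence of the $S$-unit equation theorem, cf. \cite{SchlickeweiSchmidt00}). The genuinely delicate point is making the choice of linear forms at the cancellation places uniform enough that the double-product estimate survives — this is precisely what Evertse's packaging of the Subspace Theorem is for.

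To close the induction: on each such coset one uses the relation to eliminate $m_{i^{*},j_0}(\mathbf n)$ ($j_0\in J$) from $f_{i^{*}}$, so $\mathbf f$ restricted to the coset is a PEP in $\le r$ variables with strictly fewer monomials, hence with strictly smaller $r+N$; the inductive hypothesis then gives \eqref{primaryht} on the coset (outside finitely many values, with some constant $C_H>0$), which transfers back to $\ZZ^r$ at the cost of a multiplicative constant and a larger finite exceptional set. Taking $C$ to be the minimum of $c$ and these finitely many rescaled $C_H$, and the union of all the finite exceptional sets, one obtains the theorem: if $\mathbf b$ lies outside this finite set and some preimage of $\mathbf b$ lies outside $\mathcal B_c$, that preimage satisfies \eqref{primaryht}; otherwise every preimage lies in $\mathcal B_c$, so some preimage lies in one of the good cosets and satisfies \eqref{primaryht} there.
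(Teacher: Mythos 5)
Your argument is correct, and it rests on exactly the same Diophantine inputs as the paper's -- Evertse's inequality (Corollary \ref{Lemma2}), the Zero Locus / Laurent theorem (Theorem \ref{zerolocuspep}), and the fact that the height of a tuple of characters extends to a norm (Proposition \ref{Prop:norm2}) -- but it is organized quite differently. The paper first establishes the two-sided asymptotic of Theorem \ref{mainlongpaper} via a single global partition of $\ZZ^r$ into quasi-cosets indexed by degeneracy type, keeps the stabilizers $\mathcal{K}_i$ in play, and then deduces Theorem \ref{minspecial} in a few lines from Corollary \ref{CoromainThm} by choosing the preimage of $\mathbf{b}$ of minimal $\|\cdot\|_\infty$-norm, so that $\min_{\mathbf v\in\mathcal K_i}\|\mathbf n+\mathbf v\|_\infty=\|\mathbf n\|_\infty$ and each exceptional translate of $\mathcal K_i$ contributes a single exceptional $\mathbf{b}$. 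You instead run a descending induction on $r+N$: you kill the period lattice up front (so no stabilizer ever appears and the relevant seminorm is a genuine norm), use strong cancellation plus Evertse to force a vanishing relation, confine the bad set $\mathcal B_c$ to a bounded set plus finitely many cosets, eliminate a monomial on each coset, and recurse. Your route avoids proving the exact asymptotic $\sim$ (you only need the one-sided lower bound), at the cost of more bookkeeping, and that bookkeeping hides the one point you should make explicit: for the final constant $C=\min(c,\{C_H\})$ to be \emph{explicitly computable}, you must note that each inductive constant $C_H$ depends only on the \emph{underlying group} of the exceptional coset and on the restricted characters -- not on the translate, nor on the coefficients, which enter only the $O(1)$'s and hence the (ineffective) exceptional sets. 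The underlying groups range over an effectively computable finite list by Theorem \ref{zerolocuspep}, whereas the number of translates is ineffective, so a naive minimum over ``all cosets that occur'' would not be computable; relatedly, it is cleaner to take the defining relations of the exceptional cosets to be vanishing subsums $\sum_{j\in J}m_{i^*,j}=0$ (from the degeneracy dichotomy in Corollary \ref{Lemma2}) rather than the relations $\sum_j b_jm_{i^*,j}=0$ output by a raw application of the Subspace Theorem, whose coefficients are ineffective. With these adjustments your induction closes as described.
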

 It should be emphasized that while the constant $C$ in (\ref{primaryht}) is explicitly computable, the number of exceptional vectors seems to be ineffective in general. More precisely, Theorem \ref{minspecial} will be deduced from Corollary \ref{CoromainThm}, where the number of exceptions (exceptional cosets, to be precise) cannot be explicitly bounded. We will discuss more effectivity issue in \S \ref{noteffective}.
	
\begin{rema} {\rm It is plain that for any vector of purely exponential polynomials $\mathbf{f}$, there also exists an effective $M>0$ such that 
	$$h_{\mathrm{aff}}({\mathbf{f}}({\mathbf{n}}))\leq M\cdot (\|\mathbf{n}\|_{\infty} +1)$$
	for all $\mathbf{n}=(n_1,\dots, n_r)\in \mathbb{Z}^r$, which implies that inequality (\ref{primaryht}) is in a sense best possible. We also observe that we can eliminate the exceptional set in Theorem \ref{firstmainthm} by replacing $C$ by another constant $C'$ which is no longer explicitly computable. In other words, we can show that there exists an ineffective constant $C'=C'(\mathbf{f})>0$ such that for all $\mathbf{b}\in \mathbf{f}(\ZZ^r)$, there exists $\mathbf{n}\in \mathbf{f}^{-1}(\mathbf{b})$ such that
	$$h_{\mathrm{aff}}({\mathbf{f}}({\mathbf{n}}))+1\geq C'\cdot \|\mathbf{n}\|_{\infty}.$$
	}
\end{rema}
	
	\begin{rema} \label{minnece} {\rm 
		It should be pointed out that (\ref{primaryht}) may fail to hold for {\bf all} vectors $\mathbf{n}\in \mathbf{f}^{-1}(\mathbf{b})$. For example, let $g_1,\dots, g_r\in \mathrm{SL}_n(K)$ be semi-simple matrices such that $g_1^n\cdots g_r^n=1$ for all $n\in \mathbb{Z}$ (this happens, for instance, when the $g_i$'s pairwise commute and their product is the identity matrix) and consider the (PEP) set $\langle g_1 \rangle \cdots \langle g_r \rangle$ (which is equal to the subgroup generated by $g_1,\dots, g_r$ if they all commute). Then the height of $\mathbf{f}(n,n,\dots,n)=g_1^n\cdots g_r^n=1$ is constant, whereas the norm of $(n,n,\dots,n)\in \mathbf{f}^{-1}(1)$ is $n$, which goes to infinity as $n\to \infty$. In fact, the example above has significance beyond its own right. We will return to this example and discuss it in more detail in Section \ref{Par.Open}.  }
	\end{rema}
	
	Another consequence of Theorem \ref{minspecial} is the following, which we formulate as a lemma as it will be a crucial ingredient in the proof of Theorem \ref{secondmainthm} below.
	\begin{lemma}\label{Cor}
		Let $K$ be a number field, $\Sigma \subseteq \mathrm{GL}_n(K)$ be a (PEP) subset in $r$ variables, and let $g\in \mathrm{GL}_n(K)$ be a non-semi-simple matrix.
		Then $\# \{l\in \mathbb{Z}\colon |l|\leq N\text{ and }g^l\in \Sigma\}
		=O((\log N)^{r+1})$ as $N\to \infty$.
	\end{lemma}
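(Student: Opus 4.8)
The plan is to combine Theorem \ref{minspecial} applied to a cleverly chosen enlarged (PEP) set with the main counting asymptotics of Theorem \ref{firstmainthm}. The key observation is that the set of powers $\langle g \rangle = \{g^l : l \in \ZZ\}$ is itself a (PEP) set in one variable (with bases the eigenvalues of $g$), and since $g$ is non-semi-simple, the entries of $g^l$ involve genuine polynomial-in-$l$ factors multiplying exponentials in the eigenvalues. In particular the height $h_{\mathrm{aff}}(g^l)$ grows at least like $\log|l|$ up to a positive constant (even if all eigenvalues of $g$ are roots of unity, the unipotent part forces polynomial growth of the entries, hence $h_{\mathrm{aff}}(g^l) \gg \log|l|$; if some eigenvalue is not a root of unity one gets the stronger linear-in-$|l|$ bound). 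This is exactly the kind of lower bound that rules out too many powers landing in a set with only logarithmic-to-the-$r$ point count.

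First I would form the (PEP) set $\Sigma' := \Sigma \cap \langle g \rangle$, or rather work with a (PEP) parametrization adapted to the problem. Concretely, consider the vector of purely exponential polynomials $\mathbf{f}$ in $r$ variables parametrizing $\Sigma$, and let $\mathbf{h}(l) = g^l$ be the one-variable (PEP) parametrizing $\langle g\rangle$. The idea is that if $g^l \in \Sigma$ then $g^l$ has a preimage under $\mathbf{f}$ in $\ZZ^r$; by Theorem \ref{minspecial} applied to $\mathbf{f}$, for all but finitely many such points $\mathbf{b} = g^l$ there is $\mathbf{n} \in \mathbf{f}^{-1}(\mathbf{b})$ with $h_{\mathrm{aff}}(g^l) = h_{\mathrm{aff}}(\mathbf{f}(\mathbf{n})) \geq C\|\mathbf{n}\|_\infty$. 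On the other hand, the number of distinct values $\mathbf{b} = g^l$ with $|l| \leq N$ that arise is at least $(\#\{l : |l|\leq N, g^l \in \Sigma\})$ divided by a bounded number (to account for the at most finitely many collisions $g^l = g^{l'}$ coming from a possible finite-order ambiguity — in fact if $g$ has infinite order there are none). So it suffices to bound the number of distinct elements of the form $g^l \in \Sigma$ with $|l| \le N$.

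The cleanest route: the set $\Sigma \cap \langle g \rangle$ is a (PEP) set — it is the intersection of two (PEP) sets, but more usefully it is the image of $\ZZ^r$ under the map sending $\mathbf{n} \mapsto \mathbf{f}(\mathbf{n})$ restricted to the preimage of $\langle g\rangle$; alternatively, apply the first Remark (finite unions and polynomial images of (PEP) sets are (PEP)) after noting $\langle g \rangle$ itself is (PEP). Granting that $\Sigma \cap \langle g\rangle$ is a (PEP) set in some number $r''$ of variables with $r'' \leq r+1$ — here one should take the product parametrization in $r+1$ variables and cut down, which is where a small technical argument is needed — Theorem \ref{firstmainthm} gives $\#\{P \in \Sigma \cap \langle g \rangle : H_{\mathrm{aff}}(P) \leq H\} \sim c(\log H)^{r'}$ with $r' \le r+1$. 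Now use the lower bound $h_{\mathrm{aff}}(g^l) \gg \log|l|$: if $|l| \leq N$ then $H_{\mathrm{aff}}(g^l) \leq H := \exp(c_1 N)$ for a suitable $c_1$ (from the trivial upper bound $h_{\mathrm{aff}}(g^l) = O(|l|)$), hence $\#\{l : |l| \le N, g^l \in \Sigma\} \leq \#\{P \in \Sigma \cap \langle g\rangle : H_{\mathrm{aff}}(P) \le \exp(c_1 N)\} = O((\log \exp(c_1 N))^{r+1}) = O(N^{r+1})$, up to the bounded multiplicity from $l \mapsto g^l$. Wait — this gives $O((\log N)^{r+1})$ only if I am more careful: the point is that distinct $l$ with $|l| \le N$ give $O(\log N)$-many \emph{distinct} heights spread dyadically, but actually the count of distinct powers $g^l$ with $|l|\le N$ is at most $2N+1$, and we want $O((\log N)^{r+1})$, so the real content is that $\Sigma$ catches only $O((\log N)^{r+1})$ of them, which is exactly what Theorem \ref{firstmainthm} delivers once one feeds in $H = \exp(O(N))$ — giving $O(N^{r+1})$, not $O((\log N)^{r+1})$.

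So the lower height bound must be the stronger $h_{\mathrm{aff}}(g^l) \gg |l|^{1/d}$ or, when it applies, $h_{\mathrm{aff}}(g^l) \gg |l|$ is too much to hope uniformly; the honest statement is: since $g$ is non-semi-simple, for $|l|$ large the entries of $g^l$ contain a term $\binom{l}{j}\mu^l$ with $j \ge 1$, so $H_{\mathrm{aff}}(g^l) \ge |l|/c_2$ when $\mu$ is a root of unity and $H_{\mathrm{aff}}(g^l) \ge e^{c_2|l|}$ otherwise; in the worst (unipotent-eigenvalue) case $H_{\mathrm{aff}}(g^l) \gg |l|$ gives $\log H_{\mathrm{aff}}(g^l) \gg \log|l|$, so $g^l \in \Sigma$ with $|l|\le N$ forces $H_{\mathrm{aff}}(g^l) \le H := N^{c_3}$, whence by Theorem \ref{firstmainthm} the count is $O((\log N^{c_3})^{r+1}) = O((\log N)^{r+1})$. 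That is the desired bound, and it is the step of proving the polynomial lower bound $H_{\mathrm{aff}}(g^l) \gg |l|$ for non-semi-simple $g$ (equivalently, that the Jordan block structure forces at least linear growth of some matrix entry) that I expect to be the main — though elementary — obstacle, together with the bookkeeping that $\Sigma \cap \langle g\rangle$ is legitimately a (PEP) set of rank $\le r+1$ so that Theorem \ref{firstmainthm} applies with exponent $r+1$.
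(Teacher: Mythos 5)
There is a genuine gap, and it sits exactly where your argument has to handle a non-semi-simple $g$ whose eigenvalues are not all roots of unity. Your final step feeds an upper bound $H_{\mathrm{aff}}(g^l)\le H(N)$ for $|l|\le N$ into the asymptotic of Theorem \ref{firstmainthm} and needs $H(N)$ to be \emph{polynomial} in $N$ in order to conclude $O((\log H(N))^{r+1})=O((\log N)^{r+1})$. But for, say, $g=\left(\begin{smallmatrix}2&1\\0&2\end{smallmatrix}\right)$ the entries of $g^l$ are $2^l$ and $l\,2^{l-1}$, so $H_{\mathrm{aff}}(g^l)$ grows \emph{exponentially} in $|l|$; then $H(N)=e^{cN}$ and Theorem \ref{firstmainthm} only yields $O(N^{r+1})$, which is the trivial bound. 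The ``lower bound $H_{\mathrm{aff}}(g^l)\gg|l|$'' you invoke at the end is in the wrong direction: a lower bound on the height cannot produce the upper bound the counting step requires, and your ``otherwise'' branch ($H_{\mathrm{aff}}(g^l)\ge e^{c_2|l|}$) is precisely the case your argument leaves open. A secondary problem is that $\langle g\rangle$ is \emph{not} a (PEP) set when $g$ is non-semi-simple: the entries of $g^l$ contain genuine polynomial factors $\binom{l}{j}\mu^{l-j}$, whereas purely exponential polynomials by definition have no polynomial coefficients; so neither the Remark on unions and polynomial images nor Theorem \ref{fiberpep} gives you that $\Sigma\cap\langle g\rangle$ is a (PEP) set, and Theorem \ref{firstmainthm} cannot be applied to it.

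The paper closes both gaps with two moves you are missing. First, take the Jordan decomposition $g=g_sg_u$ and replace $\Sigma$ by $\langle g_s\rangle\cdot\Sigma$, which \emph{is} a (PEP) set in $r+1$ variables because $g_s$ is semi-simple; then $g^l\in\Sigma$ forces $g_u^l\in\langle g_s\rangle\cdot\Sigma$, which strips off the exponentially growing semisimple part and reduces everything to $g$ unipotent. Second, in the unipotent case one does not compare heights of whole matrices but projects onto a single non-constant polynomial entry $c_{ij}(l)$ of $g_u^l$: its height is at most $aN^b$ for $|l|\le N$, the map $l\mapsto c_{ij}(l)$ is at most $(\deg c_{ij})$-to-one, and the entry set $\Sigma_{ij}$ is a (PEP) set (a coordinate projection of one), so Theorem \ref{firstmainthm} applied to $\Sigma_{ij}$ up to height $aN^b$ gives the $O((\log N)^{r+1})$ count. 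Your instinct to combine a polynomial height bound with Theorem \ref{firstmainthm} is the right one, but without these two reductions the argument does not go through.
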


    We obtain, in particular:
 
    \begin{coro} Let $g\in \mathrm{GL}_n(K)$ be a non-semi-simple matrix, and let $g_1,\dots,g_r \in \mathrm{GL}_n(K)$ be semi-simple. Then the set 
		$$\{ l\in \mathbb{Z}\colon g^l\in  \langle g_1 \rangle \cdots \langle g_r \rangle, |l|<N  \}$$
		has size $O((\log N)^{r+1})$ as $N\to \infty$.
	\end{coro}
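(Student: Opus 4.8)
The plan is to deduce this Corollary directly from Lemma \ref{Cor}. The only thing to check is that the product set $\Sigma := \langle g_1\rangle\cdots\langle g_r\rangle$ is a (PEP) subset of $\mathrm{GL}_n(K)$ in $r$ variables, after which the Corollary is literally the special case of Lemma \ref{Cor} with this $\Sigma$ and the given non-semi-simple $g$. So the substance is the observation, already made in the Example following \eqref{bgequation} in the Introduction, that the product of cyclic groups generated by semi-simple elements is naturally a (PEP) set.

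Concretely, I would diagonalize each $g_i$ over $\overline{K}$: write $h_i^{-1} g_i h_i = \mathrm{diag}(\lambda_{i,1},\dots,\lambda_{i,n})$ with $h_i \in \mathrm{GL}_n(\overline{K})$ and $\lambda_{i,j}\in\overline{K}^*$. Then for $\mathbf{a}=(a_1,\dots,a_r)\in\ZZ^r$ we have
\begin{equation*}
g_1^{a_1}\cdots g_r^{a_r} = \prod_{i=1}^r h_i\,\mathrm{diag}(\lambda_{i,1}^{a_i},\dots,\lambda_{i,n}^{a_i})\,h_i^{-1},
\end{equation*}
and expanding the matrix product shows that each of the $n^2$ entries of $g_1^{a_1}\cdots g_r^{a_r}$ is a sum of terms of the form (constant)$\cdot\lambda_{1,j_1}^{a_1}\cdots\lambda_{r,j_r}^{a_r}$, i.e.\ a purely exponential polynomial in the $r$ variables $a_1,\dots,a_r$ with bases among the eigenvalues $\lambda_{i,j}$ and integer-linear (indeed coordinate) exponents. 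Hence $\mathbf{f}(\mathbf a):= g_1^{a_1}\cdots g_r^{a_r}$ is a vector of purely exponential polynomials and $\Sigma=\mathbf{f}(\ZZ^r)$ is a (PEP) set in $r$ variables, viewed inside $\mathrm{GL}_n(K)\subset\mathbb{A}_K^{n^2}$ (note the entries lie in $K$ even though the $\lambda_{i,j}$ and $h_i$ are only over $\overline K$, since the matrix itself is a product of elements of $\mathrm{GL}_n(K)$).

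Having identified $\Sigma$ as a (PEP) subset of $\mathrm{GL}_n(K)$ in $r$ variables, I apply Lemma \ref{Cor} with this $\Sigma$ and the non-semi-simple matrix $g$: it gives immediately
\begin{equation*}
\#\{l\in\ZZ \colon |l|\le N,\ g^l\in\Sigma\} = \#\{l\in\ZZ\colon |l|\le N,\ g^l\in \langle g_1\rangle\cdots\langle g_r\rangle\} = O((\log N)^{r+1})
\end{equation*}
as $N\to\infty$, which is exactly the assertion (the strict inequality $|l|<N$ in the statement versus $|l|\le N$ in Lemma \ref{Cor} is immaterial for the asymptotic bound). There is essentially no obstacle here; the only mild point worth a sentence is making sure the parametrization genuinely uses $r$ variables (one per cyclic factor), so that the exponent "$r+1$" in the conclusion matches the "$r$" of Lemma \ref{Cor} applied to $\Sigma$ — which it does, since $\Sigma$ is parametrized in $r$ variables. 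The heavy lifting (Theorem \ref{minspecial} and hence Lemma \ref{Cor}) has already been done, so this Corollary is a routine specialization.
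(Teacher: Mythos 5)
Your proposal is correct and is exactly the paper's (implicit) argument: the paper presents this corollary as an immediate specialization of Lemma \ref{Cor}, using the observation from the Example in the Introduction that $\langle g_1\rangle\cdots\langle g_r\rangle$ is a (PEP) set in $r$ variables via diagonalization of the semi-simple $g_i$. Nothing further is needed.
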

	
	The above quantitative Theorem \ref{firstmainthm} enables one to obtain the following key group-theoretic result. This result, originally announced in \cite{CDRRZcr}, provides a {\it criterion} for when a linear group admits (BG) by semi-simple elements. In this sense, the following theorem considerably sharpens the main theorem of \cite{CRRZ}, in which there was only a necessary condition.
     
	\begin{theo}[Main application in group theory]
	\label{secondmainthm} 
		Let $K$ be a field of characteristic zero and let $\Gamma \subset \mathrm{GL}_n(K)$ be a linear group. Then the following three properties are equivalent. 
		\begin{enumerate}
			\item $\Gamma$ admits a (PEP) parametrization.
			\item $\Gamma$ is anisotropic and has (BG).
			\item $\Gamma$ is finitely generated and the connected component $G^{\circ}$ of the Zariski closure $G$ of $\Gamma$ is a torus (in particular, $\Gamma$ is virtually abelian). 
		\end{enumerate}
	\end{theo}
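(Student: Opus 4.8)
The plan is to prove the cyclic chain $(1)\Rightarrow(3)\Rightarrow(2)\Rightarrow(1)$, with $(1)\Rightarrow(3)$ carrying essentially all the weight. The two easy implications go as follows. For $(2)\Rightarrow(1)$: if $\Gamma$ is anisotropic then every element of $\Gamma$ is semisimple, so a bounded generation presentation $\Gamma=\langle\gamma_1\rangle\cdots\langle\gamma_r\rangle$ is automatically by semisimple elements, and the Example in the Introduction exhibits $\Gamma=\mathbf f(\ZZ^r)$ as a (PEP) set. For $(3)\Rightarrow(2)$: if $G^{\circ}$ is a torus then for every $g\in\Gamma$ a suitable power lies in $G^{\circ}$ and is therefore semisimple, which in characteristic $0$ forces the unipotent part of $g$ to be trivial; so $\Gamma$ is anisotropic. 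Moreover $\Gamma$ is finitely generated and virtually abelian (the finite-index subgroup $\Gamma\cap G^{\circ}$ lies in the commutative group $G^{\circ}$), and a finitely generated virtually abelian group has (BG): picking a normal finite-index abelian $A=\langle a_1\rangle\cdots\langle a_k\rangle$ and lifting a ``product of cyclic subgroups'' decomposition of the finite group $\Gamma/A$ to elements $q_1,\dots,q_N\in\Gamma$, one gets $\Gamma=\langle q_1\rangle\cdots\langle q_N\rangle\langle a_1\rangle\cdots\langle a_k\rangle$.

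For $(1)\Rightarrow(3)$, write $\Gamma=\mathbf f(\ZZ^r)$ and $G=\overline{\Gamma}^{\mathrm{Zar}}$. First I would reduce to the case that $K$ is a number field. The coordinates of all $\mathbf f(\mathbf n)$ lie in the finitely generated field $L$ generated by the bases and coefficients of $\mathbf f$, and neither ``$G^{\circ}$ is a torus'' nor ``$\Gamma$ is finitely generated'' changes when $K$ is replaced by $L$; then one specializes $L$ to a number field along a generic maximal ideal of a finitely generated $\ZZ$-subalgebra carrying the relevant data. This preserves the (PEP) structure (just reduce the parameters of $\mathbf f$), keeps the image a group, and can be arranged so that any prescribed non-semisimple element of $\Gamma$ stays non-semisimple — non-semisimplicity of a matrix is detected by the ranks of $(g-\mu)^{j}$ at its eigenvalues $\mu$, and those ranks can only drop under specialization, hence are preserved generically. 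This bookkeeping, and the parallel care needed because $\Gamma$ is not yet known to be finitely generated (one may always pass to a finitely generated Zariski-dense subgroup, which still lies inside the (PEP) set $\Gamma$), is the most delicate part of the argument. Granting $K$ a number field, the rest proceeds in three steps.

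Step 1: $\Gamma$ is anisotropic. If some $g\in\Gamma$ were non-semisimple, then $g^{l}\in\Gamma$ for all $l\in\ZZ$, and Lemma \ref{Cor} applied with $\Sigma=\Gamma$ would give $\#\{l:|l|\le N,\ g^{l}\in\Gamma\}=O((\log N)^{r+1})$, which is absurd since this count equals $2N+1$. Step 2: $G^{\circ}$ is a torus. If $G^{\circ}$ is not solvable, then $\Gamma$ (after passing, if necessary, to a finitely generated Zariski-dense subgroup still contained in $\Gamma$) is not virtually solvable, so by the Tits alternative it contains a nonabelian free subgroup $F$; since $H_{\mathrm{aff}}$ is submultiplicative on matrix products, a ball of radius $\ell$ in $F$ contains $\gg 3^{\ell}$ distinct elements, all of affine height $\le C^{\ell}$, whence $\#\{\gamma\in\Gamma:H_{\mathrm{aff}}(\gamma)\le T\}\gg T^{\delta}$ for some $\delta>0$, contradicting Theorem \ref{firstmainthm}. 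So $G^{\circ}$ is solvable; then $\Gamma_0:=\Gamma\cap G^{\circ}$ is Zariski-dense in $G^{\circ}$ and, being semisimple, meets the unipotent radical $R_u(G^{\circ})$ only in the identity, hence injects into the torus $G^{\circ}/R_u(G^{\circ})$; thus $\Gamma_0$ is abelian, therefore (being abelian and semisimple) simultaneously diagonalizable, and so $G^{\circ}=\overline{\Gamma_0}^{\mathrm{Zar}}$ lies in a torus and is a torus.

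Step 3: $\Gamma$ is finitely generated. The group $\Gamma_0=\Gamma\cap G^{\circ}$ is abelian with finite torsion; over a number field a non-finitely-generated subgroup of a torus has infinite rank (it is contained in no ring of $S$-units with $S$ finite, hence has infinite ``support'', and the triangular-valuation-matrix argument then yields arbitrarily large multiplicatively independent families). So for any fixed $A$ there is an $H$ with $\rho:=\operatorname{rank}\langle\{\gamma\in\Gamma_0:H_{\mathrm{aff}}(\gamma)\le H\}\rangle>A$; choosing $\gamma_1,\dots,\gamma_\rho\in\Gamma_0$ multiplicatively independent of affine height $\le H$, the $(t+1)^{\rho}$ products $\prod_i\gamma_i^{e_i}$ with $0\le e_i\le t$ are distinct and of affine height $\le(c_0H)^{\rho t}$, so $\#\{\gamma\in\Gamma:H_{\mathrm{aff}}(\gamma)\le(c_0H)^{\rho t}\}\ge(t+1)^{\rho}$; comparing with Theorem \ref{firstmainthm} forces $t^{\rho-A}\ll_{H,\rho}1$ for all $t$, impossible once $A$ (say $A=r$) is fixed. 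Hence $\Gamma_0$, and so $\Gamma$, is finitely generated, completing $(1)\Rightarrow(3)$. The hard analytic input — the exact asymptotic of Theorem \ref{firstmainthm} and the bound of Lemma \ref{Cor}, both resting ultimately on the Subspace Theorem — is already in hand; within this argument the genuine obstacle is the reduction to a number field carried out uniformly enough that Steps 1–3 never secretly presuppose finite generation of $\Gamma$.
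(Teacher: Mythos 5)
Your number-field arguments are essentially sound, and two of them are legitimate variants of what the paper does: for ``$G^{\circ}$ is a torus'' you use the Tits alternative plus exponential word growth against the $(\log T)^{r'}$ asymptotic of Theorem \ref{firstmainthm}, whereas the paper's main proof uses generic elements (Prasad--Rapinchuk) together with Laurent's theorem via Proposition \ref{keymatrixcrrz21} (the paper itself endorses your route in Remarks \ref{Rmk: a second approach} and \ref{Rmk: T to the delta appendix}); and for finite generation you count products of multiplicatively independent elements of bounded height against the same asymptotic, whereas the paper diagonalizes and invokes the Image Theorem \ref{imagetheorem}. Both substitutions work over a number field, and your Step 1 (anisotropy via Lemma \ref{Cor}) is exactly the paper's.

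The genuine gap is in the reduction to the number field case, precisely at the point you yourself flag as delicate. The specialization $\Theta$ is a ring homomorphism, not injective on $\Gamma$, so while the \emph{absence} of a non-semisimple element and virtual solvability do pull back to $\Gamma$ by contradiction (arrange $\Theta(\gamma)$ non-semisimple, resp.\ $\Theta(\Gamma)$ non-virtually-solvable, and contradict the number-field case), \emph{finite generation of $\Theta(\Gamma)$ says nothing about finite generation of $\Gamma$}: the kernel of $\Theta|_\Gamma$ can be enormous. Your Step 3 cannot be run over a general field of characteristic zero either, since it rests on heights and Northcott. The paper closes exactly this hole with a separate group-theoretic input, \cite[Proposition 6.2]{CRRZ}: a commutative subgroup of $\GL_n(B)$ consisting of semisimple elements, with $B$ a finitely generated subring of $K$, is finitely generated; this is applied to $\Gamma\cap G^{\circ}\subset \GL_n(B)$ once $\Gamma$ is known to be virtually abelian and anisotropic over the general field. (A smaller omission of the same kind: to get from ``$\Gamma$ virtually solvable'' to ``$\Gamma$ virtually abelian'' over the general field you still need the argument of \cite[Corollary 1.2]{CRRZ}, combining solvability with anisotropy; your proposal only carries this out over the number field.) Without some such ingredient, implication $(1)\Rightarrow(3)$ is not established for arbitrary $K$ of characteristic zero.
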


 Here we say that a linear group $\Gamma \subset \GL_n(K)$ over a field of characteristic zero is {\bf anisotropic} if it consists of semi-simple elements. This terminology is inspired by the fact that for a semi-simple algebraic group $G$ over a field $K$ of characteristic zero, $G$ is $K$-anisotropic (i.e.\ it admits no non-trivial cocharacter $\mathbb{G}_m \hookrightarrow G$ \cite[p.~65]{PR}) if and only if $G(K)$ contains only semi-simple elements.

    To prove Theorem \ref{secondmainthm} we first reduce to the number field case via a specialization argument and then we deal with the number field case by using Theorem \ref{firstmainthm}. See Section \ref{Sec.MainProofs} for details.

 It is worth mentioning that Theorem \ref{secondmainthm} gives some surprising implications whose verification seems unrealistic by using elementary methods. For example, (1) if a linear group $\Gamma$ admits (PEP) parametrization, then it must be \underline{finitely generated}, (2) linear groups having (BG) by semi-simple elements \underline{exhaust} all linear groups with (PEP) parametrization (caution: such groups still may have (PEP) parametrizations other than (BG), cf. Remark \ref{gpimagenopep}).

	Moreover, Theorem \ref{secondmainthm} implies that over a field of characteristic zero, linear groups that are boundedly generated by semi-simple elements are precisely those groups which are virtually abelian and anisotropic. This actually provides a {\bf criterion} for a linear group to be boundedly generated by semi-simples, and thereby considerably sharpens the main result in \cite{CRRZ} asserting only
  that such groups are virtually solvable. This upgrade is achieved by using more sophisticated techniques from the theory of diophantine approximation.

    In the appendix we will give a (stronger) version of Theorem \ref{secondmainthm} in a more quantitative direction, namely Theorem \ref{maingpth}. 

	\vskip5mm
	\noindent
	{\bf Notations and Conventions}
	\begin{enumerate}
	\item[*] All fields in this paper have characteristic zero. When $K$ is a number field, we denote by $V^K, V^K_{\infty},V^K_f$ the set of all/archimedean/non-archimedean places of $K$, respectively. And $S$ is always a finite subset of $V^K$ containing $V^K_{\infty}$.
	
	\item[*] We introduce several compact notations to represent a purely exponential polynomial. Let $K$ be a field of characteristic $0$. For a vector $\mathbf{b}=(b_1,\dots,b_k)\in (K^*)^k$, $\mathbf{n}=(n_1,\dots, n_r)\in \ZZ^r$ and a matrix $A=(a_{ij})\in M_{k\times r}(\mathbb{Z})$, we use the notation $\mathbf{b} ^{A\cdot \mathbf{n}^T}$ as a contraction for the following expression:
	$$b_1^{a_{11}n_1+\cdots +a_{1r}n_r}b_2^{a_{21}n_1+\cdots +a_{2r}n_r}\cdots b_k^{a_{k1}n_1+\cdots +a_{kr}n_r}$$
	(note that the linear forms appearing on the exponents above are the ones associated to the rows of $A$). 
	If we let $\mathbf{r}_i\coloneqq(a_{i1},a_{i2},\dots,a_{ir})$ be the $i$-th row of $A$ and we write $b^{\mathbf{r}_i\cdot \mathbf{n}^T}$ for $b^{a_{i1}n_1+\cdots +a_{ir}n_r}$, then the above expression is also denotable by:
	$$\mathbf{b} ^{A\cdot \mathbf{n}^T}=b_1^{\mathbf{r}_1\cdot \mathbf{n}^T}b_2^{\mathbf{r}_2\cdot \mathbf{n}^T}\cdots b_k^{\mathbf{r}_k\cdot \mathbf{n}^T}.$$
	
	Clearly every purely exponential polynomial $f\colon \ZZ^r\to \overline{K}$ can be written as a formal linear combination of terms of shape $\u{\lambda} ^{A\cdot \mathbf{n}^T}, \text{ where }\u{\lambda}=(\lambda_1,\ldots,\lambda_k)\in \overline{K}^k, A=\begin{pmatrix}
	\mathbf{r}_1 \\ \vdots \\ \mathbf{r}_k
	\end{pmatrix}\in M_{k \times r}(\mathbb{Z})$. 
	\item[*] For every purely exponential polynomial $f\colon \ZZ^r\to \overline{K}$ in this paper, we fix an expression $f(\mathbf{n})=a_1\boldsymbol{\lambda}^{A_1\cdot \mathbf{n}^T}+\cdots+a_k\boldsymbol{\lambda}^{A_k\cdot \mathbf{n}^T}$ as in \eqref{Eq:expressionPEP} such that $a_i\neq 0$ and $\boldsymbol{\lambda}^{A_1\cdot \mathbf{n}^T},\dots, \boldsymbol{\lambda}^{A_k\cdot \mathbf{n}^T}$ are distinct characters of $\ZZ^r$, which we call the {\em characters} {\bf appearing} in $f$. We also assume     
    the following natural conditions: (1) for each component $f_i$, the number $e$ of terms in it is {\bf minimal}, in particular $(l_{1,j}, \ldots l_{k,j})\neq (l_{1,j'}, \ldots l_{k,j'})$ if $j \neq j'$; (2) for each basis $\lambda_{\alpha}$ and integers $b_t \ (1\leq t\leq k,t\neq \alpha)$, we have $\lambda_{\alpha}\neq \prod\limits_{t\neq \alpha}\lambda_t^{b_t}$ (in particular, the bases $\lambda_1,\dots, \lambda_k$ are distinct, and none of them is equal to $1$); (3) the number of bases is also minimal: in other words, for each basis $\lambda_{\alpha},1\leq \alpha \leq k$, there is a component $f_i$ which has a term such that its exponent $l(\mathbf{x})$ of $\lambda_{\alpha}$ is such that $\lambda_{\alpha}^{l(\mathbf{x})}$ is non-constant on $\ZZ^r$. 
	\item[*] Whenever we say {\it distribution} or {\it distribution rate} of points in a subset $\Sigma \subset K^n$ ($K$ is a number field), we mean the asymptotic behavior of the function $N(\Sigma,T)\coloneqq\#\{P\in \Sigma\colon H_{\mathrm{aff}}(P)<T\}$ as $T \to \infty$.

	    \item[*] For an abstract group $\Gamma$, we say that $\Gamma$ {\it virtually} has a property $\mathcal{P}$ if $\Gamma$ has a finite index subgroup which has property $\mathcal{P}$.
	   
	    \item[*] The acronym (PEP) will be used both as a noun (purely exponential parametrization) and as an adjective (e.g.\  purely exponentially parametrized).
	   
	    \item[*] The {\it degeneracy type} of a sum $s_1+\cdots +s_r$ (the $s_i$'s are in a field of characteristic zero) is a partition (not necessarily unique for a given sum, but it always exists and there are only finitely many possibilities, so we fix one) $\{1,2,\dots,r\}=T_1 \sqcup T_2$ (with $T_1$ and $T_2$ possibly empty) such that $\sum_{i \in T_1} s_i$ is non-degenerate and $\sum_{i \in T_2} s_i=0$.
            \item[*] A $K$-group $G$ is said to be {\em anisotropic} if it has no non-trivial $K$-split subtori, or equivalently if there does not exist an embedding $\mathbb{G}_m \hookrightarrow G$. (See e.g.\ \ \cite[p.~65]{PR}). When $G$ is semi-simple, this is equivalent to $G(K)$ containing only semi-simple elements.
	\end{enumerate}
	
	\section{Brief review of heights}\label{heightdefprop}
	
	In order to describe the sparseness of (PEP) sets, we count points of linear groups over number fields in terms of the height function (via the usual projective height in Diophantine Geometry). Here we only define the height function and state the necessary properties without proofs. For a detailed account of height functions, see standard literature in Diophantine Geometry including \cite[\S B]{HindrySilverman} or \cite{BombieriGubler}.
	
	Let $K$ be a number field. For a valuation $v$ of $K$, let $\| \cdot \|_v$ be the corresponding normalized absolute value such that the usual product formula holds, i.e.\  $\prod\limits_{v\in V^K}\| x \|_v=1$ for all $x\in K^{*}$. 
	\begin{defi} {\rm Let $P=(x_0\colon x_1\colon \cdots\colon x_n)\in \mathbb{P}^n(K)$ be a point whose homogeneous coordinates are chosen in $K$. Then its {\bf absolute (multiplicative) height} is defined by 
		$$H(P)\coloneqq\Big( \prod_{v\leq \infty}\max\{ \| x_0 \|_v,\| x_1 \|_v,\dots, \| x_n \|_v  \}\Big)^{1\slash [K\colon \mathbb{Q}]}.$$
		The {\bf absolute logarithmic height} of $P$ is given by $h(P)=\log H(P)$. }
	\end{defi}
	\begin{rema} {\rm The product formula guarantees that both heights $H(P)$ and $h(P)$ are independent of the choice of homogeneous coordinates of the point $P$. The exponent $1\slash [K\colon \mathbb{Q}]$ above enables one to define heights on $\mathbb{P}^n(\overline{\mathbb{Q}})$ unambiguously. }
	\end{rema}
	\begin{defi}[Affine Height] \label{defheight}{\rm For a vector $\mathbf{x}=(x_1,\dots, x_n)\in K^n$, we denote by 
		$$H_{\mathrm{aff}}(\mathbf{x})\coloneqq H(1\colon x_1\colon \cdots\colon x_n)$$
		the {\bf affine (multiplicative) height} of the vector. We define the corresponding {\bf affine logarithmic height} as $h_{\mathrm{aff}}(\cdot)\coloneqq\log H_{\mathrm{aff}}(\cdot)$.}
	\end{defi}
	Similarly to $H(\cdot)$ and $h(\cdot)$, our $H_{\mathrm{aff}}(\cdot)$ and $h_{\mathrm{aff}}$ also extend unambiguously to $\overline{\QQ}^n$.
	The following finiteness lemma (see \cite[Theorem B.2.3]{HindrySilverman} for its proof) shows that the height functions in Definition \ref{defheight} are appropriate ways to count $K$-points in affine spaces, in particular in $\mathrm{GL}_n(K)\subset \mathbb{A}_K^{n^2}$.
	\begin{lemma}[Northcott's theorem] For any $t\in \mathbb{R}, T\in \mathbb{R}_{>0}, d \in \NN$, the sets 
		$$\{\mathbf{x}\in \overline{\QQ}^n\colon H_{\mathrm{aff}}(\mathbf{x})\leq T, \deg \mathbf{x} \leq d\}\text{    and    }\{\mathbf{x}\in \overline{\QQ}^n\colon h_{\mathrm{aff}}(\mathbf{x})\leq t, \deg \mathbf{x} \leq d\},$$
		where $\deg (x_1,\ldots,x_n):=[\QQ(x_1,\dots,x_n)\colon \QQ]$, are both finite. 
	\end{lemma}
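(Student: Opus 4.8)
The statement is Northcott's finiteness theorem in affine form, and the plan is to reduce it to the one-variable case and then run the classical argument through minimal polynomials.

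\emph{Reduction.} Since $h_{\mathrm{aff}} = \log H_{\mathrm{aff}}$, the second set is just the first one with $T$ replaced by $e^{t}$, so it is enough to treat the multiplicative height. Given $\mathbf{x} = (x_1, \dots, x_n) \in \overline{\QQ}^n$, fix a number field $K$ containing the $x_i$; at each place $v$ of $K$ one has $\max\{1, \|x_i\|_v\} \le \max\{1, \|x_1\|_v, \dots, \|x_n\|_v\}$, with all local factors $\ge 1$, and taking the product over all $v$ gives $H_{\mathrm{aff}}(x_i) \le H_{\mathrm{aff}}(\mathbf{x})$ for every $i$. Moreover $\QQ(x_i) \subseteq \QQ(x_1, \dots, x_n)$, so $\deg x_i \le \deg \mathbf{x}$. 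Hence it suffices to show that for fixed $T$ and $d$ the set $E(T,d) := \{\alpha \in \overline{\QQ} : H_{\mathrm{aff}}(\alpha) \le T,\ [\QQ(\alpha):\QQ] \le d\}$ is finite: then every coordinate of an admissible $\mathbf{x}$ lies in $E(T,d)$, and there are only finitely many tuples with entries in a finite set.

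\emph{The one-variable case.} I would encode $\alpha \in E(T,d)$ by its minimal polynomial over $\QQ$. Set $e := [\QQ(\alpha):\QQ] \le d$, let $\alpha = \alpha_1, \dots, \alpha_e$ be the $\QQ$-conjugates of $\alpha$, and write $f_\alpha(X) = \prod_{j=1}^{e}(X - \alpha_j) = X^e + c_{e-1}X^{e-1} + \dots + c_0 \in \QQ[X]$. Two standard facts about heights are needed: (i) the height is invariant under the Galois action, so $H_{\mathrm{aff}}(\alpha_j) = H_{\mathrm{aff}}(\alpha) \le T$ for all $j$; (ii) there is a constant $C(e) > 0$ depending only on $e$ — hence only on $d$ — such that the coefficient vector of $f_\alpha$ satisfies $H(1 : c_0 : \dots : c_{e-1}) \le C(e)\prod_{j=1}^{e} H_{\mathrm{aff}}(\alpha_j)$. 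Combining (i) and (ii), and using that for each $k$ one has $H_{\mathrm{aff}}(c_k) = H(1:c_k) \le H(1:c_0:\dots:c_{e-1})$, we obtain $H_{\mathrm{aff}}(c_k) \le C(d)\,T^{d} =: B$ for all $k$. Since the $c_k$ are rational, writing $c_k = p_k/q_k$ in lowest terms forces $\max\{|p_k|,|q_k|\} = H_{\mathrm{aff}}(c_k) \le B$ (as recalled in the introduction), so each $c_k$ lies in a finite set of rationals; together with $e \le d$ this shows $f_\alpha$ belongs to a finite set of polynomials, each of which has at most $d$ roots in $\overline{\QQ}$. Therefore $E(T,d)$ is finite, and the theorem follows.

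\emph{Main obstacle.} The one genuinely substantive ingredient is inequality (ii): comparing the height of the coefficient vector of a monic polynomial with the product of the affine heights of its roots, up to a constant depending only on the degree. I would establish it place by place over the splitting field $K := \QQ(\alpha_1, \dots, \alpha_e)$. At each non-archimedean place the comparison is an equality, because the Gauss norm is multiplicative (Gauss's lemma), giving $\|f_\alpha\|_v = \prod_{j}\max\{1,\|\alpha_j\|_v\}$. At each archimedean place one uses the multiplicativity of the Mahler measure together with the elementary estimate that, for a polynomial of degree $e$, the sup-norm of the coefficient vector and the Mahler measure differ by at most a factor depending only on $e$ (for instance $\le 2^{e}$). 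Multiplying the local bounds over all $v \in V^K$ and extracting the $[K:\QQ]$-th root yields (ii), say with $C(e) = 2^{e}$. Alternatively, both (i) and (ii) — and indeed the whole statement — are carried out in detail in the standard references, so one may simply cite \cite[\S B]{HindrySilverman} or \cite{BombieriGubler}.
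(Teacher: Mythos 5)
Your proposal is correct: the reduction to the one-variable case via $H_{\mathrm{aff}}(x_i)\leq H_{\mathrm{aff}}(\mathbf{x})$ and $\deg x_i\leq \deg\mathbf{x}$ is sound, and the one-variable argument (Galois invariance of the height, the comparison $H(1:c_0:\cdots:c_{e-1})\leq C(e)\prod_j H_{\mathrm{aff}}(\alpha_j)$ proved locally via Gauss's lemma and the Mahler measure, and the resulting finiteness of admissible minimal polynomials) is the standard proof. The paper gives no proof at all, citing \cite[Theorem B.2.3]{HindrySilverman} instead; your argument is exactly the one carried out in that reference, so there is nothing to compare beyond noting that you have written out what the paper delegates to the citation.
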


    In particular, for a fixed number field $K$, the set $\{\mathbf{x}\in K^n\colon H_{\mathrm{aff}}(\mathbf{x})\leq T\}$ (and the analog with the logarithmic height) is finite.
 
	We end by recalling three useful and straightforward lemmas. 
 
	\begin{lemma}\label{Lem:heightdifference} 
		For $\mathbf{x}=(x_0:x_1:\dots:x_r),\mathbf{y}=(y_0:y_1:\dots:y_r)\in (K^{*})^{n+1}/K^*$, define $\mathbf{x}\cdot \mathbf{y}:=(x_0y_0:\dots:x_ry_r)$ and $\mathbf{x}^{-1}:=(x_0^{-1}:\dots:x_r^{-1})$. Then $H(\mathbf{x}^{-1})^{-1} \cdot H(\mathbf{y}) \leq H(\mathbf{x}\cdot \mathbf{y})\leq H(\mathbf{x}) \cdot H(\mathbf{y})$.
	\end{lemma}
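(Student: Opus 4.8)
The plan is to reduce everything to a trivial local estimate at each place of $K$ and then multiply over all places. Fix homogeneous coordinate vectors $(x_0,\dots,x_r)$ and $(y_0,\dots,y_r)$ in $(K^{*})^{r+1}$ representing $\mathbf{x}$ and $\mathbf{y}$; by the product formula the value $H(\cdot)$ does not depend on this choice, so the coordinate-wise operations $\mathbf{x}\cdot\mathbf{y}=(x_0y_0:\dots:x_ry_r)$ and $\mathbf{x}^{-1}=(x_0^{-1}:\dots:x_r^{-1})$ are well-defined on $(K^{*})^{r+1}/K^{*}$ (scaling all coordinates by $\lambda\in K^{*}$ scales the product representative by $\lambda^{2}$ and the inverse representative by $\lambda^{-1}$, changing neither point).

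For the upper bound I would first record that, for every place $v\in V^{K}$, the normalized absolute value $\|\cdot\|_v$ is multiplicative (archimedean or not), so $\|x_iy_i\|_v=\|x_i\|_v\,\|y_i\|_v$ for each $i$, and hence
$$\max_{0\le i\le r}\|x_iy_i\|_v \;\le\; \Big(\max_{0\le i\le r}\|x_i\|_v\Big)\Big(\max_{0\le j\le r}\|y_j\|_v\Big).$$
Taking the product over all $v$ and extracting the $1/[K:\mathbb{Q}]$-th root yields $H(\mathbf{x}\cdot\mathbf{y})\le H(\mathbf{x})\,H(\mathbf{y})$, the right-hand inequality.

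For the lower bound the idea is to apply the inequality just proved to a well-chosen pair. Since all coordinates lie in $K^{*}$, we have $\mathbf{y}=(\mathbf{x}\cdot\mathbf{y})\cdot\mathbf{x}^{-1}$ coordinate-wise, so the upper bound applied to $\mathbf{x}\cdot\mathbf{y}$ and $\mathbf{x}^{-1}$ gives
$$H(\mathbf{y})=H\big((\mathbf{x}\cdot\mathbf{y})\cdot\mathbf{x}^{-1}\big)\le H(\mathbf{x}\cdot\mathbf{y})\,H(\mathbf{x}^{-1}).$$
Dividing by $H(\mathbf{x}^{-1})>0$ gives $H(\mathbf{x}\cdot\mathbf{y})\ge H(\mathbf{x}^{-1})^{-1}H(\mathbf{y})$, as claimed.

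There is no real obstacle here: the only points requiring a word of care are that the coordinate-wise operations descend to $(K^{*})^{r+1}/K^{*}$, that every local absolute value is multiplicative so the displayed local inequality holds at each $v$, and that $H(\mathbf{x}^{-1})$ is finite and nonzero (true since all coordinates are in $K^{*}$), which legitimizes the division in the final step. Thus the ``hard part'' is merely packaging the elementary submultiplicativity of $\max$ under the product; no deep Diophantine input is needed for this lemma.
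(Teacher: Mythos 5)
Your proof is correct: the paper states this lemma without proof as one of three ``useful and straightforward'' facts about heights, and your argument (local submultiplicativity of the max under coordinate-wise products, multiplied over all places, plus the trick of applying the upper bound to $(\mathbf{x}\cdot\mathbf{y})\cdot\mathbf{x}^{-1}=\mathbf{y}$ to get the lower bound) is exactly the standard one the authors have in mind. Your remarks on well-definedness modulo $K^{*}$-scaling and on $H(\mathbf{x}^{-1})$ being finite and nonzero are the right points of care; nothing is missing.
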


	\begin{lemma}\label{htvector} For $\mathbf{x}=(x_0,x_1,\dots,x_r)$, we have $H_{\mathrm{aff}}(\mathbf{x})\leq H_{\mathrm{aff}}(x_0)\cdots H_{\mathrm{aff}}(x_r)$.
	\end{lemma}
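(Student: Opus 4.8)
The plan is to reduce the asserted inequality to a purely local estimate at each place of $K$ and then multiply. First I would unwind the definitions using Definition \ref{defheight}: writing $\mathbf{x}=(x_0,\dots,x_r)$, we have
$$H_{\mathrm{aff}}(\mathbf{x}) = H(1:x_0:\cdots:x_r) = \Big(\prod_{v\in V^K}\max\{1,\|x_0\|_v,\dots,\|x_r\|_v\}\Big)^{1/[K:\QQ]},$$
and for each index $i$ likewise $H_{\mathrm{aff}}(x_i)=\big(\prod_{v}\max\{1,\|x_i\|_v\}\big)^{1/[K:\QQ]}$. So the statement to be proved is that the product over $v$ of $\max\{1,\|x_0\|_v,\dots,\|x_r\|_v\}$ is bounded by the product over $v$ of $\prod_{i=0}^r\max\{1,\|x_i\|_v\}$, after which the $1/[K:\QQ]$-th power is harmless.

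Next I would prove the key local inequality: for every place $v$,
$$\max\{1,\|x_0\|_v,\dots,\|x_r\|_v\}\ \le\ \prod_{i=0}^r \max\{1,\|x_i\|_v\}.$$
Each factor on the right is $\ge 1$, so the right-hand side is $\ge 1$ and also dominates any one of its factors; since the left-hand side equals either $1$ or some $\|x_j\|_v\le\max\{1,\|x_j\|_v\}$, the inequality follows in all cases. Taking the product of this inequality over all $v\in V^K$ and using commutativity to regroup $\prod_v\prod_i = \prod_i\prod_v$, then raising to the power $1/[K:\QQ]$, yields exactly $H_{\mathrm{aff}}(\mathbf{x})\le \prod_{i=0}^r H_{\mathrm{aff}}(x_i)$, as desired.

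I do not expect any genuine obstacle here: the argument is the standard sub-multiplicativity of the affine height under concatenation of coordinate blocks, and the only point requiring a moment of care is to retain the constant $1$ inside each local factor (which is forced by the affine normalization $H_{\mathrm{aff}}(y)=H(1:y)$) so that all the local factors are $\ge 1$ and the place-by-place comparison goes through.
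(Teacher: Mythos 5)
Your proof is correct: the place-by-place inequality $\max\{1,\|x_0\|_v,\dots,\|x_r\|_v\}\le\prod_{i}\max\{1,\|x_i\|_v\}$ holds exactly as you argue, and multiplying over all places and taking the $1/[K:\QQ]$-th power gives the claim. The paper states this lemma without proof as a standard fact, and your argument is precisely the expected one.
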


	Notice that $H_{\mathrm{aff}}(t)=H_{\mathrm{aff}}(\frac{1}{t}), \ t \in K^*.$ Indeed, this follows by the identity $(1\colon t)=(1\slash t \colon 1)\in\mathbb{P}^1(K)$.
	
	\begin{lemma}\label{lfxbd} For $x_1,\dots,x_r\in K$, we have
		$$\frac{h_{\mathrm{aff}}(x_1)+\cdots +h_{\mathrm{aff}}(x_r)}{r}\leq h_{\mathrm{aff}}(x_1,\dots,x_r)\leq h_{\mathrm{aff}}(x_1)+\cdots + h_{\mathrm{aff}}(x_r).$$
	\end{lemma}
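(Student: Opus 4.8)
The plan is to reduce everything to the elementary definition of the affine height in terms of normalized absolute values and the product formula, working one place at a time. Recall that for $\mathbf{x}=(x_1,\dots,x_r)\in K^n$ we have, by Definition \ref{defheight},
$$h_{\mathrm{aff}}(x_1,\dots,x_r)=\frac{1}{[K:\QQ]}\sum_{v\in V^K}\log\max\{1,\|x_1\|_v,\dots,\|x_r\|_v\},$$
and similarly $h_{\mathrm{aff}}(x_i)=\frac{1}{[K:\QQ]}\sum_{v\in V^K}\log\max\{1,\|x_i\|_v\}$. So it suffices to prove the corresponding pointwise inequalities
$$\frac{1}{r}\sum_{i=1}^r\log^+\|x_i\|_v\leq\log\max\{1,\|x_1\|_v,\dots,\|x_r\|_v\}\leq\sum_{i=1}^r\log^+\|x_i\|_v$$
for each place $v$, where $\log^+ t:=\log\max\{1,t\}$, and then sum over $v$ and divide by $[K:\QQ]$.

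For the upper bound, fix $v$ and note that $\max\{1,\|x_1\|_v,\dots,\|x_r\|_v\}$ equals $1$ if all $\|x_i\|_v\leq 1$, and otherwise equals $\max_i\|x_i\|_v\leq\prod_i\max\{1,\|x_i\|_v\}$, since each factor is $\geq 1$ and one of them already equals the maximum; taking logs gives the desired inequality at $v$. For the lower bound, observe that $\max\{1,\|x_1\|_v,\dots,\|x_r\|_v\}\geq\max\{1,\|x_j\|_v\}$ for each single index $j$, so $\log\max\{1,\dots\}\geq\log^+\|x_j\|_v$ for every $j$; averaging these $r$ inequalities over $j=1,\dots,r$ yields $\log\max\{1,\dots\}\geq\frac{1}{r}\sum_i\log^+\|x_i\|_v$. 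Summing both pointwise inequalities over all $v\in V^K$ and dividing by $[K:\QQ]$ produces exactly the two claimed bounds on $h_{\mathrm{aff}}$.

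There is no serious obstacle here: the only thing to be a little careful about is that all the quantities involved are sums of \emph{nonnegative} terms ($\log^+\geq 0$), so the term-by-term comparison survives the summation over $v$ without sign issues, and the normalizing exponent $1/[K:\QQ]$ is common to both sides and cancels out of the argument. Alternatively, one could phrase the upper bound as an immediate consequence of Lemma \ref{htvector} (applied with $x_0=1$, after noting $H_{\mathrm{aff}}(x_0,\dots)=H_{\mathrm{aff}}(x_1,\dots,x_r)$ when $x_0=1$) and only prove the lower bound directly as above, but the self-contained one-place argument is cleanest.
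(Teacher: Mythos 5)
Your proof is correct. The paper states Lemma \ref{lfxbd} without proof (as one of three "straightforward lemmas" recalled from the standard theory of heights), and your place-by-place argument via $\log^+$ — bounding $\log\max\{1,\|x_1\|_v,\dots,\|x_r\|_v\}$ above by $\sum_i\log^+\|x_i\|_v$ and below by the average $\frac1r\sum_i\log^+\|x_i\|_v$, then summing over $v\in V^K$ — is exactly the standard verification the authors have in mind.
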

	
	\section{Key Diophantine ingredients}\label{dioarguments}
	In this section we provide some necessary Diophantine-approximation inputs, mostly related to the subspace theorem.

	We recall that a sum
	\[
	x_1+\cdots+x_r
	\]
	is called {\bf non-degenerate} 
 if no non-trivial subsum (including the whole sum) is equal to $0$, and {\bf degenerate} otherwise.

\vskip1mm

    \begin{defi}\label{defdegenerate} {\rm For a vector of purely exponential polynomials $\mathbf{f}=(f_1,\dots, f_s)$, an integer vector $\mathbf{n}\in \ZZ^r$ is called $\mathbf{f}$-non-degenerate, or we say that $\mathbf{f}$ is non-degenerate at $\mathbf{n}$ if each component $f_i(\mathbf{n})$, as a linear sum of distinct characters of $\ZZ^r$ evaluated at $\mathbf{n}$, is non-degenerate, i.e.\  no subsum vanishes. The vector $\mathbf{n}$ is called $\mathbf{f}$-degenerate otherwise. When there is no risk of confusion, we also simply say that $\mathbf{f}(\mathbf{n})$ is degenerate/non-degenerate, or $\mathbf{n}$ is degenerate/non-degenerate.}
	\end{defi}
 Many of the discussions in this paper need the following definition, which plays a central role for our method. This definition introduces a simple type of vectors of purely exponential polynomials which one can usually reduce to, see the end of \S \ref{generalitypep}.

     \begin{defi}{\rm Let $\mathbf{f}=(f_1,\dots,f_s)$ be a vector of purely exponential polynomials in $r$ variables. Then we say that $\mathbf{f}$ is {\bf reduced} or {\bf in reduced form}, if its bases $\lambda_1,\dots, \lambda_k$ are multiplicatively independent, i.e.\  $\lambda_1^{\alpha_1}\cdots \lambda_k ^{\alpha_k}=1 (\alpha_i\in \ZZ)\Leftrightarrow \alpha_1=\cdots =\alpha_k=0$, and its {\em exponents} (that is, the linear forms appearing in the exponents of the monomials of all components of $\mathbf{f}$ as in \eqref{Eq:expressionPEP}) span over $\mathbb{Q}$ the dual space of $\mathbb{Q}^r$.}
	\end{defi}

    The most representative result of this section is the following asymptotic property.
    \begin{prop}\label{Prop:norm}
        Let $\mathbf{f}=(f_1,\ldots, f_s)\colon \ZZ^r \to K^s$ be a vector of purely exponential polynomials, and let $\boldsymbol{\lambda}^{A_i\cdot \mathbf{n}^T}, i=1,\ldots,t$ be all the characters appearing in the various $f_i$'s. Then for non-degenerate $\u{n} \in \ZZ^r$, we have
      \[
      h_{\mathrm{aff}}(\mathbf{f}(\u{n}))\sim \|\u{n}\|_{\mathbf{f}} \coloneqq h_{\mathrm{aff}}\left(\boldsymbol{\lambda}^{A_1\cdot \mathbf{n}^T},\ldots, \boldsymbol{\lambda}^{A_t\cdot \mathbf{n}^T}\right),
      \]
     and $\|\cdot \|_{\mathbf{f}}\colon \ZZ^r\to \RR_{\geq 0}$ is a semi-norm induced by a norm on $(\ZZ^r/\mathcal{K}) \otimes_{\ZZ} \R$, where $\mathcal{K}\coloneqq \bigcap\limits_{i=1}^t \ker \chi_i$. 
     If $\mathbf{f}$ is reduced, then $\|\cdot \|_{\mathbf{f}}$ is a norm.
    \end{prop}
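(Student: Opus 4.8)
The plan is to handle the two assertions separately — that $\|\cdot\|_{\mathbf f}$ is a semi-norm induced by a norm on $(\ZZ^r/\mathcal K)\otimes\RR$, and the asymptotic $h_{\mathrm{aff}}(\mathbf f(\mathbf n))\sim\|\mathbf n\|_{\mathbf f}$ — after first reducing both to the case when $\mathbf f$ is \emph{reduced}. For the reduction, write each basis $\lambda_i=\zeta_i\,\mu_1^{b_{i1}}\cdots\mu_{\ell}^{b_{i\ell}}$ with $\mu_1,\dots,\mu_\ell$ multiplicatively independent and $\zeta_i$ a root of unity; over each residue class of $\ZZ^r$ modulo the common order of the $\zeta_i$ the root-of-unity factors are constant, so there $\mathbf f$ is a vector of purely exponential polynomials in the $\mu_j$ (with coefficients rescaled by fixed roots of unity), and dividing out the sublattice on which all exponents vanish gives a reduced $\bar{\mathbf f}$ in $r'\le r$ variables with an affine surjection $\pi\colon\ZZ^r\to\ZZ^{r'}$, on that class, satisfying $\mathbf f=\bar{\mathbf f}\circ\pi$ and with $\ker\pi$ commensurable with $\mathcal K$. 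As roots of unity have height $0$ one gets $\|\mathbf n\|_{\mathbf f}=\|\pi(\mathbf n)\|_{\bar{\mathbf f}}$, so the semi-norm claim is the pullback of the norm claim for $\bar{\mathbf f}$, and the asymptotic for $\mathbf f$ follows class by class from that for $\bar{\mathbf f}$.

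For the (semi-)norm claim, pass to a number field $K'$ containing the $\lambda_i$. For a place $v$ of $K'$ the map $\mathbf n\mapsto\log\|\boldsymbol\lambda^{A_i\cdot\mathbf n^T}\|_v=\sum_j(A_i\mathbf n^T)_j\log\|\lambda_j\|_v$ is a linear form $\ell_{i,v}$ on $\RR^r$, vanishing for all but finitely many $v$, so
\[
\|\mathbf n\|_{\mathbf f}=\frac{1}{[K'\colon\QQ]}\sum_v\max\{0,\ell_{1,v}(\mathbf n),\dots,\ell_{t,v}(\mathbf n)\},
\]
which extends to a continuous, convex, positively homogeneous (degree $1$) function $\RR^r\to\RR_{\geq0}$. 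By the product formula $\sum_v\ell_{i,v}(\mathbf n)=0$ for each $i$, hence $\|\mathbf n\|_{\mathbf f}=0$ forces $\ell_{i,v}(\mathbf n)=0$ for every $i,v$, i.e.\ each $\boldsymbol\lambda^{A_i\cdot\mathbf n^T}$ is a root of unity, and by Kronecker's theorem such $\mathbf n$ span exactly $\mathcal K\otimes\RR$; so $\|\cdot\|_{\mathbf f}$ descends to a convex, degree-$1$-homogeneous function on $(\ZZ^r/\mathcal K)\otimes\RR$ vanishing only at $0$, i.e.\ the gauge of a bounded convex neighbourhood of the origin, which is the asserted norm. When $\mathbf f$ is reduced, multiplicative independence of the $\lambda_i$ together with the exponents spanning $(\QQ^r)^{*}$ give $\mathcal K=0$.

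For the asymptotic, the upper bound $h_{\mathrm{aff}}(\mathbf f(\mathbf n))\le\|\mathbf n\|_{\mathbf f}+O(1)$ is elementary: at each $v$, $|f_i(\mathbf n)|_v\le c_v\max_l\|\boldsymbol\lambda^{A_l\cdot\mathbf n^T}\|_v$ with $c_v=1$ for almost all $v$, and we sum over $v$. The reverse inequality $h_{\mathrm{aff}}(\mathbf f(\mathbf n))\ge(1-\varepsilon)\|\mathbf n\|_{\mathbf f}-C(\varepsilon)$ is the core of the proof and is where Evertse's height inequality — resting on the Schlickewei--Schmidt subspace theorem, cf.\ \cite[Theorem 2.3]{CorvajaZannier} — enters: applied to the linear forms $L_i$ that express the components $f_i$ in the monomials $\boldsymbol\lambda^{A_1\cdot\mathbf n^T},\dots,\boldsymbol\lambda^{A_t\cdot\mathbf n^T}$ (whose coordinates stay in a fixed finitely generated subgroup of $\overline{K}^{*}$), it shows that, outside a finite union of proper cosets of $\ZZ^r$, no component has height much below that of its largest monomial; combining the components then recovers all of $\|\mathbf n\|_{\mathbf f}$. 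The non-degeneracy hypothesis is precisely the one under which Evertse's inequality is available, and it is inherited by cosets, so the finitely many residual exceptional cosets are cleared by induction: restricting $\mathbf f$ to such a coset merges some characters and yields a PEP with strictly fewer monomials, on which $\|\cdot\|_{\mathbf f}$ agrees with the corresponding norm up to $O(1)$.

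The main obstacle is the lower bound — extracting from the subspace theorem, in Evertse's form, that a non-degenerate sum of monomials loses no positive proportion of its height to cancellation at the places of $K$, and arranging the exceptional cosets so that the induction above terminates. (This is exactly what \S\ref{pfht} makes quantitative through Theorem~\ref{mainlongpaper} and Corollary~\ref{CoromainThm}, and Theorem~\ref{minspecial} is the special case one reads off from it.) A secondary, bookkeeping, difficulty is the reduction to reduced form: keeping track of the roots of unity $\zeta_i$ and verifying that splitting $\ZZ^r$ into residue classes alters neither the semi-norm nor the shape of the asymptotic.
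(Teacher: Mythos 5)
Your overall architecture is the paper's: the asymptotic is extracted from Evertse's height inequality (hence from the Schlickewei--Schmidt subspace theorem), and the (semi-)norm statement is the convex-geometric fact that the paper isolates as Proposition \ref{Prop:norm2}; the paper's proof is literally ``combine Proposition \ref{Prop:norm2} with Corollary \ref{Lem:exactasymp}''. However, the step ``no component has height much below that of its largest monomial; combining the components then recovers all of $\|\mathbf n\|_{\mathbf f}$'' does not work as written. From componentwise bounds $h_{\mathrm{aff}}(f_i(\mathbf n))\geq(1-\varepsilon)\max_{j}h_{\mathrm{aff}}(\chi_j(\mathbf n))$ one only gets $h_{\mathrm{aff}}(\mathbf f(\mathbf n))\geq \max_i h_{\mathrm{aff}}(f_i(\mathbf n))$, whereas $\|\mathbf n\|_{\mathbf f}=h_{\mathrm{aff}}(\chi_1(\mathbf n),\ldots,\chi_t(\mathbf n))$ can exceed $\max_j h_{\mathrm{aff}}(\chi_j(\mathbf n))$ by a factor as large as $t$ (e.g.\ $h_{\mathrm{aff}}(2^n,2^{-n})=2|n|\log 2$ while each coordinate has height $|n|\log 2$). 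So this route yields only $h_{\mathrm{aff}}(\mathbf f(\mathbf n))\gg\|\mathbf n\|_{\mathbf f}$, not the asymptotic with constant $1$. What is actually needed --- and what the paper extracts from Evertse as Theorem \ref{Lemma1} and Corollary \ref{Lemma2} --- is the \emph{place-by-place} lower bound $\|f_i(\mathbf n)\|_v\geq C\cdot\max_j\|\chi_j(\mathbf n)\|_v\cdot H_S(\cdot)^{-\varepsilon}$ for every $v\in S$; one then takes $\max_i$ at each place and multiplies over $v$, recovering the full vector height. Relatedly, your exceptional cosets and the induction on them are superfluous: Evertse's inequality holds for \emph{all} non-degenerate $S$-unit sums with no exceptional set, which is why Corollary \ref{Lem:exactasymp} applies directly to every non-degenerate $\mathbf n$.

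Two smaller points. In the norm claim, Kronecker's theorem identifies the \emph{integer} points where $\|\cdot\|_{\mathbf f}$ vanishes with the saturation of $\mathcal K$, but you also need the \emph{real} vanishing locus of the extended function to equal $\mathcal K\otimes_{\ZZ}\RR$; this requires knowing that the kernel of the matrix $(\log\|\lambda_j\|_v)_{v,j}$ is spanned by its rational points (discreteness of the logarithmic embedding), which Kronecker alone does not give --- this is exactly the ``non-trivial verification'' the paper delegates to \cite[p.~136]{BombieriGubler}. Also note that $\|\cdot\|_{\mathbf f}$ is in general not symmetric (e.g.\ $h_{\mathrm{aff}}(2^{-1},3^{-1})\neq h_{\mathrm{aff}}(2,3)$), so ``norm'' must be read as the gauge of a bounded convex body, as your argument in fact produces. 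Finally, the preliminary reduction to reduced form is harmless but unnecessary here: both ingredients are stated for general $\mathbf f$.
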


    The error term, i.e.\ the rate of convergence implicit in the $\sim$ sign above is unavailable with our method.
    
    \vskip2mm

    The proof of Proposition \ref{Prop:norm} relies on a so-called ``semi-effective'' result, which is a variation of a height inequality of Evertse \cite{evertse84}, whose proof follows from the version of the subspace theorem due to Schlickewei-Schmidt cf. \cite[Theorem 2.3]{CorvajaZannier}. 

    To formulate our version of Evertse's inequality, we need to adopt some of his notation with mild modification.
    For a vector $\mathbf{x}=(x_0,x_1,\dots,x_r)\in \cO_S^{r+1}$, define the $S$-height and $S$-norm as follows:
	$$H_S(\mathbf{x})=H_S(x_0,x_1,\dots,x_r)\coloneqq\left(\prod_{v\in S} \max _i \|x_i\|_v\right)^{1\slash [K\colon \mathbb{Q}]},\text{    }N_S(x_0x_1\cdots x_r)\coloneqq\prod_{v\in S} \|x_0x_1\cdots x_r\|_v.$$
	Notice that the $S$-height here differs from the one in \cite[\S 6]{EG15} by a normalizing exponent. Also, it is evident that when $x_0,\dots, x_r$ are all $S$-units, we have $N_S(x_0x_1\cdots x_r)=1$ and $H_S(\mathbf{x})=H(\mathbf{x})$.

 The original version of Evertse's theorem cf. \cite[Proposition 6.2.1]{EG15} reads 
		\vskip2mm
		\noindent
		{\bf Theorem.} (Evertse) Let $K$ be a number field, $S$ be a finite set of places containing all archimedean ones. Let $T$ be a subset of $S$. Then for any $\varepsilon>0$ there exists a(n ineffective) constant $C>0$ such that for any non-degenerate $\mathbf{x}=(x_1,\ldots, x_n) \in \cO_S^n$, we have:
		$$N_S(x_1\cdots x_n)\prod_{v\in T}\|x_1+\cdots+x_n\|_v \geq C \cdot H_T(\mathbf{x})^{[K\colon \mathbb{Q}]}\cdot H_S(\mathbf{x})^{-\varepsilon}.$$
		(Note that the exponent on $H_T$ does not appear in \cite{EG15} because our normalizations are different.)

  In particular, taking $T=\{v\}$ for any $v\in S$ and letting $x_i\in \cO_S^*$ for all $i$, we obtain our variation of Evertse's inequality as follows (see also \cite[Theorem 4]{CZcompositio02} for a result in a similar flavor, beware that the notion of $S$-height there differs from ours).

	\begin{theo}\label{Lemma1}
		Fix a natural number $n$, a number field $K$ and a finite set of places $S$ of $K$ containing all archimedean ones. Then for any $\varepsilon>0$, there exists a(n ineffective) constant $C>0$ such that for all non-degenerate sums $x_1+\cdots+x_n$ with $\mathbf{x}=(x_1,\ldots, x_n) \in (\cO_S^*)^n$ and all $v \in S$:
		\begin{equation}\label{Eq:Lemma1}
		\|x_1+\cdots+x_n\|_v \geq C \cdot \max\{\|x_1\|_v,\ldots,\|x_n\|_v\} \cdot H_S(\mathbf{x})^{-\varepsilon}.
		\end{equation}
	\end{theo}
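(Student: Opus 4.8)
The plan is to obtain Theorem~\ref{Lemma1} as a one-place specialization of the quoted form of Evertse's theorem (\cite[Proposition~6.2.1]{EG15}), which already absorbs the entire Diophantine input (the Schlickewei--Schmidt subspace theorem, via \cite[Theorem~2.3]{CorvajaZannier}). So the work here is purely a matter of choosing the auxiliary set $T$ correctly and tidying up the normalizations.

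First I would fix a single place $v\in S$ and apply Evertse's theorem with $T=\{v\}$. This yields an ineffective constant $C_v=C_v(n,K,S,\varepsilon,v)>0$ such that, for every non-degenerate $\mathbf{x}=(x_1,\ldots,x_n)\in\cO_S^n$,
\[
N_S(x_1\cdots x_n)\cdot\|x_1+\cdots+x_n\|_v\ \geq\ C_v\cdot H_{\{v\}}(\mathbf{x})^{[K\colon\QQ]}\cdot H_S(\mathbf{x})^{-\varepsilon}.
\]
Now I would impose the hypothesis $x_1,\ldots,x_n\in\cO_S^*$. Then $y:=x_1\cdots x_n\in\cO_S^*$, so $\|y\|_w=1$ for every $w\notin S$, and the product formula gives $N_S(y)=\prod_{w\in S}\|y\|_w=\prod_{w\in V^K}\|y\|_w=1$. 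Moreover, directly from the definition, $H_{\{v\}}(\mathbf{x})^{[K\colon\QQ]}=\max\{\|x_1\|_v,\ldots,\|x_n\|_v\}$. Substituting both identities into the displayed inequality produces exactly
\[
\|x_1+\cdots+x_n\|_v\ \geq\ C_v\cdot\max\{\|x_1\|_v,\ldots,\|x_n\|_v\}\cdot H_S(\mathbf{x})^{-\varepsilon}
\]
for all non-degenerate $\mathbf{x}\in(\cO_S^*)^n$.

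It remains to make the constant independent of $v$. Since $S$ is finite and each $C_v$ is a positive real, the quantity $C:=\min_{v\in S}C_v$ is again a positive (still ineffective) constant, and the inequality just obtained, now with $C$ in place of $C_v$, holds simultaneously for all $v\in S$; this is precisely \eqref{Eq:Lemma1}.

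I do not expect a genuine obstacle in this argument: the hard theorem is being imported wholesale, and everything else is bookkeeping. The two points that need care are (i) reconciling the two normalizations of the $S$-height, which is exactly why the factor $[K\colon\QQ]$ appears on $H_T$ in the quoted version and then disappears after taking $T=\{v\}$; and (ii) the passage from the place-by-place constants $C_v$ to a single $C$, which is legitimate only because $S$ is finite. One could additionally remark --- although it is not needed for the statement --- that under the $S$-unit hypothesis $H_S(\mathbf{x})=H(\mathbf{x})$, so the correction factor $H_S(\mathbf{x})^{-\varepsilon}$ may equally be written in terms of the usual absolute multiplicative height.
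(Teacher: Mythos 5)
Your proposal is correct and is essentially the paper's own derivation: the paper obtains Theorem \ref{Lemma1} precisely by taking $T=\{v\}$ in the quoted form of Evertse's theorem and specializing to $S$-units, so that $N_S(x_1\cdots x_n)=1$ by the product formula and $H_{\{v\}}(\mathbf{x})^{[K:\QQ]}=\max_i\|x_i\|_v$. Your extra remark about passing from the place-dependent constants $C_v$ to $C=\min_{v\in S}C_v$ is a point the paper leaves implicit, and you handle it correctly since $S$ is finite.
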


	\begin{coro}\label{Lemma2}
		Let $K,S$ be as above and $n_1,\ldots,n_k\in \NN$. Then for every $\varepsilon >0$ there exists a(n ineffective) constant $C>0$ such that for all $k$-tuples of non-degenerate sums $y_i=x_{i1}+\cdots+ x_{in_i}, \ i=1,\ldots,k$ with $x_{ij} \in \cO_S^*$:
		\begin{equation}\label{EqLemma2}
		C \cdot H_{\mathrm{aff}}(\mathbf{x})^{1-\varepsilon} \leq H_{\mathrm{aff}}(y_1, \ldots, y_k) \leq  C_0 \cdot H_{\mathrm{aff}}(\mathbf{x}),
		\end{equation}
		where $\mathbf{x}$ denotes the vector $(x_{11},\ldots,x_{1n_1}, \ldots, x_{k1},\ldots, x_{kn_k}) \in (\cO_S^*)^{n_1+\cdots+n_k}$ and $C_0=\max_i{n_i}$.
	\end{coro}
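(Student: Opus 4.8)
The plan is to prove the two inequalities in \eqref{EqLemma2} by quite different means. The right-hand (upper) bound $H_{\mathrm{aff}}(y_1,\dots,y_k)\le C_0\,H_{\mathrm{aff}}(\mathbf{x})$ is elementary and effective --- it uses nothing beyond the triangle inequality --- whereas the left-hand (lower) bound is where Theorem \ref{Lemma1} enters, and is responsible for the ineffective constant $C$.

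For the upper bound one argues place by place. Since every $x_{ij}\in\cO_S^*$, at a place $v\notin S$ all relevant absolute values equal $1$, so only $v\in S$ contributes. For a finite $v\in S$ the ultrametric inequality gives $\|y_i\|_v\le\max_j\|x_{ij}\|_v$ with no constant; for an archimedean $v$ the triangle inequality gives $\|y_i\|_v\le n_i^{n_v}\max_j\|x_{ij}\|_v\le C_0^{n_v}\max_j\|x_{ij}\|_v$, where $n_v=[K_v:\QQ_v]$ and $C_0=\max_i n_i$. Hence $\max\{1,\|y_1\|_v,\dots,\|y_k\|_v\}\le C_0^{n_v}\max\{1,\|x_{ij}\|_v:\text{all }i,j\}$ at every $v$ (reading $n_v=0$ for $v\nmid\infty$); multiplying over all $v$, the archimedean factors contribute $\prod_{v\mid\infty}C_0^{n_v}=C_0^{[K:\QQ]}$, and taking $[K:\QQ]$-th roots yields $H_{\mathrm{aff}}(y_1,\dots,y_k)\le C_0\,H_{\mathrm{aff}}(\mathbf{x})$ as claimed.

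For the lower bound, fix $\varepsilon>0$ and set $\varepsilon'=\varepsilon\,[K:\QQ]/|S|$. Apply Theorem \ref{Lemma1} with $\varepsilon'$ in place of $\varepsilon$ for each of the finitely many values $n\in\{n_1,\dots,n_k\}$, and let $C_1\le 1$ be the minimum of the resulting (ineffective, but $v$-uniform) constants. Work again place by place over $v\in S$; outside $S$ both sides are $1$. Fix $v\in S$, put $M_v=\max_{i,j}\|x_{ij}\|_v$, and choose an index $i_0=i_0(v)$ with $\max_j\|x_{i_0,j}\|_v=M_v$. Since $y_{i_0}=x_{i_0,1}+\cdots+x_{i_0,n_{i_0}}$ is a non-degenerate sum of $S$-units, Theorem \ref{Lemma1} gives
\[
\|y_{i_0}\|_v\ \ge\ C_1\,M_v\,H_S(x_{i_0,1},\dots,x_{i_0,n_{i_0}})^{-\varepsilon'}\ \ge\ C_1\,M_v\,H_{\mathrm{aff}}(\mathbf{x})^{-\varepsilon'},
\]
the second step because the $S$-height of a subvector is $\le H_S(\mathbf{x})$, and $H_S(\mathbf{x})\le H_{\mathrm{aff}}(\mathbf{x})$ (adjoining the $S$-unit coordinate $1$ only enlarges the height). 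Combining this with the trivial estimate $\max\{1,\|y_1\|_v,\dots\}\ge 1$, needed only when $M_v<1$, one gets for every $v\in S$
\[
\max\{1,\|y_1\|_v,\dots,\|y_k\|_v\}\ \ge\ C_1\,\max\{1,\|x_{ij}\|_v:\text{all }i,j\}\cdot H_{\mathrm{aff}}(\mathbf{x})^{-\varepsilon'}.
\]
Multiplying over $v\in S$ and appending the trivial contributions at $v\notin S$, the right-hand side becomes $C_1^{|S|}\,H_{\mathrm{aff}}(\mathbf{x})^{-\varepsilon'|S|}\cdot H_{\mathrm{aff}}(\mathbf{x})^{[K:\QQ]}$; taking $[K:\QQ]$-th roots and recalling $\varepsilon'|S|/[K:\QQ]=\varepsilon$ gives $H_{\mathrm{aff}}(y_1,\dots,y_k)\ge C\,H_{\mathrm{aff}}(\mathbf{x})^{1-\varepsilon}$ with $C=C_1^{|S|/[K:\QQ]}>0$.

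The only genuinely deep ingredient is Theorem \ref{Lemma1}, hence ultimately the Schlickewei--Schmidt subspace theorem, which I am free to invoke; the ineffectivity of $C$ is inherited from it. Beyond that, the sole point requiring care is that the maximizing index $i_0$ depends on the place $v$, so the $v$-uniformity of the constant in Theorem \ref{Lemma1} is indispensable, and one must keep the $H_S$-versus-$H_{\mathrm{aff}}$ comparison in the single direction actually used. Everything else is routine bookkeeping with the absolute values and with the auxiliary parameter $\varepsilon'$.
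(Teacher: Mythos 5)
Your proof is correct and follows essentially the same route as the paper's: the upper bound via the triangle inequality place by place, and the lower bound by applying Theorem \ref{Lemma1} at each $v\in S$ to the sum whose terms maximize $\|x_{ij}\|_v$, using $H_S(\mathbf{x}_i)\le H_{\mathrm{aff}}(\mathbf{x})$, multiplying over $S$, and absorbing the $\#S$-dependent losses into $C$ and $\varepsilon$. The only (cosmetic) difference is that you calibrate $\varepsilon'$ in advance where the paper renames $\varepsilon$ at the end.
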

	\begin{proof}
		The inequality on the right follows immediately from the triangle inequality, so we prove the one on the left. We let $\mathbf{x}_i\coloneqq(x_{i1},\ldots,x_{in_i})$ for each $i$. 
		By Theorem \ref{Lemma1}, we may take $C>0$ such that $\|y_i\|_v \geq C \cdot \max\{\|x_{i1}\|_v,\ldots,\|x_{in_i}\|_v\} \cdot H_S(\mathbf{x}_i)^{-\varepsilon}$ for each $i$ and each $v \in S$. Moreover, we may assume that $C \leq 1$.
		Since $H_S(\mathbf{x}_i)=H_{\mathrm{aff}}(\mathbf{x}_i) \leq H_{\mathrm{aff}}(\mathbf{x})$ for each $i$, we deduce from \eqref{Eq:Lemma1} that
\begin{align*}
    H_{\mathrm{aff}}(y_1,\dots, y_k)^{[K\colon \mathbb{Q}]}&\geq \prod_{v \in S}\max_i \{\|y_i\|_v,1\}\\
    & \geq \prod_{v \in S} \left( \max\left\{C \cdot H_{\mathrm{aff}}(\mathbf{x})^{-\varepsilon} \cdot\max_i\{\max\{\|x_{i1}\|_v,\ldots,\|x_{in_i}\|_v\}\} ,1\right\} \right)\\
    &\geq C^{\#S} \cdot H_{\mathrm{aff}}(\mathbf{x})^{-\#S \cdot \varepsilon} \cdot \prod_{v \in S} \max_i\{\max\{\|x_{i1}\|_v,\ldots,\|x_{in_i}\|_v\},1\} \\
    & \geq C^{\#S}\cdot H_{\mathrm{aff}}(\mathbf{x})^{[K\colon \mathbb{Q}]-\#S\cdot \varepsilon}.
\end{align*}

		After raising the above inequality to the power of $1\slash [K\colon \mathbb{Q}]$, and renaming $C$ and $\varepsilon$, we conclude the proof.
	\end{proof}
	
	As a consequence we get the following ``anti-triangular'' inequality.
	
	\begin{coro}\label{evertseinequality}
		Let $S$ be a finite set of places of a number field $K$ containing all archimedean ones. Then for every $\varepsilon >0$, the inequality
		\[
		h_{\mathrm{aff}}(s_1+\cdots+s_r) < \left( \frac{1}{r}-\varepsilon \right) \cdot (h_{\mathrm{aff}}(s_1)+\cdots+h_{\mathrm{aff}}(s_r))\text{   with   }s_i\in \cO_S^{*}
		\]
		has only finitely many solutions such that the sum $s_1+\cdots+s_r$ is non-degenerate (the number of solutions is {\bf ineffective}).
	\end{coro}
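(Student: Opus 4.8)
The plan is to deduce Corollary~\ref{evertseinequality} from Corollary~\ref{Lemma2} by a direct manipulation of heights, passing from the multiplicative inequality \eqref{EqLemma2} to the additive (logarithmic) one and then using Lemma~\ref{lfxbd} to compare $h_{\mathrm{aff}}(\mathbf{x})$ with $\sum_i h_{\mathrm{aff}}(s_i)$. Concretely, given a would-be solution $s_1+\cdots+s_r$ with $s_i\in\cO_S^*$ and the sum non-degenerate, apply Corollary~\ref{Lemma2} with $k=1$, $n_1=r$, $(x_{11},\ldots,x_{1r})=(s_1,\ldots,s_r)$, and $y_1=s_1+\cdots+s_r$. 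That yields, for every $\varepsilon'>0$, a constant $C=C(\varepsilon')>0$ with $H_{\mathrm{aff}}(s_1+\cdots+s_r)\geq C\cdot H_{\mathrm{aff}}(s_1,\ldots,s_r)^{1-\varepsilon'}$, i.e.\ in logarithmic form $h_{\mathrm{aff}}(s_1+\cdots+s_r)\geq (1-\varepsilon')\,h_{\mathrm{aff}}(s_1,\ldots,s_r)+\log C$.

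Next I would feed in the lower bound from Lemma~\ref{lfxbd}, namely $h_{\mathrm{aff}}(s_1,\ldots,s_r)\geq \tfrac{1}{r}\bigl(h_{\mathrm{aff}}(s_1)+\cdots+h_{\mathrm{aff}}(s_r)\bigr)$, to obtain
\[
h_{\mathrm{aff}}(s_1+\cdots+s_r)\;\geq\;\frac{1-\varepsilon'}{r}\bigl(h_{\mathrm{aff}}(s_1)+\cdots+h_{\mathrm{aff}}(s_r)\bigr)+\log C.
\]
Now suppose, for contradiction, that there are infinitely many non-degenerate solutions of the displayed strict inequality of the corollary, for some fixed $\varepsilon>0$. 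Choose $\varepsilon'$ small enough that $\tfrac{1-\varepsilon'}{r}>\tfrac{1}{r}-\tfrac{\varepsilon}{2}$; combining the two bounds gives, for all these solutions,
\[
\Bigl(\tfrac{1}{r}-\tfrac{\varepsilon}{2}\Bigr)\bigl(h_{\mathrm{aff}}(s_1)+\cdots+h_{\mathrm{aff}}(s_r)\bigr)+\log C\;<\;\Bigl(\tfrac{1}{r}-\varepsilon\Bigr)\bigl(h_{\mathrm{aff}}(s_1)+\cdots+h_{\mathrm{aff}}(s_r)\bigr),
\]
so $\tfrac{\varepsilon}{2}\bigl(h_{\mathrm{aff}}(s_1)+\cdots+h_{\mathrm{aff}}(s_r)\bigr)<-\log C$, a fixed bound. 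Hence $h_{\mathrm{aff}}(s_1)+\cdots+h_{\mathrm{aff}}(s_r)$ is bounded over all these solutions, so each $h_{\mathrm{aff}}(s_i)$ is bounded, and therefore (Northcott, together with the fact that all $s_i$ lie in the fixed number field $K$) there are only finitely many possibilities for $(s_1,\ldots,s_r)$ — contradicting the assumption that infinitely many solutions exist. This proves finiteness. The ineffectivity is inherited directly from the ineffective constant $C$ in Corollary~\ref{Lemma2} (ultimately from the subspace theorem), so no claim of effectivity is made.

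The only genuinely delicate point is bookkeeping with the constants and the order of quantifiers: one must pick $\varepsilon'$ (hence $C$) \emph{after} $\varepsilon$ is given but \emph{before} looking at the solutions, which is legitimate since Corollary~\ref{Lemma2} supplies a single constant $C$ valid for all tuples. One should also note the harmless technical point that if some $s_i$ were allowed to vanish the sum would be degenerate, so the hypothesis $s_i\in\cO_S^*$ together with non-degeneracy is exactly what is needed to invoke Corollary~\ref{Lemma2}. Everything else is routine; there is no serious obstacle, as all the hard analytic work has already been absorbed into Theorem~\ref{Lemma1} and Corollary~\ref{Lemma2}.
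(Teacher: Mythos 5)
Your proposal is correct and follows exactly the route the paper intends: apply Corollary \ref{Lemma2} with $k=1$, take logarithms of the lower bound in \eqref{EqLemma2}, combine with Lemma \ref{lfxbd}, and conclude via Northcott that the heights (hence the solutions) are bounded. The only difference is that you spell out the quantifier bookkeeping that the paper leaves implicit (and you correctly use the \emph{left}-hand inequality of \eqref{EqLemma2}, where the paper's one-line proof says ``right,'' apparently a slip).
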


	\begin{proof}
		This follows immediately by letting $k=1$ in Corollary \ref{Lemma2}, taking the logarithm of the inequality on the right in \eqref{EqLemma2} and using Lemma \ref{lfxbd}.
	\end{proof}
	
	Corollary \ref{Lemma2} yields another crucial consequence.
	
	\begin{coro}\label{Lem:exactasymp}
		Let $\mathbf{f}\colon \mathbb{Z}^r \to K^n$ be a vector of purely exponential polynomials, and let $u_i(\u{n}):=a_i \cdot \boldsymbol{\lambda}^{A_i\cdot \mathbf{n}^T}, i=1,\ldots,t$ be the list of all monomials of its components (the order or any possible repetitions are irrelevant). Then there exists a(n explicitly computable) constant $c_0$ and also, for every $\varepsilon >0$, there exists a(n ineffective) $c \in \R$ such that for all non-degenerate $\mathbf{n}$ we have:
		\[
		 (1-\varepsilon) h_{\mathrm{aff}}\left(\boldsymbol{\lambda}^{A_1\cdot \mathbf{n}^T},\ldots, \boldsymbol{\lambda}^{A_t\cdot \mathbf{n}^T}\right)-c  \leq h_{\mathrm{aff}}(\mathbf{f}(\mathbf{n})) \leq  h_{\mathrm{aff}}\left(\boldsymbol{\lambda}^{A_1\cdot \mathbf{n}^T},\ldots, \boldsymbol{\lambda}^{A_t\cdot \mathbf{n}^T}\right) +c_0 .
		\]
	\end{coro}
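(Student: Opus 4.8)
The plan is to deduce both inequalities from Corollary \ref{Lemma2}, after first arranging that everything in sight is an $S$-unit. First I would pass to a number field $L \supseteq K$ containing all the bases $\lambda_1,\ldots,\lambda_k$ and all the coefficients $a_1,\ldots,a_e$ of $\mathbf{f}$ (this is harmless, since $h_{\mathrm{aff}}$ is absolute), and then pick a finite set $S$ of places of $L$, containing the archimedean ones, large enough that $\lambda_1,\ldots,\lambda_k,a_1,\ldots,a_e \in \cO_S^*$; such an $S$ is finite and explicitly determined by $\mathbf{f}$. Then for every $\mathbf{n}\in\ZZ^r$ each value $\boldsymbol{\lambda}^{A_i\cdot\mathbf{n}^T}$, being a product of integer powers of $S$-units, lies in $\cO_S^*$, and so does each monomial $u_i(\mathbf{n})=a_i\boldsymbol{\lambda}^{A_i\cdot\mathbf{n}^T}$. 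Writing $f_l=\sum_{i\in I_l}u_i$ for the $l$-th component (with the characters $\boldsymbol{\lambda}^{A_i\cdot\mathbf{n}^T}$, $i\in I_l$, pairwise distinct, as fixed in our conventions), the hypothesis that $\mathbf{n}$ is $\mathbf{f}$-non-degenerate says exactly (Definition \ref{defdegenerate}) that for each $l$ the sum $f_l(\mathbf{n})=\sum_{i\in I_l}u_i(\mathbf{n})$ is a non-degenerate sum of $S$-units.

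Next I would apply Corollary \ref{Lemma2} to the $n$ sums $y_l=f_l(\mathbf{n})$, with the tuple $\mathbf{x}$ being the list of all monomials $u_1(\mathbf{n}),\ldots,u_t(\mathbf{n})$ (order and repetitions do not affect $H_{\mathrm{aff}}$, which is a maximum over coordinates). This yields, for every $\varepsilon>0$, an ineffective $C>0$ and an explicitly computable $C_0=\max_l\#I_l$ with $C\cdot H_{\mathrm{aff}}(u_1(\mathbf{n}),\ldots,u_t(\mathbf{n}))^{1-\varepsilon}\leq H_{\mathrm{aff}}(\mathbf{f}(\mathbf{n}))\leq C_0\cdot H_{\mathrm{aff}}(u_1(\mathbf{n}),\ldots,u_t(\mathbf{n}))$; taking logarithms in \eqref{EqLemma2} gives
\[
\log C+(1-\varepsilon)\,h_{\mathrm{aff}}(u_1(\mathbf{n}),\ldots,u_t(\mathbf{n}))\ \leq\ h_{\mathrm{aff}}(\mathbf{f}(\mathbf{n}))\ \leq\ \log C_0+h_{\mathrm{aff}}(u_1(\mathbf{n}),\ldots,u_t(\mathbf{n})).
\]

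It then remains to replace $h_{\mathrm{aff}}(u_1(\mathbf{n}),\ldots,u_t(\mathbf{n}))$ by $h_{\mathrm{aff}}(\boldsymbol{\lambda}^{A_1\cdot\mathbf{n}^T},\ldots,\boldsymbol{\lambda}^{A_t\cdot\mathbf{n}^T})$, i.e.\ to discard the fixed coefficients. Viewing both as logarithmic heights of the projective points $(1:u_1(\mathbf{n}):\cdots:u_t(\mathbf{n}))$ and $(1:\boldsymbol{\lambda}^{A_1\cdot\mathbf{n}^T}:\cdots:\boldsymbol{\lambda}^{A_t\cdot\mathbf{n}^T})$, which differ by the coordinatewise product with the fixed point $(1:a_1:\cdots:a_t)$, Lemma \ref{Lem:heightdifference} gives
\[
\left|\,h_{\mathrm{aff}}(u_1(\mathbf{n}),\ldots,u_t(\mathbf{n}))-h_{\mathrm{aff}}(\boldsymbol{\lambda}^{A_1\cdot\mathbf{n}^T},\ldots,\boldsymbol{\lambda}^{A_t\cdot\mathbf{n}^T})\,\right|\ \leq\ c_1,
\]
with $c_1:=\max\{h(1:a_1:\cdots:a_t),\,h(1:a_1^{-1}:\cdots:a_t^{-1})\}$ explicitly computable from $\mathbf{f}$. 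Substituting into the previous display, the right-hand inequality gives the upper bound with $c_0:=\log C_0+c_1$ (explicitly computable), and the left-hand inequality gives the lower bound with $c:=c_1+|\log C|$, which depends on $\varepsilon$ and is ineffective through $C$. This completes the proof.

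\textbf{On the main difficulty.} There is essentially no hard step left: all the Diophantine content — in particular the appeal to the Schlickewei--Schmidt subspace theorem — has already been absorbed into Theorem \ref{Lemma1} and Corollary \ref{Lemma2}. The two points requiring care are (i) verifying that $\mathbf{f}$-non-degeneracy of $\mathbf{n}$ translates into non-degeneracy of each component \emph{sum} in the sense demanded by Corollary \ref{Lemma2} — this is precisely Definition \ref{defdegenerate} together with the convention that the characters within one component are distinct; and (ii) bookkeeping which constants survive as explicitly computable: $c_0$ arises only from the triangle inequality and the fixed coefficient adjustment, whereas $c$ inherits the ineffectivity of the subspace theorem via $C$.
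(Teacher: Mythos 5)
Your proposal is correct and follows essentially the same route as the paper's proof: extend $K$ and enlarge $S$ so that all bases and coefficients are $S$-units, apply Corollary \ref{Lemma2} componentwise to get the two-sided bound in terms of $h_{\mathrm{aff}}(u_1(\mathbf{n}),\ldots,u_t(\mathbf{n}))$, and then absorb the fixed coefficients via Lemma \ref{Lem:heightdifference}. Your extra bookkeeping of which constants remain effective is consistent with the paper's (briefer) remarks.
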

	\begin{proof}
		Extending $K$ and choosing $S$ appropriately, we assume that all bases and coefficients of the $u_i$ are $S$-units in $K$. 
		Applying Corollary \ref{Lemma2} to the sums corresponding to the components of $\mathbf{f}$, we deduce that there exist $c_0,c \in \R$ such that:
		\[
	(1-\varepsilon) h_{\mathrm{aff}}\left(u_1(\mathbf{n}),\ldots, u_k(\mathbf{n})\right)-c	   \leq h_{\mathrm{aff}}(\mathbf{f}(\mathbf{n})  \leq h_{\mathrm{aff}}(u_1(\mathbf{n}),\ldots, u_k(\mathbf{n})) +c_0 
		\]
      
		To conclude it suffices to observe that $h_{\mathrm{aff}}(u_1(\mathbf{n}),\ldots, u_k(\mathbf{n}))- h_{\mathrm{aff}}\left(\boldsymbol{\lambda}^{A_1\cdot \mathbf{n}^T},\ldots, \boldsymbol{\lambda}^{A_k\cdot \mathbf{n}^T}\right)=O(1)$ as $\|\mathbf{n}\|_{\infty}\to \infty$ by Lemma \ref{Lem:heightdifference}, where the bounding constant is explicitly computable in terms of $a_1,\ldots,a_k$.
	\end{proof}
	Lastly, we need the following crucial fact (which we will repeatedly use). Despite the simplicity of its statement, the verification is non-trivial.
	
	\begin{prop}\label{Prop:norm2} 
        Keeping the notation and assumptions of the above corollary, let $\mathcal{K}\coloneqq \bigcap\limits_{i=1}^t \ker (\mathbf{n}\mapsto \boldsymbol{\lambda}^{A_i\cdot \mathbf{n}^T})$. 
		 Then the function $\u{n} \mapsto h_{\mathrm{aff}}\left(\boldsymbol{\lambda}^{A_1\cdot \mathbf{n}^T},\ldots, \boldsymbol{\lambda}^{A_t\cdot \mathbf{n}^T}\right)$ extends to a norm on $(\ZZ^r/\mathcal{K})\otimes_{\ZZ}\RR$.
	\end{prop}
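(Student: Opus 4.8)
The plan is to write the function of interest explicitly as a finite sum of local ``support functions'', from which convexity, positive homogeneity and non-negativity are immediate, and then to identify its null set with $\mathcal{K}\otimes_\ZZ\RR$ by means of the product formula and Kronecker's theorem on heights. Write $\chi_i(\mathbf{n}):=\boldsymbol{\lambda}^{A_i\cdot\mathbf{n}^T}$ and $\Phi(\mathbf{n}):=h_{\mathrm{aff}}(\chi_1(\mathbf{n}),\dots,\chi_t(\mathbf{n}))$. First I would enlarge $K$ to contain all the bases $\lambda_1,\dots,\lambda_k$ and enlarge $S$ so that every $\lambda_j$ is an $S$-unit — this changes neither the heights nor $\mathcal{K}$ — so that every $\chi_i(\mathbf{n})\in\cO_S^{*}$ and only the places $v\in S$ contribute to the affine height. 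Setting $\mathbf{c}_v=(\log\|\lambda_1\|_v,\dots,\log\|\lambda_k\|_v)\in\RR^k$ and $\ell_{i,v}(\mathbf{x}):=(A_i\mathbf{x})\cdot\mathbf{c}_v$, an $\RR$-linear form in $\mathbf{x}$ (with $\ell_{0,v}\equiv 0$ corresponding to the homogenizing coordinate), we obtain
\[
\Phi(\mathbf{n})=\frac{1}{[K:\QQ]}\sum_{v\in S}\max\bigl\{0,\ell_{1,v}(\mathbf{n}),\dots,\ell_{t,v}(\mathbf{n})\bigr\}.
\]

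Each inner maximum is the support function of the finite set $\{0,A_1^{T}\mathbf{c}_v,\dots,A_t^{T}\mathbf{c}_v\}\subset\RR^r$, hence a finite, non-negative, convex, positively $1$-homogeneous function on $\RR^r$; so $\Phi$ extends to the support function $\tilde\Phi$ of the Minkowski sum $\tfrac{1}{[K:\QQ]}\sum_{v\in S}\mathrm{conv}\{0,A_1^{T}\mathbf{c}_v,\dots,A_t^{T}\mathbf{c}_v\}$, which is in particular convex, positively $1$-homogeneous, subadditive and $\ge 0$, and which restricts to $\Phi$ on $\ZZ^r$. For the null set, the key input is the product formula: for each fixed $i$ and each $\mathbf{n}$ one has $\sum_{v\in S}\ell_{i,v}(\mathbf{n})=0$ (applied to $\chi_i(\mathbf{n})\in\cO_S^{*}$, so that only places in $S$ contribute), an identity of $\RR$-linear forms valid on all of $\RR^r$. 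Consequently $\tilde\Phi(\mathbf{x})=0$ forces every $\ell_{i,v}(\mathbf{x})\le 0$ and hence, a sum of non-positive reals being zero, every $\ell_{i,v}(\mathbf{x})=0$; thus the null set $W=\{\tilde\Phi=0\}$ is the linear subspace $\{\mathbf{x}:A_i\mathbf{x}\in\{\mathbf{c}_v:v\in S\}^{\perp}\ \forall i\}$. By the $S$-unit theorem, $\{\mathbf{c}_v:v\in S\}^{\perp}$ is the $\RR$-span of the lattice of multiplicative relations among $\lambda_1,\dots,\lambda_k$, hence rational; therefore $W$ is a rational subspace and $W=\mathrm{span}_\RR(W\cap\ZZ^r)$. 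Finally, if $\mathbf{n}\in W\cap\ZZ^r$ then each $\chi_i(\mathbf{n})$ is an $S$-unit of height zero, hence a root of unity (Kronecker), so $N\mathbf{n}\in\mathcal{K}$ for some $N\ge1$; thus $W\cap\ZZ^r$ is exactly the saturation of $\mathcal{K}$ in $\ZZ^r$ and $W=\mathcal{K}\otimes_\ZZ\RR$. Hence $\tilde\Phi$ descends to a convex, positively $1$-homogeneous, subadditive, positive-definite function on $\RR^r/(\mathcal{K}\otimes\RR)=(\ZZ^r/\mathcal{K})\otimes_\ZZ\RR$, i.e.\ to a norm; when $\mathbf{f}$ is reduced one has $\mathcal{K}=0$ and this is a norm directly on $\RR^r$, which also yields the last clause of Proposition \ref{Prop:norm}.

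The step that requires genuine input beyond formal convex geometry is the symmetry $\tilde\Phi(-\mathbf{x})=\tilde\Phi(\mathbf{x})$ (equivalently, full homogeneity $\tilde\Phi(t\mathbf{x})=|t|\tilde\Phi(\mathbf{x})$) needed to pass from ``gauge of a convex body with $0$ in its interior'' to ``norm'': replacing $\mathbf{n}$ by $-\mathbf{n}$ turns $\max_i\ell_{i,v}$ into $-\min_i\ell_{i,v}$ at each place, and reconciling the two amounts to a telescoping over all $v\in S$ that uses the global relations $\sum_{v\in S}\ell_{i,v}=0$ rather than any place-by-place comparison; I expect this to be the main obstacle. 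A robust way to nail it down — and in any case all that the asymptotic counting of \S\ref{pfht} requires, since there only the shape of $\{\tilde\Phi\le 1\}$ up to comparison with a norm is used — is to squeeze $\Phi$, via Lemma \ref{lfxbd}, between $\tfrac1t\sum_i h_{\mathrm{aff}}(\chi_i(\mathbf{n}))$ and $\sum_i h_{\mathrm{aff}}(\chi_i(\mathbf{n}))$: the latter is visibly a seminorm, because $h_{\mathrm{aff}}(\chi_i(\mathbf{n})^{m})=|m|\,h_{\mathrm{aff}}(\chi_i(\mathbf{n}))$ (the identity $h(x)=h(x^{-1})$ on $\mathbb{P}^1$), $h_{\mathrm{aff}}(\chi_i(\mathbf{m}+\mathbf{n}))\le h_{\mathrm{aff}}(\chi_i(\mathbf{m}))+h_{\mathrm{aff}}(\chi_i(\mathbf{n}))$ by Lemma \ref{Lem:heightdifference}, and it vanishes exactly on the saturation of $\mathcal{K}$ by Kronecker; hence it induces a genuine norm on $(\ZZ^r/\mathcal{K})\otimes_\ZZ\RR$ equivalent to $\Phi$. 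Everything else — support functions, Minkowski sums, the product formula, the $S$-unit theorem, and Kronecker's theorem — is standard.
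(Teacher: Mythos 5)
Your argument supplies a complete proof where the paper offers none (the paper defers to \cite[p.~136]{BombieriGubler} and \cite[\S 3.4]{Zannier09}, whose arguments are built around the single-character situation, where $h(x)=h(x^{-1})$ makes symmetry automatic). The support-function decomposition $\Phi(\mathbf{n})=\frac{1}{[K:\QQ]}\sum_{v\in S}\max\{0,\ell_{1,v}(\mathbf{n}),\dots,\ell_{t,v}(\mathbf{n})\}$, the identification of the null set via the product formula, the rationality of that null space via the $S$-unit theorem, and the identification of its integer points with the saturation of $\mathcal{K}$ via Kronecker's theorem are all correct and are exactly the content the paper needs. The one point you leave open is the symmetry $\tilde\Phi(-\mathbf{x})=\tilde\Phi(\mathbf{x})$, and your suspicion that this is the real obstacle is justified: for $t\geq 2$ the function is genuinely \emph{not} symmetric. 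For instance, with $\chi_1(\mathbf{n})=2^{n_1}$ and $\chi_2(\mathbf{n})=3^{n_2}$ over $\QQ$ one has $h_{\mathrm{aff}}(2,3)=\log 3$ while $h_{\mathrm{aff}}(1/2,1/3)=\log H(6:3:2)=\log 6$. So $\Phi$ does not literally extend to a norm in the centrally symmetric sense; the proposition is correct only if ``norm'' is read as ``Minkowski gauge of a bounded convex body containing $0$ in its interior,'' i.e.\ a convex, positively $1$-homogeneous function on $(\ZZ^r/\mathcal{K})\otimes\RR$ that vanishes only at the origin --- which is precisely what your support-function argument establishes, and is all that the paper ever uses (the lattice-point count $\#\{\mathbf{n}\colon\tilde\Phi(\mathbf{n})\leq T\}\sim\mathrm{vol}(\{\tilde\Phi\leq 1\})\cdot T^{\rk(\ZZ^r/\mathcal{K})}$ in the proof of Theorem \ref{firstmainthm}, and the volume interpretation of $c$ in Remark \ref{Rmk:meaningc}, need only the gauge property, not central symmetry). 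Your fallback squeeze $\frac1t\sum_i h(\chi_i(\cdot))\leq\Phi\leq\sum_i h(\chi_i(\cdot))$ does produce a genuine (symmetric) norm equivalent to $\Phi$, but used alone it would lose the exact asymptotic constant, so it should be kept only as a remark: the right conclusion is to retain the first half of your proof verbatim, abandon the attempt to prove symmetry (it is false), and record that ``norm'' in the statement must be understood as a gauge.
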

      This proposition can be verified by using Minkowski's first theorem about convex bodies, see \cite[p.\ 136]{BombieriGubler} for details. It is worth pointing out that the method in \cite{BombieriGubler} is a natural analog to Cassels' proof \cite[Appendix D]{Cassels1966} of the fact that the canonical height $\hat{h}$ of an elliptic curve $E$ over a number field $K$ can be extended to a positive definite quadratic form on $E(K)\otimes_{\ZZ}\RR$. See also \cite[\S 3.4]{Zannier09} for a self-contained proof of Proposition \ref{Prop:norm2}. 
\begin{proof}[Proof of Proposition \ref{Prop:norm}]
    Combine Proposition \ref{Prop:norm2} with Corollary \ref{Lem:exactasymp}, and note that when $\mathbf{f}$ is reduced, $\mathcal{K}$ is trivial.
\end{proof}
	
	\section{Generalities about Purely Exponential Polynomials}\label{generalitypep}
	In this section, we provide some further examples of (PEP), and prove several fundamental properties of PEP sets. Other than as subsets of a linear group possessing bounded generation by semi-simple elements, (PEP) sets also arise naturally in the following situations. 
	
	\begin{exam} {\rm Let $S=\{v_1,\dots,v_s\}$ be a finite set of places of a number field $K$, containing all infinite ones. Then according to Dirichlet's $S$-unit theorem, there exist $\varepsilon_1,\dots,\varepsilon_{s-1}$ such that every $\varepsilon \in \cO_S^*$ can be expressed uniquely as 
		$$\varepsilon=\eta \varepsilon_1^{b_1}\dots \varepsilon_{s-1}^{b_{s-1}}$$
		where $\eta$ is a root of unity in $K$ and $b_1,\dots,b_{s-1}$ are rational integers. (Recall that such $\{\varepsilon_1,\dots,\varepsilon_{s-1}\}$ is called a {\bf fundamental system of $S$-units}.) Thus $\cO_S^*$ is a (PEP) set. This is relevant to the discussion in \S \ref{noteffective}. }
	\end{exam}
	More classical but equally important examples of (PEP) include linear recurrence sequences.
	\begin{exam} {\rm Any simple linear recurrence sequence (those whose characteristic polynomial has no multiple roots) produces a (PEP) set. A lot of interesting diophantine properties of this kind of (PEP) sets have been discovered by Corvaja, Evertse, van der Poorten, Schlickewei, Schmidt, and Zannier etc. See \cite{evertse84,Schmidt99,CZcompositio02}, the survey article of Schmidt \cite{Schmidt00}, or the book by Corvaja-Zannier \cite[Chapter 4]{CorvajaZannier} for a survey of recent developments in this direction. }
	\end{exam}
	
	The following assertion shows that a finite union of (PEP) sets is still a (PEP) set. 
	\begin{prop}\label{Prop:Union} 
		Let $\Sigma_1,\ldots, \Sigma_s$ be (PEP) sets in $K^r$; then 
		$$\Delta =\Sigma_1\cup\cdots \cup\Sigma_s $$
		is also a (PEP) set.
	\end{prop}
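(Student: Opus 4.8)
The plan is to reduce to the case $s=2$ by an easy induction and then to glue two parametrizations together by inserting one extra integer variable $t$ whose parity selects which of the two sets we land in. Concretely, I would first isolate the elementary closure properties of purely exponential polynomials that make such a construction possible, and then exhibit an explicit ``switching'' parametrization.

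The two facts I would establish up front are: (i) for a \emph{fixed} tuple of variables, the purely exponential polynomials form a ring under the usual pointwise operations --- closure under addition is built into \eqref{Eq:expressionPEP}, and closure under multiplication holds because a product of monomials $\boldsymbol{\lambda}^{A\cdot \mathbf{n}^T}\cdot \boldsymbol{\mu}^{B\cdot \mathbf{n}^T}$ is again a monomial (concatenate the base tuples and stack the exponent matrices), after which one distributes over the finite sums; and (ii) purely exponential polynomials in disjoint sets of variables remain purely exponential polynomials when viewed as functions on the union of those variables. Neither point is more than a bookkeeping check, but both are needed. I would also record that $\overline{K}$, being algebraically closed of characteristic $0$, contains $-1$ and $\tfrac12$, so that $e_0(t)\coloneqq \tfrac12+\tfrac12(-1)^{t}$ and $e_1(t)\coloneqq \tfrac12-\tfrac12(-1)^{t}$ are legitimate one-variable purely exponential polynomials with $e_0(t),e_1(t)\in\{0,1\}$, $e_0(t)=1\iff t$ even, and $e_0+e_1\equiv 1$.

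With these in hand, for $s=2$ I would write $\Sigma_1=\mathbf{f}(\ZZ^p)$ and $\Sigma_2=\mathbf{g}(\ZZ^q)$ with $\mathbf{f}=(f_1,\dots,f_n)$, $\mathbf{g}=(g_1,\dots,g_n)$ vectors of purely exponential polynomials landing in the same affine space, introduce a fresh variable $t$, and set
\[
h_i(\mathbf{x},\mathbf{y},t)\;=\;e_0(t)\,f_i(\mathbf{x})\;+\;e_1(t)\,g_i(\mathbf{y}),\qquad 1\le i\le n,
\]
which by (i) and (ii) is a purely exponential polynomial in the $p+q+1$ variables $(\mathbf{x},\mathbf{y},t)$. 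Then $\mathbf{h}=(h_1,\dots,h_n)$ equals $\mathbf{f}(\mathbf{x})$ when $t$ is even and $\mathbf{g}(\mathbf{y})$ when $t$ is odd, so $\mathbf{h}(\ZZ^{p+q+1})=\mathbf{f}(\ZZ^p)\cup\mathbf{g}(\ZZ^q)=\Sigma_1\cup\Sigma_2$, proving $\Sigma_1\cup\Sigma_2$ is (PEP); induction on $s$ then finishes. As a variant that treats all $s$ at once, one can replace $-1$ by a primitive $s$-th root of unity $\zeta_s\in\overline{K}$: the functions $\tfrac1s\sum_{k=0}^{s-1}\zeta_s^{\,k(t-j)}$ are purely exponential in the single variable $t$ and equal the indicator of $t\equiv j\pmod s$, so $\sum_{j=1}^s\big(\tfrac1s\sum_{k=0}^{s-1}\zeta_s^{\,k(t-j)}\big)\mathbf{f}^{(j)}(\mathbf{x}^{(j)})$ parametrizes $\Sigma_1\cup\cdots\cup\Sigma_s$ in $r_1+\cdots+r_s+1$ variables.

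I do not expect a genuine obstacle here: the only thing that really needs verification is the product-closure in (i), which is routine and is in any case the same observation underlying the remark that the image of a (PEP) set under a polynomial map is again (PEP).
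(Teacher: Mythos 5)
Your proposal is correct and is essentially the paper's own proof: the expression $e_0(t)f_i(\mathbf{x})+e_1(t)g_i(\mathbf{y})$ expands to exactly $\tfrac{f_i(\mathbf{x})+g_i(\mathbf{y})}{2}+(-1)^t\,\tfrac{f_i(\mathbf{x})-g_i(\mathbf{y})}{2}$, which is the switching parametrization the paper uses, followed by the same induction on $s$. The preliminary closure checks and the root-of-unity variant are fine but not needed beyond what the paper already does implicitly.
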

	\begin{proof} It suffices to consider the case $s=2$ and then conclude by induction. Let $\mathbf{f_i}\colon \mathbb{Z}^{m_i}\to K^r,i=1,2$ be vectors of purely exponential polynomials such that $\mathbf{f_i}(\mathbb{Z}^{m_i})=\Sigma_i.$
		Consider $\widetilde{\mathbf{f}}\colon \mathbb{Z}^{m_1+m_2+1}\to K^r$ defined as
		$$\widetilde{\mathbf{f}}(x_1,\dots,x_{m_1},y_1,\dots, y_{m_2};z)=\frac{\mathbf{f}_1(\mathbf{x})+\mathbf{f}_2(\mathbf{y})}{2}+(-1)^z\cdot \frac{\mathbf{f}_1(\mathbf{x})-\mathbf{f}_2(\mathbf{y})}{2}.$$
		This is also a (PEP), and its image is exactly $\Sigma_1\cup\Sigma_2$.
	\end{proof}
	As a consequence, we obtain that any {\bf finite set} in $K^r$ is a (PEP) set, which is false in general for ordinary polynomially parameterizable sets. 
	\begin{rema} {\rm Although the above proposition guarantees that a (PEP) set is always the image of a single vector of purely exponential polynomials, in practice, it is often helpful to do the converse, i.e.\  we might need to decompose a (PEP) set into simpler ones, see Proposition \ref{decomposepep}.}
	\end{rema}
	
	\begin{defi}{\rm
		The {\bf rank} of a coset $M$ in a commutative group is the rank of the abelian group $M-M$, which we still denote by $\rk M$, or more precisely, $\rk _{\ZZ} M$. By convention, $\rk_{\ZZ} \varnothing=-1$.}
	\end{defi}
	With the help of this notion, we introduce the following useful terminology.
	\begin{defi} \label{quasicoset} A {\bf quasi-coset} in $\ZZ ^r$ is a subset of $\ZZ^r$ which has shape $M\backslash (N_1\cup \cdots \cup N_l)$ where $M$ is a coset of $\ZZ^r$ and $N_i$ are cosets contained in $M$ such that $\rk_{\ZZ}N_i<\rk_{\ZZ} M$ for each $i$.
	
	\end{defi}
	The next result describes the zero set of a vector of purely exponential polynomials, and is basically a consequence of Laurent's theorem \cite[Theorem 2.7]{CorvajaZannier} (although we will adopt Evertse's result to prove it). Notice that the zeros of purely exponential polynomials, or more generally, of = exponential polynomials, have been studied in depth from different aspects, cf. e.g.\  \cite{SchlickeweiSchmidt00,Schmidt99,CSZ09}.
	\begin{theo}[Zero Locus Theorem] \label{zerolocuspep}
		Let $\mathbf{f}=(f_1,\dots,f_s)$ be a vector of purely exponential polynomials in $r$ variables; then the zero set $\{\mathbf{n}\in \ZZ^r\colon \mathbf{f}(\mathbf{n})=\mathbf{0}\}$ is a finite union of cosets, whose underlying groups belong to an explicitly computable finite set. Moreover, if the bases of $\mathbf{f}$ are multiplicatively independent, then those cosets all have rank $<r$. 
	\end{theo}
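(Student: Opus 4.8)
\emph{Plan.} The plan is to reduce to a single purely exponential polynomial and then induct on its number of monomials, feeding the ``anti-triangular'' inequality of Corollary \ref{evertseinequality} into the inductive step. (Everything could alternatively be deduced from Laurent's theorem on intersections of subvarieties of tori with finitely generated groups, but I will take the more hands-on route.) Write $Z(g)\coloneqq\{\mathbf{n}\in\ZZ^r:g(\mathbf{n})=\mathbf{0}\}$. Since $Z(\mathbf{f})=\bigcap_{i=1}^s Z(f_i)$ and the intersection of two cosets of $\ZZ^r$ is again a coset (its underlying group being the intersection of the two) or $\varnothing$, it suffices to treat a single component $f$: a finite intersection of finite unions of cosets is a finite union of cosets, and intersecting two explicitly computable finite families of subgroups yields another such family. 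Write $f=\sum_{j=1}^e u_j$ with $u_j(\mathbf{n})=a_j\boldsymbol{\lambda}^{A_j\cdot\mathbf{n}^T}$, $a_j\neq 0$, the characters $\boldsymbol{\lambda}^{A_j\cdot\mathbf{n}^T}$ pairwise distinct (so the $A_j$ pairwise distinct), and induct on $e$: for $e\le 1$ the set $Z(f)$ is $\ZZ^r$ or $\varnothing$; for $e\ge 2$ I split $Z(f)$ according to whether the sum $\sum_j u_j(\mathbf{n})$ is \emph{minimally vanishing} (no proper non-empty subsum equals $0$) or not.

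\emph{Inductive step.} On the minimally vanishing stratum $Z_0$, after enlarging $K$ and $S$ so that all $a_j$ and all bases become $S$-units, the ratios $w_j\coloneqq u_j(\mathbf{n})/u_1(\mathbf{n})$ ($2\le j\le e$) are $S$-units satisfying the non-degenerate identity $w_2+\cdots+w_e=-1$; by Corollary \ref{evertseinequality}, together with Northcott's theorem to absorb the solutions with $h_{\mathrm{aff}}(w_2)=\cdots=h_{\mathrm{aff}}(w_e)=0$, the tuple $(w_2,\dots,w_e)$ takes only finitely many values. For each admissible value the conditions $\boldsymbol{\lambda}^{(A_j-A_1)\cdot\mathbf{n}^T}=\text{const}$ ($2\le j\le e$) cut out $\varnothing$ or a single coset of $G\coloneqq\bigcap_{j=2}^{e}\ker\bigl(\mathbf{n}\mapsto\boldsymbol{\lambda}^{(A_j-A_1)\cdot\mathbf{n}^T}\bigr)$, so $Z_0$ is a finite union of cosets of $G$. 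Off $Z_0$, some proper non-empty $J\subsetneq\{1,\dots,e\}$ has $\sum_{j\in J}u_j(\mathbf{n})=0$, hence also $\sum_{j\notin J}u_j(\mathbf{n})=0$, so that part lies in $\bigcup_{J}\bigl(Z(f_J)\cap Z(f_{J^c})\bigr)$ with $f_J\coloneqq\sum_{j\in J}u_j$ having fewer than $e$ monomials; apply the inductive hypothesis to $f_J$ and $f_{J^c}$. Thus $Z(f)$ is a finite union of cosets whose underlying groups are intersections of kernels of maps $\mathbf{n}\mapsto\boldsymbol{\lambda}^{(A_p-A_q)\cdot\mathbf{n}^T}$, an explicitly computable finite list, and unwinding the reduction gives the same for $\mathbf{f}$.

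\emph{Rank.} Now assume the bases $\lambda_1,\dots,\lambda_k$ are multiplicatively independent. Then $\boldsymbol{\lambda}^{C\cdot\mathbf{n}^T}=1\iff C\mathbf{n}=0$, so $\ker(\mathbf{n}\mapsto\boldsymbol{\lambda}^{C\cdot\mathbf{n}^T})$ has rank $<r$ whenever $C\neq 0$. Every subgroup produced above is an intersection of such kernels with $C=A_p-A_q\neq 0$, and no nonzero $f_J$ vanishes identically (Artin's independence of characters, Remark \ref{pepchar}), so the recursion never returns the whole group; hence each coset of $Z(f)$ has rank $<r$. For a vector $\mathbf{f}$ — excluding the trivial case $\mathbf{f}\equiv 0$, in which the hypothesis is vacuous — each $f_i$ still has multiplicatively independent bases and $Z(\mathbf{f})\subseteq Z(f_{i_0})$ for some component $f_{i_0}\not\equiv 0$; since a coset of rank $r$ (a coset of a finite-index subgroup) cannot be covered by finitely many cosets of strictly smaller rank, $Z(\mathbf{f})$ contains no rank-$r$ coset, and being a finite union of cosets all of them have rank $<r$.

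\emph{Main obstacle.} The crux is the correct use of the Diophantine input: Corollary \ref{evertseinequality} does \emph{not} bound the number of $\mathbf{n}$ with $f(\mathbf{n})=\mathbf{0}$ (typically there are infinitely many), but only the number of \emph{projective} values of $\bigl(u_1(\mathbf{n}):\cdots:u_e(\mathbf{n})\bigr)$ along a minimally vanishing stratum; it is the elementary observation that each such value has a full coset of $\ZZ^r$ as its fibre that converts this finiteness into ``finite union of cosets.'' A secondary point requiring care is the bookkeeping of the minimally-vanishing stratification, so that genuine non-degeneracy is available at each invocation of Corollary \ref{evertseinequality} and the induction on $e$ terminates with underlying groups drawn from one explicit finite list.
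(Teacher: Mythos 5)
Your proof is correct and follows essentially the same route as the paper's: both reduce to a single component and stratify the zero set by the partition of the monomials into minimally vanishing blocks, then invoke the finiteness, up to $\cO_S^*$-scaling, of non-degenerate solutions of the homogeneous $S$-unit equation (which the paper packages as Theorem \ref{sunitfinitecoset} and you re-derive from Corollary \ref{evertseinequality} plus Northcott -- a valid, standard deduction). The only differences are organizational: you run an induction on the number of monomials via binary splits $J\sqcup J^c$ where the paper uses a one-shot partition into blocks, and you work directly in $\ZZ^r$ rather than pulling back cosets of $(\cO_S^*)^h$ along the character map.
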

	In particular, letting $s=r=1$, Theorem \ref{zerolocuspep} implies that the zero locus of a simple linear recurrence is a union of a finite set with finitely many arithmetic progressions. This is also a well-known consequence of the Skolem-Mahler-Lech Theorem \cite[\S 2.1]{Recurrencebook}.
	\begin{rema} {\rm The condition ``multiplicatively independent'' in the last assertion of Theorem \ref{zerolocuspep} is necessary. For example the zero locus of $2^n+(-2)^n$ is $2\ZZ+1$, whose rank is equal to $1$, the rank of $\ZZ$.}
	\end{rema}
	In order to prove the Zero Locus Theorem, we introduce the following well-known property of $S$-unit equations, which turns out to be a non-trivial reformulation of Evertse-van der Poorten-Schlickewei's theorem about the finiteness of non-degenerate solutions of $S$-unit equations \cite[Theorem 2.4]{CorvajaZannier}.
	\begin{theo}\label{sunitfinitecoset}
		Let $K$ be a number field, $S$ be a finite set of places of $K$ containing all infinite ones. Let $h\in \NN$ and $a_1,\dots,a_h\in \cO^*_S$. Then the set of solutions of
		\begin{equation}\label{sunitequationcosets}
		a_1s_1+\cdots+a_hs_h=0,\text{ for }(s_1,\dots,s_h)\in (\cO_S^*)^h
		\end{equation}
		is {\bf equal} to a finite union of translates of subgroups of $(\cO_S^*)^h$. Moreover, there exists a finite set of explicitly computable subgroups of $(\cO_S^*)^h$ such that each of the translates above is a translate of one of these subgroups.
	\end{theo}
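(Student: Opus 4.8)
The plan is to stratify the solution set by \emph{degeneracy type} and to apply, on each stratum, the finiteness theorem of Evertse--van der Poorten--Schlickewei on $S$-unit equations (\cite[Theorem 2.4]{CorvajaZannier}). Write $V \subseteq (\cO_S^*)^h$ for the set of solutions of $a_1 s_1 + \cdots + a_h s_h = 0$. Every coordinate of a point of $V$ lies in $\cO_S^* \subseteq K^*$, hence is nonzero; so for each $\mathbf{s} \in V$ one can partition the index set $\{1,\dots,h\} = I_1 \sqcup \cdots \sqcup I_m$, with all $|I_j| \ge 2$ (a singleton block would force some $a_i s_i = 0$), such that $\sum_{i \in I_j} a_i s_i = 0$ for each $j$ and no proper nonempty subsum of any such block vanishes — concretely, start from the trivial partition (for which the block sum is the whole equation) and keep refining: whenever a block sum has a vanishing proper subsum, split the block accordingly; the process stops after finitely many steps at a partition whose blocks are all non-degenerate. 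There are only finitely many set partitions of $\{1,\dots,h\}$, and for each such partition $\mathcal{P} = \{I_1,\dots,I_m\}$ let $V_{\mathcal P}$ be the set of $\mathbf{s} \in (\cO_S^*)^h$ for which $\sum_{i \in I_j} a_i s_i = 0$ and is non-degenerate for every $j$. Then $V = \bigcup_{\mathcal P} V_{\mathcal P}$ is a finite union, and, identifying $(\cO_S^*)^h \cong \prod_j (\cO_S^*)^{I_j}$ via the partition, $V_{\mathcal P}$ is the product $\prod_j W_{I_j}$, where $W_{I_j} \subseteq (\cO_S^*)^{I_j}$ is the set of non-degenerate solutions of the block equation $\sum_{i \in I_j} a_i s_i = 0$.

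The next step is to analyse a single block. Fix a subset $I \subseteq \{1,\dots,h\}$ with $|I| \ge 2$ and let $W_I$ be the set of $(s_i)_{i \in I} \in (\cO_S^*)^I$ with $\sum_{i \in I} a_i s_i = 0$ and no vanishing proper subsum. The diagonal subgroup $\Delta_I := \{(t,\dots,t) : t \in \cO_S^*\} \le (\cO_S^*)^I$ acts on $W_I$ by coordinatewise multiplication — scaling by an $S$-unit preserves both membership in $(\cO_S^*)^I$ and non-degeneracy — and two elements of $W_I$ lie in the same $\Delta_I$-orbit exactly when they represent the same point of $\mathbb{P}^{|I|-1}(K)$, since the ratio of two $S$-units is again an $S$-unit. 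By the Evertse--van der Poorten--Schlickewei theorem, the equation $\sum_{i \in I} a_i x_i = 0$ has only finitely many solutions in $\mathbb{P}^{|I|-1}(K)$ all of whose coordinates are $S$-units and which have no vanishing proper subsum; therefore $W_I$ is a finite (possibly empty) union of cosets of $\Delta_I$.

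Finally one assembles the pieces. For each partition $\mathcal P = \{I_1,\dots,I_m\}$, expressing each $W_{I_j}$ as a finite union of cosets of $\Delta_{I_j}$ exhibits $V_{\mathcal P} = \prod_j W_{I_j}$ as a finite union of cosets of the subgroup $H_{\mathcal P} := \prod_j \Delta_{I_j} \le (\cO_S^*)^h$; taking the union over the finitely many admissible $\mathcal P$ shows that $V$ is a finite union of translates of subgroups, each translate being a translate of one of the $H_{\mathcal P}$. Since $H_{\mathcal P}$ depends only on $h$ and on the combinatorics of the partition — not on $K$, $S$, or the $a_i$ — the collection $\{H_{\mathcal P}\}$ is an explicitly computable finite list, which is exactly what is claimed. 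I do not expect a serious obstacle beyond the invocation of the cited finiteness theorem; the one point requiring care is the degeneracy-type decomposition: a given solution may admit several finest partitions into non-degenerate vanishing blocks, so one must take the union over \emph{all} admissible $\mathcal P$ rather than trying to pin a canonical one on each solution. One should also note that, because the Evertse--van der Poorten--Schlickewei bound is ineffective, the cosets themselves and their number are not effectively computable — consistent with the statement, which asserts computability only for the underlying subgroups.
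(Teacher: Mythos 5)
Your proof is correct and follows essentially the same route as the paper's: decompose the solution set by degeneracy type (partitions of $\{1,\dots,h\}$ into blocks of size $\ge 2$ with non-degenerately vanishing block sums), apply the Evertse--van der Poorten--Schlickewei finiteness theorem to each block to get finitely many cosets of the diagonal $\Delta_{I_j}$, and observe that the resulting subgroups $\prod_j \Delta_{I_j}$ range over an explicit finite list indexed by the partitions. Your write-up is in fact slightly more careful than the paper's on two points it glosses over: the termination of the block-refinement process and the fact that each stratum is a finite \emph{union} of cosets rather than a single translate.
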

	
	\begin{proof} By the $S$-unit equation theorem, cf. \cite[Remark 2.5 (ii), p33]{CorvajaZannier}, the set of non-degenerate solutions of (\ref{sunitequationcosets}) is finite up to $\cO_S^*$-scaling, i.e.\ it is a union of finitely many translates of $\{(t,\dots,t)\colon t\in \cO_S^*\}\subset (\cO_S^*)^h$. On the other hand, for each degenerate solution $(s_1,\dots,s_h)$, there exists a partition $\{1,\dots,h\}=I_1\sqcup \cdots \sqcup I_l$ such that $\sum _{i\in I_j}a_is_i=0$ and the sum is non-degenerate for each $j=1,\dots,l$. (Notice: $|I_j|\geq 2$ for all $j$.) Using the $S$-unit equation theorem for these equations in fewer variables, we obtain that each has finitely many solutions up to $\cO_S^*$-scaling. We denote this finite set by $F_j\subset (\cO_S^*)^{I_j}\slash \cO_S^*$ and let $\widetilde{F_j}\subset (\cO_S^*)^{I_j}$ be a set of representatives of $F_j$. Then the set of non-degenerate solutions of the system
		$$\sum_{i\in I_j}a_is_i=0,\text{   for all }j=1,\dots,l$$
		is equal to $\prod_{j=1}^l  \widetilde{F_j}\cdot(\cO_S^*)$, which is itself a translate of an $\cO_S^*$-submodule of $(\cO_S^*)^h$.
		
		As for the second part of the statement, note that each $\prod_{j=1}^l  \widetilde{F_j}\cdot(\cO_S^*)$ above is a translate of the $\cO_S^*$-submodule $\prod_{j=1}^l  (\cO_S^*)_{\Delta_j}$, where $(\cO_S^*)_{\Delta_j}$ denotes the image of the diagonal embedding $\cO_S^*\hookrightarrow (\cO^*_S)^{I_j}$. Since there are only finitely many partitions of $\{1,\dots,h\}$, the theorem is proven.
	\end{proof}
	\begin{proof}[Proof of Theorem \ref{zerolocuspep}] Since the intersection of finitely many cosets in $\ZZ^r$ is still a coset (or the empty set), and if all these cosets have rank $<r$, so does the intersection, we may assume $\mathbf{f}=(f)$, i.e.\  $s=1$. Write $f(\mathbf{x})=\sum\limits_{i=1}^h a_i \boldsymbol{\lambda}^{A_i \cdot \mathbf{x}^T} $ where $a_i\neq 0$ and $A_i \in M_{r\times k}(\ZZ)$. Take $K,S$ such that all coefficients and bases are $S$-units. Then the set of zeros of $f$, i.e.\  those $\mathbf{n}\in \ZZ^r$ such that
		$$\sum\limits_{i=1}^h a_i \boldsymbol{\lambda}^{A_i \cdot \mathbf{n}^T}=0$$
		is equal to the preimage under the group homomorphism 
		$$\varphi\colon \ZZ^r\to \cO_S^h,\text{  }\mathbf{n}\mapsto \left( \boldsymbol{\lambda}^{A_1 \cdot \mathbf{n}^T},\dots, \boldsymbol{\lambda}^{A_h \cdot \mathbf{n}^T}  \right)$$
		of the set of solutions of the homogeneous $S$-unit equation
		$$\sum\limits_{i=1}^h a_i s_i=0.$$
		We know by Theorem \ref{sunitfinitecoset} that the latter solution set is equal to a finite union of cosets in $(\cO_S^*)^h$. We can therefore obtain the first assertion because of the fact that preimages of cosets under group homomorphisms are still cosets. 
		
		Now suppose $\lambda_1,\dots,\lambda_k$ are multiplicatively independent, let $\{1,\dots,h\}=I_1\sqcup \cdots \sqcup I_l$ be a partition such that $|I_j|\geq 2$ for all $j$; then the preimage
		$$\varphi^{-1} \left(\prod_{j=1}^l (\cO_S^*)_{\Delta_j}\right)$$
		is a subgroup of $\ZZ^r$ with rank $<r$. This yields the second assertion. 
	\end{proof}
	Next we study the cardinality of fibers of (PEP)s. The following theorem essentially characterizes intersections of (PEP) sets.
		\begin{theo}\label{fiberpep} Let $\mathbf{f}, \mathbf{g}:\mathbb{Z}^r \to \overline{K}^s$ be vectors of purely exponential polynomials, with $\mathbf{g}$ reduced. Then, outside of a finite union of cosets of rank $\leq r-1$, the function
		\[
		  \ZZ^r \to \ZZ_{\geq 0} \cup \{\infty\}, \ \ \u{n} \mapsto \# \mathbf{g}^{-1}(\mathbf{f}(\u{n}))
		\]
		attains finite values and depends only on $\u{n} \bmod N$, for some natural number $N=N(\mathbf{f}, \mathbf{g}) \geq 1$.
	\end{theo}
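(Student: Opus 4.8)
The plan is to reduce the problem to understanding when, for a fixed residue class of $\u{n}$ modulo some $N$, the equation $\mathbf{g}(\u{m}) = \mathbf{f}(\u{n})$ has a solution $\u{m} \in \ZZ^s$, and if so how many. First I would replace $\mathbf{f}$ by its restriction to a single residue class modulo a suitable $N$ so that, by the Zero Locus Theorem (Theorem \ref{zerolocuspep}) applied to the components of $\mathbf{f}$ as well as to suitable differences, each $\mathbf{f}$-non-degeneracy condition becomes either identically satisfied or identically violated on that class; this costs only a finite union of cosets of rank $\leq r-1$, since by Theorem \ref{zerolocuspep} the degeneracy locus of $\mathbf{f}$ is such a union (after passing to multiplicatively independent bases, which can be arranged by a change of variables that only refines the modulus). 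So from now on I work on a fixed non-degeneracy class and may assume every $\u{n}$ in it is $\mathbf{f}$-non-degenerate of a fixed degeneracy type.

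The second step is to fix the candidate solutions. Write $\mathbf{f}(\u{n}) = \mathbf{b}$. The key observation is that $\mathbf{g}^{-1}(\mathbf{b})$, when nonempty, is a \emph{coset} of the subgroup $\mathcal{K}_{\mathbf{g}} := \bigcap_i \ker(\u{m}\mapsto \boldsymbol{\mu}^{B_i\cdot \u{m}^T})$ of $\ZZ^s$, where the $\boldsymbol{\mu}^{B_i\cdot \u{m}^T}$ are the characters appearing in $\mathbf{g}$. Indeed, if $\u{m}_0,\u{m}_1 \in \mathbf{g}^{-1}(\mathbf{b})$, then Artin independence of characters (Remark \ref{pepchar}), together with the hypothesis that $\mathbf{g}$ is reduced so that $\mathcal{K}_{\mathbf{g}}$ is exactly the set of $\u{m}$ on which every character of $\mathbf{g}$ is $1$, forces $\u{m}_1 - \u{m}_0 \in \mathcal{K}_{\mathbf{g}}$; conversely $\mathcal{K}_{\mathbf{g}}$ preserves every character value. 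Since $\mathbf{g}$ is reduced, its exponents span the dual of $\QQ^s$, so $\mathcal{K}_{\mathbf{g}}$ is \emph{finite}, whence $\#\mathbf{g}^{-1}(\mathbf{b})$ is either $0$ or equals the fixed finite number $\#\mathcal{K}_{\mathbf{g}}$. This already reduces the theorem to showing that the \emph{binary} function $\u{n}\mapsto [\mathbf{g}^{-1}(\mathbf{f}(\u{n}))\neq\varnothing]$ is eventually periodic outside a rank-$\leq r-1$ union of cosets.

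For the third step — detecting nonemptiness — I would combine Corollary \ref{Lem:exactasymp} (or Proposition \ref{Prop:norm}) with a pigeonhole/finiteness argument. The height comparison says $h_{\mathrm{aff}}(\mathbf{f}(\u{n})) \asymp \|\u{n}\|_{\mathbf{f}}$ for non-degenerate $\u{n}$, and similarly $h_{\mathrm{aff}}(\mathbf{g}(\u{m})) \asymp \|\u{m}\|_{\mathbf{g}}$, a genuine norm on $\ZZ^s\otimes\R$ by reducedness. Hence if $\mathbf{g}(\u{m}) = \mathbf{f}(\u{n})$ then $\|\u{m}\|_{\mathbf{g}} \leq c_1\|\u{n}\|_{\mathbf{f}} + c_2$, so $\u{m}$ ranges in a ball whose radius grows linearly in $\|\u{n}\|$; this still leaves infinitely many candidates, so the height bound alone is not enough, and here is where I expect the main work. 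The cleaner route is to phrase the equation $\mathbf{g}(\u{m}) = \mathbf{f}(\u{n})$ as a system of $S$-unit equations in the unknowns $\boldsymbol{\mu}^{B_i\cdot\u{m}^T}$ and $\boldsymbol{\lambda}^{A_j\cdot\u{n}^T}$: moving everything to one side, each component becomes a non-degenerate-type $S$-unit equation (after splitting according to the fixed degeneracy type of $\mathbf{f}$), whose solution set, by Theorem \ref{sunitfinitecoset}, is a finite union of cosets of a finite computable list of subgroups of the ambient torus. Pulling back along the monomial homomorphism $(\u{m},\u{n}) \mapsto (\ldots)$, the set of \emph{pairs} $(\u{m},\u{n})$ with $\mathbf{g}(\u{m})=\mathbf{f}(\u{n})$ is a finite union of cosets in $\ZZ^{s+r}$. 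Projecting to the $\u{n}$-coordinate, the set of $\u{n}$ for which a solution exists is then a finite union of projections of cosets, i.e.\ a finite union of cosets of $\ZZ^r$; its complement within our fixed class, being the union of those cosets that are not full, is a finite union of cosets of rank $\leq r-1$. On each full coset in the decomposition the function is constant (equal to $\#\mathcal{K}_{\mathbf{g}}$), and on the "generic" part it is constant $0$ or constant $\#\mathcal{K}_{\mathbf{g}}$ according to whether that part meets the image; taking $N$ to be a common period of all these cosets (their common modulus) gives the dependence on $\u{n}\bmod N$. The main obstacle, as indicated, is precisely the bookkeeping in this $S$-unit reduction: one must track the degeneracy type of $\mathbf{f}$ on the chosen class (handled by Theorem \ref{zerolocuspep}), ensure the resulting $S$-unit equations are set up so that Theorem \ref{sunitfinitecoset} applies component-by-component, and verify that the projection of a coset union is again a coset union of controlled rank — which is elementary but needs care because projection can in principle turn a proper coset into a full one only when its direction surjects onto $\ZZ^r$, and those are exactly the cases landing in the "periodic" good set.
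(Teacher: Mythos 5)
Your third step --- rewriting $\mathbf{g}(\u{m})=\mathbf{f}(\u{n})$ as a system of $S$-unit equations, concluding via Theorem \ref{sunitfinitecoset} (equivalently, the Zero Locus Theorem \ref{zerolocuspep} applied to $\mathbf{g}(\u{m})-\mathbf{f}(\u{n})$ on $\ZZ^{2r}$) that the set of solution \emph{pairs} is a finite union of cosets, and then projecting to the $\u{n}$-factor and discarding the projections of insufficient rank --- is exactly the paper's route. The preliminary reduction via degeneracy types of $\mathbf{f}$ is not needed once the problem is set up on pairs.

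The genuine gap is in your second step. It is false that a nonempty fiber $\mathbf{g}^{-1}(\mathbf{b})$ of a reduced (PEP) is a coset of $\mathcal{K}_{\mathbf{g}}=\bigcap_i\ker\chi_i$, and hence false that $\#\mathbf{g}^{-1}(\mathbf{b})$ is always $0$ or the fixed number $\#\mathcal{K}_{\mathbf{g}}$. Artin's independence of characters gives uniqueness of the expression of $\mathbf{g}$ as a linear combination of characters \emph{as functions on $\ZZ^r$}; it does not force the individual character values to coincide at two points where the sums coincide. For instance, $g(n,m)=2^n3^m+2^m3^n$ is reduced with $\mathcal{K}_g=\{(0,0)\}$ (note that a finite subgroup of $\ZZ^r$ is trivial, so your claim would make every reduced (PEP) injective), yet $g(n,m)=g(m,n)$, so the generic fiber has cardinality $2$, not $\#\mathcal{K}_g=1$; similarly $g(n,m)=2^n+3^m$ is reduced but $g(2,0)=g(1,1)=5$. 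Consequently your reduction of the theorem to the periodicity of the binary function ``is the fiber nonempty'' does not establish the statement: the fiber \emph{cardinality} genuinely varies with the residue class of $\u{n}$ and is not determined by nonemptiness. The missing ingredient --- and the place where reducedness of $\mathbf{g}$ actually enters --- is the claim that if $L\subseteq\ZZ^r\times\ZZ^r$ is one of the cosets of solution pairs whose projection $\mathrm{p}_2(L)$ to the $\u{n}$-factor has rank $r$, then $L$ itself has rank exactly $r$, so that $\mathrm{p}_2|_L$ is injective: if $\rk L>r$, one finds a nonzero subgroup in the $\u{m}$-direction along which $\mathbf{g}$ is constant, hence annihilated by all exponents of $\mathbf{g}$, contradicting that these span the dual of $\QQ^r$. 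With this, the fiber cardinality over $\u{n}$ outside a rank $\leq r-1$ exceptional set equals the \emph{number} of such full-rank cosets $L_i$ with $\u{n}\in\mathrm{p}_2(L_i)$, and since each $\mathrm{p}_2(L_i)$ has finite index in $\ZZ^r$, this count depends only on $\u{n}$ modulo a common modulus $N$. (A minor slip: the domain of $\mathbf{g}$ is $\ZZ^r$, not $\ZZ^s$.)
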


	In particular, taking $\mathbf{f}=\mathbf{g}$, we get the following useful corollary.
	
	\begin{coro}[Generic Fiber Theorem]\label{genericfiberthm}
		Let $\mathbf{f}:\mathbb{Z}^r \to \overline{K}^s$ be a  reduced vector of purely exponential polynomials. Then, there exist (i) a finite partition $\ZZ^r=C_1\sqcup \cdots \sqcup C_l$ into finite index cosets; (ii) for each $i$, a subset $\Theta_i \subset C_i$ which is equal to a finite union of  cosets of rank $\leq r-1$, such that for each $i$, there is $\nu_i \in \NN$ such that
$$\#\{\mathbf{n}' \in C_i \backslash \Theta_i\colon \mathbf{f}(\mathbf{n})=\mathbf{f}(\mathbf{n'})\}\equiv \nu_i\text{  for all  }\mathbf{n}\in C_i \backslash \Theta_i.$$

	\end{coro}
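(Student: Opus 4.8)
The plan is to deduce the corollary from Theorem \ref{fiberpep} applied with $\mathbf{g}=\mathbf{f}$; the only genuine work is the passage from the \emph{ambient} fibre size $\#\mathbf{f}^{-1}(\mathbf{f}(\mathbf{n}))$, which that theorem controls, to the \emph{relative} count $\#\{\mathbf{n}'\in C_i\setminus\Theta_i:\mathbf{f}(\mathbf{n}')=\mathbf{f}(\mathbf{n})\}$ asked for here. So I would first record from Theorem \ref{fiberpep} a natural number $N\geq 1$ and a finite union $\Theta^{(0)}$ of cosets of rank $\leq r-1$ outside of which $\mathbf{n}\mapsto\#\mathbf{f}^{-1}(\mathbf{f}(\mathbf{n}))$ is finite and depends only on $\mathbf{n}\bmod N$.

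The structural heart of the argument is an explicit description of generic fibres. Consider the coincidence locus $Z:=\{(\mathbf{n},\mathbf{n}')\in\ZZ^{2r}:\mathbf{f}(\mathbf{n})=\mathbf{f}(\mathbf{n}')\}$. Since $(\mathbf{n},\mathbf{n}')\mapsto\mathbf{f}(\mathbf{n})-\mathbf{f}(\mathbf{n}')$ is again a vector of purely exponential polynomials, in $2r$ variables and with the same multiplicatively independent bases as $\mathbf{f}$, the Zero Locus Theorem \ref{zerolocuspep} writes $Z=\bigcup_{j=1}^{M}(\mathbf{w}_j+H_j)$ as a finite union of cosets of $\ZZ^{2r}$. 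For each $j$ let $D_j:=\pi_1(\mathbf{w}_j+H_j)$ be the first projection and $V_j:=H_j\cap(\{0\}\times\ZZ^r)$. The indices with $\rk D_j\leq r-1$ play no role for generic $\mathbf{n}$, and I absorb the (finitely many) corresponding cosets $D_j$ into the exceptional set; for the surviving indices $D_j$ has finite index in $\ZZ^r$, and then $V_j=0$ necessarily, since $\rk V_j\geq 1$ would make $\mathbf{f}^{-1}(\mathbf{f}(\mathbf{n}))$ infinite for every $\mathbf{n}\in D_j$, contradicting Theorem \ref{fiberpep}. For such $j$ the map $\pi_1$ is bijective from $\mathbf{w}_j+H_j$ onto $D_j$, so composing its inverse with $\pi_2$ gives an affine-linear \emph{branch} $T_j\colon D_j\to\ZZ^r$ with $\mathbf{f}(T_j(\mathbf{n}))=\mathbf{f}(\mathbf{n})$; and reducedness forces each $T_j$ to be \emph{injective}, for otherwise some nonzero $\mathbf{u}\in\ZZ^r$ satisfies $\mathbf{f}(\mathbf{n}+\mathbf{u})=\mathbf{f}(\mathbf{n})$ on the finite-index coset $D_j$, hence identically by the Zero Locus Theorem (the bases being multiplicatively independent), whereupon linear independence of characters (Remark \ref{pepchar}) makes every exponent form of $\mathbf{f}$ vanish at $\mathbf{u}$, against reducedness. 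Consequently, outside the current exceptional set, $\mathbf{f}^{-1}(\mathbf{f}(\mathbf{n}))$ equals exactly the finite set $\{T_j(\mathbf{n}):j\ \text{surviving},\ \mathbf{n}\in D_j\}$ of values of injective affine branches.

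It then remains to enlarge the exceptional set and refine the modulus so that, on each residue-class piece, this finite set --- and how much of it lands inside that piece away from the exceptional set --- is rigid. To $\Theta^{(0)}$ I would add the coincidence loci $\{T_j=T_{j'}\}$ for the pairs with $T_j\not\equiv T_{j'}$, together with the preimages $T_j^{-1}(\kappa)$ as $\kappa$ ranges over the coset-components of the enlarged exceptional set: because the $T_j$ are injective affine maps these all have rank $\leq r-1$, so the exceptional set $\Theta$ stays a finite union of cosets of rank $\leq r-1$. Next I would take $N'\geq 1$ to be a common multiple of $N$, of the moduli of those $D_j$ that are proper finite-index cosets, and of the finitely many denominators needed for each $T_j$ to carry residue classes mod $N'$ into residue classes mod $N'$. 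With $C_1,\dots,C_l$ the residue classes mod $N'$ and $\Theta_i:=\Theta\cap C_i$ (again a finite union of cosets of rank $\leq r-1$), on $C_i\setminus\Theta_i$ the active branch set $\{j:C_i\subseteq D_j\}$ is constant, any two active branches either agree everywhere or never agree on that piece, and each $T_j(\mathbf{n})\bmod N'$ is constant; hence the number of distinct values $T_j(\mathbf{n})$ lying in $C_i\setminus\Theta_i$ is a constant $\nu_i$ for all $\mathbf{n}\in C_i\setminus\Theta_i$, which is exactly the claim.

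I expect this last step, rather than any single Diophantine input, to be the main obstacle: Theorem \ref{fiberpep} governs only the ambient fibre, whereas one must now track residue class by residue class how much of each fibre survives in $C_i\setminus\Theta_i$, and the branch maps $T_j$, though injective, need not respect the partition, so one has to check both that their preimages of the exceptional cosets stay of low rank (this is where injectivity, hence reducedness, enters) and that the remaining finite-index loci fold into one refined modulus. The tempting shortcut of re-applying Theorem \ref{fiberpep} to the restricted parametrizations $\mathbf{m}\mapsto\mathbf{f}(\mathbf{a}_i+N\mathbf{m})$ only reproduces periodicity with a new, possibly larger period and offers no reason for such an iteration to terminate, which is why the explicit branch description above seems indispensable.
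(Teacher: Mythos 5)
The paper records no argument for this corollary beyond the phrase ``taking $\mathbf{f}=\mathbf{g}$'' before its statement, and you are right that Theorem \ref{fiberpep} controls only the ambient fibre $\#\mathbf{f}^{-1}(\mathbf{f}(\mathbf{n}))$, not the relative count inside $C_i\setminus\Theta_i$; so the extra work you do is genuinely needed. Your branch decomposition is the correct tool and is essentially the structure already underlying the paper's proof of Theorem \ref{fiberpep} (your cosets $\mathbf{w}_j+H_j$ with full-rank first projection are the $L_i$ with $i \in M_1$ there), pushed one step further: the identification of generic fibres with $\{T_j(\mathbf{n})\}$, and the use of reducedness both to force $V_j=0$ and to force each $T_j$ to be injective, are correct.

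Two steps in your final assembly are not yet proofs. First, your $\Theta$ equals $\Theta^{(1)}\cup\bigcup_j T_j^{-1}(\Theta^{(1)})$, which guarantees $T_j(\mathbf{n})\notin\Theta^{(1)}$ for $\mathbf{n}\notin\Theta$ but not $T_j(\mathbf{n})\notin\Theta$: a priori $T_j(\mathbf{n})$ could lie in some $T_{j'}^{-1}(\kappa)$ with $\kappa$ a component of $\Theta^{(1)}$, and then your count of points of the fibre surviving in $C_i\setminus\Theta_i$ is off. The repair is the observation that $\mathbf{f}^{-1}(\mathbf{f}(T_j(\mathbf{n})))=\mathbf{f}^{-1}(\mathbf{f}(\mathbf{n}))$, so $T_{j'}(T_j(\mathbf{n}))=T_{j''}(\mathbf{n})$ for some branch $j''$ active at $\mathbf{n}$, and the latter avoids $\Theta^{(1)}$ by construction; this one line makes $\ZZ^r\setminus\Theta$ stable under all branches and should be stated. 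Second, your definition of $N'$ is circular (``denominators needed for each $T_j$ to carry residue classes mod $N'$ into residue classes mod $N'$''), and for a general injective affine map no such modulus exists: the map $n\mapsto n/2$ on $2\ZZ$ carries no residue class mod $M$ into a single residue class mod $M$, for any $M$. What saves the argument is a structural fact you have not established: since $\mathbf{f}\circ T_j=\mathbf{f}$ on the finite-index coset $D_j$ and the bases are multiplicatively independent, the linear part $L_j$ must permute the finitely many exponent forms of $\mathbf{f}$, which span $(\QQ^r)^{*}$ by reducedness; hence the $L_j$ generate a finite subgroup of $\GL_r(\QQ)$, which preserves a finite-index sublattice $\Lambda\subseteq\bigcap_j(D_j-D_j)$ of $\ZZ^r$. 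The partition should then be taken by cosets of $\Lambda$ (the corollary allows arbitrary finite-index cosets, not only $N'\ZZ^r$), and each $T_j$ does map such cosets into such cosets. With these two repairs your argument is complete.
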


	\begin{proof}[Proof of Theorem \ref{fiberpep}]   By the Zero Locus Theorem \ref{zerolocuspep}, the set 
		$$W\coloneqq \left\{ (\mathbf{n},\mathbf{m}) \in \mathbb{Z}^r \times \mathbb{Z}^r \colon \mathbf{g}(\mathbf{n})=\mathbf{f}(\mathbf{m})\right\}$$
		is equal to a finite union of cosets $L_1 \cup \cdots \cup L_k \subseteq \mathbb{Z}^r \times \mathbb{Z}^r$. 
		
		Let $\mathrm{p}_2\colon \mathbb{Z}^r \times \mathbb{Z}^r\to \mathbb{Z}^r$ denote the projection to the second factor. Let
		$$M_1:=\{i\colon \ \mathrm{p}_2(L_i)\text{ has rank }r\}\text{ and }M_2:=\{i\colon\ \mathrm{p}_2(L_i)\text{ has rank }<r\}.$$

		Let $i\in M_1$. We claim that $\rk L_i=r$. After passing to a finite index coset of $\ZZ^r$, we may assume without loss of generality that the second projection is surjective. Note that, by standard linear algebra, if we assume by contradiction that $\rk L_i>r$, then there would exist an affine section of $\mathrm{p}_2|_{L_i}\colon L_i \to \ZZ^r$ such that the projection of this section onto the first factor has rank $<r$. In particular, if we denote this section by $\sigma=(\sigma_1,id)\colon \ZZ^r \to L_i$, we would have that $\ker \sigma_1 \neq \{0\}$. Since, for every $\mathbf{m} \in \ZZ^r$ we have $\mathbf{f}(\mathbf{m})=\mathbf{g}(\sigma_1(\mathbf{m}))$, we would deduce that $\ker \sigma_1$ is annihilated by all the {\em exponents} of $\mathbf{g}$ (recall that these are the linear forms appearing in the exponents of the monomials of the components of $\mathbf{g}$ as in \eqref{Eq:expressionPEP}), but these linear forms span the dual space of $\QQ^r$ over $\QQ$ by assumption, and thus have no non-trivial common zero. This leads to a contradiction and proves the claim.
		
		Thus, for $\mathbf{n} \in \mathbb{Z}^r$ lying outside $\bigcup_{i \in M_2}( \mathrm{p}_2(L_i) \cup \bigcup_{j \neq i}\mathrm{p}_2(L_i \cap L_j))$, which is a finite union of cosets of rank $\leq r-1$, the cardinality of the fiber $\mathbf{g}^{-1}(\mathbf{f}(\mathbf{n}))$ is equal to the number of indexes $i\in M_1$ such that $\mathbf{n} \in \mathrm{p}_2(L_i)$.
		
		Taking $N$ to be any natural number that divides the (finitely many) indexes of the cosets $\{\mathrm{p}_2(L_i) \subseteq \mathbb{Z}^r\}_{i \in M_1} $, we obtain the sought claim.
	\end{proof}
	While it seems that a general (PEP) set does not have any simple structure, if we know that a (PEP) set in $K^*$ is a semi-group, then it can be easily described:
	\begin{theo}[Image Theorem] \label{imagetheorem} Let $K$ be a number field and $\Sigma \subset K^*$ be a (PEP) set, suppose that $\Sigma$ is a multiplicative semi-group; then this semi-group is actually a group which is finitely generated.
	\end{theo}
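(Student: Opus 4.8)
The plan is to reduce everything to the multiplicative structure of $K^*$, which (modulo torsion) is a free abelian group, so that ``semi-group'' becomes a combinatorial condition on a finitely generated free abelian group. First I would fix a vector of purely exponential polynomials $\mathbf{f}=(f)\colon\ZZ^r\to K^*$ with $f(\ZZ^r)=\Sigma$, and enlarge $K$ and pick a finite set $S$ of places so that all coefficients and all bases of $f$ are $S$-units; then for every $\mathbf{n}$, $f(\mathbf{n})$ is an element of $\cO_S^*$ if the sum is a single monomial, but in general $f(\mathbf{n})$ need only lie in $K^*$. This is the first subtlety: a priori $\Sigma$ is not contained in $\cO_S^*$. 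However, since $\Sigma$ is a semi-group, I can exploit that $\Sigma\cdot\Sigma\subseteq\Sigma$ to control the valuations. Concretely, consider the map $\mathrm{val}\colon\Sigma\to\bigoplus_{v\notin S}\ZZ$ recording the $v$-adic valuations; the image is a sub-semi-group of a free abelian group. The key input here is that, by Corollary \ref{Lem:exactasymp} (the comparison $h_{\mathrm{aff}}(f(\mathbf{n}))\asymp\|\mathbf{n}\|_{\mathbf{f}}$ up to bounded error on the non-degenerate locus) together with Proposition \ref{Prop:norm2}, the height of $f(\mathbf{n})$ grows linearly in $\|\mathbf{n}\|_\infty$ modulo the kernel $\mathcal K$; so $\Sigma$ has at most logarithmically many elements of bounded height, and in particular the ``growth direction'' of $\Sigma$ is genuinely low-dimensional.

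The heart of the argument is to show the semi-group $\Sigma$ is in fact closed under inverses, i.e.\ is a group. Here I would argue as follows. Write $\Gamma_0$ for the subgroup of $K^*$ generated by $\Sigma$; since $\Sigma$ is a semi-group, every element of $\Gamma_0$ is a ratio $\sigma_1/\sigma_2$ with $\sigma_i\in\Sigma$, and $\Gamma_0$ is finitely generated (it is a subgroup of $K^*$ generated by the at most countably many values of $f$, but using the height bound it is generated by the finitely many values of $f$ of bounded height, by Northcott). The claim is $\Gamma_0=\Sigma$. Suppose not, and take $\gamma\in\Gamma_0\setminus\Sigma$; then for a suitable $\sigma\in\Sigma$ we have $\sigma\gamma\in\Sigma$ but $\gamma\notin\Sigma$, and by iterating we get an element $\sigma\in\Sigma$ all of whose sufficiently divisible ``roots'' in $\Gamma_0$ fail to be in $\Sigma$. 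The contradiction should come from a pigeonhole/periodicity statement: by the Generic Fiber Theorem (Corollary \ref{genericfiberthm}) applied to a reduced form of $f$, the function $\mathbf{n}\mapsto f(\mathbf{n})$ is, outside a thin set, periodic-fibered, and the image $\Sigma$ (intersected with any fixed coset) has a description compatible with a norm on $(\ZZ^r/\mathcal K)\otimes\RR$. A sub-semi-group of a finitely generated abelian group which is the image of such a PEP and is also a semi-group must, by the convexity of norm balls (the norm ball is symmetric!), be symmetric, hence a group. This symmetry of the norm ball is what forces $\Sigma^{-1}\subseteq\Sigma$: the set of $\mathbf{n}$ with $\|\mathbf{n}\|_{\mathbf f}\le T$ is symmetric under $\mathbf{n}\mapsto-\mathbf{n}$, and $f(-\mathbf{n})$ is obtained from $f(\mathbf{n})$ by inverting all the bases.

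Finally, once $\Sigma$ is known to be a group, finite generation follows immediately: $\Sigma$ is a subgroup of $K^*$, and the height estimate from Corollary \ref{Lem:exactasymp} together with Northcott's theorem shows that the finitely many elements of $\Sigma$ of height below a suitable explicit threshold already generate it (any element of larger height is a product of two elements of strictly smaller height, by going back to the parametrization and splitting $\mathbf{n}$ using that the norm $\|\cdot\|_{\mathbf f}$ satisfies the triangle inequality and $\Sigma$ is now closed under the group operation). I expect the main obstacle to be the step showing $\Sigma$ is symmetric: handling the degenerate locus (where the clean asymptotics of Proposition \ref{Prop:norm} break down) requires decomposing $\Sigma$ along the finitely many degeneracy types and arguing that the semi-group condition is inherited by, or forces relations among, the pieces — this is where Theorem \ref{fiberpep} and the quasi-coset bookkeeping of \S\ref{generalitypep} do the real work, and where one must be careful that the union of the low-rank exceptional cosets does not secretly carry the ``missing inverses.''
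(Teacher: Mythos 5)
Your overall architecture (reduce to $S$-units, exploit the coset structure of the parametrization, finish with an elementary fact about unions of cosets that are semi-groups) points in the right direction, but two essential steps are missing, and the substitute you offer for the second one does not work. First, the containment $\Sigma\subset\cO_S^*$ is never actually established: you observe that the valuation vectors of elements of $\Sigma$ at places outside $S$ form a sub-semi-group, but you need that this semi-group is trivial, and neither the height asymptotics from Corollary \ref{Lem:exactasymp} nor Northcott gives you that. The paper's mechanism is quite different: for $b\in\Sigma$ all powers $b^h$ lie in $\Sigma$, so dividing the monomials $u_1(\mathbf{n}_h),\dots,u_e(\mathbf{n}_h)$ by $b^h$ produces infinitely many solutions of the $S'$-unit equation $x_1+\cdots+x_e=1$; the finiteness-up-to-scaling of such solutions plus pigeonhole forces $\|b\|_v=1$ for every $v\notin S$. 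Without this, the later appeal to homogeneous $S$-unit equations (or to any $S$-unit machinery applied to the value $f(\mathbf{n})$ itself) is unavailable.

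Second, and more seriously, your route to showing $\Sigma$ is symmetric relies on the assertion that ``$f(-\mathbf{n})$ is obtained from $f(\mathbf{n})$ by inverting all the bases,'' which is false unless $f$ is a purely exponential \emph{monomial}: for $f(n)=2^n+3^n$ one has $f(-n)=2^{-n}+3^{-n}\neq f(n)^{-1}$. The symmetry of the norm ball $\{\|\mathbf{n}\|_{\mathbf f}\le T\}$ says nothing about $\Sigma$ being closed under inversion in $K^*$. The real content of the theorem is precisely that the semi-group hypothesis forces $f$, restricted to each coset $N\ZZ^r+\mathbf{c}$ on which the bases become multiplicatively independent, to collapse to a single monomial. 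The paper proves this by noting that $-f_{\mathbf{c}}(\mathbf{n})+u_{1,\mathbf{c}}(\mathbf{n})+\cdots+u_{e_{\mathbf{c}},\mathbf{c}}(\mathbf{n})=0$ is a homogeneous $S$-unit equation (legitimate only after the first step), so two of its terms always have ratio in a fixed finite set $\Phi$; each such ratio condition confines $\mathbf{n}$ to finitely many cosets of rank $<r$ (via the Zero Locus Theorem \ref{zerolocuspep} for the conditions involving $f_{\mathbf{c}}$ itself), and since $\ZZ^r$ is not a finite union of rank-$<r$ cosets, $e_{\mathbf{c}}=1$. Only then is $\Sigma$ a finite union of translates of finitely generated subgroups, at which point the concluding lemma (a union of cosets in a commutative group that is a semi-group is a group) applies. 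Neither the Generic Fiber Theorem nor the quasi-coset bookkeeping you invoke supplies this monomiality, so the central step of your plan has no working engine behind it.
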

	
	\begin{proof} Write $\Sigma =f(\ZZ^r)$ where $f$ is a purely exponential polynomial with shape $f(\mathbf{x})=u_1(\mathbf{x})+\cdots + u_e (\mathbf{x})$ where 
		$$u_i(\mathbf{x})=a_i \cdot \lambda_1^{l_{i,1}(\mathbf{x})}\cdots \lambda_k^{l_{i,k}(\mathbf{x})}.$$
		Let $S\subset V^K$ be a finite set of places containing all infinite ones such that all bases and coefficients of $f$ are $S$-units.  Take any $b\in \Sigma$, then $b^h\in \Sigma,\forall h\in \NN$ because $\Sigma$ is a semi-group. Let $S'\supseteq S$ be a finite set of places so that $b$ is an $S'$-unit. We obtain that for each $h\in \NN$, there exists some $\mathbf{n}_h\in \ZZ^r$ such that $\left(  \frac{u_1(\mathbf{n}_h)}{b^h},\dots, \frac{u_e(\mathbf{n}_h)}{b^h} \right)$ is a solution of the $S'$-unit equation $x_1+\cdots+x_h=1$. Recall that for this $S'$-unit equation, there is a finite set $\Phi=\Phi(b)\subset \cO_{S'}^*$ such that for each solution, there is at least one coordinate belonging to $\Phi$, cf. \cite[Remark 2.5 (iv)]{CorvajaZannier}. By the pigeonhole principle, there is an index $i$ such that there are infinitely many $h \in  \NN$ such that $\frac{u_i(\mathbf{n}_h)}{b^h}$ is equal to the same element $\theta\in \Phi$ (in fact, we only need two different such $h \in \NN$). Take any $v\not\in S$, then $1=|u_i(\mathbf{n}_h)|_v=|b|_v^h\cdot |\theta|_v$ holds for infinitely many $h$, thus $|b|_v=1$, which implies that $b\in \cO_S^*$. So $S'=S$ and $\Sigma \subset \cO_S^*$.
		
		Let $N$ be an exponent for the torsion subgroup of $\langle \lambda_1,\dots,\lambda_k \rangle$. Since the group generated by $\lambda_1^N,\dots,\lambda_k^N$ is torsion free, we can choose a free basis $\mu_1,\dots,\mu_t$ of it. Thus, for any  $\mathbf{c}\in \{0,1,\dots, N-1\}^r$, the new purely exponential polynomial in $r$ variables defined by 
		$$f_{\mathbf{c}}\colon \mathbf{x}=(x_1,\dots,x_r)\mapsto f(N\mathbf{x}+\mathbf{c})$$
		can be rewritten so that it has multiplicatively independent bases. Let us write $f_{\mathbf{c}}=\sum\limits_{i=1}^{e_{\mathbf{c}}} u_{i,\mathbf{c}}(x_1,\dots,x_r)$, where the $u_{i,\mathbf{c}}$'s are distinct purely exponential monomials in $\mu_1,\dots,\mu_t$.
		
		Now for any $\mathbf{n}\in \ZZ^r$, $(-f_{\mathbf{c}}(\mathbf{n}),u_{1,\mathbf{c}}(\mathbf{n}),\dots,u_{e_{\mathbf{c}},\mathbf{c}}(\mathbf{n}))$ is a solution of the homogeneous $S$-unit equation $x_0+x_1+\cdots +x_{e_{\mathbf{c}}}=0$.
		Again, according to \cite[Remark 2.5 (iv)]{CorvajaZannier}, there is a fixed finite set $\Phi \subset \cO_S^*$ such that for each $\mathbf{n}\in \ZZ^r$, two distinct terms among $-f_{\mathbf{c}}(\mathbf{n}),u_{1,\mathbf{c}}(\mathbf{n}),\dots,u_{e_{\mathbf{c}},\mathbf{c}}(\mathbf{n})$ have ratio in $\Phi$. Thus for any $\mathbf{n}\in \ZZ^r$, either $\frac{-f_{\mathbf{c}}(\mathbf{n})}{u_{i,\mathbf{c}}(\mathbf{n})}=\theta \in \Phi$ for some $i$ or $\frac{u_{i,\mathbf{c}}(\mathbf{n})}{u_{j,\mathbf{c}}(\mathbf{n})}=\theta \in \Phi$ for some $i\neq j$. Since the bases are multiplicatively independent, the set
		$$\{\mathbf{n}\in \ZZ^r\colon u_{i,\mathbf{c}}(\mathbf{n})=\theta u_{j,\mathbf{c}}(\mathbf{n}) \}$$
		is a coset of $\ZZ^r$ of rank $<r$. Moreover, by the Zero Locus Theorem \ref{zerolocuspep}, the set
		$$\{ \mathbf{n}\in \ZZ^r\colon -f_{\mathbf{c}}(\mathbf{n})=\theta u_{i,\mathbf{c}}(\mathbf{n}) \}$$
		is equal to a finite union of cosets of $\ZZ^r$ of rank $<r$, if $e_{\mathbf{c}}>1$. Since $\ZZ^r$ cannot be covered by finitely many cosets of $\ZZ^r$ of rank $<r$, we conclude that $e_{\mathbf{c}}=1$, i.e.\  $f_{\mathbf{c}}$ is a purely exponential monomial. Thus $f_{\mathbf{c}}(\ZZ^r)$ is a translate of a finitely generated multipicative subgroup of $\langle \mu_1,\dots,\mu_t\rangle$. Since this holds for all $\mathbf{c}$, and $\Sigma= \bigcup_{\mathbf{c}}f_{\mathbf{c}}(\ZZ^r)$, and then the following lemma implies that $\Sigma$ is a group, hence a finitely generated multiplicative group.
	\end{proof}
	
	\begin{lemma}
		Let $C_1,\ldots,C_r$ be cosets in a commutative group $G$. If $C_1 \cup \cdots \cup C_r$ is a semi-group, then it is a group.
	\end{lemma}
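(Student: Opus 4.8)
Work additively, and write each coset as $C_i = c_i + H_i$ where $H_i := C_i - C_i$ is the associated subgroup. Set $S := C_1 \cup \cdots \cup C_r$; by hypothesis $S + S \subseteq S$. The plan is: first extract a genuine subgroup of $G$ sitting inside $S$, then pass to the quotient by the largest such subgroup and observe that $S$ becomes ``torsion-closed'' downstairs, hence a group.

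\emph{Step 1 (a subgroup inside $S$).} Fix $s \in S$. Since $s, 2s, \ldots, (r+1)s$ all lie in $S = \bigcup_i C_i$, by pigeonhole there are $1 \le a < b \le r+1$ and an index $i$ with $as, bs \in C_i$. Put $m := b-a \ge 1$; then $ms = bs - as \in H_i$, and since $as \in C_i$ we may re-center $C_i = as + H_i$. Because $S$ is a semigroup, the $m$-fold sumset $\underbrace{C_i + \cdots + C_i}_{m}$ lies in $S$; but this sumset equals $mas + H_i$, and $mas = a(ms) \in H_i$, so $\underbrace{C_i + \cdots + C_i}_{m} = H_i$. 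Hence $H_i \subseteq S$; in particular $0 \in S$.

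\emph{Step 2 (quotient by the maximal subgroup).} The collection of subgroups of $G$ contained in $S$ is nonempty (Step 1) and closed under finite sums, since $K, K' \subseteq S$ gives $K + K' \subseteq S + S \subseteq S$; being directed, it has a (directed-union) largest member $H$. As $H \subseteq S$ we get $S + H \subseteq S$, so $S$ is a union of $H$-cosets; with $\pi \colon G \twoheadrightarrow \bar G := G/H$ we have $\bar S := \pi(S) = \bigcup_i \pi(C_i)$ a subsemigroup that is still a finite union of cosets, and $\pi^{-1}(\bar S) = S$. Moreover maximality of $H$ forces $\bar S$ to contain \emph{no nontrivial subgroup} of $\bar G$ (the preimage of such a subgroup would be a subgroup of $G$ strictly between $H$ and $S$).

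\emph{Step 3 (torsion downstairs) and conclusion.} Apply the Step 1 argument inside $\bar G$ to an arbitrary $\bar s \in \bar S$: it produces an index $i$ with $\pi(H_i) \subseteq \bar S$, obtained from $a < b$ with $a\bar s, b\bar s \in \pi(C_i)$. Since $\bar S$ has no nontrivial subgroup, $\pi(H_i) = \{0\}$, so $\pi(C_i)$ is a single point; therefore $a\bar s = b\bar s$, i.e.\ $(b-a)\bar s = 0$, so $\bar s$ has finite order. Consequently $\bar S$ is closed under negation (if $\bar s$ has order $N$ then $-\bar s = (N-1)\bar s \in \bar S$, the case $N=1$ being trivial), contains $0$, and is a semigroup; hence $\bar S$ is a subgroup of $\bar G$, and therefore $S = \pi^{-1}(\bar S)$ is a subgroup of $G$.

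\emph{Main obstacle.} The crux is Step 1: it is not obvious that closure under the operation alone already forces a \emph{subgroup} to appear inside the finite union of cosets. The mechanism is the re-centering $C_i = as + H_i$ together with summing a single coset with itself exactly $m$ times, which makes the torsion exponent $m$ (coming from $ms \in H_i$) cancel and returns the subgroup $H_i$. The remaining ingredients --- the directed-union construction of the maximal $H$ and the torsion argument in the quotient --- are routine once Step 1 is in hand.
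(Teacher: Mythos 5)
Your proof is correct. Both arguments hinge on the same pigeonhole germ --- among the multiples $s, 2s, \ldots, (r+1)s$ of an element of the union, two must fall in the same coset $C_i$ --- but they diverge afterwards. The paper reduces at once to the cyclic group generated by an element $\lambda$ of the union: each $C_i \cap \lambda^{\ZZ}$ is a coset inside $\lambda^{\ZZ} \cong \ZZ$, one of them must be infinite and hence of finite index, and a sub-semigroup of $\ZZ$ containing both the generator and a finite-index coset is all of $\ZZ$; this yields $\lambda^{-1} \in \Sigma$ in a few lines. You instead use the pigeonhole relation to manufacture an entire subgroup $H_i$ of the ambient group inside $S$ (via the $m$-fold sumset identity $mC_i = a(ms)+H_i = H_i$), then quotient by the maximal subgroup $H$ contained in $S$ and show the image is torsion, hence closed under negation and therefore a group (in fact trivial, by your own no-nontrivial-subgroup observation, so $S=H$). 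Your route is longer but more structural, isolating the stabilizer-type subgroup $H$ with $S+H=S$, whereas the paper's is shorter and needs only the arithmetic of sub-semigroups of $\ZZ$. Each step of yours checks out: the re-centering $C_i = as + H_i$, the directedness of the family of subgroups contained in $S$, and the torsion argument downstairs are all valid.
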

	\begin{proof}
		It is enough to prove that for every $\lambda\in\Sigma,  \lambda^\ZZ \subset \Sigma$. Let $\lambda \in \Sigma \coloneqq C_1 \cup \cdots \cup C_r$. We may assume that $\lambda$ is non-torsion. Then we have that:
		\[
		\lambda^\NN \subseteq (C_1 \cap \lambda^\ZZ) \cup \cdots \cup (C_r \cap \lambda^\ZZ) \subseteq \lambda^\ZZ,
		\]
		so there is an $i$ such that the coset $C_i \cap \lambda^\ZZ \subseteq \lambda^\ZZ$ is infinite. Thus the intersection $\Sigma \cap \lambda^\ZZ$ is a semi-group of the cyclic group $\lambda^\ZZ$ that contains both the generator $\lambda$ and a finite index coset, and hence is in fact equal to the whole $\lambda^\ZZ$. This concludes the proof.
	\end{proof}
	
	\begin{rema}\label{gpimagenopep} {\rm It is possible that a purely exponential polynomial which is not a purely exponential monomial still has a group image. For example, let $\omega$ be a cubic root of unity, and let $f(m,n,k)=\omega^m 2^n+\omega^k 2^n$. Then $f$ has image equal to the multiplicative group generated by $2,-1,\omega$.  Anyway, it is important to notice that according to the Image Theorem, if a (PEP) set in $K$ is a semi-group, then it is equal to $g(\ZZ^r)$, where $g$ is a certain purely exponential \underline{monomial}.}
	\end{rema}

        Finally, we derive a crucial decomposition result which claims that every (PEP) set is a finite union of reduced ones.

    \begin{prop}\label{decomposepep}
        Every (PEP) set $\Sigma \subseteq K^s=\mathbf{f}(\ZZ^r)$ of $r$ variables is equal to a union $\Sigma_1\cup \cdots \cup \Sigma_l$ of finitely many (PEP) sets $\Sigma_i$, each of which is the image of a \underline{reduced} vector of purely exponential polynomials of \underline{at most}  $ r$ variables.
    \end{prop}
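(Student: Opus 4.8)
The plan is to decompose $\Sigma=\mathbf{f}(\ZZ^r)$ by two successive reductions, neither of which raises the number of variables: first force the bases to be multiplicatively independent, then force the exponents to span the dual space. The first move is exactly the one already used in the proof of the Image Theorem~\ref{imagetheorem}.

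\emph{Step 1: multiplicatively independent bases.} Let $\lambda_1,\dots,\lambda_k$ be the bases of $\mathbf{f}$ and let $N$ be an exponent for the torsion subgroup of $\langle\lambda_1,\dots,\lambda_k\rangle\subset\overline{K}^*$, so that $\langle\lambda_1^N,\dots,\lambda_k^N\rangle$ is free with some $\ZZ$-basis $\mu_1,\dots,\mu_t$, say $\lambda_i^N=\prod_j\mu_j^{e_{ij}}$ with $e_{ij}\in\ZZ$. For each $\mathbf{c}\in\{0,\dots,N-1\}^r$ set $\mathbf{f_c}(\mathbf{x}):=\mathbf{f}(N\mathbf{x}+\mathbf{c})$. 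Replacing a linear form $l$ occurring in an exponent by $l(N\mathbf{x}+\mathbf{c})=N\,l(\mathbf{x})+l(\mathbf{c})$ and absorbing each constant factor $\lambda_i^{l(\mathbf{c})}$ into the corresponding coefficient, the identity $\lambda_i^{Nl(\mathbf{x})}=\prod_j\mu_j^{e_{ij}l(\mathbf{x})}$ lets us rewrite $\mathbf{f_c}$ as a vector of purely exponential polynomials in $r$ variables whose bases lie among the multiplicatively independent $\mu_1,\dots,\mu_t$. (Distinct characters of $\ZZ^r$ may collapse, or even cancel, under this substitution, but this only shortens the list of monomials and does no harm.) Since $\mathbf{x}\mapsto N\mathbf{x}+\mathbf{c}$ is a bijection of $\ZZ^r$ onto $N\ZZ^r+\mathbf{c}$ and these cosets cover $\ZZ^r$, we obtain $\Sigma=\bigcup_{\mathbf{c}}\mathbf{f_c}(\ZZ^r)$, a finite union.

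\emph{Step 2: exponents spanning the dual.} Fix one $\mathbf{g}:=\mathbf{f_c}$ from Step 1, with multiplicatively independent bases, written in minimal form with exponents $l_{i,j}$, and set $\mathcal{K}:=\bigcap_{i,j}\ker l_{i,j}\subseteq\QQ^r$. Being an intersection of kernels of $\QQ$-linear forms, $\mathcal{K}$ is a subspace, and $\mathcal{K}_\ZZ:=\mathcal{K}\cap\ZZ^r$ is a saturated sublattice of $\ZZ^r$, so $\ZZ^r/\mathcal{K}_\ZZ$ is free of rank $r':=r-\dim_\QQ\mathcal{K}\le r$. Because $\mathbf{g}(\mathbf{n})$ depends only on the integers $l_{i,j}(\mathbf{n})$, it is invariant under translation by $\mathcal{K}_\ZZ$ and hence factors through a map $\bar{\mathbf{g}}\colon\ZZ^r/\mathcal{K}_\ZZ\cong\ZZ^{r'}\to K^s$; the forms $l_{i,j}$ vanish on $\mathcal{K}$, so they descend to integer linear forms on $\ZZ^{r'}$ and $\bar{\mathbf{g}}$ is again a vector of purely exponential polynomials, still with bases among the $\mu_j$ (hence multiplicatively independent), whose exponents have trivial common zero on $\QQ^{r'}=\QQ^r/\mathcal{K}$ and therefore span $(\QQ^{r'})^*$. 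Thus $\bar{\mathbf{g}}$ is reduced, and $\bar{\mathbf{g}}(\ZZ^{r'})=\mathbf{g}(\ZZ^r)$ since $\ZZ^r\to\ZZ^r/\mathcal{K}_\ZZ$ is surjective. (When all $\lambda_i$ are roots of unity, $\mathbf{g}$ is constant, $\mathcal{K}=\QQ^r$, $r'=0$, and $\bar{\mathbf{g}}$ is a reduced PEP in $0$ variables, i.e.\ a single point.)

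Applying Step~2 to each of the finitely many $\mathbf{f_c}$ produced in Step~1 yields the asserted decomposition $\Sigma=\bigcup_{\mathbf{c}}\bar{\mathbf{f}}_{\mathbf{c}}(\ZZ^{r'_{\mathbf{c}}})$ with each $\bar{\mathbf{f}}_{\mathbf{c}}$ reduced and $r'_{\mathbf{c}}\le r$. The only genuinely delicate point is the bookkeeping in Step~1: one must check that after the congruence substitution every root-of-unity factor is really absorbed into the coefficients, so that $\mathbf{f_c}$ truly becomes a purely exponential polynomial in the multiplicatively independent $\mu_j$'s. Step~2, by contrast, is routine linear algebra, the key observation being that $\mathcal{K}\cap\ZZ^r$ is a saturated (primitive) sublattice, which is what keeps the quotient $\ZZ^r/\mathcal{K}_\ZZ$ free.
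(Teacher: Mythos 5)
Your proposal is correct and follows essentially the same two-step route as the paper: first pass to the sublattices $N\ZZ^r+\mathbf{c}$ to replace the bases by a free basis $\mu_1,\dots,\mu_t$ of $\langle\lambda_1^N,\dots,\lambda_k^N\rangle$, then kill the common kernel of the exponents (the paper restricts to a complement of the saturated sublattice after a change of basis, while you quotient by it — these are equivalent). No gaps.
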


    \begin{proof}
    
    Let us write
	$$f_i(\mathbf{n})=\sum _{A\in J_i}a_{A,i} \u{\lambda} ^{A\cdot \mathbf{n}^T}=\sum _{A\in J_i} a_{A,i} \lambda_1^{\mathbf{r}_1\cdot \mathbf{n}^T}\cdots \lambda_k^{\mathbf{r}_k\cdot \mathbf{n}^T},$$
	where $J_i$ is a finite subset of $M_{k \times r}(\ZZ)$, $a_{A,i}\neq 0$, and $\mathbf{r}_1,\dots , \mathbf{r}_k$ are rows of $A$. Let $J=J_1\cup \cdots \cup J_s$. We divide the proof in two ``reductions''.
	\vskip5mm
	\noindent
	{\bf First Reduction.} We reduce here to the case where the bases of $\mathbf{f}$ are multiplicatively independent. The trick in the following is similar to the one in the proof of the Image Theorem \ref{imagetheorem}.
	
	 Let $\Lambda$ be the subgroup of $K^*$ generated by $\lambda_1,\dots, \lambda_k$, and let $E$ be the exponent of the torsion part of $\Lambda$. We may divide $\ZZ^r$ into a finite disjoint union of cosets
	$$\ZZ^r=\bigsqcup_{\mathbf{a}\in \{0,1,\dots,E-1\}^r}M_{\mathbf{a}}$$
	where $M_{\mathbf{a}}=E\ZZ^r+\mathbf{a}$. Now for a fixed $\mathbf{a}\in \{0,1,\dots,E-1\}^r$ consider the purely exponential polynomial 
	$$\mathbf{f}_{\mathbf{a}}\colon \ZZ^r\to K^s,\text{  }\mathbf{n} \mapsto \mathbf{f}(E\mathbf{n}+\mathbf{a}).$$
	Notice that $\mathbf{f}_{\mathbf{a}}=(f_{1,\mathbf{a}},\dots, f_{s,\mathbf{a}})$ may be rewritten as 
	$$f_{i,\mathbf{a}}=\sum _{A\in J_i}a_{A,i}\lambda_1^{\mathbf{r}_1\cdot \mathbf{a}^T}\cdots \lambda_k^{\mathbf{r}_k\cdot \mathbf{a}^T}\cdot (\lambda_1^E)^{\mathbf{r}_1\cdot \mathbf{n}^T}\cdots (\lambda_k^E)^{\mathbf{r}_k\cdot \mathbf{n}^T},$$
	which is a vector of purely exponential polynomials whose bases are $\lambda_i^E, i=1,\ldots,k$. Note now that $\Lambda_{\mathbf{a}}\coloneqq \langle \lambda_1^E,\dots, \lambda_k^E\rangle$ is torsion free, choosing a $\ZZ$-basis $\mu_1,\dots,\mu_l$ of it, we may rewrite $\mathbf{f}_{\mathbf{a}}$ as a vector of purely exponential polynomials with {\bf multiplicatively independent} bases $\mu_1,\dots,\mu_l$.
	
	Noting that $\Sigma=\bigcup_{\mathbf{a}\in \{0,1,\dots,E-1\}^r}\mathbf{f}_{\mathbf{a}}(\ZZ^r)$, it suffices to establish the theorem for $\mathbf{f}_{\mathbf{a}}$, for each of the finitely many $\mathbf{a}$'s.
	\vskip5mm
	\noindent
	{\bf Second Reduction.} Assuming the first reduction is done, we further reduce here to the case where $\dim_{\QQ}\sum \mathbb{Q}\cdot \mathbf{r}_i=r$ (where the sum is taken for all $\mathbf{r}_i$ as a row vector of some $A\in J$), i.e.\  $\mathbf{f}$ is reduced. Let $H$ be the subgroup of $\ZZ^r$ defined as
	$$H=\bigcap\limits_{A\in J}\{\mathbf{n}\in \ZZ^r\colon A \cdot \mathbf{n}^T=\mathbf{0}^T \}.$$
	
	Since $H$ is saturated in $\ZZ^r$ (meaning that if $\mathbf{w} \in\ZZ^r $ is a vector such that there exists an integer $k>0$ with $k \mathbf{w} \in H$, then  $\mathbf{w}\in H$), after changing the basis of $\ZZ^r$ (this may always be done without loss of generality), we may assume that $H=\ZZ^q\times \{0\}^{r-q}, \ q\leq r$. Equivalently, $\langle\langle \mathbf{r}_1,\dots, \mathbf{r}_k\rangle, {A\in J}\rangle_{\QQ}=\{0\}^q \times \QQ^{r-q}.$
	
	Now define 
	$$\mathbf{f}_{\mathrm{new}}\colon \ZZ^{r-q}\to K^s,\text{   }(x_{q+1},\dots, x_r)\mapsto \mathbf{f}\left(\{0\}^q\times (x_{q+1},\dots, x_r)\right),$$
	which is reduced. Note that $\mathbf{f}$ factors through $\mathbf{f}_{\mathrm{new}}$ as $\mathbf{f}:\ZZ^r \to \ZZ^{r-q} \xrightarrow{\mathbf{f}_{\mathrm{new}}} K^s$, and so $\mathbf{f}(\ZZ^r)=\mathbf{f}_{\mathrm{new}}(\ZZ^{r-q})$.

  	\vskip3mm

    The two reductions above combine to give the lemma.

    \end{proof}

	\section{The asymptotic behavior of general (PEP) and its proof}\label{pfht}
	In this section we state and prove a more precise version (Theorem \ref{mainlongpaper}) of Theorem \ref{minspecial}, which serves as the main asymptotic result about (PEP) in this paper.
	
	Let $\mathbf{f}=(f_1,\dots,f_s)$ be a vector of purely exponential polynomials in $r$ variables. 
	
	\begin{defi} {\rm For a coset $M\subset \ZZ^r$, we define the {\bf stabilizer} of the restriction $\mathbf{f}| _M$, denoted by $\stab _M \mathbf{f}$, to be the maximal subgroup $\mathcal{K}$ of $M-M$ such that $\mathbf{f}| _M$ is invariant under translations by $\mathcal{K}$, i.e.
		$$\mathbf{f}(\mathbf{m}+\mathbf{k})=\mathbf{f}(\mathbf{m})\text{ for all }\mathbf{m}\in M\text{ and }\mathbf{k}\in \mathcal{K}.$$
  Moreover, for $\mathbf{f}=(f_1,\dots,f_s)$, we have the obvious relation
		$$\stab _M \mathbf{f}=\bigcap _{i=1}^s \stab _M f_i.$$}
	\end{defi}

   By Artin's linear independence of characters (remember our convention that the characters appearing in each $f_i$ are pairwise different), it is plain that the stabilizer $\stab_{\ZZ^r} \mathbf{f}$ of a (PEP) $\mathbf{f}$ coincides with the $\mathcal{K}$ of Proposition \ref{Prop:norm2}.
	
	\begin{rema}{\rm
		Notice that $\stab_M \mathbf{f}=\{\mathbf{0}\}$ if $\mathbf{f}|_M$ is reduced, where, to use the adjective reduced, we are viewing the latter as a vector of purely exponential polynomials by choosing a base-point $m_0 \in M$ and by representing elements of $M$ using coordinates for the $\ZZ$-module $M-m_0$. More generally, if the bases of $\mathbf{f}|_M$ are multiplicatively independent, then $\stab_M \mathbf{f}$ is equal to the intersection of the kernels of all exponents of $\mathbf{f}|_M$. However, it is worth observing that there do exist non-reduced (PEP) with trivial stabilizer, e.g.\  $(-1)^n+2^n$.}
	\end{rema}
	
	Our main asymptotic result on (PEP) reads:
	\begin{theo}[Main asymptotic result: precise version]\label{mainlongpaper} There exists a finite partition $\ZZ^r =L_1\sqcup \cdots \sqcup L_N$, where each $L_i$ is a quasi-coset $M_i\backslash N_i$, where $M_i$ is a coset of $\ZZ^r$ and $N_i$ is a finite union of cosets inside $M_i$ of smaller rank, such that for each $i=1,\dots,N$, writing $\mathcal{K}_i=\stab_{M_i}\mathbf{f}$, there exists a norm $\|\cdot\|_i$ on $((M_i-M_i)/\mathcal{K}_i)\otimes \R$ such that
		\begin{equation}\label{Mainineq}
		h_{\mathrm{aff}}(\mathbf{f}(\mathbf{n}))\sim \|\mathbf{n}-\mathbf{n}_0\|_{i} \text{ for }\mathbf{n}\in L_i,
		\end{equation}
    where $\mathbf{n}_0$ is any element of $M_i$. 
	\end{theo}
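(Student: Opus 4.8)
The plan is to bootstrap from the non-degenerate case (Proposition \ref{Prop:norm}) to all of $\ZZ^r$ by a stratification argument that peels off degenerate vectors coset by coset. First I would handle the trivial reductions: by Proposition \ref{decomposepep} I may assume $\mathbf{f}$ is reduced (replacing $\mathbf{f}$ by finitely many reduced PEPs on coordinate subspaces; a finite partition refining finitely many such partitions is again a finite partition into quasi-cosets, and ranks behave well). So assume $\mathbf{f}$ reduced, hence $\mathcal{K}=\stab_{\ZZ^r}\mathbf{f}=\{\mathbf{0}\}$. The set $D$ of $\mathbf{f}$-degenerate vectors is, by definition, the union over all components $f_i$ and all proper nonempty subsets $\emptyset\neq T\subsetneq\{\text{characters of }f_i\}$ of the zero loci of the PEPs $\sum_{j\in T}a_j\boldsymbol\lambda^{A_j\cdot\mathbf{n}^T}$; by the Zero Locus Theorem \ref{zerolocuspep} each such zero locus is a finite union of cosets, and — crucially — when the bases are multiplicatively independent (which holds after the first reduction of Proposition \ref{decomposepep}; one must be slightly careful that ``reduced'' already gives this) those cosets have rank $<r$. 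So $D$ is a finite union of cosets of rank $\le r-1$, and on $L:=\ZZ^r\setminus D$, which is a quasi-coset $M\setminus N$ with $M=\ZZ^r$, $N=D$, Proposition \ref{Prop:norm} gives $h_{\mathrm{aff}}(\mathbf{f}(\mathbf{n}))\sim\|\mathbf{n}\|_{\mathbf f}$ with $\|\cdot\|_{\mathbf f}$ a genuine norm on $\ZZ^r\otimes\R$ (since $\mathcal K=\{0\}$). That handles $L_1$; the $\mathbf{n}_0$ in \eqref{Mainineq} is $\mathbf 0\in M_1=\ZZ^r$ and the asymptotic is translation-insensitive because $h_{\mathrm{aff}}$ changes by $O(1)$ under a fixed translation of $\mathbf n$ (two monomials differ multiplicatively by a bounded factor).

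Next I would induct on the remaining cosets. The degenerate locus $D$ is a finite union of cosets; write it as a finite disjoint union of quasi-cosets $M'\setminus N'$ by the standard inclusion–exclusion on a finite arrangement of cosets (ordering by rank, each of the finitely many cosets minus the union of the lower-rank ones it meets, iterated). On each such $M'$, restrict $\mathbf f$: via a base point $\mathbf n_0\in M'$ and an identification $M'-M'\cong\ZZ^{r'}$, $\mathbf f|_{M'}$ becomes a PEP in $r'=\rk M'<r$ variables. Apply the full statement inductively in $r'$ to $\mathbf f|_{M'}$: this produces a finite partition of $M'-M'$ (equivalently of $M'$) into quasi-cosets on each of which $h_{\mathrm{aff}}(\mathbf f(\mathbf n))\sim\|\mathbf n-\mathbf n_0'\|$ for a norm on a quotient by $\stab$. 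One checks that $\stab_{M''}\mathbf f$ for a sub-coset $M''\subseteq M'$ is the same whether computed for $\mathbf f$ or for $\mathbf f|_{M'}$, and that a quasi-coset inside $M'$, which is itself inside a rank-$<r$ coset, is still a quasi-coset of $\ZZ^r$ of the required shape. Collecting: $\ZZ^r=L\sqcup(\text{pieces covering }D)$, each piece a quasi-coset with the desired asymptotic — which is exactly the claim. The base case $r=0$ (or $r=1$) is immediate since $\mathbf f$ is then eventually periodic or a single monomial.

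The one real subtlety — and where I expect the argument to need the most care — is the bookkeeping of \emph{stabilizers} versus \emph{reducedness} across the recursion. On a sub-coset $M$ the restriction $\mathbf f|_M$ need not be reduced even if $\mathbf f$ was, and $\mathcal K_i=\stab_{M_i}\mathbf f$ can be nontrivial; Proposition \ref{Prop:norm} only gives a \emph{norm} (not merely a seminorm) after quotienting by $\mathcal K_i$, so inside the recursion one must, before invoking the non-degenerate estimate, further apply Proposition \ref{decomposepep}-type reduction to $\mathbf f|_M$ to strip its stabilizer (pass to $(M-M)/\mathcal K_i$, noting $\mathbf f|_M$ genuinely descends there by definition of $\stab$) and to make bases multiplicatively independent (so that the Zero Locus Theorem yields \emph{rank-drop} for the new degenerate locus — without multiplicative independence the degenerate cosets could have full rank, as the Remark after Theorem \ref{zerolocuspep} warns, and the induction would not terminate). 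Granting that this reduction is performed at every stage, the rank of the ambient free group strictly decreases each time we descend into $D$, so the recursion halts after at most $r$ levels and produces a finite partition; the union of finitely many finite partitions into quasi-cosets is again such a partition, and \eqref{Mainineq} on each piece is precisely what Proposition \ref{Prop:norm} delivers after these normalizations, with the $O(1)$-insensitivity to the choice of base point $\mathbf n_0\in M_i$ absorbed into the $\sim$.
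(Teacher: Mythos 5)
Your argument is correct, but it is organized quite differently from the paper's. The paper does not induct on the rank $r$ at all: it stratifies $\ZZ^r$ in a single step according to the \emph{degeneracy type} of $\mathbf{f}(\mathbf{n})$, i.e.\ by the choice, for each component, of which subsum $f_{i,1}$ vanishes and which complementary subsum $f_{i,0}$ is non-degenerate. Each stratum $\Theta_D$ is a finite union of quasi-cosets by the Zero Locus Theorem, and on a maximal quasi-coset $M\setminus N\subseteq\Theta_D$ the function $\mathbf{f}$ coincides with $(f_{1,0},\dots,f_{s,0})|_M$, which is \emph{by construction} non-degenerate at every point of $M\setminus N$; so Proposition \ref{Prop:norm} applies directly there (the only care needed is that characters restricted to $M-M$ may coincide but never cancel completely), and no descent into lower rank is ever required. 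In particular the paper needs neither reducedness nor multiplicative independence of the bases, and the strata for non-trivial $D$ are allowed to have full rank. Your recursion re-discovers this stratification one rank level at a time: you treat only the globally non-degenerate locus directly and then restrict the full $\mathbf{f}$ to the cosets of the degenerate locus and recurse. This is valid, but it forces you to guarantee a strict rank drop at every level (hence the re-reduction to multiplicatively independent bases before each application of the Zero Locus Theorem) and to track stabilizers through the reduction and restriction steps -- exactly the two points you flag as delicate, and which the paper's flat stratification sidesteps. What your version buys is modularity (the statement is literally its own inductive hypothesis, and the passage from $\mathbf{f}$ to $\mathbf{f}|_{M'}$ needs no choice of degeneracy type); what the paper's version buys is a single application of Proposition \ref{Prop:norm} per stratum with no termination argument. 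Both proofs leave the final refinement of the overlapping pieces into a genuine partition, and the attendant stabilizer bookkeeping, at the same informal level.
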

Here both the norms $\|\cdot \|_i$ and the stabilizers $\mathcal{K}_i$ lie in effective finite sets, but the error term implicit in $\sim$ above is unattainable with our method. (See the discussion right after the proof of Theorem \ref{mainlongpaper}.) The effectivity  enables us to obtain the following corollary.
    
	\begin{coro}\label{CoromainThm}
		There exists a partition as above and an effective $C>0$ such that for each $i=1,\dots,N$, the solutions of
		\begin{equation}\label{MainCorineq}h_{\mathrm{aff}}(\mathbf{f}(\mathbf{n}))\leq C\cdot \min _{\mathbf{v}\in \mathcal{K}_i}\|\mathbf{n}+\mathbf{v}\|_{\infty} \text{ for }\mathbf{n}\in L_i
        \end{equation}
		lie in a union of finitely many translates of $\mathcal{K}_i$. 
	\end{coro}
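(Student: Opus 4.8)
The plan is to derive the Corollary directly from Theorem~\ref{mainlongpaper}, the only extra ingredient being a purely combinatorial (and effective) comparison between the two ``sizes'' that appear on the two sides of \eqref{MainCorineq}. Throughout, fix the partition $\ZZ^r=L_1\sqcup\cdots\sqcup L_N$, the cosets $M_i$, the groups $\mathcal{K}_i=\stab_{M_i}\mathbf{f}$ and the norms $\|\cdot\|_i$ on $\bigl((M_i-M_i)/\mathcal{K}_i\bigr)\otimes\R$ supplied by that theorem, and choose (effectively) a base point $\mathbf{n}_0\in M_i$. Recall from the remark following Theorem~\ref{mainlongpaper} that the $\mathcal{K}_i$ and the norms $\|\cdot\|_i$ range over effective finite sets, whereas the rate of convergence hidden in $\sim$ is ineffective.

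The first step is an effective upper bound
\[
\min_{\mathbf{v}\in\mathcal{K}_i}\|\mathbf{n}+\mathbf{v}\|_{\infty}\ \le\ c_2\bigl(\|\mathbf{n}-\mathbf{n}_0\|_i+1\bigr)\quad\text{for all }\mathbf{n}\in M_i,
\]
valid with an effectively computable $c_2\ge 1$ independent of $i$. This is elementary geometry of numbers. Put $\Lambda_i=M_i-M_i$ and let $\overline{\mathcal{K}}_i=(\mathcal{K}_i\otimes\R)\cap\Lambda_i$ be the saturation of $\mathcal{K}_i$; the index $[\overline{\mathcal{K}}_i:\mathcal{K}_i]$, hence the covering radius $D_i=\max_{\mathbf{a}\in\overline{\mathcal{K}}_i}\min_{\mathbf{v}\in\mathcal{K}_i}\|\mathbf{a}+\mathbf{v}\|_\infty$, is finite and effectively bounded because $\mathcal{K}_i$ lies in an effective finite set. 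The quotient of $\|\cdot\|_\infty$ by $\mathcal{K}_i\otimes\R$ is a genuine norm on $(\Lambda_i/\mathcal{K}_i)\otimes\R=(\Lambda_i/\overline{\mathcal{K}}_i)\otimes\R$ whose value at the class of $\mathbf{w}$ equals $\mathrm{dist}_\infty(\mathbf{w},\mathcal{K}_i\otimes\R)$, and since $\|\cdot\|_i$ lies in an effective finite family of norms on this finite-dimensional space, it is effectively comparable to that quotient norm. Combining these with $\mathrm{dist}_\infty(\mathbf{w},\mathcal{K}_i)\le\mathrm{dist}_\infty(\mathbf{w},\overline{\mathcal{K}}_i)+D_i$, with $\mathrm{dist}_\infty(\mathbf{w},\overline{\mathcal{K}}_i)\le(\text{effective})\cdot\mathrm{dist}_\infty(\mathbf{w},\mathcal{K}_i\otimes\R)$ (split off a complement of $\overline{\mathcal{K}}_i$ in $\Lambda_i$ and use that the quotient norm is equivalent to $\|\cdot\|_\infty$ on that complement), and with a triangle-inequality shift by $\mathbf{n}_0$, yields the displayed bound.

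The second step finishes the proof. Apply Theorem~\ref{mainlongpaper} with $\varepsilon=\tfrac12$: since $h_{\mathrm{aff}}(\mathbf{f}(\mathbf{n}))\sim\|\mathbf{n}-\mathbf{n}_0\|_i$ on $L_i$, there is an (ineffective) radius $R_i\ge1$ with $h_{\mathrm{aff}}(\mathbf{f}(\mathbf{n}))\ge\tfrac12\|\mathbf{n}-\mathbf{n}_0\|_i$ for every $\mathbf{n}\in L_i$ with $\|\mathbf{n}-\mathbf{n}_0\|_i>R_i$. Now fix an effective $C>0$ with $Cc_2\le\tfrac14$. If $\mathbf{n}\in L_i$ satisfies \eqref{MainCorineq} and $\|\mathbf{n}-\mathbf{n}_0\|_i>R_i$, then
\[
\tfrac12\|\mathbf{n}-\mathbf{n}_0\|_i\le h_{\mathrm{aff}}(\mathbf{f}(\mathbf{n}))\le C\min_{\mathbf{v}\in\mathcal{K}_i}\|\mathbf{n}+\mathbf{v}\|_\infty\le Cc_2\bigl(\|\mathbf{n}-\mathbf{n}_0\|_i+1\bigr)\le\tfrac14\bigl(\|\mathbf{n}-\mathbf{n}_0\|_i+1\bigr)
\]
forces $\|\mathbf{n}-\mathbf{n}_0\|_i\le1$, a contradiction. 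Hence every solution of \eqref{MainCorineq} lying in $L_i$ satisfies $\|\mathbf{n}-\mathbf{n}_0\|_i\le R_i$; as $\|\cdot\|_i$ is an honest norm on the finite-dimensional space $\bigl((M_i-M_i)/\mathcal{K}_i\bigr)\otimes\R$, the ball of radius $R_i$ contains only finitely many points of the lattice $(M_i-M_i)/\mathcal{K}_i$, so these solutions lie in a finite union of translates of $\mathcal{K}_i$, as required.

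The only real subtlety — the ``hard part'', such as it is — is the effective lattice comparison of the first step: $\|\cdot\|_i$ only sees $\mathbf{n}$ modulo the \emph{real} subspace $\mathcal{K}_i\otimes\R$, whereas $\min_{\mathbf{v}\in\mathcal{K}_i}\|\mathbf{n}+\mathbf{v}\|_\infty$ measures distance to the \emph{integral} lattice $\mathcal{K}_i$, which need not be saturated; the discrepancy is a bounded (but effective) additive term, the covering radius of $\mathcal{K}_i$ inside its saturation, and one must keep track that every constant stays effective, which works precisely because the $\mathcal{K}_i$ and the $\|\cdot\|_i$ come from effective finite sets. The ineffectiveness of the number of translates of $\mathcal{K}_i$ in the conclusion is inherited, unavoidably, from the ineffective radius $R_i$, i.e.\ from the ineffective error term in Theorem~\ref{mainlongpaper}.
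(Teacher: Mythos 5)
Your proof is correct and follows essentially the same route as the paper's, which introduces the effective comparison constants $C_D(H)$ between the semi-norms $\|\cdot\|_{H}$ and the quotient of $\|\cdot\|_{\infty}$ by $\mathcal{K}_H$, sets $C=\tfrac12\min_{D,H}C_D(H)$, and then declares the corollary ``now clear'' from Theorem \ref{mainlongpaper}. You merely supply the details the paper omits --- in particular the additive covering-radius correction needed when $\mathcal{K}_i$ fails to be saturated in $M_i-M_i$ (a point the paper's displayed inequality silently elides) and the explicit deduction from the asymptotic $h_{\mathrm{aff}}(\mathbf{f}(\mathbf{n}))\sim\|\mathbf{n}-\mathbf{n}_0\|_i$ --- so there is nothing to object to.
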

    
    Here the number of translates is ineffective. We prove the corollary at the end of this section.

    When $\mathbf{f}$ is a reduced (PEP), Proposition \ref{Prop:norm} gives already the following clean statement as a corollary. This may also be seen as a ``base case'' of Corollary \ref{CoromainThm} (see Remark \ref{Rmk:mainlongpaper}).
     
	\begin{coro}\label{mainhtsteponecoro}
	    Suppose $\mathbf{f}$ is a reduced (PEP); then there exists an effective $C'>0$ such that the inequality
	\begin{equation}\label{mainhtstepone}
	    h_{\mathrm{aff}}(\mathbf{f}(\mathbf{n})) \leq C'\cdot  \| \u{n} \|_{\infty}
	\end{equation}
	has only finitely many $\mathbf{f}$-non-degenerate solutions. 
	\end{coro}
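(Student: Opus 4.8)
\emph{Plan.} The idea is to combine the \emph{lower} bound of Corollary \ref{Lem:exactasymp} with Proposition \ref{Prop:norm2}: for reduced $\mathbf{f}$ the ``exponent height'' is an honest norm on $\R^r$, so it dominates $\|\cdot\|_\infty$, and a fixed positive proportion of it cannot be swallowed by a constant unless $\mathbf{n}$ is bounded.

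First I would set $\|\mathbf{n}\|_{\mathbf{f}} \coloneqq h_{\mathrm{aff}}\!\left(\boldsymbol{\lambda}^{A_1\cdot \mathbf{n}^T},\ldots,\boldsymbol{\lambda}^{A_t\cdot \mathbf{n}^T}\right)$, where $\boldsymbol{\lambda}^{A_i\cdot \mathbf{n}^T}$ runs over all characters appearing in $\mathbf{f}$. Since $\mathbf{f}$ is reduced, the group $\mathcal{K}=\bigcap_i \ker\chi_i$ is trivial, so by Proposition \ref{Prop:norm2} (equivalently, the reduced case of Proposition \ref{Prop:norm}) the function $\|\cdot\|_{\mathbf{f}}$ extends to a genuine norm on $\ZZ^r\otimes_\ZZ\R=\R^r$. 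In particular it does not degenerate in any direction, and since it is given by an explicit, computable formula in terms of heights of fixed algebraic numbers, comparison with $\|\cdot\|_\infty$ is effective: there is an effectively computable constant $c_1>0$ with $\|\mathbf{n}\|_{\mathbf{f}}\ge c_1\|\mathbf{n}\|_\infty$ for all $\mathbf{n}$ (e.g.\ by bounding below the piecewise-linear convex positively-homogeneous function $\|\cdot\|_{\mathbf{f}}$ on the compact $\ell^\infty$-unit sphere, or via the effective lower bound on the first successive minimum furnished by Minkowski's theorem as in Proposition \ref{Prop:norm2}).

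Next I would apply Corollary \ref{Lem:exactasymp} with $\varepsilon=\tfrac12$: there is a (possibly ineffective) constant $c\in\R$ such that, for every $\mathbf{f}$-non-degenerate $\mathbf{n}$,
\[
h_{\mathrm{aff}}(\mathbf{f}(\mathbf{n}))\ \ge\ \tfrac12\,\|\mathbf{n}\|_{\mathbf{f}}-c\ \ge\ \tfrac{c_1}{2}\,\|\mathbf{n}\|_\infty - c .
\]
Now put $C'\coloneqq c_1/4$, which is effective. If $\mathbf{n}$ is a non-degenerate solution of $h_{\mathrm{aff}}(\mathbf{f}(\mathbf{n}))\le C'\|\mathbf{n}\|_\infty$, combining the two inequalities yields $\tfrac{c_1}{4}\|\mathbf{n}\|_\infty\ge\tfrac{c_1}{2}\|\mathbf{n}\|_\infty-c$, i.e.\ $\|\mathbf{n}\|_\infty\le 4c/c_1$; since there are only finitely many integer vectors of bounded sup-norm, this finishes the proof. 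Note that the bound $4c/c_1$ on the exceptions is ineffective (inherited from $c$), exactly as the statement allows, while $C'$ itself is effective.

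The only step requiring care is the effectivity of $c_1$ in the norm comparison; everything else is a direct substitution. This is harmless precisely because $\|\cdot\|_{\mathbf{f}}$ is presented by an explicit formula and is \emph{known a priori to be a norm} (this is where reducedness, via Proposition \ref{Prop:norm2}, enters), so a positive effective lower bound over the unit sphere can be extracted; if one were content with an ineffective $C'$ one could bypass this by invoking Proposition \ref{Prop:norm} directly.
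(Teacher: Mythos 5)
Your proposal is correct and follows essentially the same route as the paper: the authors derive this corollary directly from Proposition \ref{Prop:norm} (itself the combination of Corollary \ref{Lem:exactasymp} and Proposition \ref{Prop:norm2}, with $\mathcal{K}$ trivial in the reduced case), and the effective comparison constant between $\|\cdot\|_{\mathbf{f}}$ and $\|\cdot\|_{\infty}$ you isolate is exactly the constant $C_D(H)$ appearing in their proof of Corollary \ref{CoromainThm}, whose effectivity they likewise justify by the explicitness of the semi-norm. Your explicit $\varepsilon=\tfrac12$ bookkeeping just makes concrete what the paper leaves implicit.
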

 Recall that ``non-degenerate'' was defined in Definition \ref{defdegenerate}. Needless to say, the number of non-degenerate solutions here is ineffective.

    \begin{rema}\label{Rmk:mainlongpaper} {\rm 
    The partition $\ZZ^r =L_1\sqcup \cdots \sqcup L_N$ in the theorem and the corollary above comes by classifying the elements $\mathbf{n}\in \ZZ^r$ with respect to the ``degeneracy type'' of the sums $f_i(\u{n}), i=1,\ldots,r$ in the sense as in the end of the introduction.

    The proof of Theorem \ref{mainlongpaper} yields, in particular, that when $\mathbf{f}$ is reduced, one of the cosets $M_1,\dots, M_N$ is equal to $\ZZ^r$ whose corresponding quasi-coset contains all $\mathbf{f}$-non-degenerate $\mathbf{n}$, while the others have rank $<r-1$. 
    }
    \end{rema}

\begin{proof}[Proof of Theorem \ref{mainlongpaper}]
    For each $\u{n} \in \ZZ^r$, and each $i=1,\ldots,s$, we may express the sum $f_i$ as a sum of two (possibly empty) subsums:
    \begin{equation}\label{Decompositionfi}
      f_i=f_{i,0}+f_{i,1},
    \end{equation}
    where $f_{i,1}(\u{n})=0$, and $f_{i,0}(\u{n})$ is non-degenerate (or empty). (i.e.\ we may choose a ``degeneracy type'' of $\mathbf{f}(\mathbf{n})$.)

    Motivated by this observation, we define, for each of the finitely many decompositions $D$ of the sums $f_i$ as in \eqref{Decompositionfi}:
    \[
    \Theta_D:=\left\{ \u{n} \in \ZZ^r \mid f_{i,1}(\u{n})=0 \text{ and }f_{i,0}(\u{n}) \text{ is non-degenerate, for each }i\text{ with }\mathbf{f}_i(\mathbf{n})\neq 0\right\}.
    \]

    According to the observation above, we have that $\ZZ^r=\bigcup_D \Theta_D$.
    Note that, for each $D$, the set $\Theta_D$ is of the form ``finite union of cosets minus finite union of cosets'' by the Zero Locus Theorem \ref{zerolocuspep} (when the bases of $\mathbf{f}$ are multiplicatively independent, all the cosets appearing here have rank $<r$ unless $D$ is the trivial decomposition, i.e.\ $f_{i,1}=0$ for all $i$). Moreover, any set of this form may be written as a finite union of {\it quasi-cosets}, where we define a quasi-coset to be a coset minus a finite union of subcosets of {\bf smaller} rank.
   
    Fix now $D$ and let $\chi_1:=\boldsymbol{\lambda}^{A_1\cdot \u{n}^T},\ldots,\chi_t:=\boldsymbol{\lambda}^{A_t\cdot \u{n}^T}$ be the different characters appearing in the sums $f_{1,0}, \ldots, f_{s,0}$.
    Let also $M \setminus N$ be a maximum quasi-coset in $\Theta_D$, and fix an $\u{n}_0 \in M \setminus N$. Consider now the restriction ${\mathbf{f}}|_M=(f_{1,0},\ldots,f_{s,0})|_M$, which we may identify with a purely exponential polynomial after fixing an affine isomorphism $M \cong M-\u{n}_0 =M-M \cong \ZZ^{\rk M}$. 

    Writing, for each $i$, $f_{i,0}= \sum\limits_{j=1}^{t(i)} a_{i,j}\chi_{i,j}$ with $\{\chi_{i,j}: j=1,\ldots, t(i)\}\subseteq \{\chi_1,\ldots,\chi_t\}$ and $a_{i,j}\neq 0$, we have that
    \[
    f_{i,0}|_M(\mathbf{n})= \sum_{j=1}^{t(i)} \left(a_{i,j}\chi_{i,j}(\mathbf{n}_0)\right)\chi_{i,j}|_{M-M}(\mathbf{n}) \text{ for }\mathbf{n}\in \ZZ^{\mathrm{rk}M}. 
    \]
    The characters $\chi_{i,j}|_{M-M},j=1,\ldots, t(i)$ are not necessarily pairwise different, but we may cluster the identical ones together and write
    \[
    f_{i,0}|_M= \sum_{\chi' \in \mathcal{S}_i}\left(\sum_{\substack{j=1 \\ \chi_{i,j}=\chi'}}^{t(i)} a_{i,j}\chi_{i,j}(\mathbf{n}_0)\right)\chi',
    \]
    where $\mathcal{S}_i:=\{\chi_{i,j}|_{M-M},j=1,\ldots, t(i)\}$.
    It is important to notice that no character $\chi'$ gets simplified in the following sense: \ $\sum\limits_{j,\chi_{i,j}=\chi'} a_{i,j}\chi_{i,j}(\mathbf{n}_0) \neq 0$ for every $\chi' \in \mathcal{S}_i$, for otherwise the sum $f_{i,0}|_M$ would be degenerate on the whole of $M$, and in particular on $M \setminus N$ as well. Summarizing, even though some of the characters $\chi_1|_{M-M}, \ldots, \chi_t|_{M-M}$ may coincide, each of them appears at least once in the purely exponential polynomials $f_{1,0}|_M, \ldots,f_{s,0}|_M$.

    Hence Propositions \ref{Prop:norm} and \ref{Prop:norm2} imply that
    \[
    h_{\mathrm{aff}}({\mathbf{f}}(\u{n}))=h_{\mathrm{aff}}({\mathbf{f}}|_M(\u{n}))\sim h_{\mathrm{aff}}((\chi'(\mathbf{n}))_{\chi'\in \mathcal{S}_1 \cup \cdots \cup \mathcal{S}_s})=h_{\mathrm{aff}}(\chi_1|_{M-M}(\mathbf{n}), \ldots, \chi_t|_{M-M}(\mathbf{n}))
    \]
    for $\mathbf{n}\in M \setminus N$,
    and that the right hand side defines a norm on $((M-M)/\mathcal{K})\otimes \RR$, where $\mathcal{K}$ is the stabilizer of $\mathbf{f}|_M$.    

    For each $D$, writing $\Theta_D$ as the union of its (finitely many) maximal quasi-cosets, and adjoining these unions as $D$ varies through all (the finitely many) decompositions of the sums $f_i$, we obtain a decomposition $\ZZ^r=L_1\cup \cdots \cup L_N$ as in the statement (except that the $L_i$ might intersect each other here). Refining the decomposition $\ZZ^r=L_1\cup \cdots \cup L_N$ to a partition concludes the proof.
\end{proof}

{\bf Effectivity.}
As seen from the proof above, the norms $\|\cdot\|_i$ and the stabilizers $\mathcal{K}_i$ are effective in the following sense. For each decomposition $D$, the groups $M-M$ underlying the maximal quasi-cosets are effectively computable (i.e.\ each of them belongs to a finite family of effectively computable subgroups of $\ZZ^r$) by the Zero Locus Theorem \ref{zerolocuspep}. Then it is clear that, for each of these groups, the stabilizer of $\mathcal{K}$ of $\mathbf{f}|_M$ is equal to $\bigcap_{j=1}^t \ker \chi_j|_{M-M}$, and that
the semi-norm $\|\mathbf{n}\|\coloneqq h_{\mathrm{aff}}(\chi_1|_{M-M}(\mathbf{n}), \ldots, \chi_t|_{M-M}(\mathbf{n})), \ \mathbf{n} \in M-M$ is effective, as it is given explicitly.

\begin{proof}[Proof of Corollary \ref{CoromainThm}]
    Consider, for each decomposition $D$, and each subgroup $H \subseteq \ZZ^r$ that may potentially appear as a difference $M-M$, the semi-norm:
    \[
    \|\mathbf{n}\|_{H}\coloneqq h_{\mathrm{aff}}(\chi_1|_{H}(\mathbf{n}), \ldots, \chi_t|_{G}(\mathbf{n})), \ \ \mathbf{n} \in H
    \]
    with kernel $\mathcal{K}_H\coloneqq \bigcap_{j=1}^t \ker \chi_j|_{H}$. Let $C_D(H)>0$ be the comparison constant such that $\|\mathbf{n}\|_{H} \geq C_D(H) \cdot \min_{\mathbf{v} \in \mathcal{K}_H} \|\mathbf{n}\|_{\infty}. $
    Choose $C\coloneqq \frac12 \min_{D,H} C_D(H)$. The corollary is now clear.
\end{proof}

	\section{Abundance of points in \texorpdfstring{$S$}{}-arithmetic group} \label{vol}

	This section consists mainly group-theoretic arguments. We shall verify Theorem \ref{volestimate} which provides an abundance property for points in many ambient $S$-arithmetic groups, establishing as a byproduct the sparseness of their (PEP) subsets by Theorem \ref{firstmainthm}. We also sketch an alternative, more combinatorial, approach in Remark \ref{Rmk: a second approach}.
  
    Let $K$ be a number field, and let $S$ be a finite set of places containing all archimedean ones. Let $G$ be a linear algebraic group over $K$, and let $\rho$ be a $K$-representation $\rho\colon G \rightarrow \GL_{n,K} $ with \underline{finite kernel}, we first deal with the case when $G$ and $\Gamma$ satisfy the following.
 \begin{itemize}
     \item[($\mathcal{C}_1$)] The ambient algebraic group $G$ is $K$-simple and simply connected;
     \item[($\mathcal{C}_2$)] The $S$-arithmetic group $\Gamma$ is equal to $G(\cO_S)\coloneqq G(K)\cap  \rho^{-1}(\GL_n(\cO_S))$.
 \end{itemize}
 We do assume both ($\mathcal{C}_1$) and ($\mathcal{C}_2$) until the end of the section where we will reduce the general case to it. Because of ($\mathcal{C}_1$), $G$ is $K$-simple and thus the induced map $\det \circ \rho\colon G\to \mathbb{G}_{m,K}$ is constant, hence the image of $\rho$ lies in $\mathrm{SL}_{n,K}$. Here we emphasize that we take $\rho$ to be a representation with finite kernel instead of a faithful one, this flexibility allows us to deal with the later reduction steps more easily.

	By assumption ($\mathcal{C}_1$), there is no non-trivial $K$-defined character of $G$, thus $\Gamma =G(\cO_S)$ is a lattice in $G_S\coloneqq\prod_{v \in S} G(K_v)$ cf. \cite[Theorem 5.7]{PR}. We always \emph{assume} that $\Gamma$ is infinite, which is the only case making the sparseness question (i.e.\ Theorem \ref{volestimate}) non-trivial. Recall that $\Gamma$ being infinite is equivalent to the existence of a $v \in S$ for which $G(K_v)$ is non-compact, or, equivalently, $G$ is $K_v$-isotropic \cite[Theorem 3.1]{PR}. 

       Recall that there is a variety of works about counting lattice points in $S$-arithmetic groups (or discrete subgroups of Lie groups) claiming that the number of lattice points in a certain height ball (or norm ball) is proportional to the volume of the height ball (or norm ball). See the pioneering work of Duke-Rudnik-Sarnak \cite{DukeRudnickSarnak} and Eskin-McMullen \cite{EM93}, as well as follow-up papers e.g.\  \cite{GMO,GW07,GN12,Maucourant07}. Among them, we will use a counting result of Gorodnik-Nevo \cite[Theorem 5.1]{GN12}, whose form is most convenient for our purpose. We first introduce their set-up, especially a variation $\widetilde{H}$ of our height function $H_{\mathrm{aff}}$.  Their height $\widetilde{H}\colon G_S\to \RR_{+}$ is defined as the product $\prod\limits_{v \in S} \widetilde{H}_v(\rho(g_v))^{1\slash [K\colon \mathbb{Q}]}$, where $\widetilde{H}_v$ denotes the local height function on $K_v^{n^2}=M_n(K)$ defined as follows

\begin{equation}\label{deflocalht}
\widetilde{H}_{v}(\mathbf{x})=\widetilde{H}_{v}(x_{11},x_{12} \ldots, x_{nn})= \left\{
  \begin{array}{ll}
    \left(|x_{11}|_{v}^{2}+\cdots+|x_{nn}|_{v}^{2}\right)^{[K_v\colon \R] / 2}, & \hbox{if $v\in V^K_{\infty}$,} \\
    \max \left\{\left\|x_{11}\right\|_{v}, \ldots,\left\|x_{nn}\right\|_{v}\right\}, & \hbox{if $v\in S\cap V^K_f$,} 
  \end{array}
\right.
\end{equation}
where $|\cdot|_{v}$ denotes the standard archimedean absolute value (before normalization, i.e.\ $\norm{\cdot}_v=| \cdot |_v^{[K_v \colon \RR]}$).

	Let $\mu$ be the Haar measure on $G_S$, normalized so that $\mu(G_S/\Gamma)=1$; then the Gorodnik-Nevo counting theorem  \cite[Theorem 5.1]{GN12} implies

       \begin{theo}[Gorodnik-Nevo]\label{GNcounting} 
           \begin{equation}\label{Estimate}
	\#\{\gamma \in \Gamma \mid \widetilde{H}(\rho(\gamma)) \leq T\}\sim \mu(B_T), \text{ as } T \to \infty,
	\end{equation}
	where $$B_{T}\coloneqq\{g\in G_S\colon  \widetilde{H}(g) \leq T\}.$$
       \end{theo}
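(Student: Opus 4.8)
The plan is to derive this as a direct application of the Gorodnik--Nevo counting theorem \cite[Theorem 5.1]{GN12}, so that the work reduces to verifying its hypotheses in our situation. Recall the shape of that result: if $\Gamma$ is an irreducible lattice in a semisimple $S$-adic group $G_S$ and $\{B_T\}_{T>0}$ is an increasing, exhausting family of relatively compact subsets of $G_S$ that is \emph{well-rounded} (equivalently, \emph{admissible}), then $\#(\Gamma\cap B_T)\sim\mu(B_T)$; the engine is Howe--Moore mixing of the $G_S$-action on $L^2(G_S/\Gamma)$ together with a sandwiching argument in the spirit of Eskin--McMullen \cite{EM93} and Duke--Rudnick--Sarnak \cite{DukeRudnickSarnak}, the $S$-adic bookkeeping being supplied in \cite{GN12}. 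First I would record the structural facts already noted above: under $(\mathcal{C}_1)$ and $(\mathcal{C}_2)$, $\Gamma=G(\cO_S)$ is a lattice in $G_S=\prod_{v\in S}G(K_v)$, and since $G$ is $K$-simple and simply connected, strong approximation makes it an \emph{irreducible} lattice; the finitely many compact factors $G(K_v)$ are harmless, as $\widetilde{H}\circ\rho$ is bounded on them and they are absorbed into the normalized Haar measure. Since $\rho(G)\subseteq\mathrm{SL}_{n,K}$, the function $\widetilde{H}\circ\rho\colon G_S\to\RR_{+}$ is proper, so the $B_T$ are relatively compact, increase to all of $G_S$, and satisfy $\mu(B_T)\to\infty$ (the latter because $G_S$ is noncompact, as $\Gamma$ is infinite).

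The substantive point will be the well-roundedness of $\{B_T\}$: for every $\varepsilon>0$ one must exhibit a neighborhood $\mathcal{U}$ of $1$ in $G_S$ with $\mu(\mathcal{U}B_T\mathcal{U})\leq(1+\varepsilon)\mu(B_T)$ for all large $T$. Here the explicit form of $\widetilde{H}$ is what I would exploit. At each non-archimedean $v\in S$ the local sublevel sets $\{\widetilde{H}_v(\rho(g))\leq c\}$ are open and compact, hence invariant under a small enough compact open subgroup $\mathcal{U}_v\subseteq G(K_v)$, so they contribute no condition. At an archimedean $v$, $\widetilde{H}_v\circ\rho$ is a power of a Euclidean matrix norm restricted to $\rho(G)(K_v)\subseteq\mathrm{SL}_n(K_v)$; passing to a Cartan decomposition $K_v A_v^{+} K_v$ one sees that its sublevel sets correspond to a piecewise-real-analytic convex exhaustion of $A_v^{+}$ whose volume is regularly varying (of the form $T^{a}(\log T)^{b}$) and whose $\delta$-thickenings change the volume by a factor $1+O(\delta)$; taking the product over $v\in S$ then gives the required estimate. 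Alternatively --- and this is probably the cleanest route --- one may simply invoke the general assertion, established along the way in \cite{GN12}, that the height balls attached to any $K$-rational representation of a reductive $K$-group form an admissible family, which is precisely what makes \cite[Theorem 5.1]{GN12} directly applicable to our $\widetilde{H}$.

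With $\Gamma$ an irreducible lattice in $G_S$ and $\{B_T\}$ well-rounded, \cite[Theorem 5.1]{GN12} then yields $\#\{\gamma\in\Gamma:\widetilde{H}(\rho(\gamma))\leq T\}\sim\mu(B_T)$, as claimed. The hardest part is the well-roundedness of the archimedean height balls: one needs that $\widetilde{H}_v\circ\rho$ is tame enough --- its level sets null, its volume function regularly varying --- for the Eskin--McMullen sandwich to close, and this is exactly the kind of geometric analysis carried out in the Duke--Rudnick--Sarnak / Eskin--McMullen / Gorodnik--Nevo line of work, which I would quote rather than redo. The handling of the compact factors of $G_S$ and of the finite kernel of $\rho$ is routine.
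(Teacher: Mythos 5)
Your proposal is correct and takes essentially the same route as the paper, which states this result as a direct quotation of \cite[Theorem 5.1]{GN12} without further argument; you simply add a (reasonable) sketch of why the hypotheses of that theorem — irreducibility of the lattice $G(\cO_S)$ in $G_S$ and admissibility/well-roundedness of the height balls $B_T$ — are satisfied in the present setting. Nothing more is needed.
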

	
	Recall that in our setting, for $\mathbf{x}=\left(x_{11},x_{12}, \ldots, x_{nn}\right) \in K^{n^2}$, the affine height function is defined as
	\[
	H_{\mathrm{aff}}(\mathbf{x}):= \left(\prod_{v \in V^K} \max \left\{\left\|x_{11}\right\|_{v}, \ldots,\left\|x_{nn}\right\|_{v},1\right\}\right)^{1\slash [K\colon \mathbb{Q}]}.
	\]
	To apply Theorem \ref{GNcounting} in our setting, we need the first inequality of the following comparison result (the second is included only for completeness).
	\begin{lemma}\label{HtildacompatiblewithH}
		For $\gamma \in \Gamma$, we have inequalities
		\[
		H_{\mathrm{aff}}(\rho(\gamma)) \leq \widetilde{H}(\rho(\gamma)) \leq {n} \cdot H_{\mathrm{aff}}(\rho(\gamma)).
		\]
	\end{lemma}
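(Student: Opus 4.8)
The plan is to prove both inequalities by comparing the local factors of $H_{\mathrm{aff}}$ and $\widetilde H$ place by place, after one preliminary reduction. First I would record that $\rho(\gamma)$ actually lies in $\SL_n(\cO_S)$: condition $(\mathcal{C}_2)$ gives $\rho(\gamma)\in\GL_n(\cO_S)$, so every entry $x_{ij}$ of $\rho(\gamma)$ lies in $\cO_S$, while $K$-simplicity of $G$ forces $\det\circ\rho$ to be trivial, whence $\det\rho(\gamma)=1$. Consequently $\|x_{ij}\|_v\le 1$ for all $v\notin S$, so the local factor $\max\{\|x_{11}\|_v,\dots,\|x_{nn}\|_v,1\}$ of $H_{\mathrm{aff}}(\rho(\gamma))$ is $1$ at every such $v$; thus only the places in $S$ contribute, and it suffices to compare, for each $v\in S$, the $v$-factor of $H_{\mathrm{aff}}$ with $\widetilde H_v(\rho(\gamma))$ and then collect the normalizing exponents.

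At a non-archimedean $v\in S$ I expect the two local factors to be equal. The $\widetilde H_v$-value is $\max_{ij}\|x_{ij}\|_v$ and the $H_{\mathrm{aff}}$-factor is $\max\{\max_{ij}\|x_{ij}\|_v,1\}$; expanding $1=\|\det\rho(\gamma)\|_v$ through the Leibniz formula and using the ultrametric inequality gives $1\le(\max_{ij}\|x_{ij}\|_v)^n$, hence $\max_{ij}\|x_{ij}\|_v\ge 1$, so the maximum with $1$ is vacuous. This settles the non-archimedean contribution to both inequalities at once.

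At an archimedean place $v$, write $e_v=[K_v\colon\RR]$ and $\|\rho(\gamma)\|_{F,v}=\big(\sum_{ij}|x_{ij}|_v^2\big)^{1/2}$, so that $\widetilde H_v(\rho(\gamma))=\|\rho(\gamma)\|_{F,v}^{\,e_v}$ while the $H_{\mathrm{aff}}$-factor equals $\big(\max\{\max_{ij}|x_{ij}|_v,1\}\big)^{e_v}$. I would then invoke the elementary comparisons $\max_{ij}|x_{ij}|_v\le\|\rho(\gamma)\|_{F,v}\le n\,\max_{ij}|x_{ij}|_v$ together with $\|\rho(\gamma)\|_{F,v}\ge\sqrt n\ge 1$, the last bound coming from Hadamard's inequality applied to $|\det\rho(\gamma)|_v=1$ followed by the AM--GM inequality. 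Raising these to the power $e_v$ yields
\[
\big(\max\{\max_{ij}|x_{ij}|_v,1\}\big)^{e_v}\ \le\ \widetilde H_v(\rho(\gamma))\ \le\ n^{e_v}\,\big(\max\{\max_{ij}|x_{ij}|_v,1\}\big)^{e_v}.
\]

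Finally I would multiply the local inequalities over $v\in S$ and extract the $1/[K\colon\QQ]$-th root. The left inequality of the lemma then follows immediately. For the right inequality the accumulated constant is $\big(\prod_{v\in V^K_\infty}n^{e_v}\big)^{1/[K\colon\QQ]}=n^{\left(\sum_{v\in V^K_\infty}e_v\right)/[K\colon\QQ]}=n$, since $\sum_{v\in V^K_\infty}[K_v\colon\RR]=[K\colon\QQ]$. The whole argument is bookkeeping once the structural reduction to $\SL_n(\cO_S)$ is made; the only non-formal inputs are the two lower bounds ``$\ge 1$'' on the local size of a determinant-one matrix, and the single point requiring care is matching the local normalization exponents $e_v$ against the global degree $[K\colon\QQ]$ so that the constant on the right comes out exactly as $n$ (and not, say, as some power $n^{[K\colon\QQ]}$).
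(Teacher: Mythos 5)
Your proof is correct and follows essentially the same route as the paper's: reduce to $\SL_n(\cO_S)$ so that only places in $S$ contribute to $H_{\mathrm{aff}}$, show the non-archimedean local factors agree using $\|\det\|_v=1$ and the ultrametric inequality, bound the archimedean factors via the elementary comparison of the sup and Frobenius norms together with Hadamard plus AM--GM to get $\left(\sum_{ij}|x_{ij}|_v^2\right)^{1/2}\geq\sqrt{n}\geq 1$, and collect the exponents $[K_v\colon\R]$ to obtain the constant $n$. No gaps.
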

	\begin{proof}
		For any $\gamma \in \Gamma$, write $\mathbf{x}=(x_{ij})_{1\leq i,j \leq n}\coloneqq \rho(\gamma)\in \mathrm{SL}_n(\cO_S)$. Since $\mathbf{x}\in \mathrm{SL}_n(\cO_S) \subset \cO_S^{n^2}$, we have
  $$\left\|x_{i}\right\|_{v}\leq 1,\forall i\text{ and }v\notin S\Longrightarrow\max \left\{\left\|x_{11}\right\|_{v} \ldots,\left\|x_{nn}\right\|_{v},1\right\} =1,\text{ }\forall v\notin S,$$
  hence the value of $H_\mathrm{aff}(\rho(\gamma))$ does not change if one only takes product over $v\in S$.

  If $v \in V^K_{\infty}$, we have
		\begin{equation}\label{Archimdean}
		\max \left\{\left\|x{}_{11}\right\|_{v}, \ldots,\left\|x{}_{nn}\right\|_{v},1\right\} \leq \left(|x{}_{11}|_{v}^{2}+\cdots+|x{}_{nn}|_{v}^{2}\right)^{[K_v\colon \R]  / 2} \leq n^{[K_v\colon\R]} \cdot \max \left\{\left\|x{}_{11}\right\|_{v}, \ldots,\left\|x{}_{nn}\right\|_{v},1\right\},
		\end{equation}
		where the second inequality is clear. For the first inequality, notice that the Hadamard's inequality on the determinant \cite[Exercise 9, p. 542]{Zorichbook} together with the arithmetic-geometric mean inequality yield
  $$1=[\det (\mathbf{x})]^2\leq \prod _{j=1}^n \left( \sum_{i=1}^n |x_{ij}|^2_v \right) \leq \left( \frac{\sum_{1\leq i,j\leq n}|x_{ij}|_v^2}{n}  \right)^n$$

		$$\Rightarrow 1 < \sqrt{n} \leq \left(\left|x_{11}\right|_{v}^{2}+\cdots+ \left|x_{1n}\right|_{v}^{2}+\cdots+\left|x_{n1}\right|_{v}^{2}+\cdots+ \left|x_{nn}\right|_{v}^{2}\right)^{1 / 2}.$$
  For this reason, 
\begin{align*}
    \max \left\{\left\|x{}_{11}\right\|_{v}, \ldots,\left\|x{}_{nn}\right\|_{v},1\right\}& =\left( \max \{ |x_{11}|_v,\dots, |x_{nn}|_v,1  \}\right)^{[K_v\colon \mathbb{R}]}\\
    &\leq \left( \max \{ \sqrt{|x_{11}|_v^2+\cdots +|x_{nn}|_v^2},1  \}\right)^{[K_v\colon \mathbb{R}]}\\
    &\leq \left( \sqrt{|x_{11}|_v^2+\cdots +|x_{nn}|_v^2}  \right)^{[K_v\colon \mathbb{R}]},
\end{align*}
and hence the first inequality of (\ref{Archimdean}) follows.

 For $v\in S \cap V^K_f$, we claim that 
  \begin{equation}\label{NonArchimdean}
		\max \left\{\left\|x{}_{1}\right\|_{v}, \ldots,\left\|x{}_{n^2}\right\|_{v}\right\} = \max \left\{\left\|x{}_{1}\right\|_{v}, \ldots,\left\|x{}_{n^2}\right\|_{v},1\right\}.
		\end{equation}
  To see this, notice that if $\mathbf{x}\notin \cO_v^{n^2}$, then there is a component with $\|x_i\|_v>1$, making (\ref{NonArchimdean}) obvious; if $\mathbf{x}\in \cO_v^{n^2}$, using ultrametric inequality to $\det \mathbf{x}=1$, we obtain that there is an entry $x_i$ of $\mathbf{x}$ with $\|x_i\|_v=1$, which yields (\ref{NonArchimdean}).

		The result follows by multiplying \eqref{Archimdean} and \eqref{NonArchimdean} over all $v \in S$. 
		
	\end{proof}
	
	Combining Theorem \ref{GNcounting} and Lemma \ref{HtildacompatiblewithH}, we obtain that
	\begin{equation}\label{Eq2}
	\#\{\gamma \in \Gamma \mid {H}_{\mathrm{aff}}(\rho(\gamma)) \leq T\} \geq \#\{\gamma \in \Gamma \mid \widetilde{H}(\gamma)  \leq T\} \sim \mu(B_{T}), \text{ as } T \to \infty.
	\end{equation}
	We aim at showing that the left hand side of (\ref{Eq2}) is $\gg  T^{\delta}$ for some $\delta >0$. To do so, it is enough to prove that $\mu(B_{T}) \gg  T^{\delta}$. For $v \in V^K$, let $\mu_v$ be the normalized Haar measure on $G(K_v)$ such that $\mu=\prod_{v \in S} \mu_v$. For $R >0$, define the $v$-local ball as 
 \[
	B_{v}(R) := \{g \in G(K_v) \mid \widetilde{H}_v(\rho(\gamma)) \leq R\}.
	\]
	Then $B_T$ contains the product of local balls $\prod_{v \in S} B_{v}\left(T^{[K\colon \mathbb{Q}]/s}\right)$, where $s= \#S$. In particular,
	\begin{equation}\label{Eq3}
	\mu(B_T) \geq \prod_{v \in S} \mu_v\left(B_{v}\left(T^{[K\colon \mathbb{Q}]/s}\right)\right).
	\end{equation}
	Thus to obtain the sought lower bound $\mu(B_T)\gg  T^{\delta}$ it is enough to establish the relation $\mu_v(B_{v}(T))\gg  T^{\delta'}$ for {\it one} $v \in S$. 
	
	The following key lemma, which is of independent interest, gives us what we seek.
	\begin{lemma}\label{lemma}
		Let $v \in V^K$ be a valuation such that $G(K_v)$ is non-compact; then there exists $\delta' >0$ such that
		\[
		\mu_v(B_{v}(T))\gg  T^{\delta'}\text{ as }T\to \infty.
		\]
		
	\end{lemma}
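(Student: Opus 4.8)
The plan is to reduce the statement to a concrete lower bound for the volume of a ball in a one-parameter unipotent (or, more simply, a one-parameter ``hyperbolic'') subgroup of $G(K_v)$. Since $G(K_v)$ is non-compact, by \cite[Theorem 3.1]{PR} the group $G$ is $K_v$-isotropic, so there exists a non-trivial $K_v$-rational cocharacter $\lambda\colon \mathbb{G}_m \to G$. Composing with $\rho$ we obtain a non-trivial one-parameter subgroup of $\mathrm{SL}_{n,K_v}$; after conjugating over $\overline{K_v}$ it is diagonal, $t \mapsto \mathrm{diag}(t^{a_1},\dots,t^{a_n})$ with not all $a_i$ equal to $0$ (they sum to $0$ since the image lies in $\mathrm{SL}_n$), so $\max_i |a_i| =: a > 0$. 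Then for $t\in K_v^*$ with $|t|_v \geq 1$ one has $\widetilde H_v(\rho(\lambda(t))) \asymp |t|_v^{a[K_v:\R]}$ up to a bounded multiplicative constant coming from the conjugating matrix, hence $\lambda(\{t: |t|_v \le R^{c}\})\subseteq B_v(R)$ for a suitable fixed $c>0$.

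The second ingredient is that the image of $\lambda$ is positive-dimensional, so its preimage ``thickens'': I would pick a complementary set of coordinates and use the fact that the multiplication map $U^{-} \times Z(\lambda) \times U^{+} \to G$ (the ``big cell'' attached to $\lambda$, where $U^{\pm}$ are the attracting/repelling horospherical subgroups and $Z(\lambda)$ the centralizer) is an open immersion onto a Zariski-dense open subset defined over $K_v$. The point is that $B_v(R)$ contains a product neighbourhood of the form $\Omega^- \times \lambda(\{|t|_v \le R^c\}) \times \Omega^+$, where $\Omega^{\pm}$ are fixed small open sets in $U^{\pm}(K_v)$ and the conjugation by $\lambda(t)$ expands/contracts $\Omega^{\pm}$ but keeps them inside a bounded region while they contribute a fixed positive $\mu_v$-mass. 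Alternatively — and this is probably cleaner to write — I would simply observe that a non-trivial algebraic one-parameter subgroup through the identity lies in no proper subvariety, so the $\widetilde H_v$-ball of radius $R$, containing $\lambda(\{|t|_v \le R^c\})$, is a set of $\mu_v$-measure bounded below by a positive power of $R$ because $\mu_v$ is (equivalent to) the restriction of a positive, everywhere-supported smooth/Haar density and the set $B_v(R)$ grows to fill a full-dimensional region: picking any $K_v$-point $g_0$ in the big cell with $\lambda$-attracting coordinates, the orbit $\lambda(t)g_0$ for $|t|_v \le R^c$ together with the local product structure gives a subset of positive measure of order $R^{\delta'}$.

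More carefully, here is the route I would take to make this rigorous. Work in the $\lambda$-weight decomposition of the Lie algebra $\mathfrak g = \mathfrak g^- \oplus \mathfrak z \oplus \mathfrak g^+$, with $\dim \mathfrak g^+ = d^+ > 0$ (nonzero because $\lambda$ is non-trivial and $G$ is $K$-simple, hence $\lambda$ is not central). Fix a small compact open (or small compact) neighbourhood $\Omega^+ \subset U^+(K_v)$ of the identity with $\mu^+(\Omega^+) > 0$ in the Haar measure of $U^+(K_v)$. For $|t|_v$ large, $\lambda(t)\,\Omega^+\,\lambda(t)^{-1} \supseteq \Omega^+$ (attracting direction), and all of $\lambda(t')\Omega^+$ with $1\le |t'|_v \le |t|_v$ has $\widetilde H_v \ll |t|_v^{C}$ for an explicit $C$; hence $\bigcup_{1 \le |t'|_v \le R} \lambda(t')\Omega^+ \subseteq B_v(R^{C})$. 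Using the local product coordinates (the map $U^-\times Z(\lambda)\times U^+ \to G$ is an analytic isomorphism near the identity, with $\mu_v$ corresponding up to bounded factors to the product of Haar measures on the factors), one sees $\mu_v(B_v(R^C)) \gg \int_{1\le |t'|_v \le R} |t'|_v^{\,d^+ [K_v:\R]}\, \tfrac{d\mu^\times(t')}{\ } \gg R^{\,d^+[K_v:\R]}$, which after reindexing $T = R^{C}$ gives $\mu_v(B_v(T)) \gg T^{\delta'}$ with $\delta' = d^+[K_v:\R]/C > 0$. The main obstacle I anticipate is purely bookkeeping: keeping track of how the normalized local height $\widetilde H_v$ (which is defined through the fixed embedding $\rho$, not intrinsically on $G$) transforms under the $\lambda$-action and under the conjugation that diagonalizes $\lambda$ over $\overline{K_v}$, and checking that the Jacobian of the big-cell parametrization is bounded above and below on the relevant compact pieces so that $\mu_v$ genuinely dominates the product-measure estimate. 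None of this is deep, but it is where the care is needed; the conceptual content — ``a non-trivial one-parameter subgroup forces polynomial volume growth of height balls'' — is exactly the $v$-local analogue of the classical fact used in counting lattice points on isotropic groups.
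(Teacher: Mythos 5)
Your overall strategy (produce a non-compact one-parameter subgroup inside $B_v(T)$ and then ``thicken'' it to a full-dimensional set using the big cell) is viable and genuinely different from the paper's, but the key measure estimate you write down does not work. The set $\bigcup_{1\le |t'|_v\le R}\lambda(t')\Omega^+$ is contained in $\lambda(\mathbb{G}_m)(K_v)\cdot U^+(K_v)\subseteq Z(\lambda)(K_v)\,U^+(K_v)$, i.e.\ in the $K_v$-points of a proper parabolic subgroup of $G$; this is a proper closed submanifold of $G(K_v)$ (of positive codimension, since $U^-\neq 1$ and $\dim Z(\lambda)\geq \dim\lambda(\mathbb{G}_m)$), hence has $\mu_v$-measure zero. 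So the displayed integral $\int_{1\le|t'|_v\le R}|t'|_v^{d^+[K_v:\R]}\,d\mu^\times(t')$ is not a lower bound for $\mu_v(B_v(R^C))$ --- you are integrating a Jacobian for the adjoint action of $\lambda(t')$ on $U^+$, which measures the size of $\lambda(t')\Omega^+\lambda(t')^{-1}$ inside $U^+(K_v)$, not the $\mu_v$-mass of left translates $\lambda(t')\Omega^+$ inside $G(K_v)$. Your earlier sketch of a product set $\Omega^-\cdot\lambda(\{|t|_v\le R^c\})\cdot\Omega^+$ has the same defect whenever $\dim Z(\lambda)>1$. The repair is standard but must actually be carried out: take fixed compact neighbourhoods of the identity in \emph{all three} factors $U^-(K_v)$, $Z(\lambda)(K_v)$, $U^+(K_v)$, let only the $\lambda(t)$-coordinate range over $1\le|t|_v\le R$, check by submultiplicativity of $\widetilde H_v$ that the resulting full-dimensional set sits in $B_v(R^C)$, and compute its $\mu_v$-measure via the big-cell Jacobian; the correct growth exponent is then governed by $\bigl|\det\mathrm{Ad}(\lambda(t))|_{\mathfrak{u}^+}\bigr|_v$ rather than by $d^+[K_v:\R]$, though for the qualitative conclusion $\gg T^{\delta'}$ this does not matter. (Your ``cleaner alternative'' in the middle paragraph --- that the ball fills a full-dimensional region --- is not a proof either: the whole content of the lemma is to quantify how the measure of that region grows with $T$.)

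For comparison, the paper avoids the big cell entirely. For archimedean $v$ it simply quotes the known asymptotic $\mu_v(B_v(T))\sim c'T^{\alpha}(\log T)^{\beta}$ for height balls in non-compact semi-simple real groups. For non-archimedean $v$ it uses a $K_v$-embedding $\mathbb{G}_a\hookrightarrow G$ (available since $G$ is $K_v$-isotropic) and thickens by right translation by the full compact open subgroup $G(\cO_v)$: the cosets $\iota(x)G(\cO_v)$, for $x$ ranging over representatives of $K_v$ modulo a bounded subgroup, are disjoint, each has fixed positive measure, and the number of them meeting $B_v(T)$ is $\gg T^{1/d}$ because the matrix entries of $\rho(\iota(x))$ are polynomials in $x$. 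This sidesteps all Jacobian bookkeeping; your approach, once corrected, has the merit of treating both local cases uniformly and without citing the archimedean volume asymptotics.
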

	\begin{proof}
		If $v$ is archimedean, $G(K_v)$ becomes a connected non-compact semi-simple real Lie group, our assertion is true by the asymptotic formula $\mu_v(B_v(T))\sim c' T^{\alpha}(\log T)^{\beta}$ ($c',\alpha>0,\beta \geq 0$) as $T\to \infty$, cf.  \cite[Theorem 2.7]{GW07} or \cite[Corollary 1.1]{Maucourant07}. So, let us assume that $v$ is non-archimedean. Since $G$ is $K_v$-isotropic, there exists a $K_v$-defined embedding $\iota\colon \Ga \hookrightarrow G_{K_v}$ (write  $\Ga \subseteq G_{K_v}$ if no risk of confusion), cf. \cite[Sec.\ 2.1.14]{PR}. Write $\rho_a \coloneqq\rho|_{_{\Ga}}=\rho\circ \iota$, which is non-constant.
		
		More explicitly, there exist polynomials $a_{ij}(x)\in K_v[x], 1 \leq i,j \leq n,$ not all constants, such that $\rho_a(x)=(a_{ij}(x))_{i,j}$. Then there exists a $C>0$ such that $\widetilde{H}_v(\rho_a(x))  \leq C \cdot ( \|x\|_v^d +1)$ for all $x \in K_v,$ where $d=\max\limits_{1\leq i,j\leq n}\deg a_{ij}(x)\geq 1$ and $\widetilde{H}_v$ is defined as (\ref{deflocalht}).

		Let $G(\cO_v) \coloneqq G(K_v) \cap \rho^{-1}(\SL_n(\cO_v))$, and $\Ga(\cO_v)'\coloneqq \Ga \cap G(\cO_v)$. Although $\Ga(\cO_v)'$ is \underline{not} necessarily equal to $\Ga(\cO_v)=\cO_v$, being a bounded subgroup of $\Ga(K_v)=K_v$, $\Ga(\cO_v)'$ is contained in $\pi_v^{-\ell}\cO_v$ for some $\ell \in \NN$.

		In order to obtain the sought lower bound, note that the following union is contained in $B_{v}(T)$,
		\[
		\bigcup_{\substack{x \in K_v\\ \widetilde{H}_v(\rho_a(x)) \leq T}}\iota(x) \cdot G(\cO_v),
		\]
		(this is because $\widetilde{H}_v(MN) \leq \widetilde{H}_v(M)\widetilde{H}_v(N)$ for every $N,M \in \GL_n(K_v)$ and $\widetilde{H}_v(g)=1$ for every $g \in \SL_n(\cO_v)$)
		and that the following union, contained in the one above, is disjoint,
		\begin{equation}\label{Disjoint}
		\bigsqcup_{\substack{[x] \in \mathcal{R} \\ \widetilde{H}_v(\rho_a(x)) \leq T}}\iota(x) \cdot G(\cO_v),
		\end{equation}
		where $\mathcal{R}$ is a set of representatives for the quotient $K_v/(\Ga(\cO_v)')$. Note that, as $T \to \infty$, we have
		\begin{multline*}  
		\#\{[x] \in \mathcal{R} \colon \widetilde{H}_v(\rho_a(x)) \leq T\} \geq  \#\{[x] \in \mathcal{R} \colon C \cdot ( \|x\|_v^d +1) \leq T\}  \\ =  \#\left\{[x] \in \mathcal{R} \colon \|x\|_v \leq (T/C-1)^{1/d}\right\}  \gg   \#\left\{[x] \in K_v/(\pi_v^{-\ell}\cO_v) \colon \|x\|_v \leq (T/C-1)^{1/d}\right\}   \asymp T^{1/d}.
		\end{multline*}
		In particular, the union \eqref{Disjoint} has (left) Haar measure $\gg  \mu_v(G(\cO_v))\cdot  T^{1/d}$, and this proves the sought bound.
	\end{proof}
	
	Summarizing, putting together Equations \eqref{Eq2} (recall that we proved this via Gorodnick-Nevo's result), \eqref{Eq3} and Lemma \ref{lemma}, we just proved:
	
	\begin{prop}[Counting property under ($\mathcal{C}_1$) and ($\mathcal{C}_2$)]\label{volestimatesimple}
		Let $K$ be a number field and let $S$ be a finite set of places of $K$ containing all archimedean ones. Let $G$ be a $K$-simple simply connected linear algebraic group and $\rho:G \to \SL_{n,K}, n \geq 2$ be a representation with finite kernel. Assume that $G(\cO_S)$ is infinite; then there exists $\delta >0$ such that
		\[
		\#\{\gamma \in G(\cO_S) \mid {H}_{\mathrm{aff}}(\rho(\gamma)) \leq T\} \gg  T^{\delta}\text{ as }T\to \infty.
		\]
	\end{prop}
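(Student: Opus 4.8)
The plan is simply to chain together the three ingredients assembled above; no new argument is needed. Since $G(\cO_S)$ is infinite, \cite[Theorem 3.1]{PR} gives a place $v_0\in S$ with $G(K_{v_0})$ non-compact, and this is the place to which I would apply Lemma \ref{lemma}. I would then combine the Gorodnik--Nevo counting Theorem \ref{GNcounting} with the height comparison of Lemma \ref{HtildacompatiblewithH}: because $H_{\mathrm{aff}}(\rho(\gamma))\leq\widetilde H(\rho(\gamma))$ for $\gamma\in G(\cO_S)$, the set $\{\gamma\in G(\cO_S)\colon H_{\mathrm{aff}}(\rho(\gamma))\leq T\}$ contains $\{\gamma\in G(\cO_S)\colon\widetilde H(\rho(\gamma))\leq T\}$, whose cardinality is $\sim\mu(B_T)$ by Theorem \ref{GNcounting} (this is exactly \eqref{Eq2}). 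So it suffices to prove $\mu(B_T)\gg T^{\delta}$ for some $\delta>0$.

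To bound $\mu(B_T)$ from below I would use the submultiplicativity of $\widetilde H$, as in \eqref{Eq3}: writing $s=\#S$, any $g=(g_v)_{v\in S}\in G_S$ with $\widetilde H_v(\rho(g_v))\leq T^{[K\colon\QQ]/s}$ for every $v$ satisfies $\widetilde H(g)\leq T$, so $B_T\supseteq\prod_{v\in S}B_v(T^{[K\colon\QQ]/s})$ and hence $\mu(B_T)\geq\prod_{v\in S}\mu_v(B_v(T^{[K\colon\QQ]/s}))$. For the distinguished place $v_0$, Lemma \ref{lemma} gives $\mu_{v_0}(B_{v_0}(R))\gg R^{\delta'}$ for some $\delta'>0$, so the $v_0$-factor is $\gg T^{\delta'[K\colon\QQ]/s}$. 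For each of the remaining finitely many $v\in S$, once $T$ is large the ball $B_v(T^{[K\colon\QQ]/s})$ contains a fixed open neighborhood of the identity of $G(K_v)$ (it is enough that the radius exceed $\widetilde H_v(\rho(e))$), hence $\mu_v(B_v(T^{[K\colon\QQ]/s}))$ is bounded below by a positive constant independent of $T$. Multiplying over $v\in S$ yields $\mu(B_T)\gg T^{\delta}$ with $\delta=\delta'[K\colon\QQ]/s$, and chaining with \eqref{Eq2} gives the desired $\#\{\gamma\in G(\cO_S)\colon H_{\mathrm{aff}}(\rho(\gamma))\leq T\}\gg T^{\delta}$.

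The only genuinely substantive inputs here are the two results already proved --- the Gorodnik--Nevo asymptotic of Theorem \ref{GNcounting} and the local volume-growth estimate of Lemma \ref{lemma} (whose non-archimedean case rests on embedding $\Ga\hookrightarrow G_{K_{v_0}}$ and a crude bound for the local height of the resulting polynomial curve). I therefore expect no real obstacle in assembling the Proposition: the remaining steps are the elementary bookkeeping with the product formula for $\widetilde H$ described above, together with the observation that the non-distinguished local factors contribute a uniformly positive constant rather than decaying as $T\to\infty$.
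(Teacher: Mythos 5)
Your proposal is correct and follows essentially the same route as the paper, which likewise assembles the Gorodnik--Nevo asymptotic (via the height comparison of Lemma \ref{HtildacompatiblewithH}), the product bound $\mu(B_T)\geq\prod_{v\in S}\mu_v(B_v(T^{[K:\QQ]/s}))$, and the local volume estimate of Lemma \ref{lemma} at a place where $G(K_v)$ is non-compact. Your explicit remark that the non-distinguished local factors are bounded below by a positive constant is a small point the paper leaves implicit, but it is exactly the right justification.
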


   In other words, this completes the proof of (a)$\Longrightarrow$(b) part of Theorem \ref{volestimate} when $G,\Gamma$ satisfy both ($\mathcal{C}_1$) and ($\mathcal{C}_2$). It remains to explain how to deduce the general case to this one as well as (b)$\Longrightarrow$(a).

    \begin{proof}[Proof sketch of the deduction of Theorem \ref{volestimate} from Proposition \ref{volestimatesimple}] Let $K,S,G,\Gamma$ be as in the context of Theorem \ref{volestimate}. If $G$ is not reductive, then its unipotent radical, hence $G$ itself, contains a $K$-subgroup which is isomorphic to $\mathbb{G}_a$. Using arguments similar to the proof of Lemma \ref{lemma}, one can show that $\#\{P\colon H_{\mathrm{aff}}(P)<T\}\geq T^{\delta'}$ for some $\delta'>0$. Henceforth, we assume $G$ is \underline{reductive}.

    \vskip2mm
	\noindent
	{\bf Reduction to }($\mathcal{C}_2$): Since $\Gamma\subset G(K)$ is an $S$-arithmetic subgroup of $G$, we see that $\Gamma$ is commensurable with $\Gamma_0 \coloneqq G(\cO_S)$. The reduction follows from the next easy lemma.
 \begin{lemma}\label{lemmacomm} Let $\Delta_1,\Delta_2$ be two commensurable subgroups of $G(K)$, then
     \begin{itemize}
     \item[(i)] The group $\Delta^{\mathrm{der}}_1$ is infinite if and only if $\Delta^{\mathrm{der}}_2$ is;
     \item[(ii)] There exist $0<b_1<b_2$ such that 
     $$b_1\cdot \#\{P\in \Delta_1\colon H_{\mathrm{aff}}(P)<T\}\leq \#\{P\in \Delta_2\colon H_{\mathrm{aff}}(P)<T\}<b_2 \cdot  \#\{P\in \Delta_1\colon H_{\mathrm{aff}}(P)<T\}.$$
 \end{itemize}
 \end{lemma}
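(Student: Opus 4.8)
The plan is to reduce both parts to a finite-index inclusion. Set $\Delta := \Delta_1 \cap \Delta_2$; by commensurability each index $m_i := [\Delta_i : \Delta]$ is finite, so in (i) it suffices to compare $\Delta'^{\mathrm{der}}$ with $\Delta^{\mathrm{der}}$ for a finite-index pair $\Delta \leq \Delta'$, and in (ii) it suffices to compare the counting function of such a $\Delta'$ with that of $\Delta$; one then composes the two instances $\Delta \leq \Delta_1$ and $\Delta \leq \Delta_2$. Recall that at this point in the argument $G$ is already assumed connected and reductive, so $\Delta_1,\Delta_2,\Delta$ are all $S$-arithmetic subgroups of $G$.

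For (ii), fix coset representatives $\Delta' = \bigsqcup_{j=1}^{m} \gamma_j \Delta$ with $\gamma_1 = e$, so that $\#\{P\in\Delta' : H_{\mathrm{aff}}(P) < T\} = \sum_{j=1}^{m}\#\{\delta\in\Delta : H_{\mathrm{aff}}(\gamma_j\delta) < T\}$. I would then use submultiplicativity of the affine height on matrix products, $H_{\mathrm{aff}}(AB) \leq c_n\, H_{\mathrm{aff}}(A) H_{\mathrm{aff}}(B)$ with $c_n$ depending only on $n$ (a routine consequence of the height estimates of \S\ref{heightdefprop}), to trap the $j$-th summand between $\#\{\delta\in\Delta : H_{\mathrm{aff}}(\delta) < c^{-1}T\}$ and $\#\{\delta\in\Delta : H_{\mathrm{aff}}(\delta) < cT\}$ for a constant $c$ depending only on the finitely many $\gamma_j^{\pm 1}$. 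Summing over $j$ shows that $\#\{\cdot\in\Delta' : <T\}$ and $\#\{\cdot\in\Delta : <T\}$, hence $\#\{\cdot\in\Delta_1 : <T\}$ and $\#\{\cdot\in\Delta_2 : <T\}$, agree up to a multiplicative constant and a bounded rescaling of $T$; since the counting function of an $S$-arithmetic group grows polynomially in $T$ (cf.\ the volume/Gorodnik--Nevo estimates used elsewhere in this section), a bounded rescaling of $T$ costs only a bounded factor, which produces the asserted constants $0<b_1<b_2$. (All downstream uses of the lemma in fact only need the statement up to such a rescaling, so one could also record just that weaker form.)

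For (i), the inclusion $\Delta^{\mathrm{der}}\subseteq\Delta'^{\mathrm{der}}$ gives ``$\Delta^{\mathrm{der}}$ infinite $\Rightarrow$ $\Delta'^{\mathrm{der}}$ infinite'' immediately, and the remaining implication is where arithmeticity enters. The key claim I would establish is that for an $S$-arithmetic $\Lambda\subseteq G(K)$ the property ``$\Lambda^{\mathrm{der}}$ is infinite'' depends only on the pair $(G,S)$ --- concretely, that it is equivalent to $G^{\mathrm{der}}(\cO_S)$ being infinite, i.e.\ to $G^{\mathrm{der}}$ being $K_v$-isotropic for some $v\in S$. One direction is elementary: $\Lambda^{\mathrm{der}}\subseteq[G(K),G(K)]\subseteq G^{\mathrm{der}}(K)$, and $\Lambda\cap G^{\mathrm{der}}(K)$ is commensurable with $G(\cO_S)\cap G^{\mathrm{der}}(K)=G^{\mathrm{der}}(\cO_S)$ (intersecting commensurable subgroups with the fixed subgroup $G^{\mathrm{der}}(K)$ preserves commensurability), so if $G^{\mathrm{der}}(\cO_S)$ is finite then $\Lambda^{\mathrm{der}}$ is finite. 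For the converse I would pass to the simply connected cover $\widetilde{G^{\mathrm{der}}}\to G^{\mathrm{der}}$ and invoke standard reduction theory: $\widetilde{G^{\mathrm{der}}}(\cO_S)$ has finite-index image in $G^{\mathrm{der}}(\cO_S)$, so the latter is infinite iff $\widetilde{G^{\mathrm{der}}}$ is isotropic at some $v\in S$, in which case the $S$-arithmetic group $\widetilde{G^{\mathrm{der}}}(\cO_S)$ is Zariski-dense in a non-trivial semisimple group, hence not virtually abelian, hence (being finitely generated) has infinite derived subgroup; tracing this back through the central isogeny and the commensurabilities gives $\Lambda^{\mathrm{der}}$ infinite. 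Granting this, ``$\Lambda^{\mathrm{der}}$ infinite'' takes the same truth value for $\Lambda=\Delta_1,\Delta_2,\Delta$, which proves (i).

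The main obstacle is the converse half of (i). It genuinely needs arithmeticity: for arbitrary commensurable linear groups the statement is false --- the infinite dihedral group $D_\infty$, which embeds in $\GL_2(\QQ)$, has $D_\infty^{\mathrm{der}}\cong\ZZ$ infinite while its index-two cyclic subgroup has trivial derived subgroup. The one delicate point is the precise bridge between $[\Lambda,\Lambda]$ and an arithmetic subgroup of $G^{\mathrm{der}}$: $[\Lambda,\Lambda]$ need not have finite index in $\Lambda\cap G^{\mathrm{der}}(K)$ on the nose, so one must go through $\widetilde{G^{\mathrm{der}}}$ and keep track of the finite (``class-number''-type) cokernel before the reduction-theoretic dichotomy ($K_v$-rank $\geq 2$ versus rank one) can be applied. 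I expect that bookkeeping, rather than any new idea, to absorb most of the work; everything else is routine.
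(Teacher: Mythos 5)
Your proposal is correct, and it is worth recording that it supplies more than the paper's own one-sentence justification, which only points to coset representatives and the height inequality of Lemma \ref{Lem:heightdifference}. For part (ii) you follow exactly that indicated route; your extra care about the rescaling $T\mapsto cT$ is warranted, since the coset-representative argument literally yields $\#\{P\in\Delta'\colon H_{\mathrm{aff}}(P)<T\}\leq m\cdot\#\{P\in\Delta\colon H_{\mathrm{aff}}(P)<cT\}$ and converting this into the same-$T$ statement requires a doubling property of the counting function that is not available a priori for a general $S$-arithmetic $\Gamma$ (the relevant asymptotics are part of what Theorem \ref{volestimate} is proving). Your observation that every downstream use of the lemma tolerates the rescaled form dissolves this issue. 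For part (i) you correctly identify that the paper's hint cannot suffice as stated: ``derived subgroup infinite'' is \emph{not} a commensurability invariant of abstract linear groups (your $D_\infty$ versus $\ZZ$ example inside $\GL_2(\QQ)$ is a genuine counterexample to the literal statement), so the $S$-arithmetic hypothesis, implicit in the context of the reduction step, must actually be used. Your reduction of ``$\Lambda^{\mathrm{der}}$ infinite'' to ``$G^{\mathrm{der}}(\cO_S)$ infinite'' is the right fix and matches the paper's own remark that $G^{\mathrm{der}}(\cO_S)$ is finite iff $G(\cO_S)^{\mathrm{der}}$ is.

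One streamlining for the step you flag as delicate: there is no need to ``trace back'' through the isogeny to $\Lambda$, which as phrased risks reusing the commensurability invariance you are trying to establish. Instead, note that $\Lambda\cap G^{\mathrm{der}}(K)$ is itself an infinite $S$-arithmetic subgroup of the semisimple group $G^{\mathrm{der}}$ (commensurable with $G^{\mathrm{der}}(\cO_S)$, as you observe); by Borel density its Zariski closure contains a positive-dimensional semisimple $K$-group $H$, and the derived subgroup of a Zariski-dense subgroup of $H$ is Zariski-dense in $H=H^{\mathrm{der}}$, hence infinite. Since $(\Lambda\cap G^{\mathrm{der}}(K))^{\mathrm{der}}\subseteq\Lambda^{\mathrm{der}}$, this finishes (i) without any class-number or rank-one/higher-rank bookkeeping. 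Your alternative route via ``finitely generated and not virtually abelian implies infinite derived subgroup'' (a Schur--Hall argument) is also valid; either way, the substantive input beyond the paper's sketch is Borel density, and you were right to insist that some such input is needed.
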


    The verification of both (i) and (ii) needs the usage of a finite set of representatives of each $\Delta_i/ \Delta_1\cap \Delta_2,i=1,2$; for (2), one also resorts to the height inequality $H_{\mathrm{aff}}(\gamma) \ll_{\theta
    } H_{\mathrm{aff}}(\theta \cdot \gamma)$ (cf. Lemma \ref{Lem:heightdifference}).

    So, we will assume from now on that $\Gamma=G(\cO_S).$
  \vskip2mm
	\noindent
	{\bf Reduction to }($\mathcal{C}_1$): Recall the following two helpful facts.
	
	\vskip2mm
	\noindent
	{\bf Fact A.} Let $\phi\colon G' \to G$ be an epimorphism of $K$-algebraic groups, and $\Gamma <G'(K)$ be a subgroup; then $\phi(\Gamma)$ is an $S$-arithmetic subgroup of $G$ if $\Gamma$ is an $S$-arithmetic subgroup of $G'$, cf. \cite[Theorem 5.9]{PR}. Moreover, if $\phi$ is an isogeny, then $\Gamma \to \phi(\Gamma)$ is a virtual isomorphism, i.e.\  the kernel is finite.
	\vskip2mm
	\noindent
	{\bf Fact B.} Let $G/K$ be reductive, we have exact sequences $1 \to Z \to G \to G^{\mathrm{ad}} \to 1$ and $1 \to G^{\mathrm{der}} \to G \to G^{\mathrm{ab}} \to 1$ of $K$-groups, where $Z$ and $G^{\mathrm{ab}}$ are $K$-tori, $G^{\mathrm{ad}}$ and $G^{\mathrm{der}}$ are semi-simple. In addition, the induced morphisms $G^{\mathrm{der}} \to G^{\mathrm{ad}}$ and $Z \to G^{\mathrm{ab}}$ are isogenies.
	\vskip2mm

     One also notices that $G^{\mathrm{der}}(\cO_S)$ is finite if and only if $G(\cO_S)^{\mathrm{der}}$ is.
	
	(b)$\Longrightarrow$(a) (recall that (a) and (b) are those in the statement of Theorem \ref{volestimate}). We prove the contrapositive: suppose $G^{\mathrm{der}}(\cO_S)$ is finite, then it is an easy exercise using Facts A and B above to prove that the image of $Z(\cO_S) \to G(\cO_S)$ is of finite index. Since $Z(\cO_S)$ is finitely generated, $G(\cO_S)$ is virtually multiplicative and finitely generated. Thus, over a finite field extension $L$ of $K$, we may then virtually diagonalize $G(\cO_S)$. In this way $G(\cO_S)\subset G(L)$ contains a (PEP) subgroup with finite index, so $\#\{P\in G(\cO_S)\colon H_{\mathrm{aff}}(P)<T\}< c\cdot (\log T)^{r'}$ by Theorem \ref{firstmainthm} and Lemma \ref{lemmacomm} (ii) \footnote{One may also use Lemma \ref{Lemrankequalrank} in the Appendix. We refer the reader to that lemma for more details.}, in particular the latter is slower than $T^{\delta}$ for any $\delta >0.$ 
	
	(a)$\Longrightarrow$(b): If $G^{\mathrm{der}}(\cO_S)$ is infinite, then we only need to treat the case $G=G^{\mathrm{der}}$. Let $\pi\colon G^{\mathrm{sc}}\to G^{\mathrm{der}}$ be the universal cover (which is an isogeny) and $G^{\mathrm{sc}}=G_1\times \cdots \times G_r$  be its decomposition into $K$-simple simply connected factors, cf. \cite[Theorem 2.6 (2)]{PR}.

 By Fact A, $G^{\mathrm{sc}}(\cO_S)=G_1(\cO_S)\times \cdots \times G_r(\cO_S)$ has finite index image in $G^{\mathrm{der}}(\mathcal{O}_S)$ under the universal cover $\pi$, hence $G^{\mathrm{sc}}(\cO_S)$ is infinite, so is one of its factors $G_i(\cO_S)$. To complete the reduction of Theorem \ref{volestimate} to ($\mathcal{C}_1$), it suffices to apply Proposition \ref{volestimatesimple} to $G_i$ and the finite-kernel representation $\rho_i\colon G_i \to G^{\mathrm{der}} \hookrightarrow \mathrm{SL}_{n,K}$.
    \end{proof}
    \begin{rema}[A classical approach to counting lattice points]\label{Rmk: classical} {\rm 
    To deduce Proposition \ref{volestimatesimple} from the volume bound Lemma \ref{lemma}, we used the asymptotic $\#\{\gamma \in G(\cO_S) \mid \widetilde{H}(\rho(\gamma)) \leq T\}\sim \mu(B_T)$ (with the notation of the beginning of this section), proved in modern literature such as \cite{GN12}. We would like to emphasize that this is not necessary for our purposes, as a lower bound $\gg \mu(B_T)^{\delta'}$ for some $\delta'>0$ (in place of $\sim \mu(B_T)$) would be sufficient. For $K$-isotropic groups it is much easier to prove Proposition \ref{volestimatesimple}, using a $K$-embedding $\mathbb{G}_a \hookrightarrow G$ (for the existence of such an embedding see \cite[Sec.\ 2.1.14]{PR}) and the fact that $S$-integral points on  $\mathbb{G}_a$ already grow polynomially in terms of the height. So, we focus on the case where $G\hookrightarrow \mathrm{SL}_{n,K}$ is a semi-simple $K$-{\bf anisotropic} group.
    
    In this context, the sought lower bound may be obtained using the more classical finiteness of the Tamagawa number $\tau(G) :=\mathrm{vol}(G(\mathbb{A}_K)\slash G(K))$ (cf. Borel and Harish-Chandra's fundamental paper \cite{B-HC62} for the proof in full generality, as well as some important earlier approaches such as Siegel \cite{Siegel35,Siegel36,Siegel37}, Weil \cite{Weil62} whose work was motivated by the study of quadratic forms). This may be done via a Minkowski's first convex body theorem type argument, that we sketch here (only for $G$ semi-simple and $\Gamma=G(\cO_S)$, to which one can easily reduce to). 
    
    For each $T>0$, define 
    $$D_T := \prod_{v\in S} B_v\left(T^{[K\colon \mathbb{Q}]/|S|}\right) \subseteq B_T,$$
    where $B_v(T):=\{ g \in G(K_v) \mid \widetilde{H}_v(g) \leq T\}$ and $B_T:=\{ g \in G_S = \prod_{v \in S} G(K_v) \mid \widetilde{H}(g) \leq T\}$ as in the beginning of this section.
    
    It is easy to verify that $D_T\cdot D_T^{-1}\subset B_{aT^n}$, with $a=n!$.
    
    The finiteness of the Tamagawa number $\tau(G)$ implies that the volume $\mathrm{vol}(G_S\slash G(\cO_S))$ is also finite (see e.g.\ \ the proof of \cite[Theorem 5.7]{PR}).

    Note now that the local covering $D_T \to G_S\slash G(\cO_S)$ must have a fiber $\{\alpha_1,\ldots,\alpha_f\}$ of cardinality $f\geq \mathrm{vol}(D_T)/\mathrm{vol}(G_S\slash G(\cO_S)) \gg \mathrm{vol}(D_T),$ and 
    \[
    \{I, \alpha_2\cdot \alpha_1^{-1},\ldots, \alpha_f\cdot \alpha_1^{-1}\} \subset G(\cO_S) \cap (D_T\cdot D_T^{-1})\subset G(\cO_S) \cap  B_{aT^n}.
    \]
    Thus $\# \{g\in G(\cO_S);\widetilde{H}(g)\leq aT^n\} \gg \mathrm{vol}(D_T)$, and hence $\#\{g\in G(\cO_S)\colon \widetilde{H}(g)\leq T\} \gg \mathrm{vol}\left(D_{(T/a)^{1/n}}\right)$. By Lemma \ref{lemma}, the latter is $\gg T^{\delta}$, for some $\delta>0$. Combining with Lemma \ref{HtildacompatiblewithH}, we thus obtain that $\#\{g\in G(\cO_S)\colon {H}_{\mathrm{aff}}(g)\leq T\} \gg \#\{g\in G(\cO_S)\colon \widetilde{H}(g)\leq T\} \gg T^{\delta}$, as wished.
  }
    \end{rema}

    We give in the following remark a second approach to proving $(a) \Rightarrow (b)$ in Theorem \ref{volestimate} when $G$ is reductive. It has the disadavantage of not being derived from very precise asymptotic estimates as Theorem \ref{GNcounting}, but has the advantage of being easily generalizable to more general linear groups (see more on this in Remark \ref{Rmk: T to the delta appendix}). 

    \begin{rema}\label{Rmk: a second approach}{\rm   
    Let $G< \mathrm{GL}_n$ be reductive, and $\Gamma < G$ be $S$-arithmetic with $\# \Gamma^{\mathrm{der}}=\infty$. The group $\Gamma$ is non-virtually-solvable (for otherwise its Zariski-closure, which contains the semi-simple group $G^{\mathrm{der}}$, would be virtually solvable). Then Tits' alternative \cite{Tits} asserts that $\Gamma$ contains a free subgroup $\Gamma_1:=\langle \gamma_1,\gamma_2 \rangle$ of rank $2$.  Let $W(l)$ denote the set of words in $\gamma_1,\gamma_2$ of length $\leq l$. Note that $\#W(l) \geq 2^l$. Letting $C:=\max (H_{\mathrm{aff}}(\gamma_1),H_{\mathrm{aff}}(\gamma_2))$, one has $H_{\mathrm{aff}}(\gamma) \leq n^{l-1}C^l\leq (nC)^l$ for all $\gamma \in W(l)$. Hence:
  \begin{equation}\label{Eq: ineq with words}
       \#\left\{\gamma \in \Gamma \colon H_{\mathrm{aff}}(\gamma) \leq T\right\} \geq \#W\left( \log_{nC} (T) \right) \gg T^\delta, \ \delta =\log_{nC} (2),
 \end{equation}
(re)proving Theorem \ref{volestimate} for $\Gamma$.
   The theme of exponential growth in linear groups (implicit in our argument above) has been largely explored in the literature. For this and related topics see, for instance, \cite{B}, \cite{BCLM}, \cite{EMO} or \cite[Section 5.B]{Gro}. } 
   \end{rema}
    
	\section{Proof of the main theorems}\label{Sec.MainProofs}
	In this section, we are going to prove the main theorems announced in the introduction. We first treat the main quantitative result. 
		
	\begin{proof}[Proof of Theorem \ref{firstmainthm}]  By Proposition \ref{decomposepep}, there is a decomposition of $\Sigma$ such that $\Sigma= \Sigma_1\cup \cdots \cup \Sigma_l$, where $\Sigma_i=
		\mathbf{f}_i(\ZZ^{r_i})$ with $\mathbf{f}_i$ be reduced (PEP) and $r \geq r_i \geq 0$ for each $i$.  After re-enumerating those $\Sigma_i$, we may assume that 
		$$r_{\mathrm{max}}\coloneqq r_1=r_2=\cdots=r_{l'}>r_{l'+1}\geq \cdots \geq r_l.$$ 

We prove the result by showing that $r'$ in the statement should be taken to be $r_{\mathrm{max}}$. The verification goes by induction on $r_{\mathrm{max}}$. If $r_{\mathrm{max}}=0$, then $\Sigma$ is a finite set, our assertion is evident.  Now let $r_{\mathrm{max}}>0$.
  
		 Let $\varphi: \ZZ^{r_1} \sqcup \ZZ^{r_2} \sqcup \cdots \sqcup \ZZ^{r_{l'}} \to K^s$ be the unique map such that $\varphi|_{\ZZ^{r_i}}=\mathbf{f}_i$ for each $i$.
		Notice that
		\begin{equation}\label{Mainheight0}
			\#\{\gamma \in \Sigma_{l'+1}  \cup \cdots \cup \Sigma_l\mid h_{\mathrm{aff}}(\gamma) \leq T \} = O(T^{r_{\mathrm{max}}-1})
		\end{equation}
		by induction hypothesis.
		
		Applying Theorem \ref{fiberpep} to each pair $(\mathbf{f}_i,\mathbf{f}_j), i,j=1,\ldots,l'$, we know that there exists a set $D_{i,j} \subseteq \ZZ^{r_i}=\ZZ^{r_{\mathrm{max}}}$, which is a union of finitely many cosets of rank $\leq r_i-1=r_{\mathrm{max}}-1$, and a natural number $N_{i,j}$ such that the  function
		\[
		\ZZ^{r_{\mathrm{max}}} \to \ZZ_{\geq 0} \cup \{\infty\}  \ \ \u{n} \mapsto \#\mathbf{f}_j^{-1}(\mathbf{f}_i(\u{n}))
		\]
		has finite values and depends only on $\u{n} \bmod N_{i,j}$ for $\u{n} \notin D_{i,j}$. As a consequence, the function $\ZZ^{r_{\mathrm{max}}} \to \NN \cup \{\infty\}, \ \u{n} \mapsto \#\varphi^{-1}(\mathbf{f}_i(\u{n}))$, which is the sum of the $l'$  functions above ($i$ fixed, and $j=1,\ldots, l'$), attains finite values depending only on $\u{n}$ modulo $N\coloneqq\mathrm{L.C.M} _{1\leq i,j\leq l'}N_{i,j}$ for all $\u{n} \notin D_i\coloneqq\bigcup_{j=1}^{l'} D_{i,j}$. 
  
		Letting $\Sigma' =\bigcup_i \mathbf{f}_i(D_i)$, we have the identity  
		\begin{equation}\label{Mainheight1}
			\#\{\gamma \in \Sigma_1\cup \cdots\cup \Sigma_{l'} \mid h_{\mathrm{aff}}(\gamma) \leq T \} = \sum_{i=1}^{l'} \sum_{\substack{\u{n}\in \ZZ^{r_{\mathrm{max}}} \backslash D_i\\ h_{\mathrm{aff}}(\mathbf{f}_i(\u{n})) \leq T }}\frac{1}{\# \varphi^{-1}(\mathbf{f}_i(\u{n}))} + O(\#\{\gamma \in \Sigma' \mid h_{\mathrm{aff}}(\mathbf{f}_i(\u{n})) \leq T \}).
		\end{equation}
		Again, the error term in (\ref{Mainheight1}) is $O(T^{r_{\mathrm{max}}-1})$ by induction hypothesis. On the other hand, for each $i$, the Zero Locus Theorem \ref{zerolocuspep} yields that the set
  $$B_i\coloneqq\{\mathbf{n}\in \ZZ^{r_{\mathrm{max}}}\colon \mathbf{f}_i(\mathbf{n})\text{ is degenerate}\}$$
  is equal to a finite union of cosets of rank $<r_{\mathrm{max}}$. Therefore, according to Proposition \ref{Prop:norm}, we have
\begin{align}\label{Mainheight2}
    \sum_{\substack{\u{n}\in \ZZ^{r_{\mathrm{max}}} \backslash D_i \\ h_{\mathrm{aff}}(\mathbf{f}_i(\u{n})) \leq T }}\frac{1}{\# \varphi^{-1}(\mathbf{f}_i(\u{n}))} &=\sum_{\mathbf{a} \in \{0,\ldots, N-1\}^{r_{\mathrm{max}}}} \left[\frac{1}{\# \varphi^{-1}(\mathbf{f}_i(\u{n}_{\mathbf{a}}))} \cdot \#\{\u{n} \in (\mathbf{a}+N\ZZ^{r_{\mathrm{max}}})\backslash D_i \mid h_{\mathrm{aff}}(\mathbf{f}_i(\u{n})) \leq T\}\right]\notag \\
    &\sim \sum_{\mathbf{a} \in \{0,\ldots, N-1\}^{r_{\mathrm{max}}}} \left[\frac{1}{\# \varphi^{-1}(\mathbf{f}_i(\u{n}_{\mathbf{a}}))} \cdot \#\{\u{n} \in (\mathbf{a}+N\ZZ^{r_{\mathrm{max}}})\backslash (D_i\cup B_i) \mid h_{\mathrm{aff}}(\mathbf{f}_i(\u{n})) \leq T\}\right]\notag \\
    &\sim \sum_{\mathbf{a} \in \{0,\ldots, N-1\}^{r_{\mathrm{max}}}}\left[\frac{1}{\# \varphi^{-1}(\mathbf{f}_i(\u{n}_{\u{a}}))}\#\{\u{n} \in (\mathbf{a}+N\ZZ^{r_{\mathrm{max}}} )\backslash (D_i\cup B_i) \mid \|\u{n}\|_{\mathbf{f}_i} \leq T\} \right]\notag\\& \sim  \sum_{\mathbf{a} \in \{0,\ldots, N-1\}^{r_{\mathrm{max}}}} \frac{1}{\# \varphi^{-1}(\mathbf{f}_i(\u{n}_{\u{a}}))}\cdot \frac{1}{N^{r_{\mathrm{max}}}}  c_i  T^{r_{\mathrm{max}}}\notag \\
 &=c'  T^{r_{\mathrm{max}}},
\end{align}
where $\norm{\cdot}_{\mathbf{f}_i}$ is the norm defined in Proposition \ref{Prop:norm2}, $c_i= \mathrm{vol}(\{\u{v} \in \R^{r_{\mathrm{max}}} \mid \|\u{v}\|_{\mathbf{f}_i} \leq 1\})>0$, and $\u{n}_{\u{a}}$ is any element in $(\u{a} + N\cdot \ZZ^{r_{\mathrm{max}}})\backslash \bigcup\limits _{i=1}^{l'} D_i$. Combining \eqref{Mainheight0}, \eqref{Mainheight1} and \eqref{Mainheight2}, the theorem is proven with $r'=r_{\mathrm{max}}$.

	\end{proof}

\begin{rema}\label{Rmk:meaningc}{\rm
    As it stems from the proof above,  the constant $c$ in Theorem \ref{firstmainthm} has a very clean geometric meaning, at least when $\Sigma$ is the image of a reduced vector of purely exponential polynomials. Indeed, in this case, $c$ is a rational multiple of the volume of the norm-ball
    \[
    \{v \in \R^r \mid \norm{v}_{\mathbf{f}} \leq 1\},
    \]
    where $\norm{*}_{\mathbf{f}}:\R^r \to \R_{\geq 0}$ is the height-norm associated to the monomials of $\mathbf{f}$, defined as in Proposition \ref{Prop:norm2}. 
    The rational coefficient is determined by the combinatorics of the fibers of $\mathbf{f}$. The height-norm $\norm{*}_{\mathbf{f}}:\R^r \to \R_{\geq 0}$ is by construction the maximum function of a set of linear forms whose coefficients are logarithms of algebraic numbers.

    When $\Sigma$ is non-reduced, then $c$ is a (rational) linear combination of volumes as above, where the coefficients still have a ``combinatorics of fibers'' meaning.}
\end{rema}

\vskip3mm

Now the main height inequality Theorem \ref{minspecial} follows easily from Corollary \ref{CoromainThm}.

    \begin{proof}[Proof of Theorem \ref{minspecial}]
	This follows directly by applying Corollary \ref{CoromainThm}. 
    Indeed, let $b \in \mathbf{f}(\ZZ^r)$, and let $\u{n} \in \mathbf{f}^{-1}(b)$ be an element whose norm $\|\u{n}\|_{\infty}$ is minimal. If $\u{n} \in L_i$ (with the notation of Theorems \ref{mainlongpaper} and its corollary \ref{CoromainThm}), note that $\min_{\u{v} \in \mathcal{K}_i} \| \u{n} +\u{v} \|_{\infty}= \|\u{n}\|_{\infty}$ by the minimality of $\u{n}$. Hence
    we have:
    \[
        h_{\text {aff }}(\mathbf{f}(\mathbf{n})) \geq C \cdot \min_{\u{v} \in \mathcal{K}_i} \| \u{n} +\u{v} \|_{\infty} = C \cdot \| \u{n}\|_{\infty},
     \]
    except for $\u{n}$ lying in a finite union of translates of $\mathcal{K}_i$. The image of each translate consists of a single point, so this leads to only finitely many exceptional $b$, as wished.
    \end{proof}
	\vskip2mm
	Next we shift our attention to the main application of the above asymptotic result in group theory, i.e.\  Theorem \ref{secondmainthm}. Before proving it, we first need the asymptotic estimate given by Lemma \ref{Cor} in the introduction (see also \cite{CDRRZcr}).

	\begin{proof}[Proof of Lemma \ref{Cor}.] We first prove the lemma when $g=g_u \neq I_n$ is unipotent. Write $g_u=I_n+h$, where $h$ is a nonzero nilpotent matrix. Using Newton's binomial expansion for $g_u^l=(I_n+h)^l, l \in \ZZ \setminus \{0\}$, we see that $g_u^l=(c_{ij}(l))_{i,j}$, where $c_{ij}(X)\in K[X]$ are fixed polynomials of degree $<n$ for all $i,j$. Since $h$ is nonzero, not all $c_{ij}(X)$ are constants. 

	Take a non-constant $c_{ij}(X)$ with $\deg c_{ij}(X)=k\geq 1$. We obtain from elementary properties of (Weil) heights including $H_{\mathrm{aff}}(\alpha \beta)\leq H_{\mathrm{aff}}(\alpha) H_{\mathrm{aff}}(\beta) $ and $H_{\mathrm{aff}}(\alpha_1+\cdots +\alpha_m)\leq r H_{\mathrm{aff}}(\alpha_1)\cdots H_{\mathrm{aff}}(\alpha_m)$ that there exist $a,b>0$ such that
	$$H_{\mathrm{aff}}(c_{ij}(l))\leq a H_{\mathrm{aff}}(l)^b=a|l|^b.$$
	
	Note that if $l\in \ZZ, |l|\leq N$ and $g_u^l\in \Sigma$, then $H_{\mathrm{aff}}(c_{ij}(l))\leq a N^b$. 
 
    Observe that $c_{ij}(\ZZ)$ is a subset of $\Sigma_{ij}$, where the latter is the (PEP) set consisting of the $(i,j)$-entries of elements in $\Sigma$. Also, since $g_u$ is non-trivial unipotent, $g_u^l$ are distinct for $l\in \NN$. Thus
    \begin{multline*}
        \# \{l\in \ZZ\colon |l| \leq N, g_u^l\in \Sigma\}  \leq \# \{l\in \ZZ\colon  |l| \leq N, c_{ij}(l)\in \Sigma_{ij}\} \leq \frac{1}{k}\# \{P\in \Sigma_{ij}\colon H_{\mathrm{aff}}(P)\leq aN^b\}\\ =O\left((\log (aN^b))^{\mathrm{rk}_{\mathrm{PEP}}\Sigma_{ij}+1}\right),
    \end{multline*}
	where the latter comes from Theorem \ref{firstmainthm}. This  concludes the case $g=g_u$ since $r \geq \mathrm{rk}_{\mathrm{PEP}}\Sigma_{ij}$.
	
	In general, let $g=g_ug_s=g_sg_u$ be the Jordan decomposition, where $g_u$ is unipotent and $g_s$ is semi-simple (we may assume both $g_s,g_u$ have entries in $K$). Let $\Sigma_{\mathrm{new}}$ be the new (PEP) set defined by $\langle g_s\rangle \cdot\Sigma$; then 
	$$\#\{l\in \ZZ\colon |l| \leq N, g^l\in \Sigma\}\leq \#\{l\in \ZZ\colon |l| \leq N,  g_u^l\in \Sigma_{\mathrm{new}}\}=O((\log N)^r+1),$$
	as required.
	\end{proof}
	We also need the following lemma about specialization, cf. \cite[\S 2]{CRRZ}. 
		\begin{lemma}[Specialization]\label{specializationlemma}
  Let $R$ be a finitely generated $\QQ$-algebra without zero divisors. Given a non virtually solvable subgroup $\Delta \subset \mathrm{GL}_n(R)$ and a non-semi-simple element $\gamma \in \Gamma$, there exists a $\QQ$-algebra homomorphism $\theta\colon R\to K$ to a number field $K$ such that for the resulting group homomorphism $\Theta\colon \mathrm{GL}_n(R)\to \mathrm{GL}_n(K)$, the group $\Gamma'\coloneqq\Theta (\Gamma)$ is not virtually solvable and the matrix $\Theta(\gamma)$ is non-semi-simple.
  \end{lemma}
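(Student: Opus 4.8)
The plan is to produce a $\QQ$-algebra homomorphism $\theta\colon R\to K$, with $K$ a number field, such that the induced $\Theta\colon\GL_n(R)\to\GL_n(K)$ keeps two features of the input: (i) $\Theta(\gamma)$ is still non-semi-simple, and (ii) $\Theta(\Gamma)$ is still not virtually solvable (we write $\Gamma$ for the subgroup called $\Delta$ in the statement). I would show that each of (i) and (ii) holds for every $\theta$ whose kernel, a prime of $R$, lies in a suitable nonempty Zariski-open subset of $\mathrm{Spec}(R)$, and then use that $R$, being a finitely generated $\QQ$-algebra, is a Jacobson ring: every nonempty open subset of $\mathrm{Spec}(R)$ contains a closed point $\mathfrak m$, and $R/\mathfrak m$ is automatically a number field. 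Taking $\mathfrak m$ in the intersection of the two open sets and $\theta$ equal to the quotient $R\to K\coloneqq R/\mathfrak m$ then finishes the argument.

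For (i): let $m(t)\in\mathrm{Frac}(R)[t]$ be the monic minimal polynomial of $\gamma$, of degree $d$; since $\gamma$ is non-semi-simple, $m$ is not squarefree, i.e.\ $\disc(m)=0$. The powers $I,\gamma,\dots,\gamma^{d-1}$ are $\mathrm{Frac}(R)$-linearly independent, so the $d\times n^{2}$ matrix formed from their entries has a nonzero $d\times d$ minor $\mu\in R$; by Cramer's rule applied to the relation writing $\gamma^{d}$ in terms of the lower powers, the coefficients of $m$ then lie in the localization $R_\mu$. For any $\theta$ not killing $\mu$, the images $I,\Theta(\gamma),\dots,\Theta(\gamma)^{d-1}$ stay linearly independent, so the minimal polynomial of $\Theta(\gamma)$ has degree $\ge d$; being a monic divisor of the monic degree-$d$ polynomial $\theta(m)$, it equals $\theta(m)$, whence its discriminant is $\theta(\disc(m))=0$ and $\Theta(\gamma)$ is non-semi-simple. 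So $U_{1}\coloneqq\{\mu\neq0\}$ is the open set for (i). This step is elementary.

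For (ii): let $G\subseteq\GL_{n,\mathrm{Frac}(R)}$ be the Zariski closure of $\Gamma$. A virtually solvable abstract group has virtually solvable Zariski closure, so $G^{\circ}$ is non-solvable; passing to $\overline{G}\coloneqq G/\mathrm{Rad}(G)$ and to the image $\overline{\Gamma}$ of $\Gamma$, I would reduce to the case where $\Gamma=\overline{\Gamma}$ is a Zariski-dense subgroup of a linear algebraic group with semisimple nontrivial identity component — legitimate because a group with a non-virtually-solvable quotient is itself non-virtually-solvable, and after inverting finitely many elements of $R$ the group $\overline{\Gamma}$ again lives in $\GL_{N}$ of a finitely generated $\QQ$-domain. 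By Noetherianity of $\overline{G}$, pick finitely many elements of $\overline{\Gamma}$ generating $\overline{G}$ as an algebraic group, and spread $\overline{G}$, its quotient map and those elements out to a group scheme, morphism and sections over a nonempty open $V\subseteq\mathrm{Spec}(R)$ whose fibres still have semisimple nontrivial identity component. Then the specialization principle of \cite[\S2]{CRRZ} — the locus of tuples generating a Zariski-dense subgroup of a group with semisimple identity component is Zariski-open — shows that for $\theta$ in a nonempty open $U_{2}\subseteq V$ the image $\Theta(\overline{\Gamma})$ is again Zariski-dense in the fibre over $\theta$, hence not virtually solvable, hence $\Theta(\Gamma)$ is not virtually solvable.

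To conclude, choose a closed point $\mathfrak m\in U_{1}\cap U_{2}$ and set $\theta\colon R\to K\coloneqq R/\mathfrak m$. The hard part will be (ii): one must understand how the Zariski closure of the fixed group $\Gamma$ can drop under specialization, and in particular must know that it drops only over a \emph{proper closed} subvariety of $\mathrm{Spec}(R)$ — a mere countable union of proper subvarieties would be useless, since when $\dim R\ge 1$ it can already contain all number-field-valued points. This openness is exactly what is borrowed from \cite[\S2]{CRRZ} and relies on the structure theory of semisimple groups; the remainder of the argument is soft.
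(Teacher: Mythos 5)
Your proposal is correct and follows essentially the route the paper intends: the paper gives no details, deferring the non-virtual-solvability part to the specialization argument of \cite[Proposition 2.1]{CRRZ} (which you outline faithfully, including the reduction to a Zariski-dense finitely generated subgroup of the semisimple quotient and the openness of the density locus) and noting only that preserving non-semi-simplicity of $\gamma$ ``needs a little care.'' Your minimal-polynomial argument --- localizing at a minor $\mu$ so that the degree of the minimal polynomial cannot drop, then observing that its discriminant specializes to $\theta(\disc(m))=0$ --- is a correct and clean way to supply exactly that missing care.
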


	The proof goes exactly on the same lines as the one of \cite[Proposition 2.1]{CRRZ}, except that the last condition 	``$\Theta(\gamma)$ is non-semi-simple'' above needs a little care. Now we prove Theorem \ref{secondmainthm}. 

	\begin{proof}[Proof of Theorem \ref{secondmainthm}] Let us reduce to the number field case. Assume that the three assertions in Theorem \ref{secondmainthm} are equivalent when $K$ is a number field. Let $\Gamma \subset \mathrm{GL}_n(K)$ ($K$ is an arbitrary field of characteristic zero) be a linear group which has (PEP), i.e.\  $\Gamma =\mathbf{f}(\ZZ^r)$ for some vector of purely exponential polynomials $\mathbf{f}$. Let $R$ be the $\QQ$-algebra of finite type generated by the coefficients and bases, with their inverses, of an expression for $\mathbf{f}$. Assume by contradiction that $\Gamma$ contains a non-semi-simple element $\gamma$. Let $\theta, \Theta$ be as in Lemma \ref{specializationlemma} above. Then, on the one hand, $\Theta(\gamma)$ is non-semi-simple, on the other hand, $\Gamma'=\Theta(\Gamma)$ is a (PEP) linear group over a number field, thus contains only semi-simple elements because we assumed the validity of Theorem \ref{secondmainthm} in the number field case. This contradiction implies that $\Gamma$ contains only semi-simple elements. Also, the number field case of Theorem \ref{secondmainthm} shows that $\Gamma'$ is virtually abelian, thus virtually solvable, therefore the fact above guarantees that $\Gamma$ is virtually solvable as well. Now arguing as the proof of Corollary 1.2 of \cite{CRRZ}, we obtain that $\Gamma$ is virtually abelian. But the Zariski-closure of an abelian linear group consisting of semi-simple elements is diagonalizable (group of multiplicative type), so the connected component $G^{\circ}$ of $G\coloneqq\overline{\Gamma}^{\mathrm{Zar}}$ is a torus. Recall the following group-theoretic proposition cf. \cite[Proposition 6.2]{CRRZ}.
		
		\begin{prop}
		    Let $K$ be a field of characteristic zero; then for any finitely generated ring $B\subset K$, a commutative subgroup $\Delta \subset \mathrm{GL}_n(B)$ consisting of semi-simple elements is finitely generated.
		\end{prop}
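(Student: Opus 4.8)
The plan is to realize $\Delta$ as a subgroup of a finite product of unit groups of finitely generated rings, and then invoke the finite generation of such unit groups. First I would pass to the commutative $F$‑algebra $A:=F[\Delta]\subseteq M_n(F)$ generated by $\Delta$, where $F=\mathrm{Frac}(B)$. Since $\Delta$ is commutative, $A$ is a commutative $F$‑algebra with $\dim_F A\le n^2$; since every element of $\Delta$ is semisimple and any finitely many commuting semisimple matrices are simultaneously diagonalizable over $\overline{F}$, a nilpotent element $x\in A$ --- being an $F$‑linear combination of products of finitely many elements of $\Delta$ --- becomes a nilpotent diagonal matrix after a single conjugation, hence $x=0$. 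So $A$ is reduced, and as $\mathrm{char}\,F=0$ this forces $A\cong L_1\times\cdots\times L_t$ with each $L_j$ a finite field extension of $F$. As $\Delta$ is a group, $\Delta\subseteq A^{*}\cong\prod_{j=1}^{t}L_j^{*}$, giving homomorphisms $\psi_j\colon\Delta\to L_j^{*}$ whose product is injective.

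The second step is to locate the images $\psi_j(\Delta)$ inside unit groups. For $\delta\in\Delta$, the scalar $\psi_j(\delta)$ by which $\delta$ acts on the (nonzero) $L_j$‑isotypic part of $F^n$ is, after fixing an embedding $L_j\hookrightarrow\overline{F}$, an eigenvalue of $\delta$, i.e.\ a root of the monic characteristic polynomial $\det(X I_n-\delta)\in B[X]$; thus $\psi_j(\delta)$ is integral over $B$. Applying this to $\delta^{-1}\in\mathrm{GL}_n(B)$ as well shows $\psi_j(\delta)^{-1}$ is integral over $B$, so $\psi_j(\delta)$ lies in $B_j^{*}$, where $B_j$ is the integral closure of $B$ in $L_j$. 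Hence $\Delta$ embeds into $\prod_{j=1}^{t}B_j^{*}$.

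For the conclusion, I would note that each $B_j$ is again a finitely generated ring, because $B$ is a finitely generated integral domain of characteristic zero and the integral closure of such a ring in a finite extension of its fraction field is module‑finite over it. By the finite generation of the unit group of a finitely generated integral domain of characteristic zero (a form of Dirichlet's unit theorem), each $B_j^{*}$ is a finitely generated abelian group; hence so is $\prod_j B_j^{*}$, and therefore so is its subgroup $\Delta$.

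I do not expect a genuine obstacle: every step but the last is elementary linear and commutative algebra, and the arithmetic content is concentrated in the finite generation of $B_j^{*}$, which in turn can be reduced to Dirichlet's $S$‑unit theorem over number fields together with its function‑field analogue by an induction on the transcendence degree of $F$ over $\mathbb{Q}$. A parallel route I might use instead replaces $A$ by the Zariski closure $H=\overline{\Delta}^{\mathrm{Zar}}$ in $\mathrm{GL}_{n,K}$: one checks that $H$ is a commutative $K$‑group of multiplicative type (its unipotent part is trivial because $\Delta$ is Zariski‑dense, consists of semisimple elements, and $\mathrm{char}\,K=0$), picks a finite generating set of characters of $H$ over a splitting field $L/K$, and evaluates them on $\Delta$; since a character of $H$ extends to a regular function on $\mathrm{GL}_{n,L}$, i.e.\ to an element of $L[x_{ij},\det^{-1}]$, it carries $\mathrm{GL}_n(B)$ into the units of the integral closure of $B$ in $L$, and one finishes as above.
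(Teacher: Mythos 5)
Your argument is correct. Note first that the paper itself gives no proof of this proposition: it is quoted verbatim from \cite[Proposition 6.2]{CRRZ}, so there is nothing in the present text to compare against line by line. Your first route is a clean, self-contained proof: the algebra $A=F[\Delta]$ is indeed reduced (each element of $A$ involves only finitely many elements of $\Delta$, which are simultaneously diagonalizable over $\overline{F}$, so a nilpotent element of $A$ is a nilpotent diagonal matrix and hence zero), the splitting $A\cong L_1\times\cdots\times L_t$ and the faithfulness of $A$ on $F^n$ make each $\psi_j(\delta)$ an eigenvalue of $\delta$ and of $\delta^{-1}$, hence a unit of the integral closure $B_j$, and the conclusion rests on exactly two standard but nontrivial inputs which you correctly identify: Nagata's theorem that the normalization of a finitely generated $\ZZ$-algebra domain in a finite extension of its fraction field is module-finite (so $B_j$ is again finitely generated), and Roquette's theorem that the unit group of a finitely generated integral domain is a finitely generated abelian group (your reduction to Dirichlet's $S$-unit theorem by induction on transcendence degree is the standard proof of the latter). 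Your alternative route via the Zariski closure of $\Delta$ being of multiplicative type and evaluating a finite generating set of characters, which land in the units of a finitely generated ring, is essentially the argument used in \cite{CRRZ}; it trades the explicit integrality step for the structure theory of diagonalizable groups, and both versions ultimately funnel through Roquette's theorem. Either way the proof is complete.
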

		Taking $B$ to be the subring of $K$ generated by the bases and coefficients of an expression of $\mathbf{f}$; then $\Gamma=\mathbf{f}(\ZZ^r)\subset \mathrm{GL}_n(B)$. Indeed, it is clear that for every $\gamma \in \Gamma, \gamma \in M_n(B)$, but also $\gamma^{-1} \in \Gamma \subseteq M_n(B)$ hence $\det (\gamma) \in B \cap B^{-1}= B^*$, and thus $\gamma \in \GL_n(B)$. Hence the abelian group $\Delta\coloneqq\Gamma \cap G^{\circ}\subset \mathrm{GL}_n(B)$ is finitely generated, and so is $\Gamma$ itself. This establishes (1)$\Rightarrow$(3). The equivalence of (2) and (3) was done in \cite{CRRZ}, and the implication (2)$\Rightarrow$(1) is evident. We have therefore reduced to the number field case.
		
		In the rest of the proof, it suffices to verify the theorem for the case when $K$ is a number field. Notice that our argument is (non-trivially) analogous to the one in Theorem 1.1 in \cite{CRRZ}.
		
		Suppose $\Gamma$ has (PEP), i.e.\  $\Gamma=\mathbf{f}(\mathbb{Z}^r)$ for some vector of purely exponential polynomials $\mathbf{f}$ in $r$ variables. 
		Then it follows immediately from Lemma \ref{Cor} that $\Gamma$ consists entirely of semi-simple elements. 
		
		Suppose $\Gamma$ is not virtually solvable, take $G\subset \mathrm{GL}_{n,K}$ be the Zariski closure of $\Gamma$. Let $R$ be the radical of the connected component $G^{\circ}$ of $G$ and consider the natural homomorphism $\varphi\colon G\to G\slash R=G'$. Then both $G'$ and $\varphi$ are defined over $K$ as well, we may take an embedding $G'\subset \mathrm{GL}_{n',K}$. Take natural number $N$ to be sufficiently large so that the map defined by the following ``twist''
		$$\widetilde{\varphi}\colon G\to G',\text{  }g\mapsto (\det g)^N \cdot \varphi(g)$$
		is given by polynomials. Then $\Gamma'\coloneqq\widetilde{\varphi}(\Gamma)$ is a linear group in $G'(K)$ which has purely exponential parametrization. Since $\Gamma$ is non-virtually solvable, the connected component of the Zariski closure $G'$ of $\Gamma'$ is a non-trivial semi-simple $K$-group times a $1$-dimensional torus. Thus in the rest of the proof, it suffices to treat the case where the connected component
		$G^{\circ}$ of $G=\overline{\Gamma}^{\mathrm{Zar}}$
		is a non-trivial semi-simple group times a $1$-dimensional torus. 
		
		Let $\Lambda$ be the multiplicative subgroup of $K^*$ generated by the bases $\lambda_i$ appearing in the components $\mathbf{f}$. By Proposition 3.5 in \cite{CRRZ} (recall that this relies on the theory of {\bf generic elements} developed by G. Prasad and the third author, cf. \cite{PrR1}\cite{PrR2}\cite{PrR3} etc.), there exists a matrix $g\in\Gamma$ admitting an eigenvalue $\lambda$ which is not a root of unity and such that $\langle\lambda\rangle \cap  \Lambda=\{1\}$. 
		
		Using Laurent's Theorem cf. \cite[Theorem 10.10.1]{EG15}, then an almost identical process as the one in the proof of the key matrix statement \cite[Theorem 4.1]{CRRZ} yields the following.
		\begin{prop} \label{keymatrixcrrz21}Let $\Sigma =\mathbf{f}(\mathbb{Z}^r) \subset \mathrm{GL}_n(\overline{\mathbb{Q}})$ be a (PEP) set under the purely exponential polynomial $\mathbf{f}=(f_1,\dots,f_{n^2})$ where
			$$f_i(\mathbf{x})=\sum_{j=1}^e a_j \lambda_1^{l_{1,j}(\mathbf{x})}\cdots \lambda_k^{l_{k,j}(\mathbf{x})}.$$
			Let $\Lambda$ be the multiplicative subgroup of $\overline{\mathbb{Q}}^{*}$ generated by all bases $\lambda_l$'s involved in the definition of $\mathbf{f}$. 
			Then for any semi-simple matrix $\gamma \in \mathrm{GL}_n(\overline{\mathbb{Q}})$ that has an eigenvalue $\lambda \in \overline{\mathbb{Q}}^{*}$ which is not a root of unity and for which $\langle \lambda \rangle \cap \Lambda = \{ 1 \}$, the intersection $\langle \gamma \rangle \cap \Sigma$ is finite. 
		\end{prop}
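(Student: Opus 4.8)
The plan is to follow the line of \cite[Theorem 4.1]{CRRZ}, reformulating the problem as an intersection of a finitely generated multiplicative group with a subvariety of a torus so as to apply Laurent's theorem, and then to close the argument by comparing characteristic polynomials. First I would make the harmless reduction to a statement about exponents: since $\lambda$ is not a root of unity, $\gamma$ has infinite order, so $\langle\gamma\rangle\cap\Sigma$ is finite if and only if $E:=\{m\in\ZZ:\gamma^m\in\Sigma\}$ is finite, and I may argue by contradiction, assuming $E$ infinite. Writing $\gamma=hDh^{-1}$ with $D=\mathrm{diag}(\mu_1,\dots,\mu_n)$ and $\mu_1=\lambda$, each entry $(\gamma^m)_{st}=\sum_i c^{(st)}_i\mu_i^m$ is a purely exponential polynomial in $m$ with $c^{(st)}_i=h_{si}(h^{-1})_{it}$ fixed, while $f_{st}(\mathbf{n})=\sum_j a^{(st)}_j\prod_l\lambda_l^{l^{(st)}_{l,j}(\mathbf{n})}$. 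Let $u_1,\dots,u_N\colon\ZZ^{1+r}\to\overline{\QQ}^*$ enumerate the distinct characters of $\ZZ^{1+r}$ appearing among the $(m,\mathbf{n})\mapsto\mu_i^m$ and $(m,\mathbf{n})\mapsto\prod_l\lambda_l^{l^{(st)}_{l,j}(\mathbf{n})}$, and set $\Psi=(u_1,\dots,u_N)\colon\ZZ^{1+r}\to\mathbb{G}_m^N(\overline{\QQ})$; the $n^2$ entrywise identities $(\gamma^m)_{st}=f_{st}(\mathbf{n})$ become linear relations among the coordinates, cutting out a closed subvariety $W\subseteq\mathbb{G}_m^N$ with $\{(m,\mathbf{n}):\gamma^m=\mathbf{f}(\mathbf{n})\}=\Psi^{-1}(W)$.

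Next I would apply Laurent's theorem \cite[Theorem 10.10.1]{EG15} to $W$ and the finitely generated group $\Gamma_0:=\Psi(\ZZ^{1+r})$: the intersection $W\cap\Gamma_0$ is a finite union of cosets $\varepsilon_iH_i$ of subtori $H_i$ with $\varepsilon_iH_i\subseteq W$. Pulling these back along $\Psi$ exhibits $\{(m,\mathbf{n}):\gamma^m=\mathbf{f}(\mathbf{n})\}$ as a finite union of cosets of subgroups of $\ZZ^{1+r}$, on each of which $\gamma^m=\mathbf{f}(\mathbf{n})$ holds for every member. If $E$ were infinite, one such coset would have infinite projection under $(m,\mathbf{n})\mapsto m$; fixing a base point $(m_0,\mathbf{n}_0)$ in it and an element $(a,\mathbf{b})$ of the underlying subgroup with $a\neq0$, we would obtain $\gamma^{m_0+ta}=\mathbf{f}(\mathbf{n}_0+t\mathbf{b})$ for \emph{all} $t\in\ZZ$.

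To conclude I would compare characteristic polynomials in the parameter $t$. The characteristic polynomial of $\mathbf{f}(\mathbf{n}_0+t\mathbf{b})$ is $X^n+\sum_{i=1}^n(-1)^ie_i(t)X^{n-i}$, and each $e_i(t)$, being a fixed integer polynomial in the matrix entries, is a purely exponential polynomial in $t$ whose bases all lie in $\Lambda$ (and $e_0=1$). Since $\lambda^{m_0+ta}$ is an eigenvalue of $\gamma^{m_0+ta}=\mathbf{f}(\mathbf{n}_0+t\mathbf{b})$ for every $t$, substituting $X=\lambda^{m_0+ta}$ yields a purely exponential polynomial identity in $t$ that vanishes identically. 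Expanding the $e_i(t)$ and invoking linear independence of characters (Remark \ref{pepchar}), the coefficient of the character $t\mapsto(\lambda^{na})^t$ must vanish; but that coefficient receives the contribution $\lambda^{nm_0}\neq0$ from the leading term $X^n$, and any further contribution would come from a summand of some $e_i(t)$, $1\le i\le n$, with base $\rho\in\Lambda$ satisfying $\rho\,\lambda^{(n-i)a}=\lambda^{na}$, i.e. $\lambda^{ia}=\rho\in\Lambda$ with $ia\neq0$ — impossible since $\langle\lambda\rangle\cap\Lambda=\{1\}$ and $\lambda$ is not a root of unity. Hence that coefficient equals $\lambda^{nm_0}\neq0$, a contradiction, so $E$ is finite.

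The main obstacle is really the first part: correctly packaging the matrix equation $\gamma^m=\mathbf{f}(\mathbf{n})$ as the preimage, under an explicit homomorphism of lattices into a torus, of a fixed subvariety, so that Laurent's theorem applies without modification; once the equation is forced to persist along a genuine arithmetic progression of exponents, the characteristic-polynomial comparison isolates exactly the two hypotheses on $\lambda$. This mirrors \cite[Theorem 4.1]{CRRZ}, with the genericity of $g$ — used there to \emph{produce} an eigenvalue $\lambda$ with these properties — here already built into the hypotheses.
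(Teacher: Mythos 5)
Your proposal is correct and follows exactly the route the paper intends: the paper itself only sketches this proposition by invoking Laurent's theorem \cite[Theorem 10.10.1]{EG15} and deferring to the process of \cite[Theorem 4.1]{CRRZ}, and your argument (package $\gamma^m=\mathbf{f}(\mathbf{n})$ as a linear subvariety of a torus met with a finitely generated group, extract a coset with an arithmetic progression of exponents, then compare characteristic-polynomial coefficients via linear independence of characters using $\langle\lambda\rangle\cap\Lambda=\{1\}$ and $\lambda$ not a root of unity) is a correct, detailed instantiation of that sketch. No gaps found.
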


		It follows that only finitely many powers of $g$ are inside the image $\mathbf{f}(\mathbb{Z}^r)$, whence a contradiction. Thus $\Gamma$ is virtually solvable.
		
		Next, an argument similar to the one in the proof of Corollary 1.2 in \cite{CRRZ} guarantees that $\Gamma$ is virtually abelian.
		
		Now, since $\Gamma$ is virtually abelian and consisting only of semi-simple elements, $G^{\circ}$ must be a torus. Then, after a suitable conjugation, we may assume that the purely exponential parametrization of $\Gamma$ has shape
		\begin{equation*}
		g=\mathrm{diag}(f_1(\mathbf{x}),\ldots,f_n(\mathbf{x})),\qquad \mathbf{x}\in\mathbb{Z}^r,
		\end{equation*}
		for suitable purely exponential polynomials $f_1,\ldots,f_n$. Since $\Gamma$ is a group, the value sets $f_1(\mathbb{Z}^r),\dots,f_n(\mathbb{Z}^r)$ must all be multiplicative subgroups of $K^*$. Thus according to the Image Theorem \ref{imagetheorem}, $f_i(\ZZ^r)$ are all finitely generated multiplicative groups sit inside $K^*$. Therefore, $\Gamma$ is finitely generated, and even has (BG), as it is a finitely generated abelian group.
	\end{proof}
	
	\section{Effectivity issues}\label{noteffective}
	In this section, we discuss in detail the computational aspects of the main height asymptotic in \S \ref{pfht}. 
        In particular, we link the ineffectiveness of the number of exceptional cosets in Corollary \ref{CoromainThm} (and thus a fortiori on the unavailability of an error term in Theorem 
        \ref{mainlongpaper}) to the ineffectiveness of the height of solutions of the $S$-unit equation.

	Recall the following breakthrough, which was a surprising {\bf quantitative} result, established in Evertse, Schlickewei and Schmidt's distinguished article  \cite{effective02}.
	\begin{theo}\label{Sunitquan02} Let $K$ be a field of characteristic zero, let $n\geq 2$, let $a_1,\dots,a_n\in K^{*}$ and let $\Gamma$ be a subgroup of $(K^{*})^n$ of rank $r$. Then the number of non-degenerate solutions of
		\begin{equation}\label{suniteq} 
		a_1x_1+\cdots +a_nx_n=1 \text{ in }(x_1,\dots,x_n)\in \Gamma
		\end{equation}
		can be estimated from above by a quantity $A(n,r)$ depending only on $n$ and $r$. One may take $A(n,r)=e^{(6n)^3(r+1)}$.
	\end{theo}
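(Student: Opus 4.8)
The plan is to follow the strategy of Evertse--Schlickewei--Schmidt \cite{effective02}, which we only outline here, as the full argument is long and lies beyond our scope. First I would reduce to the case where $\Gamma$ is a \emph{finitely generated} subgroup of $(\overline{\QQ}^*)^n$. That $\Gamma$ may be taken finitely generated is immediate, since an arbitrary subgroup of rank $r$ is the directed union of its finitely generated subgroups of rank $\leq r$, the number of non-degenerate solutions is the supremum of the corresponding numbers, and $A(n,\cdot)$ is increasing. To pass from an arbitrary field of characteristic zero down to $\overline{\QQ}$, I would choose a subfield $k_0\subseteq K$ finitely generated over $\QQ$ containing $a_1,\dots,a_n$, a finite generating set of $\Gamma$, and --- invoking the qualitative $S$-unit equation theorem --- the (finitely many) non-degenerate solutions; a suitable specialization $k_0\to\overline{\QQ}$ then keeps the $a_i$, keeps the solutions distinct, preserves non-degeneracy, and does not increase $\rk\Gamma$. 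After this, fix a number field $k$ and a finite set $S$ of places of $k$ containing the archimedean ones, with $a_i\in\cO_S^*$ and $\Gamma\subseteq(\cO_S^*)^n$.

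Next I would split the non-degenerate solutions into ``small'' and ``large'' ones according to the height $H(x_1:\dots:x_n)$ and treat the large ones by a \emph{quantitative} Subspace Theorem. For a solution $\mathbf{x}$ and each $v\in S$ one forms an $n$-tuple of linear forms in $y_1,\dots,y_n$: the coordinate forms $y_i$ for all indices but one, and the form $a_1y_1+\cdots+a_ny_n$ for the remaining index, chosen so that this last form is smallest in absolute value at $v$. The identity $\sum a_ix_i=1$ together with the product formula then yields a double-product estimate of the shape $\prod_{v\in S}\prod_{i=1}^n\|L_v^{(i)}(\mathbf{x})\|_v\ll H(\mathbf{x})^{-\delta}$ with a fixed $\delta=\delta(n)>0$, so that all large solutions lie in a union of $N_0$ proper linear subspaces of $\overline{\QQ}^n$.

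The crux --- and the step I expect to be the main obstacle --- is that $N_0$ must be bounded in terms of $n$ and $r$ \emph{alone}, with no dependence on $\#S$ (equivalently, on $k$ or on $\Gamma$). The naive quantitative subspace theorem gives a bound that grows with the number of places and so is useless here; the remedy is a refinement, tailored to solutions confined to a group of rank $r$, in which the relevant ``approximation data'' of a solution is reduced to its class in a partition of a space of dimension roughly $r+1$ (the $r$ logarithmic directions together with one weight/height direction) into boundedly many cones. Making this work --- together with a gap principle that bounds, uniformly in $n$ and $r$, the number of small solutions, equivalently controls how solutions within a single approximation class accumulate --- is the technical heart of \cite{effective02}.

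Finally I would induct on $n$. In each of the $N_0$ subspaces the equation either pins some coordinate $x_i$ to a fixed value --- lowering the number of variables and replacing $\Gamma$ by its intersection with a coset, still of rank $\leq r$ --- or, after eliminating one variable, becomes a non-degenerate linear equation in at most $n-1$ variables with unknowns in a group of rank $\leq r$ (the degenerate solutions arising in this reduction being discarded). Combining the count of small solutions with $N_0$ times the bound for $n-1$ variables and unwinding the recursion in $n$ yields a bound of the claimed shape $A(n,r)=e^{(6n)^3(r+1)}$.
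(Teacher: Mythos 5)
This statement is not proved in the paper at all: it is quoted verbatim from Evertse--Schlickewei--Schmidt \cite{effective02} and used purely as a black box (the paper introduces it with ``Recall the following breakthrough\dots established in \cite{effective02}''). So there is no internal proof to compare against.

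Judged on its own terms, your outline is a faithful summary of the known strategy of \cite{effective02}: the reduction to finitely generated $\Gamma$ via directed unions, the specialization from an arbitrary field of characteristic zero to $\overline{\QQ}$ (which, as you note, needs the prior qualitative finiteness over such fields, due to Evertse--van der Poorten--Schlickewei/Laurent), the split into small and large solutions, the application of a quantitative Subspace Theorem to the large ones, and the induction on $n$ through the exceptional subspaces. You also correctly isolate the genuine difficulty, namely obtaining a bound on the number of exceptional subspaces depending only on $n$ and $r$ and not on $\#S$ or the field, which is exactly the content of the Absolute (Parametric) Subspace Theorem and the accompanying gap principles in \cite{effective02}. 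But since you explicitly defer that step --- ``the technical heart'' --- to the reference, what you have written is an annotated citation rather than a proof. That is entirely consistent with how the paper itself treats the result, but be aware that nothing in your write-up independently establishes the bound $A(n,r)=e^{(6n)^3(r+1)}$; the specific shape of that constant comes out of the detailed counting in \cite{effective02} and cannot be recovered from the outline alone.
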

	Relevant works and subsequent improvements have been done by various authors including Amoroso, Viada, R\'emond etc., cf. for example, \cite{effective09,effectiveremond}.

        We emphasize here that although we do have effective upper bounds for the {\bf number} of non-degenerate solutions of (\ref{suniteq}) \cite{Schmidt89,evertse96, SchlickeweiSchmidt02, EF13}, an effective {\bf height upper bound} of these solutions (when $K$ is a number field) seems to be far beyond reach at this moment.
	
	It is clear that the theorems in this paper are closely related with $S$-unit equations. However, in contrast with Theorem \ref{Sunitquan02}, the number of maximal exceptional cosets in Corollary \ref{CoromainThm} cannot be explicitly bounded in our framework. More precisely, we prove (see the \underline{Primary Computational Claim} below) that an effective bound on the number of maximal exceptional cosets would lead to an effective bound on the height of solutions of Theorem \ref{Sunitquan02}.

	We begin with an intuitive example.
	\vskip5mm
	\noindent
	{\bf Example.} {\rm Let $f(n)=a_1\lambda_1^n+\cdots +a_m\lambda_m^n$ (assume $\lambda_1,\dots,\lambda_m,2$ are multiplicatively independent) be a \underline{reduced} purely exponential polynomial in one variable, with bases and coefficients in a number field $K$. Consider the vector
	of purely exponential polynomials:
	$$\widetilde{\mathbf{f}}=\left(f,2^{(\cdot)}\right)\colon \mathbb{Z}^2\to K^2,\text{   }(n,l)\mapsto \left(f(n),2^l\right).$$
	Then $\widetilde{\mathbf{f}}$ is also reduced. Let $C=C_1'>0$ be the effective constant as in Corollary \ref{mainhtsteponecoro}. Choose any solution of (\ref{mainhtstepone}), i.e.\  $n\in \ZZ$ such that
	\begin{equation}\label{oldpep}
	f(n)\text{ is non-degenerate and  }h_{\mathrm{aff}}(f(n))\leq C\cdot |n|,
	\end{equation}
	holds. Then we get solutions $(n,1),(n,2),\dots, (n,\lfloor\log_2 (\varepsilon\cdot |n|)\rfloor)$ to the inequality 
	\begin{equation}\label{newpep}
	\widetilde{\mathbf{f}}(n,l)\text{ is non-degenerate and }h_{\mathrm{aff}}(\widetilde{\mathbf{f}}(n,l))\leq 2 C\cdot \max\{|n|,|l|\}, \ (n,l)\in \ZZ^2.
	\end{equation}

	Thus 
    \begin{equation}\label{bound}
	\# \{\text{solutions of }(\ref{newpep})\}\geq \log_2 \left(C\cdot \max\{|n|\colon n\text{ is a solution of }(\ref{oldpep})\}\right)-1.
	\end{equation}
	Therefore, if we had an effective bound for the {\bf number} of solutions of (\ref{newpep}), by \eqref{bound}, we would then obtain an effective bound for the {\bf size} of solutions of (\ref{oldpep}).}

	The following claim, which generalizes the idea of the example above, is the main contribution of this section. This claim essentially illustrates the difficulty of making the number of exceptional cosets in Corollary \ref{CoromainThm} to be effective.
	
	\vskip2mm
	\noindent
	\underline{\bf Primary Computational Claim.}  Suppose that for every vector of purely exponential polynomials $\mathbf{f}$, the \underline{number} of maximal exceptional cosets in Corollary \ref{CoromainThm} has an effective upper bound.
	Then, for every finite $S$ containing $V^K_{\infty}$, there would exist effective $\delta,B >0$ such that for every non-degenerate solution $\mathbf{x}=(x_1,\dots,x_r)\in (\cO_S^{*})^r$ to the inequality
	\begin{equation}\label{Eqnoneff}
		h_{\mathrm{aff}}(x_1+\cdots+x_r)\leq \delta \cdot \max\{h_{\mathrm{aff}}(x_1),\dots , h_{\mathrm{aff}}(x_r)\},
	\end{equation}
	we have $h_{\mathrm{aff}}(\mathbf{x})\leq B$.
	
	\begin{rema} {\rm In particular, under the (very strong) hypothesis of the above ``Primary Claim about Effectivity'', we would deduce that the height of the non-degenerate solutions of the $S$-unit equation (\ref{suniteq}) would be effectively boundable. The latter is, as mentioned above, a very difficult open problem, due to the presently unreachable effectiveness of the subspace theorem. }
	\end{rema}
	In the rest of this section we will explain how to obtain the Primary Computational Claim above.
		Note that an effective $\delta>0$ such that there are only finitely many solutions to 
		\eqref{Eqnoneff} exists by Corollary \ref{evertseinequality}. So, let us explain that we may obtain an effective bound on their height (under our strong assumption).
		
		Let $W_K \subseteq K^*$ be the finite group of roots of unity in $K$, and let $\lambda_1,\dots,\lambda_m$ be a basis for a ($\ZZ$-free) complement of $W_K$ in $\cO_S^*$, where $m$ is a finite number by Dirichlet's $S$-unit theorem. (Note that the $\lambda_i$'s are automatically {\bf multiplicatively independent}.)
		For each vector $\boldsymbol{\mu}=(\mu_1,\dots,\mu_r)\in W_K^r$, consider the following purely exponential polynomial $\mathbf{f}_{\boldsymbol{\mu}}\colon \ZZ^{rm}=(\ZZ^m)^r\to K$:
		$$\mathbf{f}_{\boldsymbol{\mu}}[(n_{11},\dots,n_{1m}),\dots,(n_{r1},\dots,n_{rm})]\coloneqq\mu_1 \lambda_1^{n_{11}}\cdots \lambda_m^{n_{1m}}+\cdots+\mu_r \lambda_1^{n_{r1}}\cdots \lambda_m^{n_{rm}}.$$
		Choose a positive integer $\alpha$ such that $\alpha, \lambda_1,\dots,\lambda_m$ are still multiplicatively independent. For each $\boldsymbol{\mu}=(\mu_1,\dots,\mu_r)\in W_K^r$, we further define a new vector of purely exponential polynomials $\widetilde{\mathbf{f}}_{\boldsymbol{\mu}}\colon \ZZ^{rm+1}=\ZZ^{rm}\times \ZZ\to K$ by $\widetilde{\mathbf{f}}_{\boldsymbol{\mu}}(\mathbf{n},l)=(\mathbf{f}_{\boldsymbol{\mu}}(\mathbf{n}), \alpha ^l).$ 
		
		Then it is easy to see that both $\mathbf{f}_{\boldsymbol{\mu}}$ and $\widetilde{\mathbf{f}}_{\boldsymbol{\mu}}$ are reduced (PEP). So in our case, all information in Corollary \ref{CoromainThm} we will need is included in Corollary \ref{mainhtsteponecoro}.
		
		Again, by Dirichlet's $S$-unit theorem, combined with the fact that $\lambda_1,\dots,\lambda_m$ are multiplicatively independent, we obtain that for every non-degenerate solution $(x_1,\dots,x_r)$ of (\ref{Eqnoneff}), there exist unique $\boldsymbol{\mu}=(\mu_1,\dots,\mu_r)\in W_K^r$ and $\mathbf{n}=(n_{11},\dots,n_{1m})\times\cdots \times (n_{r1},\dots, n_{rm})\in \ZZ^{rm}$ such that
		$$(x_1,\dots,x_r)=(\mu_1\lambda_1^{n_{11}}\cdots \lambda_m^{n_{1m}},\dots, \mu_r\lambda_r^{n_{r1}}\cdots \lambda_m^{n_{rm}}).$$
		Keeping in mind Proposition \ref{Prop:norm2}, this $\u{n}$ gives a non-degenerate solution (which means making $h_{\mathrm{aff}}({f}_{\boldsymbol{\mu}}(\mathbf{n}))$ to be a non-degenerate sum and such that the inequality below holds) of: 
		\begin{equation}\label{oldpep1} 
		\refstepcounter{equation}
		\tag*{(\theequation)\ensuremath{_{\boldsymbol{\mu}}}}
		h_{\mathrm{aff}}({f}_{\boldsymbol{\mu}}(\mathbf{n}))\leq \delta' \cdot \| \mathbf{n}\|_{\infty},
		\end{equation}
		for some $\delta'=\delta \cdot B$, where $B>0$ is an explicitly computable constant that depends only on $\lambda_1, \ldots, \lambda_m$ (in particular, not depend on the non-degenerate solution $(x_1,\dots,x_r)$), in other words, $\delta'$ is also effective. Now, arguing as in the previous example, for any non-degenerate solution $\mathbf{n}\in \ZZ^{rm}$ of \ref{oldpep1}, we get non-degenerate solutions $(\mathbf{n},1),(\mathbf{n},2),\dots,(\mathbf{n}, \lfloor\log_{\alpha}( \delta \cdot \| \mathbf{n}\|_{\infty})\rfloor)$ to the inequality
		\begin{equation}\label{newpep1}
		h_{\mathrm{aff}}(\widetilde{\mathbf{f}}_{\boldsymbol{\mu}}(\mathbf{n},l))\leq 2\delta'\cdot\|(\mathbf{n},l)\|_{\infty} =2\delta'\cdot \max \{\| \mathbf{n}\|_{\infty} ,|l|\}.
		\end{equation}

		Thus the number of non-degenerate solutions of (\ref{newpep1}) is at least $\lfloor\log_{\alpha}( \delta \cdot  R_{\boldsymbol{\mu}}) \rfloor$, where $R_{\boldsymbol{\mu}}$ is defined to be the maximum of $\| \mathbf{n}\|_{\infty}$ as $\mathbf{n}$ varies among the non-degenerate solutions of \ref{oldpep1}. Replacing $\delta$ by $\min \{\frac{C}{3B},\delta\}$ when necessary, we may assume that $2 \delta' < C$, where $C=C_1'$ is the effective constant appearing in Corollary \ref{mainhtsteponecoro}. So, if we have an effective upper bound for the number of non-degenerate solutions of (\ref{newpep1}), we will have an effective upper bound for the $\infty$-norm of non-degenerate solutions of \ref{oldpep1} for all $\boldsymbol{\mu}\in W_K^r$. Applying Proposition \ref{Prop:norm2} again, we obtain an effective height bound on the non-degenerate solutions of (\ref{Eqnoneff}).

	\section{An open question}\label{Par.Open}
 
	In this section, we raise an open question. Formulated in a geometric language, this is motivated by the study of whether, in specific instances, Theorem \ref{minspecial} could hold for all (but finitely many) $\u{n} \in \mathbb{Z}^r$. 

    By Theorem \ref{mainlongpaper}, the validity of Theorem \ref{minspecial} for all but finitely many $\u{n} \in \ZZ^r$ amounts to the non-existence of a coset $\mathcal{K}$ of rank $\geq 1$ on which $\mathbf{f}$ is constant. Obviously such a coset exists if and only if there exists one of rank $1$. Since we are mainly interested in (PEP) sets provided by (BG) by semi-simple matrices, it seems natural to ask the following rough question.
    \vskip2mm
	\noindent
    {\bf Problem.}
        Give a characterization of all $r$-tuples of semi-simple matrices $(g_1,\ldots,g_r)\in \GL_n (K)^r$ such that there exists a rank $1$ coset $\mathcal{K} \subseteq \ZZ^r$ for which $g_1^{n_1}\cdots  g_r^{n_r}$ has constant value for $(n_1,\ldots, n_r) \in \mathcal{K}$. 
    
    \vskip2mm

    Writing such a $\mathcal{K}$ as $\u{a}+\ZZ\cdot \u{c}$, where $\mathbf{a}=(a_1,\dots,a_r),\mathbf{c}=(c_1,\dots,c_r)\in \ZZ^r$, then the condition in the problem above amounts to saying that $g_1^{a_1}g_1^{c_1m}\cdots g_r^{a_r}g_1^{c_rm}$ is constant for $m \in \ZZ.$ Letting $M_i:= g_i^{a_i}, g'_i:= g_i^{c_i}, i=1,\ldots, r$, this is equivalent to asking that $M_1\cdot h_1 \cdot M_2 \cdot h_2 \cdots M_r \cdot h_r$ is equal to a constant matrix $M$ as $(h_1,\ldots,h_r)$ varies in the Zariski-closure of $(g'_1,\ldots,g'_r)^{\ZZ} \subseteq \GL_n(K)^r.$ It is well-known that this Zariski-closure is a group of multiplicative type (i.e.\  a finite extension of a finite commutative group by a torus), hence a ``characterization'' as in the question above would be obtained by characterizing all tori lying in the varieties $X:=\{(h_1,\ldots,h_r) \in \GL_n^r \mid M_1h_1\cdots M_rh_rM^{-1}= I_n \}$ (as the $M_i,M$ vary). Since every $1$-dimensional torus in $X$ is contained in a maximal torus, we may restrict to {\bf maximal} tori. Inspired by the above observation, we pose the following more precise question:

    \vskip2mm
	\noindent
    {\bf Question.}
        Let $r \geq 1$ be a natural number, $M_1,\ldots,M_{r+1} \in \GL_n(K)$ be semi-simple matrices, and $X$ be the subvariety of $G:= \GL_n^r$ defined by 
        $$X:=\{(h_1,\ldots,h_r) \in \GL_n^r \mid M_1h_1\cdots M_rh_rM_{r+1}= I_n\}.$$ 
        Let $H\subset \mathrm{Aut}(G)$ be the stabilizer of $X$. Is the set of maximal tori contained in $X$ a finite union of orbits under $H$?
        
    Moreover, one can generalize the above setting and ask the following much more ambitious question. 
    \vskip2mm
    \noindent
	{\bf Question}. Let $G$ be an algebraic group over a field $K$ of characteristic zero, and $X\subset G$ be a subvariety. Let $H\subset \mathrm{Aut}(G)$ be the stabilizer of $X$, give a verifiable sufficient condition on $G$ and $X$ so that the set of maximal tori inside $X$ is contained in a finite union of orbits under $H$.
	\vskip2mm

\vskip5mm
\begin{appendix}
    \section*{Appendix} 
    \renewcommand{\thesection}{A}

    Henceforth $K$ is always a number field. The goal of this appendix is to discuss the ``sparseness'' property of (PEP) sets in linear groups which are not necessarily $S$-arithmetic. To make this precise, we formulate the following question.

	\vskip2mm
	\noindent
	{\bf Question.} Let $\Gamma \subset \mathrm{GL}_n(K)$ be a linear group; then are (PEP) sets in $\Gamma$ sparse in terms of the height? Here we say that a subset $\Sigma \subset \Gamma$ is {\bf sparse} in $\Gamma$ if $\lim\limits_{T\to \infty} \frac{\#\{g\in \Sigma \colon H_{\mathrm{aff}}(g)\leq T\}}{\#\{g\in \Gamma\colon  H_{\mathrm{aff}}(g)\leq T\}}=0.$
	
	\vskip2mm

 We completely answer this question with Theorem \ref{maingpth} below, which strengthens Theorem \ref{secondmainthm}. We only include a sketch of its proof, as this follows the same lines of the one of Theorem \ref{secondmainthm}.

	\begin{theo}[Main group theoretic theorem]\label{maingpth} Let $\Gamma \subset \mathrm{GL}_n(K)$ be a linear group. Consider the three conditions

 (AN) $\Gamma$ is anisotropic; (VA) $\Gamma$ is virtually abelian; (FG) $\Gamma$ is finitely generated.

 Then

		1. If at least one of the conditions (AN),(VA),(FG) fails for $\Gamma$, then all (PEP) sets in $\Gamma$ are sparse;
		
		2. If $\Gamma$ satisfies all of (AN),(VA),(FG), then $\Gamma$ contains a rank $r$ abelian subgroup of finite index, we have
		
		\begin{itemize}
			\item If $r\geq 2$, then some infinite (PEP) sets in $\Gamma$ are sparse, while some are not;
			\item If $r=1$, then none of the infinite (PEP) sets in $\Gamma$ is sparse.
		\end{itemize}
		
	\end{theo}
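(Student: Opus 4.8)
The plan is to prove Theorem~\ref{maingpth} by combining the quantitative counting results of the paper (Theorem~\ref{firstmainthm} and Theorem~\ref{volestimate}) with the structural characterization of (PEP) linear groups (Theorem~\ref{secondmainthm}), and then doing an explicit analysis of the virtually abelian anisotropic case. Let me organize the argument in three parts corresponding to the three claims.

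\textbf{Part 1: if one of (AN), (VA), (FG) fails, all (PEP) sets are sparse.} First observe that a (PEP) set $\Sigma \subset \Gamma$ always has $\#\{g \in \Sigma \colon H_{\mathrm{aff}}(g) \leq T\} \sim c(\log T)^{r'}$ by Theorem~\ref{firstmainthm}, so it suffices to show the denominator $\#\{g \in \Gamma \colon H_{\mathrm{aff}}(g) \leq T\}$ grows faster than any power of $\log T$ whenever one of (AN), (VA), (FG) fails. I would split into cases. If (FG) fails, then $\Gamma$ is infinitely generated; I would argue that an infinitely generated linear group over a number field already has more than logarithmically many elements of bounded height --- e.g.\ using that finitely generated subgroups $\Gamma_m \subsetneq \Gamma_{m+1} \subsetneq \cdots$ exhaust $\Gamma$, and a diagonal/height argument shows the counting function is unbounded by any $(\log T)^k$. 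If (VA) fails, i.e.\ $\Gamma$ is not virtually abelian, then either $\Gamma$ is not virtually solvable --- in which case Tits' alternative gives a rank-$2$ free subgroup and the word-length argument of Remark~\ref{Rmk: a second approach} (inequality \eqref{Eq: ineq with words}) gives $\#\{g\in\Gamma\colon H_{\mathrm{aff}}(g)\leq T\}\gg T^\delta$ --- or $\Gamma$ is virtually solvable but not virtually abelian, in which case I would pass to the Zariski closure $G$ and use that $G^\circ$ is non-reductive or that a suitable unipotent or derived-group contribution forces polynomial growth (essentially the mechanism of Theorem~\ref{volestimate}(a)$\Rightarrow$(b) combined with the arguments of Lemma~\ref{lemma}). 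If (AN) fails, i.e.\ $\Gamma$ contains a non-semi-simple element $g$, then $\langle g\rangle$ already contributes a subset with polynomially many elements of bounded height (the entries of $g^l$ being polynomials in $l$ of degree $\geq 1$, as in the proof of Lemma~\ref{Cor}), so again the denominator is $\gg (\log T)^k$ for all $k$, in fact $\gg T^{1/d}$. In all three cases the ratio tends to $0$.

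\textbf{Part 2: $\Gamma$ satisfying all three conditions.} Here by Theorem~\ref{secondmainthm} (or directly, since (AN)$+$(VA)$+$(FG) forces the connected component of $\overline{\Gamma}^{\mathrm{Zar}}$ to be a torus) $\Gamma$ is virtually a finitely generated free abelian group of some rank $r$, and after conjugation a finite-index subgroup is diagonal, $g \mapsto \mathrm{diag}(\chi_1(g),\ldots,\chi_n(g))$ with the $\chi_i$ characters of $\ZZ^r$. The counting function of $\Gamma$ itself is then $\sim c_\Gamma (\log T)^r$: this follows from Theorem~\ref{firstmainthm} applied to $\Gamma$ (which is itself a (PEP) set, being virtually abelian f.g.), together with Proposition~\ref{Prop:norm2}, which identifies the relevant counting with lattice points in a norm ball in $\R^r$ --- so $\mathrm{rk}_{\mathrm{PEP}}\Gamma = r$ (this is the content of ``Lemma~\ref{Lemrankequalrank}'' alluded to earlier). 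Now a (PEP) subset $\Sigma \subseteq \Gamma$ is sparse if and only if $\mathrm{rk}_{\mathrm{PEP}}\Sigma < r$. When $r = 1$: any \emph{infinite} (PEP) set $\Sigma$ has $\mathrm{rk}_{\mathrm{PEP}}\Sigma \geq 1$ (a finite-rank-$0$ PEP set is finite, by Theorem~\ref{firstmainthm} with $r'=0$), hence $\mathrm{rk}_{\mathrm{PEP}}\Sigma = 1 = r$, so $\Sigma$ is not sparse --- the ratio tends to a positive constant. When $r \geq 2$: on the one hand $\Gamma$ itself is an infinite (PEP) set of rank $r$, hence not sparse; on the other hand, pick a rank-$1$ subgroup $\langle\gamma\rangle$ with $\gamma$ of infinite order --- this is an infinite (PEP) set of $\mathrm{PEP}$-rank $1 < r$, hence sparse. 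This exhibits both behaviors.

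\textbf{Part 3: assembling and the main obstacle.} The remaining work is to make the rank comparisons rigorous: I need the precise statement that for a virtually abelian finitely generated anisotropic $\Gamma$, $\#\{g\in\Gamma\colon H_{\mathrm{aff}}(g)\leq T\}\sim c(\log T)^r$ with the \emph{same} $r$ as the torsion-free rank, and that no (PEP) subset can achieve this rank unless its image is ``Zariski-dense enough'' --- concretely, $\mathrm{rk}_{\mathrm{PEP}}\Sigma = r$ forces $\Sigma$ to contain a finite-index-in-$\Gamma$ amount of mass. I expect the main obstacle to be precisely the sharp identification of $\mathrm{rk}_{\mathrm{PEP}}\Gamma$ with the abelian rank $r$ and the verification that a proper (PEP) subset of the ``non-degenerate'' part drops the rank; this is where one must invoke the norm structure of Proposition~\ref{Prop:norm2} and the Generic Fiber Theorem~\ref{genericfiberthm} carefully, rather than just quoting Theorem~\ref{firstmainthm}. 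The (FG)-fails and (VA)-fails subcases of Part~1 are comparatively routine given Theorem~\ref{volestimate} and Remark~\ref{Rmk: a second approach}, and the rest is bookkeeping. Since the proof largely parallels that of Theorem~\ref{secondmainthm}, I would only sketch it, as the statement of the theorem already promises.
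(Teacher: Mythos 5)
Your Part 2 is essentially the paper's own treatment of Case 2: Lemma \ref{Lemrankequalrank} gives $\#\{g\in\Gamma\colon H_{\mathrm{aff}}(g)\leq T\}\sim c(\log T)^r$, Theorem \ref{firstmainthm} shows an infinite (PEP) set has rank $\geq 1$, and the two bullet points follow. Your (VA)-fails route via Tits' alternative is the paper's own Remark \ref{Rmk: T to the delta appendix} (legitimate because anisotropic plus virtually solvable already forces virtually abelian), and your (FG)-fails case, though vaguely stated, only needs to be handled when (AN) and (VA) hold, where the paper's argument (the diagonalizable finite-index subgroup has infinite rank, so $\Gamma$ contains rank-$m$ (PEP) subgroups for every $m$) fills in what you left as ``a diagonal/height argument.''

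The genuine gap is in your (AN)-fails case, and it breaks the overall strategy of Part 1. You assert that a non-semi-simple $g\in\Gamma$ makes $\langle g\rangle$ grow polynomially (``the entries of $g^l$ being polynomials in $l$,'' hence $\gg T^{1/d}$). That is true only for unipotent (or quasi-unipotent) $g$; for a general non-semi-simple $g$, e.g.\ $g=\left(\begin{smallmatrix}2 & 1\\ 0 & 1/2\end{smallmatrix}\right)$, the entries of $g^l$ are polynomial-exponential in $l$, the height grows exponentially, and $\#\{\gamma\in\langle g\rangle\colon H_{\mathrm{aff}}(\gamma)\leq T\}\asymp \log T$. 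Taking $\Gamma=\langle g\rangle$ (which satisfies (VA) and (FG) but not (AN)) shows that your reduction --- ``it suffices to show the denominator beats every power of $\log T$'' --- is simply false: here the denominator is only $\asymp\log T$, yet the theorem still (correctly) asserts that every (PEP) subset of $\Gamma$ is sparse. The paper's argument for this subcase is of a different nature: it applies Lemma \ref{Cor} to $g$ and to $\Sigma'=\Sigma\cdot\Sigma^{-1}$ to conclude that among $|n|\leq N$ only $O((\log N)^{a})$ exponents give $g^n\in\Sigma'$, so the cosets $g^n\Sigma$ are mostly pairwise disjoint and $g^{\ZZ}\cdot\Sigma\subseteq\Gamma$ has distribution exceeding that of $\Sigma$ by an unbounded factor; the comparison is thus made relative to the given $\Sigma$, not via a uniform super-logarithmic lower bound for $\Gamma$. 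This route also forces a technical point you would have to address: $\Sigma\cdot\Sigma^{-1}$ need not be (PEP) (its coordinates are quotients of purely exponential polynomials by $\det\mathbf{f}(\u{n})$), and the counting results must be extended to such sets.
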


Note that saying that $\Gamma$ satisfies (AN),(VA),(FG) is just a reformulation of the condition given in point (3) of Theorem \ref{secondmainthm}. Hence:

\begin{coro}
    All (PEP) sets in a linear group $\Gamma \subset \mathrm{GL}_n(K)$ are sparse if $\Gamma$ is not boundedly generated by semi-simple elements.
\end{coro}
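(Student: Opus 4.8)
The corollary is an immediate consequence of Theorem \ref{maingpth}, once one records that its hypothesis is exactly the negation of the structural description appearing in Theorem \ref{secondmainthm}. The plan is therefore the following.

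First, I would recall the equivalences of Theorem \ref{secondmainthm}: for a linear group $\Gamma \subset \mathrm{GL}_n(K)$ over a field of characteristic zero, the conditions (1) $\Gamma$ admits a (PEP) parametrization, (2) $\Gamma$ is anisotropic and has (BG), (3) $\Gamma$ is finitely generated with the connected component $G^{\circ}$ of its Zariski closure a torus, are all equivalent. In particular, ``$\Gamma$ is boundedly generated by semi-simple elements'' is equivalent to each of (1), (2), (3): indeed, bounded generation by semi-simple elements yields a (PEP) parametrization by the Example of the introduction, hence (1); conversely (3) makes $\Gamma$ a finitely generated, virtually abelian group consisting of semi-simple elements, so a finite-index finitely generated abelian subgroup has (BG), and a standard argument promotes this to (BG) of $\Gamma$ itself by semi-simple elements.

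Second, as observed just before the statement of Theorem \ref{maingpth}, the simultaneous validity of the three conditions (AN) ($\Gamma$ anisotropic), (VA) ($\Gamma$ virtually abelian) and (FG) ($\Gamma$ finitely generated) is precisely a reformulation of condition (3) of Theorem \ref{secondmainthm}. Passing to contrapositives, ``$\Gamma$ is not boundedly generated by semi-simple elements'' is equivalent to ``at least one of (AN), (VA), (FG) fails for $\Gamma$''. Part 1 of Theorem \ref{maingpth} then applies verbatim and gives that every (PEP) subset of $\Gamma$ is sparse, which is the assertion of the corollary.

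The only substantive content is hidden in Theorem \ref{maingpth}, part 1, whose proof proceeds by a dichotomy: when $G$ is non-reductive or the derived group $\Gamma^{\mathrm{der}}$ is infinite, Theorem \ref{volestimate} produces a lower bound $\#\{\gamma\in\Gamma : H_{\mathrm{aff}}(\rho(\gamma))\leq T\} \gg T^{\delta}$ with $\delta>0$, which dominates the logarithmic growth $\#\{P \in \Sigma : H_{\mathrm{aff}}(P)\leq T\} \sim c(\log T)^{r'}$ of any (PEP) set $\Sigma$ guaranteed by Theorem \ref{firstmainthm}; in the complementary (virtually solvable) case one handles failure of finite generation, and the rank, via Lemma \ref{Cor} and a comparison of ranks. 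For the corollary as stated, however, no argument is needed beyond quoting Theorems \ref{secondmainthm} and \ref{maingpth}, so the expected main obstacle lies entirely in Theorem \ref{maingpth}, not in this deduction.
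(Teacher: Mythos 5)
Your deduction is correct and is exactly the paper's (essentially one-line) argument: the conjunction of (AN), (VA), (FG) is a reformulation of condition (3) of Theorem \ref{secondmainthm}, hence of bounded generation by semi-simple elements, so the hypothesis of the corollary places $\Gamma$ in case 1 of Theorem \ref{maingpth}. The only inaccuracy is in your tangential description of how part 1 of Theorem \ref{maingpth} is proved --- the paper's sketch uses Lemma \ref{Cor}, generic elements, and Lemma \ref{Lemrankequalrank} rather than Theorem \ref{volestimate} (which per the remark only covers the $S$-arithmetic reductive case) --- but this does not affect the deduction of the corollary itself.
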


\begin{rema}{\rm 
    Let $\Gamma$ be an $S$-arithmetic subgroup of a reductive $K$-group such that $\Gamma^{\mathrm{der}}$ is infinite. Then Theorem \ref{maingpth} may be inferred from combining Theorem \ref{volestimate} and Theorem \ref{firstmainthm}, gives in fact the much stronger estimates:
    \[
    \#{\left\{g \in \Sigma \colon H_{\mathrm{aff}}(g) \leq T\right\}} \asymp (\log T)^{r'} \ll T^{\delta} \ll \#{\left\{g \in \Gamma \colon H_{\mathrm{aff}}(g) \leq T\right\}}.
    \]
    
    In spite of this, the techniques used in this appendix (namely, Evertse's theorem and the theory of generic elements) are substantially different from the ones used to prove Theorem \ref{volestimate} (namely, counting results for lattice points). }
\end{rema}

 To prove Theorem \ref{maingpth}, we need the following.
	
    \begin{lemma}\label{Lemrankequalrank}
        Let $\Sigma_0 \subseteq \GL_n(K)$ be an abelian subgroup generated by finitely many semi-simple matrices, and let $\Sigma=\gamma_0\cdot \Sigma_0\subset \GL_n(K)$ be a translate of $\Sigma _0$  by another semi-simple matrix $\gamma_0$. Then $\rk_{\ZZ}\Sigma_0=\rk_{\PEP}\Sigma$, i.e.\  the rank of $\Sigma_0$ as a $\ZZ$-module is equal to the rank of $\Sigma$ as a (PEP).
    \end{lemma}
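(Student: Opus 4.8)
# Proof Plan for Lemma \ref{Lemrankequalrank}

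The plan is to prove the two inequalities $\rk_{\ZZ}\Sigma_0 \leq \rk_{\PEP}\Sigma$ and $\rk_{\PEP}\Sigma \leq \rk_{\ZZ}\Sigma_0$ separately, using Theorem \ref{firstmainthm} as the bridge between the abelian-group rank and the counting asymptotics. First I would set up coordinates: since $\Sigma_0$ is abelian and generated by finitely many semi-simple matrices, after passing to a finite extension of $K$ and a simultaneous conjugation, all elements of $\Sigma_0$ (and hence of $\Sigma = \gamma_0 \Sigma_0$, after possibly enlarging further to diagonalize $\gamma_0$ too, using that it normalizes or at least that we only care about the image in a common torus) become diagonal. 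Write $\Sigma_0 = \langle d_1, \ldots, d_t\rangle$ with $d_i$ diagonal and $t = \rk_{\ZZ}\Sigma_0$ chosen so that the map $\ZZ^t \to \Sigma_0$, $\mathbf{x} \mapsto \prod d_i^{x_i}$ has finite kernel (equivalently, is a free presentation up to torsion). Then $\Sigma = \{\gamma_0 \prod d_i^{x_i} : \mathbf{x} \in \ZZ^t\}$ is manifestly a (PEP) set in $t$ variables, so by definition $\rk_{\PEP}\Sigma = r' \leq t = \rk_{\ZZ}\Sigma_0$. This already gives one inequality.

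For the reverse inequality $\rk_{\ZZ}\Sigma_0 \leq \rk_{\PEP}\Sigma$, I would produce a lower bound for the counting function $\#\{P \in \Sigma : H_{\mathrm{aff}}(P) \leq T\}$ of order $(\log T)^{\rk_{\ZZ}\Sigma_0}$, which by Theorem \ref{firstmainthm} forces $\rk_{\PEP}\Sigma \geq \rk_{\ZZ}\Sigma_0$. The key point here is that translation by the fixed semi-simple matrix $\gamma_0$ does not distort heights multiplicatively beyond a bounded factor: by Lemma \ref{Lem:heightdifference} (applied after diagonalization, so that $\gamma_0$ and the $d_i$ live in a common diagonal torus and products of diagonal matrices correspond to coordinatewise products in $\mathbb{P}^{n-1}$), one has $H_{\mathrm{aff}}(\gamma_0 d) \asymp H_{\mathrm{aff}}(d)$ up to constants depending only on $\gamma_0$. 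Hence it suffices to count $\{d \in \Sigma_0 : H_{\mathrm{aff}}(d) \leq T\}$. Now modulo the finite torsion subgroup of $\Sigma_0$, the function $\mathbf{x} \mapsto h_{\mathrm{aff}}(\prod d_i^{x_i})$ is comparable to $h_{\mathrm{aff}}$ of the tuple of monomials $\lambda_j^{\mathbf{r}_j \cdot \mathbf{x}^T}$, and by Proposition \ref{Prop:norm2} (applied to the reduced form of this monomial parametrization, which has exactly $\rk_{\ZZ}\Sigma_0$ free variables) this extends to a genuine norm on $\ZZ^{\rk_{\ZZ}\Sigma_0} \otimes \R$. The number of lattice points in a ball of radius $\log T$ for a norm on $\R^m$ is $\sim c(\log T)^m$, giving the desired lower bound with $m = \rk_{\ZZ}\Sigma_0$.

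Putting the two inequalities together yields $\rk_{\PEP}\Sigma = \rk_{\ZZ}\Sigma_0$. The main obstacle I anticipate is handling the interaction between $\gamma_0$ and $\Sigma_0$ cleanly: a priori $\gamma_0$ need not commute with $\Sigma_0$, so ``diagonalizing everything simultaneously'' is not literally available. The fix is to observe that for the counting problem only the entries of the matrices $\gamma_0 d$ matter, and one can work one matrix coordinate at a time or, more robustly, note that $\gamma_0 \Sigma_0$ sits inside the (PEP) set attached to the monomial parametrization whose bases are the eigenvalues of the $d_i$ together with the entries of $\gamma_0$ — so the upper estimate $\rk_{\PEP} \leq \rk_{\ZZ}$ is immediate from the parametrization, and for the lower estimate one uses that the left-translation-by-$\gamma_0$ map is injective and alters each affine height by at most a bounded multiplicative constant (Lemma \ref{Lem:heightdifference} after choosing projective coordinates), which is all that is needed to transport the $(\log T)^{\rk_{\ZZ}\Sigma_0}$ lower bound for $\Sigma_0$ to one for $\Sigma$. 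A secondary technical point is that Theorem \ref{firstmainthm} and Proposition \ref{Prop:norm2} are stated for reduced (PEP); reducing to that case costs only a finite partition into cosets and a bounded-index subgroup, neither of which changes the leading power of $\log T$.
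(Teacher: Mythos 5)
Your proposal is correct and follows essentially the same route as the paper's proof: simultaneous diagonalization of the abelian anisotropic group $\Sigma_0$, Proposition \ref{Prop:norm2} to turn $h_{\mathrm{aff}}$ into a norm on the (torsion-free quotient of the) parameter lattice, the observation that left-translation by the fixed matrix $\gamma_0$ shifts $h_{\mathrm{aff}}$ only by $O(1)$, and lattice-point counting in norm balls compared against Theorem \ref{firstmainthm}. The only difference is organizational — you prove two inequalities and pass to a free rank-$t$ subgroup plus finitely many torsion cosets, whereas the paper computes the exact asymptotic in one step by quotienting $\ZZ^r$ by the kernel $\mathcal{K}$ of the parametrization — and this does not affect correctness.
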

    
        Our proof is analog to the ``proof of the reduction step'' at the end of \S 6. However, we represent it here (with more details) for completeness.

       \begin{proof} 
        Let $\gamma_1,\ldots,\gamma_r$ be semi-simple matrices which generate $\Sigma_0$; then $\Sigma_0$ is the image of the (PEP) 
        $$\mathbf{f}_0\colon \ZZ^r \to K,\text{  }\u{n} \mapsto \gamma_1^{n_1}\cdots \gamma_r^{n_r}.$$
         Let $\Lambda \subseteq \overline{K}^*$ denote the subgroup generated by the eigenvalues of the matrices in $\Sigma_0$ ($\Lambda$ is finitely generated as $\Sigma_0$ is commutative of finite rank), so we may choose $\lambda_1,\ldots,\lambda_k$ as a set of generators for $\Lambda$. Moreover, let $g\in \GL_n(\overline{K})$ be a matrix diagonalizing $\Sigma_0$, i.e.\  $g^{-1}\Sigma_0 g\subset D_n$; then we may pick matrices $A_1,\ldots,A_n \in M_{k\times r}(\ZZ)$ such that 
        $$g^{-1}\gamma_1^{n_1} \cdots \gamma_r^{n_r}g=\mathrm{diag}\left(\u{\lambda}^{A_1\cdot \u{n} ^T},\ldots,\u{\lambda}^{A_n\cdot \u{n} ^T}\right)=:D(\u{n}), \text{ where }\u{n}=(n_1,\ldots,n_r).$$
        Then clearly 
        $$h_{\mathrm{aff}}(\gamma_0\cdot g \cdot D(\u{n}) \cdot g^{-1})- h_{\mathrm{aff}}(D(\u{n}))=O(1),$$
        and, on the other hand, $h_{\mathrm{aff}}(D(\u{n}))$ extends to a norm on $(\ZZ^r/\mathcal{K}) \otimes_{\ZZ} \R$ by Proposition \ref{Prop:norm2}, where $\mathcal{K}=\bigcap\limits_i \ker \left(\mathbf{n}\mapsto \boldsymbol{\lambda}^{A_i \cdot \mathbf{n}^T}\right)$. In particular, $\#\{ \u{n} \in \ZZ^r/\mathcal{K} \mid h_{\mathrm{aff}}(f(\u{n})) \leq T \} \sim c \cdot T^{r-\rk_{\ZZ} \mathcal{K}}$ for some $c>0$. Since the matrices $\gamma_1,\ldots,\gamma_r$ commute, the map
        $$\mathbf{f}:\ZZ^r/\mathcal{K} \to \Sigma,\text{  }\u{n} \mapsto \gamma_0 \cdot \mathbf{f}_0(\mathbf{n})= \gamma_0 \gamma_1^{n_1}\cdots \gamma_r^{n_r}$$
        defines a bijection. This implies that $\#\{ \gamma \in \Sigma \mid h_{\mathrm{aff}}(\gamma) \leq T \} \sim c \cdot T^{r-\rk _{\ZZ} \mathcal{K}}$ and hence $\rk_{\PEP}\Sigma=r-\rk _{\ZZ} \mathcal{K}=\rk_{\ZZ}\Sigma_0$.
    \end{proof}
    \begin{rema}\label{valuerprimebg} {\rm Lemma \ref{Lemrankequalrank} shows that when a linear group $\Gamma \subset \GL_n(K)$ over a number field $K$ has (BG) by semi-simple elements, then its rank as a (PEP) set is \underline{equal} to the rank of a(ny) finite index abelian subgroup of $\Gamma$ (whose existence is guaranteed by Theorem \ref{secondmainthm}). }
    
    \end{rema}
    \begin{proof}[Sketch of proof of Theorem \ref{maingpth}]
        We start with case $2$, which is easier to deal with. In this case, $\#\left\{g \in \Gamma \colon H_{\mathrm{aff}}(g) \leq T \right\} \sim c \cdot (\log T)^r$ for some $c >0$ by Lemma \ref{Lemrankequalrank} (applied to the finitely many translates of a finitely generated commutative subgroup of rank $r$ of $\Gamma$). Now, by Theorem \ref{firstmainthm}, any (PEP) set $\Sigma$ (in any affine space) is either finite, or $\# \left\{g \in \Sigma \colon H_{\mathrm{aff}}(g) \leq T \right\} \gg \log T$. Therefore, if $r=1$, we see that all (PEP) subsets of $\Gamma$ are either finite or not sparse. On the other hand, if $r \geq 2$, then any subgroup $\Gamma' < \Gamma$ of rank $1$ provides a sparse (PEP) subset of $\Gamma$ since $\# \left\{g \in \Gamma' \colon H_{\mathrm{aff}}(g) \leq T \right\} \sim c' \cdot \log T, c' >0$; at the same time, $\Gamma$ itself is (PEP) but obviously not sparse in $\Gamma$.
        
        We now turn to the more complicated case $1$. Let $\Sigma \subseteq \Gamma$ be a (PEP) subset of rank $r \geq 0$. We want to prove that $\Sigma$ is sparse in $\Gamma$. Let $\Sigma = \Sigma_1\cup \cdots \cup \Sigma_l$ be a decomposition as in Proposition \ref{decomposepep}. Note that $r=\rk_{\PEP} \Sigma= \max_i \rk _{\PEP} \Sigma_i$. We may thus assume that $\rk _{\PEP} \Sigma_1=r$. Clearly, $\Sigma$ is sparse in $\Gamma$ if and only if $\Sigma_1$ is. So, we may assume $\Sigma=\Sigma_1=\mathbf{f}(\ZZ^r)$, where $\mathbf{f}$ is a reduced (PEP). We divide Case 1 into the following three subcases regarding $\Gamma$, signaled with underlined titles. 
        
        \underline{(AN) fails for $\Gamma$.} Let $\Sigma' \coloneqq \Sigma \cdot \Sigma^{-1}$. Let us first assume, for the sole purpose of simplifying the exposition, that $\Sigma'$ is (PEP). Note that, for instance, this is the case when $\Sigma$ is a BG set by semi-simple elements. Let $g \in \Gamma$ be a non-semi-simple element. Using Lemma \ref{Cor} on $g$ and the (PEP) set $\Sigma'$ we deduce that 
        \begin{equation}\label{Eqappendix}
            \#\{ n \in \ZZ \mid g^n \in \Sigma', |n| \leq T\} \ll (\log T)^a,
        \end{equation}        
        for some $a>0$. It is then fairly easy to see that the set $g^{\ZZ}\cdot \Sigma$ has at least $\frac{\log T}{(\log T)^a} \cdot (\log T)^r$-distribution (when counted by affine height $H_{\mathrm{aff}}$), so does its superset $\Gamma$. In particular, $\Sigma$ is sparse in $\Gamma$. 
        
        If $\Sigma'$ is not (PEP), then it is still the image of a function $\varphi:\ZZ^r \to \A_K^{n^2}$, where each coordinate of $\varphi$ is the quotient of two (PEP) polynomials (the denominators may be chosen to be all equal to the determinant $\det \mathbf{f}(\u{n})$). One can see that, with arguments essentially analogous to those in \S \ref{pfht} of this paper, Theorem \ref{firstmainthm} still holds for the image of such a function $\varphi$, except that \emph{a priori} it could hold with the distribution rate $=O((\log T)^{r'})$ instead of an exact asymptotic.\footnote{The authors actually believe that it should be feasible to use here the result \cite{Levin} of Levin to actually obtain an exact asymptotic. In fact, the main problem for extending our results to vectors of \emph{quotients} of purely exponential polynomials is controlling the greatest common divisor of the numerators and denominators of the coordinates on non-degenerate values of $\u{n}$. This gcd is exactly what Levin controls.} Using this, one manages to verify that \eqref{Eqappendix} is still true even if $\Sigma'$ is not a (PEP), and the rest of the proof is the same.

        \underline{(AN) holds, but (VA) fails  for $\Gamma$.}  Using the theory of generic elements, c.f.\  Proposition 3.5 of \cite{CRRZ}, in an analog way as it was done in the proof of Theorem \ref{secondmainthm}, one finds a semi-simple matrix $\gamma \in \Gamma$ that has an eigenvalue $\lambda$ not equal to a root of unity and satisfies $\langle \lambda\rangle \cap \Lambda =\{1\}$. Then it is not difficult to see that the new (PEP) $$\widetilde{\mathbf{f}}\colon \ZZ^{r+1}\to K^{n^2};\text{  }(n_0,n_1,\ldots,n_r)\mapsto \gamma^{n_0} \cdot \mathbf{f}(n_1,\ldots,n_r)$$
        is also reduced. In particular its image has distribution $\sim c\cdot (\log T)^{r+1}$ by Theorem \ref{firstmainthm} for some $c>0$. Thus $\Sigma$ is sparse in $\widetilde{\mathbf{f}}(\ZZ^{r+1})$ and therefore in $\Gamma \ \left(\supset \widetilde{\mathbf{f}}(\ZZ^{r+1})\right)$ as well.
        
        \underline{(AN), (VA) hold, but (FG) fails  for $\Gamma$.} Then for any $m\in \NN$, we may find a commutative subgroup $\Gamma_m < \Gamma$ of $\ZZ$-rank $m$. Since $\Gamma$ is anisotropic, all elements of $\Gamma_m\subset \Gamma$ are semi-simple, thus $\Gamma _m$ is a (PEP) satisfying $\rk _{\PEP} \Gamma _m=\rk _{\ZZ} \Gamma_m =m$ (according to Lemma \ref{Lemrankequalrank}). By Theorem \ref{firstmainthm}, we then obtain that 
        $$\#\left\{g \in \Gamma \colon H_{\mathrm{aff}}(g) \leq T \right\} \geq \#\left\{g \in \Gamma_m \colon H_{\mathrm{aff}}(g) \leq T \right\} \sim c_m(\log T)^m\text{ as }T\to \infty,$$ 
        for some $c_m>0$. Upon choosing $m >\rk _{\PEP}\Sigma$, we see that $\Sigma$ is sparse in $\Gamma.$
        \end{proof}

    We end the appendix by giving an alternative approach to proving Theorem \ref{maingpth} when $\Gamma$ is non-virtually-abelian:

    \begin{rema}\label{Rmk: T to the delta appendix}
      {\rm Assume $\Gamma< \mathrm{GL}_n(K)$ is non-virtually-abelian. Then $\#\{\gamma \in \Gamma \colon H_{\mathrm{aff}}(\rho(\gamma)) \leq T\} \gg T^\delta$ for some $\delta>0$ because
      \begin{itemize}
          \item if $\Gamma$ contains no unipotent elements, then one argues by Tits' alternative as in Remark \ref{Rmk: a second approach} (one checks that such a $\Gamma$ is non-virtually-solvable).
          \item if $\Gamma$ contains a unipotent element $u$, then this lower bound already holds on the subgroup $\Gamma_1:=\langle u\rangle$.
      \end{itemize}
      Combining with Theorem \ref{firstmainthm}, this (re)proves Theorem \ref{maingpth} for $\Gamma$.}
    \end{rema}
        
    \vskip2mm
	\noindent {{\small {\bf Acknowledgements.} We thank Alex Lubotzky, Amos Nevo, Peter Sarnak and Akshay Venkatesh for their interest and encouragement of this project. We are grateful to Minju Lee and Fei Xu for some helpful communications. The first author is partially funded by the Italian PRIN 2017 ``Geometric, algebraic and analytic methods in arithmetic''. The second author was a guest at the Max Planck Institute for Mathematics whenfirst  working on this article. He thanks the Institute for their hospitality and financial support. Part of this work was done when the fourth author was a member at the IAS, he thanks the support by the National Science Foundation under Grant No. DMS-1926686 and the Ky Fan and Yu-Fen Fan Endowment fund of the Institute for Advanced Study.}}
	
    \end{appendix}
    \bibliographystyle{amsplain}

\end{document}